\newtheorem{lemma}{Lemma}[section]
\newtheorem{theorem}[lemma]{Theorem}
\newtheorem*{theorem*}{Theorem}
\newtheorem{claim}[lemma]{Claim}
\newtheorem{corollary}[lemma]{Corollary}
\newtheorem{question}[lemma]{Question}
\newtheorem{proposition}[lemma]{Proposition}
\newtheorem*{proposition*}{Proposition}
\newtheorem{conjecture}[lemma]{Conjecture}
\newtheorem{problem}[lemma]{Problem}
\newtheorem*{problem*}{Problem}
\theoremstyle{definition}
\newtheorem*{claim*}{Claim}
\newtheorem{definition}[lemma]{Definition}
\newtheorem{example}[lemma]{Example}
\newtheorem{examples}[lemma]{Examples}
\DeclareMathOperator*{\oE}{\overline{\mathbb{E}}}
\DeclareMathOperator*{\E}{\mathbb{E}}
\newcommand{\C}{{\mathbb C}}
\newcommand{\N}{{\mathbb N}}
\newcommand{\Q}{{\mathbb Q}}
\newcommand{\R}{{\mathbb R}}
\newcommand{\T}{{\mathbb T}}
\newcommand{\Z}{{\mathbb Z}}
\newcommand{\CA}{{\mathcal A}}
\newcommand{\CB}{{\mathcal B}}
\newcommand{\CC}{{\mathcal C}}
\newcommand{\CD}{{\mathcal D}}
\newcommand{\CF}{{\mathcal F}}
\newcommand{\CG}{{\mathcal G}}
\newcommand{\CH}{{\mathcal H}}
\newcommand{\CI}{{\mathcal I}}
\newcommand{\CK}{{\mathcal K}}
\newcommand{\CQ}{{\mathcal Q}}
\newcommand{\CX}{{\mathcal X}}
\newcommand{\CY}{{\mathcal Y}}
\newcommand{\CZ}{{\mathcal Z}}
\newcommand{\FD}{{\mathfrak D}}
\newcommand{\ba}{\mathbf{a}}
\newcommand{\bm}{{\mathbf{m}}}
\newcommand{\bn}{{\mathbf{n}}}
\newcommand{\be}{{\mathbf{e}}}
\newcommand{\bg}{{\mathbf{g}}}
\newcommand{\bh}{{\mathbf{h}}}
\newcommand{\bk}{{\mathbf{k}}}
\newcommand{\br}{{\mathbf{r}}}
\newcommand{\bu}{{\mathbf{u}}}
\newcommand{\bv}{{\mathbf{v}}}
\newcommand{\bx}{{\mathbf{x}}}
\newcommand{\bz}{{\mathbf{z}}}
\newcommand{\bJ}{{\mathbf{J}}}
\newcommand{\balpha}{{\boldsymbol{\alpha}}}
\newcommand{\bbeta}{{\boldsymbol{\beta}}}
\newcommand{\uh}{{\underline{h}}}
\newcommand{\um}{{\underline{m}}}
\newcommand{\veps}{\varepsilon}
\newcommand{\e}{\varepsilon}
\newcommand{\eps}{\epsilon}
\newcommand{\ueps}{{\underline{\epsilon}}}
\newcommand{\norm}[1]{\left\Vert #1\right\Vert}
\newcommand{\nnorm}[1]{\lvert\!|\!| #1|\!|\!\rvert}
\newcommand{\Bignnorm}[1]{\Big|\!\Big|\!\Big| #1\Big|\!\Big|\!\Big|}
\newcommand{\inv}{^{-1}}
\DeclareMathOperator{\sgn}{sgn}
\DeclareMathOperator{\Span}{Span}
\DeclareMathOperator{\extspan}{ExtSpan}
\DeclareMathOperator{\extSpan}{ExtSpan}
\DeclareMathOperator{\Nil}{Nil}
\newcommand{\abs}[1]{\mathopen{}\left| #1\mathclose{}\right|}
\newcommand{\brac}[1]{\mathopen{}\left( #1 \mathclose{}\right)}
\newcommand{\Bigbrac}[1]{\Bigl( #1 \Bigr)}
\newcommand{\floor}[1]{{\left \lfloor #1 \right \rfloor}}
\newcommand{\sfloor}[1]{{\lfloor #1 \rfloor}}
\newcommand{\rem}[1]{\left \{ #1 \right \}}
\newcommand{\srem}[1]{ \{ #1 \}}
\title[]{Resolving the joint ergodicity problem\\ for Hardy sequences}
\author{Sebasti\'an Donoso, Andreas Koutsogiannis, Borys Kuca,\\ Wenbo Sun, and Konstantinos Tsinas}
\address[Sebasti\'an Donoso]{Departamento de Ingenier\'{\i}a
	Matem\'atica and Centro de Modelamiento Ma\-te\-m\'a\-ti\-co, Universidad de Chile and IRL-CNRS 2807, Beauchef 851, Santiago,
	Chile.} 
\email{sdonosof@uchile.cl}
\address[Andreas Koutsogiannis]{
Department of Mathematics, Aristotle University of Thessaloniki, Thessaloniki 54124, Greece}
\email{akoutsogiannis@math.auth.gr}
\address[Borys Kuca]{Faculty of Mathematics and Computer Science, Jagiellonian University, 30-348 Krak\'ow, Poland}
\email{borys.kuca@uj.edu.pl}
\address[Wenbo Sun]{Department of Mathematics, Virginia Tech, 225 Stanger Street, Blacksburg, VA, 24061, USA}
\email{swenbo@vt.edu}
\address[Konstantinos Tsinas]{Institute of Mathematics, Ecole Polytechnique F\'{e}d\'{e}rale de Lausanne (EPFL), Lausanne 1015,
Switzerland}
\email{konstantinos.tsinas@epfl.ch}
\thanks{The first author was partially funded by Centro de Modelamiento Matemático (CMM) FB210005, BASAL funds for centers of excellence from ANID-Chile and ANID/Fondecyt/1241346. For the second author, the research project is implemented in the framework of H.F.R.I call ``3rd Call for H.F.R.I.’s
Research Projects to Support Faculty Members \& Researchers'' (H.F.R.I. Project Number: 24979).  The third author was supported by the NCN Polonez Bis 3 grant No. 2022/47/P/ST1/00854 (H2020 MSCA GA No. 945339). The fourth author was partially supported by the NSF Grant DMS-2247331. The fifth author was supported by the Swiss National Science Foundation grant TMSGI2-211214. For the purpose of Open Access, the authors have applied a CC-BY public copyright licence to any Author Accepted Manuscript (AAM) version arising from this submission.}
\subjclass[2020]{Primary: 37A44; Secondary: 11B30, 28D05}
\keywords{Ergodic averages, joint ergodicity, Host-Kra seminorms, Hardy fields.}
\begin{document}

\begin{abstract}
    The joint ergodicity classification problem aims to characterize those sequences which are jointly ergodic along an arbitrary dynamical system if and only if they satisfy two natural, simpler-to-verify conditions on this system. These two conditions, dubbed the difference and product ergodicity conditions, naturally arise from Berend and Bergelson's pioneering work on joint ergodicity. Elaborating on our earlier work, we investigate this problem for Hardy sequences of polynomial growth, this time without making any independence assumptions on the sequences. Our main result establishes the ``difficult’’ direction of the problem: if a Hardy family satisfies the difference and product ergodicity conditions on a given system, then it is jointly ergodic for this system. We also find that, surprisingly, the converse fails for certain pathological families of Hardy sequences, even though it holds for all ``reasonable’’ Hardy families. We conclude by suggesting potential fixes to the statement of this problem. {New ideas of independent interest developed in this paper include the structure theory of a family of factors generalizing Host-Kra and box factors; a strengthening of Tao-Ziegler's concatenation results; and the most robust extension of a seminorm smoothing argument.}
\end{abstract}

\maketitle

\tableofcontents

\section{Introduction}

In 1977, Furstenberg gave his famous dynamical proof of Szemer\'edi's theorem on arithmetic progressions, initiating a highly successful program of applying ergodic tools to problems in combinatorics and number theory \cite{Fu77}. As part of his proof, he showed that whenever $(X, \CX, \mu, T)$ is a weakly mixing measure-preserving dynamical system, the averages
\begin{align*}
    \E_{n\in[N]}T^n f_1 \cdots T^{\ell n}f_\ell
\end{align*}
(where $\E_{n\in[N]}:=\frac{1}{N}\sum\limits_{n=1}^N$ {and $[N]:=\{1,\ldots,N\}$ for $N\in \N$}) converge in $L^2(\mu)$ to the product of integrals $\int f_1\; d\mu \cdots \int f_\ell\; d\mu$ for all functions $f_1, \ldots, f_\ell\in L^\infty(\mu)$. In doing so, he proved the first result on the \textit{joint ergodicity} of multiple ergodic averages {after the mean ergodic theorem}.

\begin{definition}[Joint ergodicity]\label{D: joint ergodicity}
    Let $a_1, \ldots, a_\ell:\N\to\R$ be sequences and $(X, \CX, \mu, T_1,$ $ \ldots, T_\ell)$ be a system.\footnote{By which we mean commuting invertible measure preserving transformations $T_1, \ldots, T_\ell$ acting on a Lebesgue probability space $(X, \CX, \mu)$.} We say that $a_1, \ldots, a_\ell$ are \textit{jointly ergodic} for $(X, \CX, \mu, T_1, \ldots, T_\ell)$ (alternatively, the tuple $(T_1^{\floor{a_1(n)}}, \ldots, T_\ell^{\floor{a_\ell(n)}})_n$ is \textit{jointly ergodic} on $(X, \CX, \mu)$) if
\begin{align}\label{E: joint ergodicity}
        \lim_{N\to\infty}\norm{\E_{n\in[N]}\prod_{j=1}^\ell T_j^{\floor{a_{j}(n)}}f_j - \prod_{j=1}^\ell \int f_j\, d\mu}_{L^2(\mu)} = 0
    \end{align}
    holds for all $f_1, \ldots, f_\ell\in L^\infty(\mu)$. {For $\ell=1,$ we say that $a_1$ is \emph{ergodic} for $(X, \CX, \mu, T_1)$ (and respectively we call $(T_1^{\floor{a_1(n)}})_n$ \emph{ergodic}).}
\end{definition}

In particular, $(T^n)_n$ is ergodic if and only if $T$ is ergodic.

A few years after Furstenberg's work, Bergelson and Berend inquired about the necessary and sufficient conditions for $(T_1^n, \ldots, T_\ell^n)_n$ to be jointly ergodic, proving the following result.
\begin{theorem}[{\cite{BB84}}]
    Let $(X, \CX, \mu, T_1, \ldots, T_\ell)$ be a system. Then $(T_1^n, \ldots, T_\ell^n)_n$ is jointly ergodic if and only if the following two conditions hold:
    \begin{enumerate}
    \item $T_i T_j^{-1}$ is ergodic on $(X, \CX, \mu)$ for all distinct $i, j\in[\ell]$;
    \item $T_1\times \cdots \times T_\ell$ is ergodic on the product system $(X^\ell, \CX^{\otimes \ell}, \mu^\ell)$.
    \end{enumerate}
\end{theorem}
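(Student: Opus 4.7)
The necessity direction is a short test-function argument. Taking all but one $f_j=1$ in \eqref{E: joint ergodicity} forces each $T_j$ to be ergodic, which I use freely below. If some $T_iT_j^{-1}$ fails to be ergodic, I pick a non-constant $\phi\in L^\infty(\mu)$ with $T_i\phi=T_j\phi$ and $\int\phi\,d\mu=0$; setting $f_i:=\phi$, $f_j:=\bar\phi$, and $f_k:=1$ otherwise collapses the average in \eqref{E: joint ergodicity} to $\E_{n\in[N]}T_i^n|\phi|^2$, whose $L^2$-limit has integral $\|\phi\|_{L^2(\mu)}^2>0$, contradicting joint ergodicity. If instead $T_1\times\cdots\times T_\ell$ fails to be ergodic on $X^\ell$, spectral theory provides eigenfunctions $\phi_j$ of $T_j$ with eigenvalues $\lambda_j$ satisfying $\prod_j\lambda_j=1$ but not all $\lambda_j=1$; plugging $f_j:=\phi_j$, the ergodic average collapses to the non-zero function $\prod_j\phi_j$, while $\prod_j\int\phi_j\,d\mu=0$, again a contradiction.

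For sufficiency, after subtracting mean values it suffices to show
\[
\lim_{N\to\infty}\Big\|\E_{n\in[N]}\prod_{j=1}^\ell T_j^n g_j\Big\|_{L^2(\mu)}=0
\]
whenever at least one $g_j$ has $\int g_j\,d\mu=0$. My plan is an iterated van der Corput / spectral argument. In the base case $\ell=2$, applying van der Corput to the sequence $n\mapsto T_1^n g_1\cdot T_2^n g_2$, then exploiting commutativity to rewrite the resulting correlations along orbits of $S:=T_1^{-1}T_2$, and finally applying the mean ergodic theorem to $S$ (ergodic by (i)), reduces the problem to showing
\[
\lim_{H\to\infty}\E_{h\in[H]}\langle T_1^h g_1,g_1\rangle\langle T_2^h g_2,g_2\rangle=0.
\]
By the spectral theorem this Ces\`aro limit equals the mass of the convolution $\sigma_{g_1}*\sigma_{g_2}$ at $0\in\T$, where $\sigma_{g_j}$ is the spectral measure of $g_j$ under $T_j$. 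The condition $\int g_j\,d\mu=0$ excludes atoms at $0$, so non-vanishing of this mass forces $T_1$ and $T_2$ to share a non-trivial eigenvalue relation $\lambda\cdot\lambda^{-1}=1$, precisely what (ii) prohibits. For general $\ell$ I would iterate van der Corput $\ell-1$ times, each step peeling off one transformation via the ergodicity of a suitable ratio $T_iT_j^{-1}$ supplied by (i), and terminating with the mass of $\sigma_{g_1}*\cdots*\sigma_{g_\ell}$ at $0$, which vanishes exactly under (ii).

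The main obstacle is the combinatorics of the PET iteration: each van der Corput step replaces the tuple $(T_1,\ldots,T_\ell)$ by a shorter tuple built from the $T_j$'s and their inverses, and one must verify at every stage that (i) supplies the required ergodic ratio and that the terminal spectral quantity is the one controlled by (ii). In the linear regime this bookkeeping is manageable because the tuple remains inside the group generated by $T_1,\ldots,T_\ell$; for the non-linear Hardy-field averages that are the true subject of this paper, it must be replaced by the heavier Host-Kra factor, nilsystem and concatenation machinery advertised in the abstract.
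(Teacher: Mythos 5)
The paper does not reprove this theorem (it is cited directly from [BB84]), so there is no internal proof to compare against; I assess your proposal on its own terms. Your necessity direction is correct and standard, and the $\ell=2$ case of sufficiency is also fine provided you run the van der Corput inequality over a symmetric range of shifts: since $\gamma_{-h}=\overline{\gamma_h}$, the bilateral average $\E_{|h|\leq H}\gamma_h$ is real and converges to the nonnegative spectral mass $(\sigma_{g_1}*\sigma_{g_2})(\{0\})$, so no absolute values are needed. (Do fix the typo ``$\lambda\cdot\lambda^{-1}=1$''; the relation you want is $\lambda_1\lambda_2=1$.)

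For $\ell\geq 3$, however, the sentence ``each step peeling off one transformation via the ergodicity of a suitable ratio $T_iT_j^{-1}$ supplied by (i)'' hides the crux of the argument, not mere bookkeeping. After one van der Corput step (pivoting at $T_1$, say), the quantity to analyse is $\E_h\int T_1^hg_1\bar g_1\cdot\E_n\prod_{j\geq 2}S_j^n\bigl(T_j^hg_j\bar g_j\bigr)\,d\mu$ with $S_j:=T_jT_1^{-1}$. To make the inner average collapse to $\prod_{j\geq 2}\langle T_j^hg_j,g_j\rangle$, which is what your terminal quantity $(\sigma_{g_1}*\cdots*\sigma_{g_\ell})(\{0\})$ requires, you must apply the $(\ell-1)$-fold statement to $(S_2,\dots,S_\ell)$, and for that you must verify both hypotheses for the ratio system. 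Hypothesis (i) transfers trivially because $S_iS_j^{-1}=T_iT_j^{-1}$, but the ergodicity of $S_2\times\cdots\times S_\ell$ on $X^{\ell-1}$ does \emph{not} follow by inspection from (i) and (ii); you need a lemma. Namely: since each $S_j$ is ergodic, its eigenspaces are one-dimensional and hence $T_i$-invariant for every $i$, so an eigenfunction $\phi_j$ of $S_j$ (with $S_j\phi_j=\mu_j\phi_j$) is automatically a joint eigenfunction, say $T_i\phi_j=\lambda_{ij}\phi_j$. A nontrivial relation $\prod_{j\geq 2}\mu_j=1$ witnessing non-ergodicity of $S_2\times\cdots\times S_\ell$ then produces the nonconstant invariant function $\bigl(\prod_{j\geq 2}\bar\phi_j\bigr)\otimes\phi_2\otimes\cdots\otimes\phi_\ell$ of $T_1\times\cdots\times T_\ell$ on $X^\ell$ (the $|\lambda_{1j}|^2$ factors cancel), contradicting (ii). Without this lemma the induction does not close, and the spectral measures that fall out of the iteration belong to the ratio transformations $S_j$, not to the original $T_j$, so your claimed terminal quantity is not what you would actually obtain.
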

It is interesting to understand the extent to which the Bergelson-Berend conditions for joint ergodicity can be extended to more general multiple ergodic averages, leading to the following definitions. 
\begin{definition}\label{D: good for joint ergodicity}
    Let $a_1, \ldots, a_\ell:\N\to\R$ be sequences and $(X, \CX, \mu, T_1, \ldots, T_\ell)$ be a system. We say that on the system, the sequences $a_1, \ldots, a_\ell$ satisfy
    \begin{enumerate}
        \item\label{i: difference} the \emph{difference ergodicity condition} if the sequence $(T_i^{\floor{a_i(n)}}T_j^{-\floor{a_j(n)}})_n$ is ergodic\footnote{A sequence of transformations $(T_n)_n$ on a Lebesgue probability space $(X, \CX, \mu)$ is called \textit{ergodic} if
\[  \lim_{N\to\infty}\norm{\E_{n\in[N]} T_n f - \int f\, d\mu}_{L^2(\mu)} = 0    \]     
    for all $f\in L^\infty(\mu)$.} on $(X, \CX, \mu)$ for all distinct $i, j\in[\ell]$; and
        \item\label{i: product} the \emph{product ergodicity condition} if $(T_1^{\floor{a_1(n)}}\times \cdots \times T_\ell^{\floor{a_\ell(n)}})_n$ is ergodic on the product system $(X^\ell, \CX^{\otimes \ell}, \mu^\ell)$.
    \end{enumerate}
\end{definition}

We then have the following natural problem whose variants were previously asked in \cite{BMR20, DKS22, DKS23}.
    \begin{problem}[Joint ergodicity classification problem]\label{Pr: joint ergodicity problem}
    Classify the sequences $a_1, \ldots, a_\ell:\N\to\R$ with the following property: the sequences are jointly ergodic for a system $(X, \CX, \mu, T_1, \ldots, T_\ell)$ if and only if they simultaneously satisfy the difference and product ergodicity conditions on the system. 
    \end{problem}

Joint ergodicity has been a subject of intense scrutiny in the last decades \cite{BB84, BB86, Ber87, BeHK09, BLS16, BMR20, BS22, BS23, BFM22, CFH11, DFKS22, DKKST24, DKS22, DKS23, DKS24,  Fr10, Fr12, Fr22, Fr21, FrKr05, FrKu22b, FrKu22c, FrKu22a, KK19, Kouts21,  KoSu23, KoTs23, Ts22}. Broadly speaking, the aforementioned works circulate around three themes:
\begin{enumerate}
    \item establishing joint ergodicity and its variants for broad classes of sequences and systems;
    \item deriving easier-to-verify criteria for joint ergodicity;
    \item investigating Problem \ref{Pr: joint ergodicity problem} for various classes of sequences.
\end{enumerate}

So far, Problem \ref{Pr: joint ergodicity problem} has been examined for integer polynomials, Hardy sequences, and generalized linear functions, and the equivalence postulated by Problem \ref{Pr: joint ergodicity problem} has been verified affirmatively for the following classes of sequences:
\begin{itemize}
    \item linear functions \cite{BB84};
    \item generalized linear functions \cite{BLS16};
    \item a single integer polynomial $a_1 = \cdots = a_\ell$ \cite{DKS22} (see also \cite{DFKS22} for a slight generalization);
    \item pairwise independent integer polynomials \cite{FrKu22a};
    \item arbitrary integer polynomials \cite{FrKu22b};
    \item a single Hardy sequence $a_1 = \cdots = a_\ell$ of polynomial growth that stays logarithmically away from $\R[x]$ \cite{DKS23};
    \item pairwise independent Hardy sequence of polynomial growth \cite{DKKST24}.
\end{itemize}
Of this rich variety of results, we particularly emphasize the resolution of Problem \ref{Pr: joint ergodicity problem} for integer polynomials \cite{FrKu22b} and pairwise independent Hardy sequences of polynomial growth \cite{DKKST24}. By identifying broad classes of sequences for which Problem \ref{Pr: joint ergodicity problem} holds, the aforementioned works have given substantial credibility to why one should think of the difference and product ergodicity conditions as a right set of conditions guaranteeing joint ergodicity. 

So far, the difficult part of Problem \ref{Pr: joint ergodicity problem} has typically been to establish the converse direction, i.e. the sufficiency of the difference and product ergodicity conditions for joint ergodicity. The main result of this paper shows that the difficult direction in Problem \ref{Pr: joint ergodicity problem} holds for all Hardy sequences of polynomial growth (whose collection is denoted below by $\CH$; see Section \ref{SS: Hardy} for the formal definition). 
\begin{theorem}\label{T: main theorem}
    Suppose that $a_{1},\dots,a_{\ell}\in\mathcal{H}$ satisfy the difference and product ergodicity conditions on a system $(X, \CX, \mu, T_1, \ldots, T_\ell)$.
Then they are jointly ergodic for the system.
     \end{theorem}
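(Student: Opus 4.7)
The strategy follows the broad paradigm of reducing joint ergodicity to equidistribution on a pronilfactor, but because the sequences $a_1,\dots,a_\ell\in\CH$ are allowed to be arbitrarily correlated, each step requires substantial refinement over \cite{DKKST24, FrKu22b}. The goal is to show that the mean in \eqref{E: joint ergodicity} tends to $0$. After the standard decomposition $f_j = \int f_j\, d\mu + \tilde f_j$, it suffices to prove that for each $j_0\in[\ell]$, if $\int \tilde f_{j_0}\, d\mu=0$ then $\E_{n\in[N]}\prod_{j}T_j^{\floor{a_j(n)}}\tilde f_j\to 0$ in $L^2(\mu)$.

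The first phase is to iterate the van der Corput inequality via a PET-type scheme adapted to Hardy fields (the growth rates of the $a_j$ are totally ordered by $\CH$, which permits a well-ordered induction on the leading exponent vector). For polynomial sequences this would terminate at a Host--Kra seminorm $\nnorm{\cdot}_{s, T_{j_0}}$ acting on a single transformation; however, the lack of independence among $a_1,\dots,a_\ell$ and the presence of several commuting transformations will instead produce control by a generalized box seminorm $\nnorm{\tilde f_{j_0}}_{\mathbf{V}}$, whose defining directions $\mathbf{V}$ are products of various iterates of $T_1,\dots,T_\ell$ dictated by the PET reduction. The structure theory for the family of factors attached to such seminorms, which generalizes both the Host--Kra and the classical box factors, needs to be developed to ensure they are well-behaved enough to support a further analysis.

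The heart of the argument is then to upgrade this $\mathbf{V}$-box seminorm control to control by a genuine Host--Kra seminorm on the pronilfactor of some auxiliary transformation. This is where the two new technical ingredients advertised in the abstract enter: a seminorm smoothing argument, which turns one-sided seminorm bounds into an average of better-structured bounds along shifts of the original sequences, and a strengthening of the Tao--Ziegler concatenation theorem, which merges the resulting family of box seminorm controls into control by a single Host--Kra seminorm. I expect this phase to be the main obstacle, since the usual Tao--Ziegler machinery relies on independence properties that fail here, and the whole point of the strengthening is to bypass them and to accommodate the tangled direction vectors produced by PET on a non-independent family.

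Once control by a Host--Kra seminorm is achieved, one reduces to the case when $(X,\CX,\mu,T_1,\dots,T_\ell)$ is a (system of commuting rotations on a) nilmanifold, where the convergence of $\E_{n\in[N]}\prod_j T_j^{\floor{a_j(n)}} f_j$ to the product of integrals is an equidistribution statement for the nil-orbit $(g_1^{\floor{a_1(n)}}\Gamma,\dots,g_\ell^{\floor{a_\ell(n)}}\Gamma)_n$. Invoking the equidistribution theory for Hardy sequences on nilmanifolds (as in \cite{DKKST24, Ts22, KoTs23}), it remains to rule out obstructions on horizontal and higher-step tori, and this is precisely what the difference and product ergodicity conditions provide: the product ergodicity condition eliminates joint obstructions coming from the product horizontal torus, while the difference ergodicity condition kills the pairwise relations that could confine the joint orbit to a proper subnilmanifold. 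Combining all four phases yields the claimed joint ergodicity.
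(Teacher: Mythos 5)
Your high-level outline tracks the paper up through the smoothing-plus-concatenation phase, and the description of PET-style van der Corput reduction into generalized box seminorms is accurate in spirit. But there is a genuine gap in where you place the difference ergodicity condition. You assert that the smoothing argument together with the strengthened Tao--Ziegler concatenation already yields control by a single Host--Kra seminorm $\nnorm{\cdot}_{s,T_{j_0}}$, and you then defer the difference and product ergodicity conditions to a final nilmanifold-equidistribution phase. This is not what the machinery actually delivers. For non-pairwise-independent Hardy sequences, the output of the smoothing argument (Theorem~\ref{T: new estimates}) is a box seminorm $\nnorm{f_1}_{\bv_1,\ldots,\bv_s}$ whose directions $\bv_i$ come from the leading coefficients of $\{a_1\be_1\}\cup\{a_i\be_i-a_j\be_j:\, a_i,a_j\ \text{dependent}\}$; the cross-terms $\be_i-\be_j$ for dependent pairs are \emph{not} removable by concatenation alone, since the relative concatenation theorem does not cancel dependent directions---it only merges factor inclusions. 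In the paper the difference ergodicity condition is the essential input at exactly this step: it is unpacked via spectral analysis (Propositions~\ref{P: seminorm comparison}, \ref{P: seminorm comparison, s = 0}, and the case work in Sections~\ref{S: seminorm comparison I}--\ref{S: seminorm comparison II}) to show that ergodicity of $(T_i^{\floor{a_i(n)}}T_j^{-\floor{a_j(n)}})_n$ forces the $\be_i-\be_j$ seminorm to be dominated by a $\langle\be_i,\be_j\rangle$-seminorm, which after combining with ergodicity of the individual $T_j$'s can be replaced by powers of $\be_1$. Without this, Phase~2 does not terminate at a Host--Kra seminorm and Phase~3 cannot begin.

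A second, smaller discrepancy: the paper does not reduce to an explicit equidistribution statement on nilmanifolds after obtaining Host--Kra control. It instead invokes the black-box joint ergodicity criteria of Theorem~\ref{T: joint ergodicity criteria}, which requires only (a) seminorm control and (b) the vanishing of the explicit exponential sums \eqref{E: good for equidistribution} over eigenvalues; the latter follows from the product ergodicity condition alone (via \cite[Corollary 12.5]{DKKST24}), with no further use of the difference ergodicity condition. So the product condition does all the work for equidistribution, and the difference condition does all its work in the seminorm-control phase---the two conditions are not interleaved in a joint nil-equidistribution argument as you suggest. Finally, you should handle the degenerate case $a_j\ll\log$ separately (as in Lemma~\ref{L: slowly growing}), since the difference ergodicity assumption then forces the measure to be a point mass and the PET reduction is unavailable.
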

We also establish the following partial converse.

\begin{definition}
    For $a\in\mathcal{H}$, we say that $a$ is an \emph{almost rational 
    polynomial} if $a-p$ converges for some $p$ in $\Q[x]$. 
    We say that $a$ \emph{stays logarithmically away from $\Q[x]$} (\emph{resp. $\R[x]$}) if $a-p\succ \log$ for all $p$ in $\Q[x]$ (resp. $\R[x]$).
\end{definition}

\begin{theorem}\label{T: easy direction}
        Suppose that $a_{1},\dots,a_{\ell}\in\mathcal{H}$ are jointly ergodic for a system $(X, \CX, \mu, T_1, $ $ \ldots, T_\ell)$. Then the following holds:
        \begin{enumerate}
            \item The sequences satisfy the product ergodicity condition on the system.
            \item Suppose additionally that for every $i\neq j$, $c_i, c_j\in\R$, the function $c_i a_i - c_j a_j$ is either an almost rational polynomial  or stays logarithmically away from $\Q[x]$. Then the sequences satisfy the difference ergodicity condition on the system.
        \end{enumerate}
\end{theorem}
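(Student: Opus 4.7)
The plan is to handle each part using spectral techniques, exploiting joint ergodicity to deduce vanishing of relevant exponential sums.

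For part~(i), by a standard density argument it suffices to treat test functions $F = \bigotimes_{j} f_j$ with $f_j \in L^\infty(\mu)$. Since the mean of the ergodic average already equals $\prod_j \int f_j\,d\mu$, I reduce to establishing $L^2(\mu^\ell)$-norm convergence. Expanding the squared norm using the tensor structure and applying the spectral theorem to each $T_j$ acting on $f_j$ (with spectral measure $\sigma_{f_j}$ on $\T$), the question becomes whether
\[
\int_{\T^\ell} \abs{\Phi_N(\theta)}^2\, d\sigma_{f_1}\otimes \cdots \otimes d\sigma_{f_\ell} \to \prod_j \abs{\int f_j\, d\mu}^2, \quad \Phi_N(\theta) := \E_{n \in [N]} e^{2\pi i \sum_j \floor{a_j(n)} \theta_j}.
\]
I would deduce from joint ergodicity that $\Phi_N(\theta) \to 0$ for $\bigotimes_j \sigma_{f_j}$-almost every $\theta \neq 0$. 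At points in the coordinate-wise point spectrum, one picks individual eigenfunctions $f_j$ of each $T_j$ at $\theta_j$ and reads off the vanishing of $\Phi_N(\theta)$ from joint ergodicity (since the limit $\prod_j \int f_j$ vanishes whenever some $\theta_j \neq 0$). For the continuous part of the joint spectral support, one extends via a density argument combined with individual ergodicity of each $(T_j^{\floor{a_j(n)}})_n$, which follows from joint ergodicity by setting all but one test function to $1$.

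For part~(ii), fix $i \neq j$ and $h \in L^\infty(\mu)$. Setting all factors but those at indices $i, j$ equal to $1$ in joint ergodicity yields $\E_n T_i^{\floor{a_i(n)}} f \cdot T_j^{\floor{a_j(n)}} g \to \int f \int g$ in $L^2(\mu)$ for all $f, g \in L^\infty(\mu)$. Pairing with the constant function via the inner product and using unitarity of $T_j$, one obtains the weak $L^2$-convergence of $\E_n T_i^{\floor{a_i(n)}} T_j^{-\floor{a_j(n)}} h$ to $\int h\, d\mu$. To upgrade to $L^2$-norm convergence, I would control the squared norm via the spectral theorem on the commuting pair $(T_i, T_j)$, letting $\sigma_h$ denote the joint spectral measure of $h$; this reduces the problem to showing
\[
\lim_{N\to\infty}\E_{n\in[N]} e^{2\pi i (\floor{a_i(n)} \alpha - \floor{a_j(n)} \beta)} = 0 \quad \text{for } \sigma_h\text{-a.e.\ } (\alpha, \beta) \neq (0,0).
\]
At joint-spectrum atoms, joint ergodicity itself forces this vanishing (a non-vanishing limit would contradict $\int F \int \bar F = 0$ for the corresponding eigenfunction $F$). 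For the continuous part, the hypothesis on $c_i a_i - c_j a_j$ enters: the almost-rational-polynomial/logarithmic-separation dichotomy ensures that for $\sigma_h$-a.e.\ $(\alpha, \beta)$ in the continuous spectrum, the sequence $\alpha a_i - \beta a_j$ stays logarithmically away from $\Q[x]$, and then Boshernitzan-type equidistribution for Hardy sequences forces the exponential sum to vanish.

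The main obstacle I anticipate is in part~(i): rigorously extending the vanishing of $\Phi_N$ from the coordinate-wise point spectrum to $\sigma$-almost every point of the full spectral support, especially where the spectrum has continuous parts. A clean treatment likely needs either a Herglotz-type spectral smoothing argument or additional structural input from the Hardy-field nature of the $a_j$ to control the continuous contribution.
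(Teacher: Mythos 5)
Your plan for part (ii) contains a genuine gap, and it comes from aiming for a strictly stronger statement than is needed.

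You reduce the upgrade from weak to strong $L^2$-convergence of $\E_{n\in[N]} T_i^{\floor{a_i(n)}}T_j^{-\floor{a_j(n)}}h$ to the claim that the exponential sum
$\E_{n\in[N]} e\bigl(\alpha\floor{a_i(n)} - \beta\floor{a_j(n)}\bigr)$
vanishes for $\sigma_h$-a.e.\ $(\alpha,\beta)\neq(0,0)$. This is not true in general, and your hypothesis does not force it. Take $a_i=a_j$: on the diagonal $\alpha=\beta$ the integrand is identically $1$, and the joint spectral measure $\sigma_h$ of the pair $(T_i,T_j)$ can perfectly well put continuous (nonatomic) mass on the diagonal. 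Your atom argument is correct (and handles exactly those pieces of the diagonal that are atomic), but the continuous contribution does not go away; the hypothesis on $c_ia_i-c_ja_j$ only divides the frequency plane into a region where $\alpha a_i - \beta a_j$ stays logarithmically away from $\Q[x]$ and a region where it is an almost rational polynomial, and on the latter the exponential sum typically converges to a nonzero value, not to $0$.

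The paper avoids this entirely. It shows that the exponential sums $\E_{n\in[N]}e(t\floor{a_i(n)}+s\floor{a_j(n)})$ \emph{converge} for all $(t,s)$ (here the dichotomy hypothesis is used, together with Boshernitzan's equidistribution theorem, purely for convergence, never for vanishing). By the spectral theorem this gives that the averages $\E_{n\in[N]} T_i^{\floor{a_i(n)}}T_j^{-\floor{a_j(n)}}h$ form a Cauchy sequence in $L^2(\mu)$, hence converge strongly. Once strong $L^2$ convergence is established, the limit must agree with the weak limit, and you have already computed the weak limit to be $\int h\,d\mu$ using joint ergodicity. There is no need to locate the zero set of the limiting exponential sum, and the problematic diagonal mass is handled automatically. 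That observation — convergence of the exponential sum suffices, strong limit $=$ weak limit — is the missing idea in your proposal.

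For part (i), you correctly set up the spectral reduction on $\mu^\ell$ but explicitly acknowledge you cannot close the argument in the continuous spectrum. The paper does not prove this inline; it invokes Lemmas 12.6 and 12.13 of~\cite{DKKST24}, which carry the actual content. So your part (i) is genuinely incomplete, not just differently phrased.

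Finally, both parts of the paper's proof begin with a reduction to $a_1,\ldots,a_\ell\succ\log$ via Lemma~\ref{L: slowly growing}: joint ergodicity implies each $(T_j^{\floor{a_j(n)}})_n$ is ergodic (set all other $f_i\equiv1$), and if some $a_j\ll\log$ this forces $\mu$ to be a point mass, making the whole theorem trivial. This reduction is worth including; without it, some of the exponential-sum estimates you would want to invoke are false or meaningless.
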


As a corollary, we provide an affirmative answer to Problem \ref{Pr: joint ergodicity problem} for a large class of Hardy sequences.
\begin{corollary}\label{C:np}
    Let $a_1, \ldots, a_\ell\in\CH$ be Hardy sequences with the property that for every $i\neq j$ and $c_i, c_j\in\R$ the function $c_i a_i - c_j a_j$ is either an almost rational polynomial or it stays logarithmically away from $\Q[x]$. Then the sequences are jointly ergodic for a system $(X, \CX, \mu, T_1, \ldots, T_\ell)$ if and only  if they satisfy the product and difference ergodicity conditions on the system. 
\end{corollary}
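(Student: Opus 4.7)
The plan is to deduce Corollary \ref{C:np} as an immediate consequence of Theorem \ref{T: main theorem} combined with Theorem \ref{T: easy direction}; no new ingredients beyond these two theorems are needed.

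For the sufficiency direction, suppose that $a_1, \ldots, a_\ell$ satisfy the product and difference ergodicity conditions on the system $(X, \CX, \mu, T_1, \ldots, T_\ell)$. Then Theorem \ref{T: main theorem}, which is the difficult direction of Problem \ref{Pr: joint ergodicity problem} and the main contribution of the paper, yields joint ergodicity directly. Notably, no hypothesis on the linear combinations $c_i a_i - c_j a_j$ is required in this direction, since Theorem \ref{T: main theorem} applies to arbitrary Hardy families of polynomial growth.

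For the necessity direction, assume that $a_1, \ldots, a_\ell$ are jointly ergodic for the system. Then the product ergodicity condition follows at once from Theorem \ref{T: easy direction}(1), with no further restriction on the sequences. To derive the difference ergodicity condition, one would invoke Theorem \ref{T: easy direction}(2), whose hypothesis matches the standing assumption of the corollary verbatim: for every $i \neq j$ and $c_i, c_j \in \R$, the function $c_i a_i - c_j a_j$ is either an almost rational polynomial or stays logarithmically away from $\Q[x]$.

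Since both implications reduce to previously stated results with matching hypotheses, there is no genuine obstacle to overcome; the only point requiring verification is that the hypothesis of the corollary aligns exactly with that of Theorem \ref{T: easy direction}(2), which it does word for word. The asymmetry between the two directions---namely, that the ``if'' direction holds unconditionally while the ``only if'' needs the polynomial/logarithmic condition---reflects the failure of the converse for certain pathological Hardy families, as mentioned in the abstract.
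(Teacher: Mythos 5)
Your proof is correct and is precisely the argument the paper intends: the corollary is stated immediately after Theorems \ref{T: main theorem} and \ref{T: easy direction} with the word ``corollary'' signaling that it is their direct combination, the hypothesis of Corollary \ref{C:np} being copied verbatim from Theorem \ref{T: easy direction}(ii). No further commentary is needed.
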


In particular, we resolve \cite[Conjecture 6.1]{BMR20}, i.e. Problem \ref{Pr: joint ergodicity problem} {for general real and fractional polynomials}.
\begin{corollary}
    Let $\CF$ be the class of functions of the form $$a(x) = \beta_d x^{c_d} + \cdots + \beta_1 x^{c_1}$$ for $\beta_i\in\R$ and $c_i\geq 0$. Then any sequences $a_1, \ldots, a_\ell\in \CF$ are jointly ergodic for a system $(X, \CX, \mu, T_1, \ldots, T_\ell)$ if and only  if they satisfy the product and difference ergodicity conditions on the system. 
\end{corollary}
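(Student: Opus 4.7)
The plan is to deduce this corollary from Corollary \ref{C:np}. The two preliminary observations I would verify are: (i) $\CF \subseteq \CH$, since each member of $\CF$ is a finite real linear combination of the Hardy sequences $x^c$ for $c \geq 0$ and hence has polynomial growth; and (ii) $\CF$ is closed under real linear combinations, so $\alpha a_i - \gamma a_j \in \CF$ for all $i \neq j$ and $\alpha, \gamma \in \R$. Given these, the corollary reduces to the following dichotomy: every $f \in \CF$ is either an almost rational polynomial or stays logarithmically away from $\Q[x]$. With the dichotomy in hand, Corollary \ref{C:np} immediately yields the desired equivalence.

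To prove the dichotomy, I would write a generic $f \in \CF$ in the form $f(x) = \sum_{k=1}^m \beta_k x^{c_k}$ with distinct exponents $c_1 > \cdots > c_m \geq 0$ and nonzero real coefficients $\beta_k$, and introduce its \emph{rational polynomial part}
\[
p(x) := \sum_{\substack{k\,:\,c_k \in \Z_{\geq 0}\\ \beta_k \in \Q}} \beta_k x^{c_k} \in \Q[x].
\]
By construction, every surviving term of $f - p$ has either a positive non-integer exponent or an irrational coefficient on a non-negative integer exponent. If every surviving term has exponent $0$, then $f - p$ is either $0$ or an irrational constant, hence convergent, so $f$ is an almost rational polynomial and the first alternative of the dichotomy holds.

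Otherwise, let $c^* > 0$ be the largest exponent remaining in $f - p$. For an arbitrary $q \in \Q[x]$, I would write $f - q = (f - p) + (p - q)$ and inspect the leading behaviour. Since $p - q \in \Q[x]$ has only integer exponents and rational coefficients, it cannot cancel the non-integer-exponent terms of $f - p$; and where it shares an integer exponent with a surviving term of $f - p$, the irrational coefficient of the latter remains nonzero after the rational shift. Hence the leading exponent of $f - q$ equals $\max(c^*, \deg(p - q)) \geq c^* > 0$, giving $|f - q| \succ \log$. The main (and essentially the only) subtle point is this non-cancellation bookkeeping at shared integer exponents, which ultimately rests on the elementary fact that $\beta + r \neq 0$ for every $\beta \in \R \setminus \Q$ and $r \in \Q$.
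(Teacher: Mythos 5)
Your proof is correct, and since the paper states this corollary without an explicit argument, the reduction you give — showing that $\CF$ is closed under real linear combinations and lies in $\CH$, and then verifying the dichotomy "almost rational polynomial or logarithmically away from $\Q[x]$" for every $f\in\CF$ by splitting off the rational-coefficient integer-exponent part $p$ and observing that no $q\in\Q[x]$ can cancel the leading surviving term of $f-p$ — is exactly the verification needed to invoke Corollary~\ref{C:np}. The one place to be slightly more careful in writing it up is the case $\deg(p-q)=c^*$: here $c^*$ must in fact be a non-negative integer (since $p-q\in\Q[x]$), the coefficient at $x^{c^*}$ in $f-p$ is irrational by construction, and adding the rational coefficient from $p-q$ cannot annihilate it, which is precisely the "non-cancellation bookkeeping" you flag; spelling this out closes the only gap a referee might raise.
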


We also show that, surprisingly, joint ergodicity does not always imply the difference ergodicity condition, yielding a negative answer to Problem \ref{Pr: joint ergodicity problem} in general. The failure of the (supposedly) easy direction in Problem \ref{Pr: joint ergodicity problem} is instantiated by the following example. 
\begin{theorem}\label{T: counterexample}
    Let $(X, \CX, \mu, T)$ be a system. Then $(T^n, T^{n+\floor{\log_2 n}})_n$ is jointly ergodic if and only if $T$ is (strongly) mixing. By contrast, $(T^{\floor{\log_2 n}})_n$ is ergodic if and only the system is conjugate to a one-point system, or equivalently $\mu$ is concentrated on a single point. Consequently, if $T$ is mixing but not conjugate to a one-point system, then $(T^n, T^{n+\floor{\log_2 n}})_n$ is jointly ergodic whereas $(T^{\floor{\log_2 n}})_n$ is not ergodic.
\end{theorem}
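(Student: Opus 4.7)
Write $g(n) := \floor{\log_2 n}$. Everything rests on the observation that $g$ is constant equal to $k$ on each dyadic block $S_k := [2^k, 2^{k+1})$, which for $N = 2^K$ rewrites averages as
\[
\E_{n \in [2^K]} T^{g(n)} F = \sum_{k=0}^{K-1} 2^{k-K} T^k F =: B_K(F),
\]
a sequence satisfying the recursion $B_{K+1}(F) = \tfrac{1}{2} B_K(F) + \tfrac{1}{2} T^K F$. To show that $(T^{g(n)})_n$ is ergodic iff $\mu$ is a point mass: if $B_K(f) \to \int f\, d\mu$ in $L^2(\mu)$ for every $f \in L^\infty(\mu)$, the recursion forces $T^K f \to \int f\, d\mu$ in $L^2(\mu)$; since $T$ is a Koopman isometry, this yields $\norm{f}_{L^2(\mu)} = \abs{\int f\, d\mu}$ for every $f$, and the equality case of Cauchy--Schwarz forces every $L^2$ function to be essentially constant, so $\mu$ is a Dirac mass. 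The converse is immediate.

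For the ``joint ergodicity $\Rightarrow$ $T$ is mixing'' direction, take any $f \in L^\infty(\mu)$ with $\int f\, d\mu = 0$ and test joint ergodicity with $f_1 = f$, $f_2 = \bar f$: the target limit $\int f_1\, d\mu \cdot \int f_2\, d\mu = 0$ forces $\int A_N\, d\mu \to 0$, and $T$-invariance of $\mu$ reduces this integral to $\sum_{k=0}^{K-1} 2^{k-K} \angle{T^k \bar f, \bar f}$ for $N = 2^K$. The scalar version of the recursion then yields $\angle{T^K \bar f, \bar f} \to 0$ for every $f \perp \mathbf{1}$, and polarization together with the decomposition $L^2(\mu) = \C\mathbf{1} \oplus \mathbf{1}^\perp$ upgrades this to $\angle{T^K h_1, h_2} \to \int h_1\, d\mu \cdot \overline{\int h_2\, d\mu}$ for all $h_1, h_2 \in L^2(\mu)$, i.e.\ $T$ is mixing.

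The harder direction ``$T$ mixing $\Rightarrow$ joint ergodicity'' is where I expect the real work. Splitting off the mean of $f_2$ and handling the constant piece with the mean ergodic theorem (mixing implies ergodicity), I reduce to showing $\norm{A_N}_{L^2(\mu)} \to 0$ when $\int f_2\, d\mu = 0$, and this I plan to prove by applying the van der Corput lemma to $u_n := T^n f_1 \cdot T^{n+g(n)} f_2 \in L^2(\mu)$. For each fixed $h \geq 1$, the exceptional set $\{n \in [N] : g(n+h) \neq g(n)\}$ consists of at most $h$ elements at the top of each dyadic block inside $[N]$, hence has density $O(h \log_2 N / N) = o(1)$; on its complement, changing variables by $T^{-n}$ yields
\[
\angle{u_{n+h}, u_n}_{L^2(\mu)} = \int (T^h f_1 \cdot \bar f_1) \cdot T^{g(n)}(T^h f_2 \cdot \bar f_2)\, d\mu,
\]
a scalar which, since $g(n) \to \infty$, converges by mixing of $T$ to $\angle{T^h f_1, f_1} \cdot \angle{T^h f_2, f_2}$ as $n \to \infty$; Cesàro preserves the limit. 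Van der Corput then bounds $\limsup_N \norm{A_N}_{L^2(\mu)}^2$ by $\limsup_H \E_{h \in [H]} \abs{\angle{T^h f_1, f_1} \cdot \angle{T^h f_2, f_2}}$, which vanishes because $\int f_2\, d\mu = 0$ and mixing force $\angle{T^h f_2, f_2} \to 0$. The main technical obstacle is making this van der Corput step rigorous for general $N$ (not merely along powers of $2$), carefully tracking the exceptional set and the nested limits $\limsup_H \E_h \limsup_N$; but the dyadic robustness of $g$ and the fact that $g(n) \to \infty$ keep each sub-step routine.
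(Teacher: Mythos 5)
Your proposal is correct and establishes all three claims, but for the hardest direction (mixing $\Rightarrow$ joint ergodicity) you take a genuinely different and more elementary route than the paper. The paper proves a more general seminorm estimate (Proposition \ref{est12}), valid for two commuting transformations $T_1, T_2$ with $T_2\vert_{\CI(T_1^{-1}T_2)}$ mixing: it decomposes the Ces\`aro average dyadically first, applies Cauchy--Schwarz block by block, then van der Corput within blocks, and uses the mean ergodic theorem for $S = T_1^{-1}T_2$ to project onto $\CI(S)$ before invoking mixing. That seminorm control is then fed into the abstract joint ergodicity criteria of Theorem \ref{T: joint ergodicity criteria}, together with the observation that a mixing $T$ has no nontrivial eigenvalues. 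You instead apply van der Corput directly to the global sequence $u_n = T^n f_1\cdot T^{n+g(n)}f_2$, exploiting the two facts that $g(n+h)=g(n)$ for all but a density-$O(h\log_2 N/N)$ set of $n\in[N]$ and that $g(n)\to\infty$, so that $\angle{u_{n+h}, u_n}$ converges by mixing to $\angle{T^h f_1, f_1}\angle{T^h f_2, f_2}$; the vanishing of $\angle{T^h f_2, f_2}$ then closes the argument. This works cleanly because $T_1 = T_2$ makes $S = \text{id}$, so there is no inner $S^n$-average to tame; your proof would not extend to the generality of Proposition \ref{est12}, but it is shorter and self-contained for the theorem at hand, requiring neither the seminorm/factor machinery nor Theorem \ref{T: joint ergodicity criteria}. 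Your treatment of the other two parts (the recursion $B_{K+1} = \tfrac12 B_K + \tfrac12 T^K F$ forcing $T^K f \to \int f\,d\mu$ for the ergodicity characterization, and the polarization argument to extract mixing from joint ergodicity) is essentially a direct re-derivation of the special case $a(n)=\log_2 n$ of the paper's Lemma \ref{L: slowly growing}, via the same dyadic-block mechanism. The technical issues you flag about nested limits and non-dyadic $N$ are not real obstacles: boundedness plus density-$o(1)$ exceptional sets make the Ces\`aro averages converge to the pointwise limit, and the nested $\limsup_H \E_h \limsup_N$ structure is exactly the standard form of the van der Corput inequality.
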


To conclude the discussion, we remark that in this work we essentially resolve Problem~\ref{Pr: joint ergodicity problem} for polynomial and Hardy sequences of polynomial growth, i.e. classes for which we have a reduction-of-complexity argument based on van der Corput's inequality. Up until now, we have had no results at all for general real polynomials and several different Hardy sequences (the case when all of them are the same has been partially dealt with in \cite{DKS22, DKS23}).

Lastly, in Section \ref{section: repaired conjecture}, we propose several ways of amending the statement of Problem~\ref{Pr: joint ergodicity problem} for Hardy sequences that would avoid the issues posed by the counterexample for Theorem \ref{T: counterexample}. While we obtain some preliminary results towards these modified  versions of Problem \ref{Pr: joint ergodicity problem}, more research is needed in order to see if they could serve as a good fix to the statement of Problem~\ref{Pr: joint ergodicity problem}.

\subsection{Outline of the argument}
In \cite[Theorem 2.5]{FrKu22a}, Frantzikinakis and the third author derived the following simpler-to-verify criteria for joint ergodicity. 
\begin{theorem}[Joint ergodicity criteria]\label{T: joint ergodicity criteria}
Let $a_1, \ldots, a_\ell:\N\to\R$ be sequences and $(X, \CX, \mu, T_1, \ldots, T_\ell)$ be a system with each $T_1, \ldots, T_\ell$ ergodic. Then $a_1, \ldots, a_\ell$ are jointly ergodic for  $(X, \CX, \mu, T_1, \ldots, T_\ell)$ if and only if the following two conditions hold:
    \begin{enumerate}
        \item $a_1, \ldots, a_\ell$ are \emph{good for seminorm control} for the system in that there exists $s\in\N$ such that for all 1-bounded $f_1, \ldots, f_\ell\in L^\infty(\mu)$, we have
    \begin{align*}
        \lim_{N\to\infty}\norm{\E_{n\in[N]} T_1^{\floor{a_1(n)}}f_1 \cdots T_\ell^{\floor{a_\ell(n)}}f_\ell}_{L^2(\mu)} = 0
    \end{align*}
    whenever $\nnorm{f_j}_{s, T_j} = 0$ for some $j\in[\ell]$;
        \item $a_1, \ldots, a_\ell$ are \emph{good for equidistribution} for the system
        if for all eigenvalues $\lambda_1, \ldots, \lambda_\ell$ of $T_1, \ldots, T_\ell$ respectively, not all of which are 0, we have
        \begin{align}\label{E: good for equidistribution}
            \lim_{N\to\infty} \E_{n\in[N]}e(\lambda_1 \floor{a_1(n)} + \cdots + \lambda_\ell \floor{a_\ell(n)}) = 0.
        \end{align}
    \end{enumerate}
\end{theorem}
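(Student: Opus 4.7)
The plan is to prove the equivalence in two directions, with the forward direction being routine bookkeeping and the reverse direction carrying the substantive content. For the forward direction, condition (2) is immediate by testing joint ergodicity against eigenfunctions: if $T_j f_j = e(\lambda_j) f_j$ with $\abs{f_j} = 1$, then
\[ \prod_{j=1}^\ell T_j^{\floor{a_j(n)}} f_j = e\Bigl(\sum_{j=1}^\ell \lambda_j \floor{a_j(n)}\Bigr) \prod_{j=1}^\ell f_j, \]
and when some $\lambda_j \neq 0$ the corresponding $f_j$ has zero mean, so the joint-ergodicity limit is $0$; taking $L^2$-norms and using $\norm{\prod_j f_j}_{L^2} = 1$ yields (2). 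Condition (1) follows with $s=1$: since each $T_j$ is ergodic, $\nnorm{f_j}_{1,T_j} = \abs{\int f_j\, d\mu}$, and joint ergodicity directly forces the average to vanish whenever some $f_j$ has mean zero.

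For the reverse direction, I would assume (1) and (2) and decompose each $f_j = (\int f_j\, d\mu)\one + g_j$ with $\int g_j\, d\mu = 0$. Expanding $\prod_j T_j^{\floor{a_j(n)}} f_j$ produces the expected main term $\prod_j \int f_j\, d\mu$ together with error terms in which at least one coordinate has been replaced by a mean-zero $g_{j_0}$; it suffices to show each such error term tends to zero in $L^2$. By hypothesis (1), this reduces to the case $\nnorm{g_{j_0}}_{s,T_{j_0}} > 0$. Using the Host--Kra structure theorem to split $g_{j_0}$ into its projection onto the characteristic factor $\CZ_{s-1}(T_{j_0})$ plus an orthogonal remainder (the latter killed by (1)) and iterating this reduction over all coordinates, I reduce the problem to the case where each $f_j$ is a function on an $(s-1)$-step nilfactor of $T_j$.

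On nilfactors, the average $\E_{n\in[N]}\prod_j T_j^{\floor{a_j(n)}} f_j$ can be analyzed via Leibman's equidistribution theorem, which asserts that equidistribution on an ergodic nilmanifold is detected by the projection of the sequence to its Kronecker (horizontal torus) factor. A Fourier expansion on this torus factor reveals that the relevant obstructions to equidistribution are precisely the characters $e\bigl(\sum_j \lambda_j \floor{a_j(n)}\bigr)$ with $\lambda_j$ an eigenvalue of $T_j$, not all zero; condition (2) guarantees these average to zero. Combining these observations yields convergence of the product average to $\prod_j \int f_j\, d\mu$, i.e. joint ergodicity.

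The main obstacle is the transition from the abstract seminorm framework to concrete nilsystems: one must approximate functions on the Host--Kra factor (an inverse limit of finite-step nilfactors) by nilfactor functions with quantitative control, track the resulting errors through any van~der~Corput-type reductions used to activate (1), and then patch together the per-coordinate nilsystem analyses into a genuinely joint equidistribution statement. It is precisely at this final patching step that condition (2), rather than merely the individual equidistribution of each $\floor{a_j(n)}$, becomes indispensable, as it is the only hypothesis forcing all nontrivial \emph{joint} exponential sums in the eigenvalues of the $T_j$'s to cancel simultaneously.
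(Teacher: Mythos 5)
Your forward direction is correct: testing joint ergodicity against unimodular eigenfunctions gives (2), and since $\nnorm{f}_{1,T_j}=\abs{\int f\,d\mu}$ for ergodic $T_j$, joint ergodicity gives (1) with $s=1$. The first stage of your reverse direction — using (1) to replace each $f_j$ by $\E(f_j\mid\CZ_{s-1}(T_j))$, then approximating by functions on $(s-1)$-step nilfactors of $T_j$ via the Host--Kra structure theorem — is also legitimate.

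The gap is the ``patching'' step, and it is not merely a hard technicality to grind through: the route you propose cannot work. Leibman's equidistribution theorem is a statement about \emph{polynomial} orbits on nilmanifolds, whereas this theorem places no restriction whatsoever on $a_1,\ldots,a_\ell:\N\to\R$. For a non-polynomial sequence there is no Leibman-type principle: $(\floor{a(n)})_n$ can equidistribute perfectly well on the Kronecker factor of an $(s-1)$-step nilsystem while failing to equidistribute on the nilsystem itself, because of obstructions coming from the higher nilpotent layers. Condition (2) controls exactly the degree-one (eigenvalue) obstructions and nothing more, while condition (1) at level $s$ lets you discard only the part of $f_j$ orthogonal to $\CZ_{s-1}(T_j)$ and gives you no handle on the part of $\CZ_{s-1}(T_j)$ sitting strictly above $\CZ_1(T_j)$. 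For $s\geq 3$ this gap between $\CZ_{s-1}$ and the Kronecker factor is nonempty, so your sentence asserting that ``the relevant obstructions to equidistribution are precisely the characters $e(\sum_j\lambda_j\floor{a_j(n)})$'' is not justified — closing that gap is the real content of the theorem and your outline offers no mechanism for it. A secondary difficulty is that the factors $\CZ_{s-1}(T_j)$ are nilfactors for the individual $\Z$-actions by $T_j$ and are different for different $j$; there is no single nilmanifold on which $\prod_j T_j^{\floor{a_j(n)}}h_j$ lives, so there is nothing for a Leibman-type tool to act on even if one existed.

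Note also that the present paper does not prove this statement; it is cited verbatim from \cite[Theorem~2.5]{FrKu22a}, so there is no in-text argument to compare against, but the proof there does not go through an equidistribution theorem on nilmanifolds.
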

While the equidistribution properties of Hardy sequences are well-understood thanks to Boshernitzan \cite{Boshernitzan-equidistribution}, establishing seminorm control over multiple ergodic averages of interest is a highly challenging problem. 
Up to several years ago, few seminorm estimates were known for multiple ergodic averages considered in this paper. Then came two breakthroughs which dramatically changed the situation. On one hand, the fifth author significantly refined existing arguments and obtained seminorm estimates for essentially all possible (unweighted) multiple ergodic averages along Hardy sequences of a single transformation \cite{Ts22}.\footnote{In another interesting work that came out around the same time, Bergelson, Moreira, and Richter showed that one can get seminorm estimates for another large class of averages if one is willing to introduce a suitable weight \cite{BMR20}.} On the other hand, Frantzikinakis and the third author introduced a seminorm smoothing technique \cite{FrKu22b, FrKu22a} that gave seminorm estimates for multiple ergodic averages of \textit{commuting} transformations along integer polynomials. As a starting point, they used earlier box seminorm estimates obtained by the first, second and fourth author jointly with Ferr\'e-Moragues \cite{DFKS22}, based on a concatenation result of Tao and Ziegler \cite{TZ16}. 

Generalizing all the aforementioned arguments, we have recently shown \cite{DKKST24} that averages of commuting transformations along Hardy sequences can be controlled by certain seminorms that generalize the now classical Host-Kra and box seminorms. We then used a variant of the seminorm smoothing argument to obtain (quantitative) Host-Kra seminorm estimates for averages of commuting transformations along any family of pairwise independent Hardy sequences; from this we derived a number of corollaries of joint ergodicity, recurrence, and $L^2(\mu)$ convergence. In this work, we drop the assumption of pairwise independence and show that under some necessary conditions which are good enough for the purpose of proving Theorem \ref{T: main theorem}, we can get Host-Kra seminorm control for averages along not necessarily pairwise independent Hardy sequences.
\begin{theorem}\label{T: seminorm control}
    Let $a_1, \ldots, a_\ell\in\CH$ and $(X, \CX, \mu, T_1, \ldots, T_\ell)$ be a system with $T_1, \ldots, T_\ell$ ergodic. If the sequences satisfy the difference ergodicity condition, then they are good for seminorm control for the system. 
\end{theorem}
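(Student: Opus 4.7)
The plan is to combine three refinements of the techniques developed in our prior work \cite{DKKST24}: an iterated van der Corput / PET reduction to certain generalized box-type seminorms, a seminorm smoothing argument that exploits the difference ergodicity condition in place of pairwise independence, and a strengthened concatenation theorem in the spirit of Tao-Ziegler \cite{TZ16} used to pass from box seminorms to Host-Kra seminorms.

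First I would apply iterated van der Corput to the average $\E_{n\in[N]} T_1^{\floor{a_1(n)}} f_1 \cdots T_\ell^{\floor{a_\ell(n)}} f_\ell$ and reduce its $L^2(\mu)$ norm to a quantity controlled by a generalized box-type seminorm of each $f_j$, whose sides are products of transformations of the form $T_i^{\floor{a_i(n+h)} - \floor{a_i(n)}}$ evaluated at various shifts. The structure theory of the factors associated with these seminorms — the generalization of the Host-Kra and box factors advertised in the abstract — supplies both a dual characterization of functions with zero seminorm and the orthogonality facts needed downstream.

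Second, and most crucially, I would execute the seminorm smoothing step. Writing the generalized box seminorm as an iterated $L^2$-average, one inserts additional shift variables and performs a further van der Corput step. In the pairwise independent setting of \cite{DKKST24}, various cross terms could be discarded by appealing to equidistribution of tuples such as $(a_i(n+h)-a_i(n),\, a_j(n+h)-a_j(n))$. When pairwise independence fails these tuples need not equidistribute, so one instead leverages the difference ergodicity hypothesis: the sequence of operators $(T_i^{\floor{a_i(n)}} T_j^{-\floor{a_j(n)}})_n$ converges in $L^2$-average to the integral projection, and this vanishing is exactly what kills the cross terms from inside the smoothing. Iterating this step peels off all mixed directions and leaves seminorm control attached purely to each individual $T_j$.

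Finally I would apply the strengthened concatenation theorem to upgrade the resulting one-transformation box seminorm control into genuine Host-Kra seminorm control $\nnorm{f_j}_{s, T_j}$, completing the argument. I expect the second step to be the principal obstacle. The smoothing iteration must be arranged so that, after each van der Corput step, the auxiliary averages that appear are precisely of the form $(T_i^{\floor{a_i(n)}} T_j^{-\floor{a_j(n)}})_n$ for suitable pairs $(i,j)$, so that the difference ergodicity hypothesis can actually be invoked. Synchronizing this over all relevant pairs while ensuring the iteration terminates with clean Host-Kra control — rather than some weaker mixed object that concatenation cannot handle — is the technical heart of the argument, and it is here that the \emph{most robust extension of a seminorm smoothing argument} promised in the abstract is required; the strengthening of Tao-Ziegler concatenation is then needed because the "directions" produced by Hardy sequences are substantially looser than those produced by polynomials in \cite{TZ16}.
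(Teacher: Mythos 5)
Your proposal identifies the right collection of ingredients (the generalized box seminorms from \cite{DKKST24}, a smoothing argument, and a strengthened concatenation theorem), but it misassigns their roles in a way that leaves the real crux of the proof missing.

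First, in the paper the smoothing argument (Theorem~\ref{T: new estimates general} / Proposition~\ref{newestimate3}) is deliberately \emph{system-independent}: it makes no use whatsoever of the difference ergodicity hypothesis. Its purpose is purely structural, namely to replace the vectors appearing in the box seminorm control of Theorem~\ref{T: old estimates} (leading coefficients of $a_1\be_1 - a_j\be_j$ for \emph{all} $j$) by vectors from the smaller set $\{a_1\be_1\}\cup\{a_i\be_i - a_j\be_j : a_i, a_j\ \text{dependent}\}$. Your proposed mechanism --- inserting the difference ergodicity hypothesis inside the smoothing's van der Corput step to ``kill cross terms'' --- is not how it works and would not easily be made to work, because the smoothing is an inductive Ping/Pong/Concatenation argument over types that never sees the dynamics of the specific system. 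Relative concatenation is the third step \emph{inside} smoothing (it merges the Pong output with the original seminorm), not a final pass converting box to Host--Kra control.

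Second, and more importantly, the step where difference ergodicity is actually exploited is completely absent from your proposal. After smoothing produces control along vectors $\beta_i\be_i - \beta_j\be_j$ coming from dependent pairs, the paper needs Proposition~\ref{P: seminorm comparison}: under the ergodicity of $(T_i^{\floor{a_i(n)}}T_j^{-\floor{a_j(n)}})_n$, the seminorm $\nnorm{f}_{G_1,\dots,G_s,\langle\beta_i\be_i-\beta_j\be_j\rangle}$ vanishes whenever $\nnorm{f}_{G_1,\dots,G_s,\langle\be_i,\be_j\rangle}^+$ does. That comparison is a spectral-theoretic argument via the Herglotz--Bochner theorem, a preliminary reduction to iterates of the form $(T_i^{\floor{a(n)}}T_j^{-\floor{\beta a(n)+u}})_n$ (Proposition~\ref{P: reduction of ergodicity}), and a case-by-case computation of exponential sums that splits according to whether $\beta$ is rational (an algebraic factorization argument, Proposition~\ref{P: beta rational}) or irrational (a delicate zero-set analysis, Proposition~\ref{P: beta irrational}). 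This occupies two full sections of the paper and is where almost all of the new difficulty of dropping pairwise independence is concentrated. Without it --- or some substitute --- the conversion from ``generalized box control along dependent-pair directions'' to genuine Host--Kra control $\nnorm{f_j}_{s,T_j}$ does not go through, and your sketch gives no indication of how that conversion would be effected. (Once Proposition~\ref{P: seminorm comparison} is in hand, the final replacement of $\langle\be_i,\be_j\rangle$ by copies of $\be_1$ uses only the ergodicity of the individual $T_i$, not concatenation.)

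You also omit the preliminary reduction (via Lemma~\ref{L: slowly growing}) to the case $a_j\succ\log$ for all $j$, though this is minor by comparison.
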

The proof of Theorem \ref{T: seminorm control} occupies a bulk of this paper. Starting with the generalized box seminorm estimates from \cite{DKKST24}, we use a new variant of the seminorm smoothing argument to obtain another collection of seminorm estimates for the averages in question. For instance, the estimates from \cite{DKKST24} allow us to control the limit of an average 
\begin{align}\label{E: average example intro}
    \E_{n\in[N]} T_1^{\sfloor{n^{3/2}}}f_1\cdot T_2^{\sfloor{n^{3/2}+\log n}}f_2\cdot T_3^{\sfloor{n^{3/2}+n\log n}}f_3\cdot T_4^{\sfloor{n^{3/2}+n\log n}}f_4
\end{align}
by a box seminorm of $f_1$ involving many copies of $T_1, T_1T_2\inv, T_1T_3\inv, T_1T_4\inv$. By contrast, our smoothing argument gives us control over (the limit of) \eqref{E: average example intro} by a seminorm of $f_1$ along many copies of $T_1, T_1T_2\inv, T_3T_4\inv$.
A priori, there is no reason why the new seminorm 
would be easier to work with than the original one. It is at this point where the difference ergodicity condition kicks in. It forces $T_1T_2\inv, T_3T_4\inv$ to be ergodic, hence the intermediate seminorm involving $T_1, T_1T_2\inv, T_3T_4\inv$ can be replaced by a proper Host-Kra seminorm along $T_1$.

To carry out the argument above, we introduce a number of tools of independent interest. In Section \ref{SS: factors}, we introduce factors related to the generalized box seminorms in much the same manner in which Host-Kra factors are related to Host-Kra seminorms (Theorem \ref{T: relative concatenation}). We also obtain a weak structure theorem for these factors, showing that they can be generated via dual functions weighted by nilsequences of appropriate step. In Section \ref{S: concatenation}, we derive a Tao-Ziegler-type concatenation result for such factors. In fact, our concatenation strengthens Tao and Ziegler's results \cite{TZ16} in the following sense. While Tao and Ziegler show for example that 
\begin{align*}
    \mathcal{Z}_{H_{1},\dots,H_{d}}\cap \mathcal{Z}_{H'_{1},\dots,H'_{d'}}\subseteq \mathcal{Z}_{(H_{i}+H'_{i'})_{i\in[d], i'\in[d']}}
\end{align*}
for any finitely generated subgroups $H_1, \ldots, H_d, H'_1, \ldots, H'_{d'}\subseteq\Z^\ell$, we show that
\begin{align*}
    \mathcal{Z}_{G_{1},\dots,G_s,H_{1},\dots,H_{d}}\cap \mathcal{Z}_{G_{1},\dots,G_s,H'_{1},\dots,H'_{d'}}\subseteq \mathcal{Z}_{G_{1},\dots,G_{s},(H_{i}+H'_{i'})_{i\in[d], i'\in[d']}}
\end{align*}
for any finitely generated subgroups $G_1, \ldots, G_s, H_1, \ldots, H_d, H'_1, \ldots, H'_{d'}\subseteq\Z^\ell$. Because we are able to fix the same set of subgroups $G_1, \ldots, G_s$ in all the factors that we compare, we refer to our concatenation result as \textit{relative concatenation}.

Relative concatenation plays a key role in the seminorm smoothing argument carried out in Section \ref{S: smoothing}. The version of the smoothing argument developed in this paper (Theorem \ref{T: new estimates general}) is more robust than previous variations from \cite{DKKST24, FrKu22b, FrKu22a}. While performing smoothing for integer polynomials \cite{FrKu22b}, Frantzikinakis and the third author had to make some a priori ergodicity assumptions on the system, and then they had to deal with intermediate averages in which functions are invariant under various transformations. Not only did keeping track of all this information throughout an inductive argument require an unwieldy formalism; even worse, the argument could not be translated to the language of generalized box seminorms needed in the setting of Hardy sequences. Hence the need for a more flexible smoothing argument that we develop in this paper. Our argument is conceptually simpler than the one from \cite{FrKu22b} in that we make no a priori assumptions on the ergodicity of the transformations, and we have to track much less information throughout the proof than in \cite{FrKu22b}. The key innovation that makes it possible is the use of relative concatenation in the smoothing argument. On top of \textit{ping} and \textit{pong}, the two traditional steps of the smoothing argument, we add a third step, where we concatenate the original seminorm controlling our average (along $T_1, T_1T_2\inv, T_1T_3\inv, T_1T_4\inv$ in \eqref{E: average example intro}) with an auxiliary one obtained in the \textit{pong} step to get an upgraded seminorm (along $T_1, T_1T_2\inv, T_3T_4\inv$ in \eqref{E: average example intro}) controlling our average. The details of this innovation are explained with an explicit example in Section \ref{SS: concatenation example}.

Sections \ref{S: seminorm comparison I} and \ref{S: seminorm comparison II} are then dedicated to extracting information from the difference ergodicity assumption. The ergodicity of $(T_i^{\floor{a_i(n)}}T_j^{-\floor{a_j(n)}})_n$ imposes strict conditions on the spectral measures of functions in $L^2(\mu)$ since by the Herglotz-Bochner theorem, a spectral measure may not have a point mass on any $(t,s)$ for which the exponential sum
\begin{align}\label{E: exp sums intro}
    \E_{n\in[N]}e(t\floor{a_i(n)}-s\floor{a_j(n)})
\end{align}
does not vanish as $N\to\infty$. The bulk of these two sections is then dedicated to a case-by-case study of exponential sums \eqref{E: exp sums intro}. While the details of the computations performed in these sections are not very illuminating, we manage to extract enough information to upgrade seminorm control obtained in smoothing to a Host-Kra seminorm control over our average. Section \ref{S: proofs of main theorems} then pieces everything together and derives the theorems stated in the introduction.

\subsection{Basic notation}\label{SS:notation}
We start with presenting basic notation used throughout the paper.

The symbols $\C, \R, \R_+, \Q, \Z, \N_0, \N$ respectively stand for the sets of complex numbers, reals, positive reals, rationals, integers, nonnegative integers, and positive integers. With $\T$, we denote the one dimensional torus, and we often identify it with $\R/\Z$ or  with $[0,1)$. For a ring $R$, we denote the collection of polynomials over $R$ in variables $x_1, \ldots, x_k$ via $R[x_1, \ldots, x_k]$. If $E\subseteq G$ is a subset of a group $G$, then $\langle E \rangle$ is the subgroup generated by $E$.

We frequently use the interval notation to mean either a real interval or its restriction to $\Z$; the meaning would typically be clear from the context. If we restrict to $\Z$, then we write $[M,N] = \{M, \ldots, N\}$ and $[N]=\{1, \ldots, |N|\}$ for integers $M\leq N$.

For a finite set $E,$ we define the average of a sequence $a:E\to \C$ as 
\[\E_{n\in E}a(n) :=\frac{1}{|E|}\sum_{n\in E}a(n).\] We will mainly use finite averages along subsets of integers of the form $[N]^s$ or $[\pm N]^s$, also known as \textit{Ces\`aro averages}.

We let $\CC z := \overline{z}$ be the complex conjugate of $z\in \C$.

For a set $E$, we define its indicator function by $1_E$. Whenever convenient, we also use the notation $1(E)$ for an event $E$.

Typically, we use underlined symbols $\uh$ to denote elements of $\R^s$ (which we think as tuples of parameters), and we employ the bold notation $\bx$ to denote elements of $\R^\ell$ (which we think as points in a space); sometimes we also use bold notation to denote $s$-tuples of points in $\R^\ell$, e.g., $\bm=(\bm_1, \ldots, \bm_s)\in(\R^s)^\ell$. We denote the integer and fractional parts of $\bx$ via
\begin{align*}
    \floor{\bx} = (\floor{x_1}, \ldots, \floor{x_\ell})\quad \textrm{and}\quad \rem{\bx} =(\rem{x_1}, \ldots, \rem{x_\ell}).
\end{align*}

We often write $\ueps\in\{0,1\}^s$ for a vector of 0s and 1s of length $s$. For $\ueps\in\{0,1\}^s$ and $\uh, \uh'\in\R^s$, we set $\ueps\cdot \uh:=\eps_1 h_1+\cdots+ \eps_s h_s$, $\abs{\uh} := |h_1|+\cdots+|h_s|$ and
\begin{gather}\label{E: h^eps}
    h_j^{\eps_j} =\begin{cases}
        h_j,\; &\eps_j =0\\
        h_j',\; &\eps_j = 1
    \end{cases}
    \quad \textrm{for}\quad j\in[s].
\end{gather}

Given a system $(X, \CX, \mu, T_1, \ldots, T_\ell)$ and $\bm=(m_{1},\dots,m_{\ell})\in\R^\ell$, we define $$T^{\floor{\bm}}:=T_1^{\floor{m_1}}\cdots T_\ell^{\floor{m_\ell}}.$$
For $j\in[\ell]$, we set $\be_j$ to be the unit vector in $\R^\ell$ in the $j$-th direction, and we let $\be_0 = \mathbf{0}$, so that $T^{\be_j} = T_j$ for $j\in[\ell]$ and $T^{\be_0}$ is the identity transformation. 

For $M>0$, we say that a function $f\in L^\infty(\mu)$ is \textit{$M$-bounded} if $\norm{f}_{L^\infty(\mu)}\leq M$.

For a $\sigma$-algebra $\CA\subseteq\CX$ and $f\in L^1(\mu)$, we let $\E(f|\CA)$ be the conditional expectation of $f$ with respect to $\CA$. By abuse of notation, we can identify the $\sigma$-algebra $\CA$ with the algebra of functions $L^2(\CA)$. One algebra that we often work with is the \textit{invariant factor}
$$\CI(T) = \{A\in \CX:\; T\inv A = A\}$$
of a transformation $T$.

Given two functions $a,b:(x_0,\infty)\to\mathbb{R}$ for some $x_0\in\R$ we write
\begin{enumerate}
    \item $a\ll b$ or $a = O(b)$ if there exists $C>0$ and $x_1\geq x_0$ such that $|a(x)| \leq C |b(x)|$ for all $x\geq x_1$; 
    \item $a \asymp b$ if $a\ll b$ and $b\ll a$;
    \item $a \prec b$ or $a = o(b)$  if $\lim\limits_{x\to\infty}\frac{a(x)}{b(x)} = 0$.
\end{enumerate}
If the absolute constant in (i) depends on some parameters $x_1, \ldots, x_n$, then we write $a\ll_{x_1, \ldots, x_n} b$, $a = O_{x_1, \ldots, x_n}(b)$, etc. If in (ii) we want to emphasize what parameter we are taking to infinity, we can also write $a = o_{x\to\infty; x_1, \ldots, x_n} (b)$ to emphasize that $x_1, \ldots, x_n$ are kept fixed while $x$ goes to infinity.

For a finitely generated abelian group $G$ and a bounded function $\psi:G\to\C$, we define
\begin{align*}
    \E_{\bm\in G}\psi(\bm) = \lim_{N\to\infty}\E_{\bm\in I_N}\psi(\bm)
\end{align*}
to be the limit along any F\o lner sequence $(I_N)_N$ on $G$ (if such a limit exists), and if $\psi$ is nonnegative, then we define
\begin{align*}
    \oE_{\bm\in G}\psi(\bm) = \sup_{(I_N)_N}\limsup_{N\to\infty}\E_{\bm\in I_N} \psi(\bm),
\end{align*}
where the supremum is taken along any F\o lner sequence on $G$. Likewise, if $(X,\CX,\mu)$ is a probability space and $f_\bm\in L^\infty(\mu)$ for all $\bm\in G$, then we define $\E_{\bm\in G}f_\bm$ to be the $L^2(\mu)$ limit (whenever it exists).

\subsection{Basic properties of Hardy sequences}\label{SS: Hardy}

Let $B$ be the collection of germs at $\infty$ of real valued functions defined on some halfline $(x_0,\infty)$ for $x_0\geq 0$ (thus functions that eventually agree are considered to be the same).  A \emph{Hardy field} is a subfield of the ring $(B, +, \cdot)$ that is closed under differentiation.
 
 For various technical reasons, we will consider Hardy fields $\mathcal{H}_0$ which are closed under composition and compositional inversion of functions, when defined. We say that $a\in\CH$ has \emph{polynomial growth} if it satisfies $a(x)\ll x^d$ for some $d>0$, and we then let
\begin{align*}
    \CH = \{a\in \CH_0:\; a(x) \ll x^d\;\; \textrm{for\; some}\;\; d>0\},
\end{align*}
which is still a Hardy field closed under compositions, but not closed under inverses.

A prime example is the Hardy field $\mathcal{LE}$ of \textit{logarithmico-exponential functions}, which consists of functions defined on a half-line $(x_0,+\infty)$ by a finite combination of symbols $+, -, \times, \div, \sqrt[n]{\cdot}, \exp, \log$ acting on the real variable $x$ and on real constants. While $\mathcal{LE}$ is not closed under compositional inverses, it is contained in the Hardy field of Pfaffian functions that is. See \cite{Fr10, Fr12, Hardy12} for more on Hardy fields in general and $\mathcal{LE}$ in particular.

\begin{theorem}[{\cite[Theorem 1.3]{Boshernitzan-equidistribution}}]\label{T: Boshernitzan}
    Let $a\in\mathcal{H}$. The sequence $(a(n))_n$ is equidistributed $\mod~\!1$ (meaning that $$\lim_{N\to\infty}\E_{n\in [N]}F(a(n)\!\!\! \mod 1)=\int_{\T} F(t)\,dt$$ for any Riemann-integrable function $F\colon\T\to\C$) if and only if for every $p\in \Q[x]$ we have \[\lim_{x\to \infty}\frac{|a(x)-p(x)|}{\log x}=\infty.\]
\end{theorem}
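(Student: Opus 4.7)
The plan is to verify both directions via Weyl's equidistribution criterion, which asserts that $(a(n))_n$ is equidistributed $\mod 1$ if and only if $\lim_{N\to\infty}\E_{n\in[N]} e(ka(n))=0$ for every nonzero $k\in\Z$.

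For the necessity direction, I would argue by contrapositive. Suppose some $p\in\Q[x]$ satisfies $|a(x)-p(x)|/\log x\not\to\infty$. Since any germ in a Hardy field is comparable to $\log x$, this forces $a-p=O(\log x)$. Choose $q\in\N$ such that $qp\in\Z[x]$, so that $qp(n)\in\Z$ for every $n$, and set $\phi:=q(a-p)$; then $\phi\in\mathcal{H}$, $\phi\ll \log x$, and $e(qa(n))=e(\phi(n))$. The Hardy-field structure forces $\phi$ to either converge to a finite limit (in which case $\E_{n\in[N]}e(\phi(n))$ clearly converges to a nonzero value) or to be comparable with $(\log x)^c$ for some $c\in(0,1]$, or with $\log\log x$, or slower. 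In each sub-case one compares with the model $e(c\log n)=n^{2\pi i c}$, whose Cesàro average has absolute value $\asymp |1+2\pi ic|^{-1}$, and an Abel summation argument shows the averages of $e(\phi(n))$ do not tend to zero. Thus Weyl's criterion fails at $k=q$.

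For the sufficiency direction, fix a nonzero $k\in\Z$. Since the hypothesis is invariant under multiplication by nonzero rationals (if $ka-p\ll\log$ then $a-p/k\ll\log$ with $p/k\in\Q[x]$), it suffices to show $\E_{n\in[N]}e(a(n))\to 0$ whenever $a-p\succ\log$ for all $p\in\Q[x]$. I would proceed by induction on the position of $a$ in the Hardy-field growth scale, with base cases supplied by classical Weyl equidistribution for genuine polynomials with an irrational leading coefficient, and by direct Weyl-type estimates for functions $a$ with $a'(x)\to 0$ and $xa'(x)\to\infty$. The inductive step uses van der Corput's inequality
\[\Bigl|\E_{n\in[N]}e(a(n))\Bigr|^2 \ll \frac{1}{H}+\E_{1\le|h|\le H}\Bigl|\E_{n\in[N]}e(a(n+h)-a(n))\Bigr|,\]
to replace $a$ by its discrete differences, each of which is asymptotic to $ha'(\cdot)$ and therefore strictly lower in the Hardy scale; the polynomial-growth restriction $a(x)\ll x^d$ bounds the length of the induction by $O(d)$.

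The hardest step will be verifying that $a(\cdot+h)-a(\cdot)$ inherits the hypothesis of staying logarithmically away from $\Q[x]$ for a full-density set of $h\in\Z$, so that the induction actually applies. This requires careful Hardy-field calculus: discrete differencing can produce rational-polynomial components not present in $a$ itself, and these must be tracked. I would address this by expanding $a(x+h)-a(x)$ asymptotically in the Hardy scale in the variable $x$, isolating any emergent $\Q[x]$-component, handling it via classical Weyl for polynomial sequences, and leaving a residual Hardy term that genuinely satisfies the inductive hypothesis for almost every $h$.
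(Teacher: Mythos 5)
This theorem is cited from Boshernitzan's original work and used as a black box; the paper supplies no proof of its own, so there is no argument in the source to compare against. Evaluating your proposal on its own terms, the necessity direction is broadly sound, but the sufficiency direction has a genuine gap in the inductive step.

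The difficulty is that differencing via the van der Corput \emph{inequality} can land in the dead zone below $\log$ even when $a$ itself satisfies the hypothesis, and the inner sums produced there do not vanish. Take $a(x)=x\log\log x$. This satisfies $a-p\succ\log$ for every $p\in\Q[x]$ (for $\deg p\leq 1$ the difference is $\sim x\log\log x$, for $\deg p\geq 2$ it is $\gg x^2$), so $(a(n))_n$ must be equidistributed. But $a'(x)=\log\log x+\tfrac{1}{\log x}\to\infty$, so your Fejér base case does not apply, and one differencing gives $a(x+h)-a(x)\sim h\log\log x$, which is $\prec\log x$: this fails the inductive hypothesis for every nonzero $h$, there is no emergent $\Q[x]$-component to strip off, and its Cesàro exponential average does not tend to zero (it is $e(h\log\log N)+o(1)$, of modulus $1+o(1)$, since $\log\log$ is essentially constant on $[N/2,N]$). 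Hence the inner sums in van der Corput's inequality are uniformly bounded away from zero for all small $h$, the averaging over $h$ gains nothing, and the inequality yields only $|\E_{n\in[N]}e(a(n))|^2\ll 1$. What actually proves equidistribution here is the \emph{direct} van der Corput second-derivative estimate applied to $a$ itself: $a''(x)\sim\tfrac{1}{x\log x}$, so $x^2a''(x)\to\infty$ and the sum over $[N/2,N]$ is $\ll\sqrt{N\log N}=o(N)$. Your base cases (classical Weyl for irrational polynomials; Fejér with $a'\to 0$, $xa'\to\infty$) do not include the higher-order Kuzmin--Landau/van der Corput derivative tests, and your inductive step does not reach them, so the scheme as written does not close. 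Boshernitzan's proof runs the derivative test at the correct order on (a suitable polynomial reduction of) $a$ directly, rather than recursing through the differencing inequality.
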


\begin{definition}[Independence properties of Hardy sequences]\label{D: independence}
Let $a_1, \ldots, a_\ell\in\CH$. We call them:
\begin{enumerate}
\item \textit{independent} if for any $c_1, \ldots, c_\ell\in\R$ not all 0, we have
\begin{align*}
    \lim_{x\to\infty}\frac{|c_1 a_1(x) + \cdots + c_\ell a_\ell(x)|}{\log x}=\infty,
\end{align*}
and \textit{dependent} otherwise;
\item \textit{pairwise independent} if $a_i, a_j$ are independent for any $1\leq i<j\leq \ell$, and \textit{pairwise dependent} otherwise.
 \end{enumerate}
\end{definition}

\begin{definition}[Strongly nonpolynomial Hardy sequences]
    We call $a\in\CH$ \textit{strongly nonpolynomial} if     there exists $d\in\N_0$ such that $x^d\prec a(x)\prec x^{d+1}$.
\end{definition}

 \subsection*{Acknowledgments}
 We would like to thank Nikos Frantzikinakis for helpful discussions and sharing ideas that turned into the proof of part (ii) of Lemma \ref{L: slowly growing}.

\section{Generalized box seminorms and factors}

This section discusses generalized box seminorms along finitely generated subgroups $G_1, \ldots, G_s\subseteq\R^\ell$ and associated factors, all of which extend the classical Host-Kra and box seminorms and factors defined for subgroups of $\Z^\ell$ (see e.g. \cite{DFKS22, H09, HK05a, HK18, TZ16}). The generalized box seminorms have been defined in our earlier paper \cite{DKKST24}, in which we proved their main properties; the notion of factors is new. For the duration of the entire section, we fix a system $(X, \CX, \mu, T_1, \ldots, T_\ell)$.

\subsection{Generalized box seminorms}

Let $G_1, \ldots, G_s\subseteq\R^\ell$ be finitely generated subgroups. For $f\in L^\infty(\mu)$ and $\bm,\bm'\in\R^\ell$, let $\Delta_{(\bm,\bm')}f = T^{\sfloor{\bm}}f\cdot T^{\sfloor{\bm'}}\overline{f}$. Recall from \cite{DKKST24} that we can define two seminorms of $f$ along $G_1, \ldots, G_s$. The first one is given by
\begin{align*}
    \nnorm{f}_{\substack{G_1, \ldots, G_s}} &= \brac{\E_{\bm_1, \bm_1'\in G_1}\cdots \E_{\bm_s, \bm_s'\in G_s}\int \Delta_{({\bm_1}, {\bm'_1}), \ldots, ({\bm_s}, {\bm'_s})}f\, d\mu}^{1/2^{s}}\\
    &= \brac{\E_{\bm_1, \bm_1'\in G_1}\cdots \E_{\bm_s, \bm_s'\in G_s}\int \prod_{\ueps\in\{0,1\}^s}\CC^{|\ueps|}T^{\sfloor{\bm_1^{\eps_1}}+\cdots + \sfloor{\bm_s^{\eps_s}}} f\, d\mu}^{1/2^{s}},
\end{align*}
while the second one is 
\begin{align*}
    \nnorm{f}^+_{\substack{G_1, \ldots, G_s}} &:=  \brac{\E_{\bm_1, \bm_1'\in G_1}\cdots \E_{\bm_s, \bm_s'\in G_s}\abs{\int \Delta_{({\bm_1}, {\bm_1'}), \ldots, ({\bm_s}, {\bm_s'})}f\, d\mu}^2}^{1/2^{s+1}}\\
      &= \brac{\E_{\bm_1, \bm_1'\in G_1}\cdots \E_{\bm_s, \bm_s'\in G_s}\int \Delta_{({\bm_1}, {\bm_1'}), \ldots, ({\bm_s}, {\bm_s'})}f\otimes\overline{f}\, d(\mu\times\mu)}^{1/2^{s+1}}\\
      &=\nnorm{f\otimes\overline{f}}_{G_1, \ldots, G_s}^{1/2}.
\end{align*}
See \eqref{E: h^eps} for the convention regarding expressions like $T^{\sfloor{\bm_1^{\eps_1}}}$.

If $G_i = \langle \bv_i\rangle$, we also write 
\begin{align*}
    \nnorm{f}_{\substack{G_1, \ldots, G_s}} = \nnorm{f}_{\substack{\bv_1, \ldots, \bv_s}}\quad \textrm{and}\quad \nnorm{f}^+_{\substack{G_1, \ldots, G_s}} = \nnorm{f}^+_{\substack{\bv_1, \ldots, \bv_s}},
\end{align*}
and if $G_1 = \cdots = G_s = G$, then
\begin{align*}
    \nnorm{f}_{\substack{G_1, \ldots, G_s}} = \nnorm{f}_{G^{\times s}}\quad \textrm{and}\quad \nnorm{f}^+_{\substack{G_1, \ldots, G_s}} = \nnorm{f}^+_{G^{\times s}}.
\end{align*}

For instance, if $\ell=s=2$ and $G_i = \langle \alpha_i \be_i\rangle$, then 
\begin{align*}
    \nnorm{f}_{\substack{G_1, G_2}}^4= \E\limits_{m_{1},m'_{1}\in\Z}\E\limits_{m_{2},m'_{2}\in\Z} &\int T_1^{\sfloor{\alpha_1 m_1}}T_2^{\sfloor{\alpha_2 m_2}} f \cdot T_1^{\sfloor{\alpha_1 m_1}}T_2^{\sfloor{\alpha_2 m'_2}}\overline{f}\\
        &\qquad \cdot T_1^{\sfloor{\alpha_1 m'_1}}T_2^{\sfloor{\alpha_2 m_2}}\overline{f}\cdot T_1^{\sfloor{\alpha_1 m_1'}}T_2^{\sfloor{\alpha_2 m_2'}}f\, d\mu
\end{align*}
and
\begin{align*}
    \Bigbrac{\nnorm{f}^+_{\substack{G_1, G_2}}}^8= \E\limits_{m_{1},m'_{1}\in\Z}\E\limits_{m_{2},m'_{2}\in\Z} &\left|\int T_1^{\sfloor{\alpha_1 m_1}}T_2^{\sfloor{\alpha_2 m_2}} f \cdot T_1^{\sfloor{\alpha_1 m_1}}T_2^{\sfloor{\alpha_2 m'_2}}\overline{f}\right.\\
        &\qquad \cdot\left. T_1^{\sfloor{\alpha_1 m'_1}}T_2^{\sfloor{\alpha_2 m_2}}\overline{f}\cdot T_1^{\sfloor{\alpha_1 m_1'}}T_2^{\sfloor{\alpha_2 m_2'}}f\, d\mu\right|^2.
\end{align*}

The following properties of generalized box seminorms have all been established in \cite[Section 3]{DKKST24}.
\begin{lemma}[Basic properties of seminorms]\label{L: basic properties of seminorms}
    Let $G_1, \ldots, G_s\subseteq\R^\ell$ be finitely generated subgroups and $f\in L^\infty(\mu)$. Then: 
    \begin{enumerate}
        \item (Symmetry) For any permutation $\sigma:[s]\to[s]$, we have
        \begin{align*}
            \nnorm{f}_{G_1, \ldots, G_s} = \nnorm{f}_{G_{\sigma(1)}, \ldots, G_{\sigma(s)}}\quad \textrm{and}\quad \nnorm{f}_{G_1, \ldots, G_s}^+ = \nnorm{f}_{G_{\sigma(1)}, \ldots, G_{\sigma(s)}}^+.
        \end{align*}
        \item (Monotonicity) For any finitely generated $G_{s+1}\subseteq\R^\ell$, we have
        $$\nnorm{f}_{G_1, \ldots, G_s}\leq\nnorm{f}_{G_1, \ldots, G_s}^+\leq\nnorm{f}_{G_1, \ldots, G_{s+1}}\leq\nnorm{f}_{G_1, \ldots, G_{s+1}}^+.$$
        \item (Subgroup property) If $G'_i\subseteq G_i$ for all $i\in[s]$, then $$\nnorm{f}_{G_1, \ldots, G_s}\leq\nnorm{f}_{G'_1, \ldots, G'_s} \quad \textrm{and}\quad \nnorm{f}^+_{G_1, \ldots, G_s}\leq\nnorm{f}_{G'_1, \ldots, G'_s}^+.$$ Conversely,  if the index of $G'_{i}$ in $G_{i}$ is at most $D$ for all $i\in[s]$, then 
\begin{align*}
\nnorm{f}_{G'_1, \ldots, G'_s}^+\ll_{D,s} \nnorm{f}_{G_1, \ldots, G_s}^+.
\end{align*} 
        \item (Rescaling property) Let $K>0$. If for every $i\in[s]$ we have
        \begin{align*}
            G_i = \langle \bg_{i1}, \ldots, \bg_{iK_i}\rangle\quad \textrm{and}\quad G'_i = \langle \alpha_{i1}\bg_{i1}, \ldots, \alpha_{iK_i} \bg_{iK_i}\rangle 
        \end{align*}
        for some $K_i\leq K$ and nonzero real $1/K\leq |\alpha_{ij}|\leq K$, then
    \begin{align*}
        \nnorm{f}^+_{G'_1, \ldots, G'_s}\asymp_{\ell, K, s} \nnorm{f}^+_{G_1, \ldots, G_s}
    \end{align*}
    and
    \begin{align*}
        \nnorm{f}_{G'_1, \ldots, G'_s}\asymp_{\ell, K, s} \nnorm{f}_{G_1, \ldots, G_s} \quad \textrm{(if\; additionally}\; s\geq 2).
    \end{align*}
    \item (Equality of ergodic averages) For all finitely generated subgroups $G_1, \ldots, G_s~\subseteq~\R^\ell$ for which  $\E\limits_{\bm\in G_i} T^{\floor{\bm}}f = \E\limits_{\bm\in G'_i} T^{\floor{\bm}}f$ holds for all $f\in L^\infty(\mu)$ and $i\in[s]$, we have
    \begin{align*}
        \nnorm{f}_{G_1, \ldots, G_s}=\nnorm{f}_{G'_1, \ldots, G'_s}.
    \end{align*}
    \end{enumerate}
\end{lemma}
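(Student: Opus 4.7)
The plan is to verify each of the five properties separately, using standard techniques from the theory of Gowers-Host-Kra box seminorms, adapted to real (rather than integer) subgroups and to the floor operation. Property (i) is immediate from the definition, since the $(\bm_i, \bm_i')$-averaging is separable and the product over $\ueps \in \{0,1\}^s$ is symmetric under permutation of its factors. For (ii), the inequality $\nnorm{f}_{G_1,\ldots,G_s} \leq \nnorm{f}^+_{G_1,\ldots,G_s}$ follows from Jensen/Cauchy-Schwarz in the form $|\E X|^2 \leq \E |X|^2$ applied to $X = \int \Delta_{(\bm_1,\bm_1'),\ldots,(\bm_s,\bm_s')}f\, d\mu$. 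The remaining inequality $\nnorm{f}^+_{G_1,\ldots,G_s} \leq \nnorm{f}_{G_1,\ldots,G_{s+1}}$ uses the product-space identity $(\nnorm{f}^+_{G_1,\ldots,G_s})^{2^{s+1}} = \nnorm{f \otimes \overline{f}}_{G_1,\ldots,G_s}^{2^s}$ combined with a Gowers-Cauchy-Schwarz step that introduces an extra averaging direction in $G_{s+1}$.

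For (iii), I would use a classical pull-down argument: write $\nnorm{f}_{G_1,\ldots,G_s}^{2^s} = \int |\E_{\bm_s \in G_s} T^{\sfloor{\bm_s}} F|^2\, d\mu$ for a suitable $F$ built from $f$ and the remaining shifts, decompose $G_s$ as a union of cosets of $G_s'$, and apply Cauchy-Schwarz in the coset variable; then iterate over the remaining coordinates to get monotonicity in each $G_i$. The converse with bounded index $D$ proceeds similarly: $G_i$ is a union of at most $D$ cosets of $G_i'$, so each factor in the seminorm decomposes into $O(D)$ shifted pieces, and the bound follows with a $D$-dependent constant. For (iv), rescaling reduces to (iii): for bounded rescaling factors, both $G_i$ and $G_i'$ can be sandwiched between a common overgroup and a common subgroup of index depending only on $K$ and $\ell$, so (iii) gives comparable seminorms up to an $O_{\ell,K,s}(1)$ constant.

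For (v), I would again use the pull-down identity and observe that, by hypothesis, the inner ergodic average is unchanged when $G_s$ is replaced by $G_s'$; iterating over all coordinates gives the equality of seminorms. The main technical obstacle throughout (iii)-(v) is that $\sfloor{\bm + \bm'} \neq \sfloor{\bm} + \sfloor{\bm'}$ for $\bm, \bm' \in \R^\ell$, so the translations that are trivial in the integer-group setting produce residual correction vectors $\beeta(\bm,\bm') \in \{0,1\}^\ell$. The fix is to note that these corrections depend only on fractional parts and take finitely many values; pushing them through a F\o lner average distributes them uniformly and introduces only bounded distortion, which can be absorbed either by slightly enlarging the subgroup or by paying a constant in the comparison inequality. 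This bookkeeping from floor functions is the reason these proofs require more care than their integer-group analogs.
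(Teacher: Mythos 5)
The paper itself does not prove this lemma: it cites \cite[Section 3]{DKKST24}. So there is no in-paper proof to compare against, and what you have written should be read as a reconstruction of the arguments in that earlier paper. The overall framework you describe — iterating a pull-down identity $\nnorm{f}_{G_1,\ldots,G_s}^{2^s} = \E_{\text{outer}} \|\E_{\bm_s\in G_s} T^{\sfloor{\bm_s}} F\|_{L^2(\mu)}^2$ to peel off one group at a time, and then treating the innermost average — is indeed the right skeleton, and your treatment of (i), (ii), (v), and the $\ll_{D,s}$ half of (iii) is plausible. In particular, you slightly overstate the difficulty of (v): once the pull-down formula places the $G_s$-average innermost, the hypothesis $\E_{\bm\in G_i}T^{\sfloor{\bm}}g = \E_{\bm\in G_i'}T^{\sfloor{\bm}}g$ for all $g\in L^\infty(\mu)$ substitutes directly (with $g=F$) coordinate by coordinate, and no floor-error analysis is needed there at all.

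There are, however, two genuine gaps. First, in the exact inequality $\nnorm{f}_{G_1,\ldots,G_s}\leq\nnorm{f}_{G'_1,\ldots,G'_s}$ of (iii), your fix for the floor correction $\sfloor{\bm+c}\neq\sfloor{\bm}+\sfloor{c}$ is to ``pay a constant'' or ``slightly enlarge the subgroup.'' That produces only an $\asymp$-type comparison, not the stated exact inequality. After the coset decomposition and triangle inequality you are left needing the \emph{exact} equality $\|\E_{\bm\in G'}T^{\sfloor{\bm+c}}F\|_{L^2(\mu)} = \|\E_{\bm\in G'}T^{\sfloor{\bm}}F\|_{L^2(\mu)}$ for coset representatives $c$, and this is where the real work is: one has to see that the floor error shows up only as a modulus-one spectral multiplier, which is an equidistribution/spectral computation (or a suspension-flow argument, as in \cite[Theorem 3.2]{Ko18}) rather than an absorbable distortion. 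As written, your argument does not close this step. Second, your plan for (iv) — sandwich $G_i$ and $G_i'$ between a common finite-index overgroup and subgroup — simply fails when the scaling factors $\alpha_{ij}$ are irrational. For example $G_i=\Z$ and $G_i'=\sqrt{2}\,\Z$ have trivial intersection and together generate a rank-two group, so there is no finite-index common subgroup or overgroup, and (iii) does not apply. The rescaling property for irrational factors needs its own argument (again essentially an equidistribution computation that handles the floors head on), and reducing it to (iii) is not available. You should also note that your route is why the paper states the unplussed comparison in (iv) only for $s\geq 2$ — that restriction is not incidental, and your sketch doesn't explain where it comes from.
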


\subsection{Generalized box factors}\label{SS: factors}

Our goal now is to construct factors $\CZ^+_{G_1, \ldots, G_s}$ with the property that
\begin{align}\label{E: seminorm vs factor}
    \nnorm{f}_{G_1, \ldots, G_s}^+ = 0 \quad \Longleftrightarrow \quad \E(f|\CZ_{G_1, \ldots, G_s}^+) = 0.
\end{align}
If $G_1, \ldots, G_s$ lie inside $\Z^\ell$, then 
there exist factors $\CZ_{G_1, \ldots, G_s}$ with the property
\begin{align}\label{E: seminorm vs factor unplussed}
    \nnorm{f}_{G_1, \ldots, G_s} = 0 \quad \Longleftrightarrow \quad \E(f|\CZ_{G_1, \ldots, G_s}) = 0
\end{align}
(see for example \cite{H09, HK05a,S21,TZ16}).
We however do not know if there exists a suitable factor satisfying (\ref{E: seminorm vs factor unplussed}) whenever $G_1, \ldots, G_s$ do not lie inside $\Z^\ell$. By the results in \cite{DKS20}, there exists a factor corresponding to the seminorm $\nnorm{\cdot}_{\langle\alpha\rangle}$ for a system $(X,\mathcal{X},\mu,T)$ and $\alpha\in [0,1)$ (this factor is spanned by eigenfunctions with eigenvalues coming from the subgroup $(\frac{1}{\alpha}\Z)/\Z$). While this limited evidence suggests a possibility of the existence of a factor satisfying (\ref{E: seminorm vs factor unplussed}) for general finitely generated subgroups $G_1, \ldots, G_s\subseteq\R^\ell$, we do not require such factors in our arguments and hence do not pursue this question in this paper. We refer the readers to the discussion below Definition \ref{D: factor} for an explanation of difficulties inherent in constructing a factor $\CZ_{G_1, \ldots, G_s}$ even when $s=1$.


Following e.g. \cite{BHK05, FH18, GT12}, we define multiparameter nilsequences. 

\begin{definition}[Multiparameter nilsequences]
    A \textit{degree-$s$ $\Z^{K}$-nilsequence} is a bounded sequence  $\psi:\Z^{K}\to\C$, where: 
    \begin{itemize}
        \item $G/\Gamma$ is a nilmanifold (with $G$ connected and simply connected) with a degree-$s$ filtration\footnote{We recall that a filtration $G_\bullet = (G_i)_{i=0}^\infty$ on a group $G$ is a collection of subgroups $G = G_0=G_1\supseteq G_2 \supseteq \cdots$ satisfying $[G_i, G_j]\subseteq G_{i+j}$ for all $i,j\in\N_0$, where $[A,B]$ is the group generated by the commutators $[a,b]=a\inv b\inv a b$ for $a\in A, b\in B$.} $G_\bullet = (G_i)_{i=0}^\infty$;
        \item $g:\Z^{K}\to G$ is a polynomial sequence adapted to the filtration $G_\bullet$, meaning that $$\partial_{h_{i}}\dots\partial_{h_{1}}g(n)\in G_{i}$$
        for all $i\in\N$ and $n,h_{1},\dots,h_{i}\in\Z^{K}$ (here, $\partial_h g(n) = g(n)g(n+h)\inv$); 
        \item $\psi(\um) = F(g(\um)\Gamma)$ for a bounded Riemann integrable function $F:G/\Gamma\to\C$.
    \end{itemize}
\end{definition}


Our next goal is to obtain a suitable notion of nilsequence on general finitely generated subgroups $G\subseteq\R^\ell$. To this end, we need to map $G$ to some $\Z^K$. We note that by the fundamental theorem of finitely generated abelian groups, $G$ is always group-isomorphic to $\Z^K$ for some $K\in\N_0$.
\begin{definition}[Coordinate maps]\label{D: coordinate maps}
    Let $G\subseteq\R^\ell$ be a finitely generated subgroup with rank $K$ (which we recall is the cardinality of the largest linearly independent subset of $G$). We then call a group isomorphism $\phi:\Z^K\to G$ a \textit{coordinate map}.
\end{definition}

\begin{examples}

Here are some examples of coordinate maps:
\begin{enumerate}
    \item if $G = \langle (\sqrt{2}, \sqrt{3}) \rangle\subseteq\R^2$, then $\phi(m) = (\sqrt{2} m, \sqrt{3} m)$ is a coordinate map;
    \item if $G = \langle \sqrt{2}, \sqrt{3} \rangle\subseteq\R$, then $\phi(m_1,m_2) = \sqrt{2} m_1 + \sqrt{3} m_2$ is a  coordinate map. 
\end{enumerate}    
\end{examples}


With the help of coordinate maps, the notion of nilsequences can be extended to general finitely generated subgroups $G\subseteq\R^\ell$. In the entire ensuing discussion, $K$ denotes the rank of $G$.
\begin{definition}[Nilsequences on $G$]
Let $G\subseteq\R^\ell$ be a finitely generated subgroup. Then a sequence $\psi:G\to\C$ is a \textit{degree-$s$ nilsequence on $G$} if there exists a coordinate map $\phi:\Z^K\to G$ 
and a  degree-$s$ $\Z^{K}$-nilsequence $\theta\colon \Z^{K}\to \C$ such that $\theta=\psi\circ \phi$.
 We then define $\Nil_s(G)$ to be the collection of all degree-$s$ nilsequences on~$G$.
\end{definition}

\begin{proposition}\label{P: same coordinate map}
Let $G\subseteq\R^\ell$ be a finitely generated subgroup. If $\psi$ is a degree-$s$ nilsequence on $G$ with respect to some coordinate map, then it is a degree-$s$ nilsequence on $G$ with respect to any coordinate map.
\end{proposition}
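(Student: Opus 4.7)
The plan is to reduce the comparison of two coordinate maps $\phi_1,\phi_2:\Q^K\to \Span_\Q G$ to pre-composition by the $\Q$-linear isomorphism $A := \phi_1^{-1}\circ\phi_2 : \Q^K\to \Q^K$, which makes sense because both $\phi_1$ and $\phi_2$ are $\Q$-vector space isomorphisms onto the same subspace. Suppose $\psi$ is a degree-$s$ nilsequence on $G$ with respect to $\phi_1$, witnessed by a degree-$s$ $\Q^K$-nilsequence $\theta_1(\um)=F(g_1(\um)\Gamma)$ on a filtered nilmanifold $H/\Gamma$ with filtration $H_\bullet=(H_i)_{i\geq 0}$ (I write $H$ rather than $G$ for the Lie group to avoid a clash with the subgroup $G\subseteq\R^\ell$). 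The natural candidate witness with respect to $\phi_2$ is then $\theta_2(\um) := \theta_1(A\um) = F(g_1(A\um)\Gamma)$, so that the associated polynomial sequence is $g_2 := g_1\circ A$.

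The main step is to verify that $g_2$ is a polynomial sequence $\Q^K\to H$ adapted to the same filtration $H_\bullet$. A one-line computation using the additivity of $A$ gives
\[
\partial_h g_2(\um) = g_2(\um)\,g_2(\um+h)^{-1} = g_1(A\um)\,g_1(A\um + Ah)^{-1} = (\partial_{Ah}g_1)(A\um).
\]
Iterating this identity by induction on the order of differencing yields
\[
\partial_{h_i}\cdots\partial_{h_1} g_2(\um) = \bigl(\partial_{Ah_i}\cdots\partial_{Ah_1} g_1\bigr)(A\um),
\]
which lies in $H_i$ because $g_1$ is adapted to $H_\bullet$. Hence $g_2$ is adapted to $H_\bullet$ as well, and $\theta_2$ is a degree-$s$ $\Q^K$-nilsequence. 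This is really the only place where anything substantive happens: it is the closure of the class of adapted polynomial sequences under pre-composition by a group homomorphism.

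It remains to check that $\theta_2$ realises $\psi$ through $\phi_2$. For any $\um\in\phi_2^{-1}(G)$, one has $A\um = \phi_1^{-1}(\phi_2(\um))\in\phi_1^{-1}(G)$, and consequently
\[
\theta_2(\um) = \theta_1(A\um) = \psi(\phi_1(A\um)) = \psi(\phi_2(\um)).
\]
The symmetry between $\phi_1$ and $\phi_2$ then completes the proof. The main (and fairly minor) obstacle is the verification that pre-composition by $A$ preserves the adaptation to the filtration, and this is a routine calculation; nothing deeper is required because the paper defines $\Q^K$-nilsequences on all of $\Q^K$ rather than only on $\Z^K$, so the change-of-basis map $A$ (which is $\Q$-linear but need not send $\Z^K$ into $\Z^K$) automatically fits into the ambient setting.
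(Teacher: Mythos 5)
Your proof is correct and is essentially the same as the paper's: both reduce to the observation that $\phi_1^{-1}\circ\phi_2$ is a $\Q$-linear isomorphism of $\Q^K$, and that polynomial sequences adapted to a given filtration are stable under pre-composition by such a map, verified via the iterated identity $\partial_{h_i}\cdots\partial_{h_1}(g\circ A) = (\partial_{Ah_i}\cdots\partial_{Ah_1}g)\circ A$. The only cosmetic difference is that you spell out the final step ($\um\in\phi_2^{-1}(G)\Rightarrow A\um\in\phi_1^{-1}(G)$) which the paper folds silently into the chain of compositions.
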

\begin{proof}
    Let $\psi\colon G\to\C$ be a map and $\phi_{1},\phi_{2}\colon\Z^{K}\to G$ be coordinate maps. Suppose that $\psi\circ \phi_{1}=\theta_{1}$ for some degree-$s$ $\Z^{K}$-nilsequence $\theta_{1}\colon\Z^{K}\to \C$. It suffices to show that  $\psi\circ \phi_{2}=\theta_{2}$ for some  degree-$s$ $\Z^{K}$-nilsequence $\theta_{2}\colon\Z^{K}\to \C$.

    Note that for all $n\in \Z^K$, we have
    $$\psi\circ\phi_{2}(n)=\psi\circ(\phi_{1}\circ\phi_{1}^{-1})\circ\phi_{2}(n)=(\psi\circ\phi_{1})\circ(\phi_{1}^{-1}\circ\phi_{2})(n)=\theta_1\circ(\phi_{1}^{-1}\circ\phi_{2})(n).$$
  Because $\phi:=\phi_{1}^{-1}\circ\phi_{2}$ is a group automorphism of $\Z^{K}$, it suffices to show that degree-$s$ $\Z^{K}$-nilsequences are closed under compositions with group automorphisms of $\Z^k$.
    
    To see this, assume that $\theta_1=F(g(\cdot)\Gamma)$. Then $\theta_1\circ \phi=F(g\circ\phi(\cdot)\Gamma)$. Since $\phi:\Z^{K}\to\Z^{K}$ is a group isomorphism, we have that $$\partial_{h_{i}}\dots \partial_{h_{1}}(g\circ \phi)(n)=\partial_{\phi(h_{i})}\dots \partial_{\phi(h_{1})}g(\phi(n))$$ for all $i\in\N$ and $n,h_{1},\dots,h_{i}\in\Z^{K}$. This implies that $g\circ \phi$  is a polynomial sequence with the same filtration as $g$. So $\theta_2:=\theta_1\circ \phi$ is a  degree-$s$ $\Z^{K}$-nilsequence and we are done.
\end{proof}

As an immediate consequence, we have the following.

\begin{corollary}\label{C: algebra}
   Let $G\subseteq\R^\ell$ be a finitely generated subgroup. Then $\Nil_s(G)$ is a conjugation-closed $\C$-algebra.
\end{corollary}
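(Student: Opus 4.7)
The plan is to reduce the claim to the analogous statement for degree-$s$ $\Q^{K}$-nilsequences by exploiting Proposition \ref{P: same coordinate map} to work with a single fixed coordinate map, and then to verify the algebraic closure properties at the level of $\Q^{K}$ via standard product-of-nilmanifolds constructions.

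First I would fix once and for all a coordinate map $\phi\colon\Q^{K}\to\Span_{\Q}G$. Given $\psi_{1},\psi_{2}\in\Nil_{s}(G)$, Proposition \ref{P: same coordinate map} guarantees the existence of degree-$s$ $\Q^{K}$-nilsequences $\theta_{1},\theta_{2}$ with $\theta_{i}\vert_{\phi^{-1}(G)}=\psi_{i}\circ\phi\vert_{\phi^{-1}(G)}$. Since pointwise sums, products, scalar multiples, and complex conjugates commute with the restriction-and-pullback operation $f\mapsto f\circ\phi\vert_{\phi^{-1}(G)}$, it suffices to check that degree-$s$ $\Q^{K}$-nilsequences are closed under the four operations; the corresponding equalities $(\psi_{1}\star\psi_{2})\circ\phi\vert_{\phi^{-1}(G)}=(\theta_{1}\star\theta_{2})\vert_{\phi^{-1}(G)}$ for $\star\in\{+,\cdot\}$ then exhibit $\psi_{1}\star\psi_{2}$ as an element of $\Nil_{s}(G)$, and similarly for $\overline{\psi_{1}}$ and $c\psi_{1}$ with $c\in\C$.

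Second, I would verify the $\Q^{K}$ claim using the standard product construction. Write $\theta_{i}(n)=F_{i}(g_{i}(n)\Gamma_{i})$ for polynomial sequences $g_{i}\colon\Q^{K}\to G^{(i)}$ adapted to a degree-$s$ filtration $G^{(i)}_{\bullet}$, and bounded Riemann-integrable $F_{i}\colon G^{(i)}/\Gamma_{i}\to\C$. On the product nilmanifold $(G^{(1)}\times G^{(2)})/(\Gamma_{1}\times\Gamma_{2})$ endowed with the termwise product filtration $(G^{(1)}_{i}\times G^{(2)}_{i})_{i\geq 0}$ (still degree $s$, since $[G^{(1)}_{i}\times G^{(2)}_{i},G^{(1)}_{j}\times G^{(2)}_{j}]\subseteq G^{(1)}_{i+j}\times G^{(2)}_{i+j}$), the diagonal sequence $(g_{1},g_{2})\colon\Q^{K}\to G^{(1)}\times G^{(2)}$ is polynomial and adapted to this filtration. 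Taking $H_{+}(x_{1},x_{2})=F_{1}(x_{1})+F_{2}(x_{2})$ realizes $\theta_{1}+\theta_{2}$, while $H_{\cdot}(x_{1},x_{2})=F_{1}(x_{1})F_{2}(x_{2})$ realizes $\theta_{1}\theta_{2}$; both functions are bounded and Riemann-integrable on the product. Complex conjugation is handled by replacing $F_{1}$ with $\overline{F_{1}}$ on the same data, and scalar multiplication by $c\in\C$ by replacing $F_{1}$ with $cF_{1}$.

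I do not anticipate a serious obstacle here: all of the constructions are routine once Proposition \ref{P: same coordinate map} is in hand, the only subtle point being that without that proposition one might worry that different nilsequences are presented via different coordinate maps and that forming sums or products would require changing the underlying vector-space identification. The proposition removes precisely this concern, so the corollary follows as an essentially formal consequence.
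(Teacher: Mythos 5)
Your proof is correct and takes essentially the same route as the paper: fix a single coordinate map via Proposition \ref{P: same coordinate map}, then push the algebra structure down from degree-$s$ $\Q^{K}$-nilsequences. The paper simply asserts that $\Q^{K}$-nilsequences form a conjugation-closed algebra (treating this as standard), whereas you spell out the product-nilmanifold construction; the extra detail is fine but not a different argument.
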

\begin{proof}
    Let $\psi_1, \psi_2\in\Nil_s(G)$. By the definition of $\Nil_s(G)$ and Proposition \ref{P: same coordinate map}, we can find a coordinate map $\phi:\Z^K\to G$ and degree-$s$ $\Z^{K}$-nilsequences $\theta_1, \theta_2\colon \Z^{K}\to \C$ such that $\theta_i=\psi_i\circ \phi$ for $i\in\{1,2\}$. Then for any $c\in\C$, the functions $\overline{\psi_1}$, $\psi_1 + c\psi_2$, and $\psi_1\psi_2$ are elements of $\Nil_s(G)$ corresponding to degree-$s$ $\Z^{K}$-nilsequences $\overline{\theta_1}$, $\theta_1 + c\theta_2$, and $\theta_1\theta_2$. This proved the claim.
\end{proof}

The following notion will play a crucial part in the forthcoming definition of the factors. 
\begin{definition}[Dual functions]\label{D: dual functions}
    Set $G = G_1\times G_1 \times \cdots \times G_s\times G_s$, and let $\psi\in\Nil_s(G)$.
Let $\{0,1\}^s_* = \{0,1\}^s\setminus\{\underline{0}\}$ and assume that $f_\ueps\in L^\infty(\mu)$ for $\ueps\in \{0,1\}^s_*$.
We then call a function 
\begin{multline}\label{E: dual function}
    \CD_{G_1, \ldots, G_s}((f_\ueps)_{\{0,1\}^s_*}; \psi) := \E_{(\bm_1, \bm_1', \ldots, \bm_s, \bm_{s'})\in G}\; \psi(\bm_1,\bm_1',\ldots,\bm_s,\bm_s')\\
    \prod_{\ueps\in\{0,1\}^s_*}T^{\sfloor{\bm_1^{\eps_1}}-\floor{\bm_1}+\cdots + \sfloor{\bm_s^{\eps_s}}-\floor{\bm_s}} f_\ueps
\end{multline}
a \textit{level-$s$ dual function of $f$ along $G_1, \ldots, G_s$}. 

In a number of cases, we simplify the notation:
\begin{itemize}
    \item if $f_\ueps = \CC^{|\ueps|}f$ for some $f$, then we also denote the dual function as $\CD_{G_1, \ldots, G_s}(f; \psi)$;
    \item if $\psi = 1$, then we write $\CD_{G_1, \ldots, G_s}((f_\ueps)_{\{0,1\}^s_*})$;
    \item if $G_1 = \cdots = G_s$, then we use the label $\CD_{s, G_1}((f_\ueps)_{\{0,1\}^s_*}; \psi)$, and if all the groups equal $\langle \be_j \rangle$ for some $j\in[\ell]$, then we further write $\CD_{s, T_j}((f_\ueps)_{\{0,1\}^s_*}; \psi)$.
\end{itemize}
We also combine all the following conventions freely, e.g. denoting the dual function by $\CD_{s, T_j}(f)$ when $G_1 = \cdots = G_s = \langle \be_j \rangle$, $f_\ueps = \CC^{|\ueps|}f$, and $\psi = 1$.
\end{definition}


\begin{lemma}
    The limit \eqref{E: dual function} exists in $L^2(\mu)$. Moreover, the single limit along $G$ can be replaced by an iterated limit along $G_1\times G_1$, ..., $G_s\times G_s$.
\end{lemma}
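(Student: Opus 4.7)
The plan is to argue by induction on $s$, establishing existence of the iterated limit first and then deducing its agreement with the single limit. The base case $s=0$ is trivial (the ``dual function'' is just $1$), so assume the claim holds at level $s-1$.

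For the iterated limit, fix outer variables $(\bm_1,\bm_1',\ldots,\bm_{s-1},\bm_{s-1}')$ and consider the innermost $L^2$-average over $(\bm_s,\bm_s')\in G_s\times G_s$. Since $\sfloor{\bm_s^{\eps_s}}-\floor{\bm_s}$ equals $0$ when $\eps_s=0$ and $\floor{\bm_s'}-\floor{\bm_s}$ when $\eps_s=1$, the product over $\ueps\in\{0,1\}^s_*$ in \eqref{E: dual function} regroups as $F\cdot T^{\floor{\bm_s'}-\floor{\bm_s}}G$, where $F,G\in L^\infty(\mu)$ are products of shifts of the $f_\ueps$ depending only on the outer variables. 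Moreover, by Proposition~\ref{P: same coordinate map} together with the fact that the restriction of a polynomial sequence to an affine subspace is again polynomial of the same degree, the map $(\bm_s,\bm_s')\mapsto\psi(\bm_1,\bm_1',\ldots,\bm_s,\bm_s')$ is a degree-$s$ nilsequence on $G_s\times G_s$. Convergence in $L^2(\mu)$ of nilsequence-weighted averages of the form $\E_{(\bm_s,\bm_s')}\chi(\bm_s,\bm_s')\cdot F\cdot T^{\floor{\bm_s'}-\floor{\bm_s}}G$ is then a standard consequence of Host--Kra/Leibman-type results on product nilmanifolds (after absorbing the fractional-part discrepancies $\{\bm_s\},\{\bm_s'\}$ into the weight via a suspension, noting that these fractional parts are themselves nilsequences). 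The resulting $L^2$-limit, viewed as a function of the outer variables, is a level-$(s-1)$ dual function whose nilsequence weight absorbs the original $\psi$-dependence, so the inductive hypothesis closes the loop.

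For the equality of the iterated and single limits, realize the entire integrand as a polynomial orbit on a single product nilmanifold built from the nilmanifold witnessing $\psi$, an auxiliary torus recording the fractional parts $(\{\bm_j\},\{\bm_j'\})_{j\in[s]}$, and the $s$-step Host--Kra nilfactor of $(X,\CX,\mu,T_1,\ldots,T_\ell)$. A single application of Leibman's equidistribution theorem then guarantees that averaging along any F\o lner sequence on $G$ produces the same spatial average on the orbit closure; in particular, the single Ces\`aro average over $G$ coincides with the iterated Ces\`aro average along $G_1\times G_1,\ldots,G_s\times G_s$.

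The main obstacle is the joint polynomial representation across all variables simultaneously, since the floor functions appearing in the shifts of $f_\ueps$ are a priori incompatible with the smooth nilmanifold structure carrying $\psi$. This is resolved by exploiting the $\C$-algebra structure of $\Nil_s(G)$ (Corollary~\ref{C: algebra}) to combine $\psi$ with the fractional-part nilsequences, reducing the problem to a single equidistribution statement on a product nilmanifold to which Leibman's theorem applies directly.
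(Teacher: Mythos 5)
Your inductive scheme has a genuine gap at the step you describe as ``the inductive hypothesis closes the loop.'' After fixing the outer variables and taking the innermost $L^2$-limit over $(\bm_s,\bm_s')\in G_s\times G_s$, the resulting object $H(\bm_1,\bm_1',\ldots,\bm_{s-1},\bm_{s-1}')$ is of the form $F\cdot \lim_{(\bm_s,\bm_s')}\chi\cdot T^{\floor{\bm_s'}-\floor{\bm_s}}G$, which is a product of $F$ with a mean-ergodic projection (twisted by the nilsequence weight). This is \emph{not} of the form $\psi'(\bm_1,\ldots,\bm_{s-1}')\cdot\prod_{\ueps'\in\{0,1\}^{s-1}_*}T^{\cdots}g_{\ueps'}$: in the simplest case $s=2$, $\psi=1$, $G_2\subseteq\Z^\ell$, $f_\ueps=f$, one finds $H(\bm_1,\bm_1') = T^{\floor{\bm_1'}-\floor{\bm_1}}f\cdot \E\bigl(f\cdot T^{\floor{\bm_1'}-\floor{\bm_1}}f \mid \CI\bigr)$, a product of two $(\bm_1,\bm_1')$-dependent factors (one of them a conditional expectation), which cannot be rewritten as a single nilsequence weight times a single shift $T^{\floor{\bm_1'}-\floor{\bm_1}}g$. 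So the induction hypothesis simply does not apply, and the existence of the outer average is not established.

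The second part of your argument has an independent problem. To ``realize the entire integrand as a polynomial orbit on a single product nilmanifold'' and then invoke Leibman's equidistribution theorem, you would need the $f_\ueps$ (arbitrary $L^\infty(\mu)$ functions) to live on a nilsystem factor of $(X,\CX,\mu,T_1,\ldots,T_\ell)$. This would require a Host--Kra-type structure theorem for several \emph{commuting} transformations, which does not exist in the required form; and the paper certainly does not assume or prove one. The paper's proof instead routes the existence of the limit through the multiparameter suspension flow trick of \cite{Ko18} together with Walsh's theorem \cite{W12} (for trivial weight), followed by a uniform-density approximation of $\psi$ by the special weights of \cite[Proposition~4.2]{FH18}; and it obtains the equality of single and iterated limits \emph{abstractly} via the Fubini principle for ergodic averages \cite[Lemma~1.1]{BL15}, with no reference to structure theory or equidistribution on nilmanifolds. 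Your proposal attempts to replace Walsh's theorem and the Fubini principle by a Leibman-type argument, but that substitution is not available here.
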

\begin{proof}
    Let $\phi:\Z^K\to G$ be a coordinate map and $(I_N)_N$ be a F\o lner sequence on $G$. Define
    \begin{align*}
         F(\bm) = \prod_{\ueps\in\{0,1\}^s_*}T^{\sfloor{\bm_1^{\eps_1}}-\floor{\bm_1}+\cdots + \sfloor{\bm_s^{\eps_s}}-\floor{\bm_s}} f_\ueps
     \end{align*}
     for $\bm\in G$.
    Recasting the original average over $G$ as an average over $\Z^K$, it suffices to show that the average
    \begin{align}\label{E: dual over Z^K}
       \E_{{\um}\in \phi\inv(I_N)}\; \psi(\phi(\um)) \cdot  F(\phi(\um))
    \end{align}
    converges as $N\to\infty$ to the limit independent of the choice of $(I_N)_N$. Note that in \eqref{E: dual over Z^K}, $\psi(\bm)$ is replaced by a $\Z^{K}$-nilsequence $\psi\circ\phi$ and iterates $T^{\floor{\bm_i}}$ inside $F$ are replaced by expressions like $T^{\floor{\phi(\um_i)}}=T^{\sfloor{\balpha_1 m_{i,1} + \cdots + \balpha_{K_i} m_{i, K_i}}}$, where $G_i = \langle \balpha_1, \ldots, \balpha_{K_i}\rangle$.

    From then on, the proof of the $L^2(\mu)$ convergence is completely analogous to the arguments in \cite[Section 3]{Ko18}. 
    The multiparameter variant of the suspension flow trick \cite[Theorem~3.2]{Ko18} combined with the celebrated convergence result of Walsh \cite{W12} allows us to obtain the $L^2(\mu)$ convergence of \eqref{E: dual over Z^K} whenever $\psi= 1$.  
    To treat the general form of \eqref{E: dual over Z^K}, it suffices to approximate the nilsequence $\psi$ in uniform density with the special weights considered in \cite[Proposition~4.2]{FH18}.\footnote{The paper \cite{FH18} defines nilsequences in terms ``linear sequences'' $\tau_1^{n_1}\cdots \tau_K^{n_K}$ rather than general polynomial sequences, and so here we also implicitly use the argument from \cite[Section 2.11]{L05c} to convert the original ``polynomial nilsequence'' into a ``linear nilsequence'' as defined in \cite{FH18}.} 
    The claim then follows by arguments similar to those in \cite[Section~3]{Ko18}.


    The convergence of the iterated limit can be proved analogously, and the equality of a single and iterated limits can be established using the Fubini principle for ergodic averages \cite[Lemma 1.1]{BL15} much like in \cite[Lemma 3.1]{DKKST24}.
    %
\end{proof}


Dual functions play a crucial role in the following definition of the closed space of functions ``structured'' with respect to the groups $G_1, \ldots, G_s$.
\begin{definition}\label{D: factor}
    Let $G_1, \ldots, G_s\subseteq\R^\ell$ be finitely generated subgroups.
    Given a factor $\CY\subseteq \CX$, let $Z_{G_1, \ldots, G_s}^+(\CY)\subseteq L^\infty(\mu)$ be the $L^2(\mu)$-closed linear span of functions of the form 
$\CD_{G_1, \ldots, G_s}((f_\ueps)_{\{0,1\}^s_*}; \psi)$ for $f_\ueps\in L^\infty(\CY, \mu)$ and $\psi\in\Nil_s(G)$
(where we recall that $G = G_1\times G_1\times\ldots \times G_s\times G_s$). If $\CY = \CX$, then we simply write $Z_{G_1, \ldots, G_s}^+(\CX) = Z_{G_1, \ldots, G_s}^+$.

\end{definition}

This is a good place to explain the presence of a nilsequence $\psi$ in the definition \eqref{E: dual function} as well as the reason why we consider a factor associated with the seminorm $\nnorm{\cdot}_{G_1, \ldots, G_s}^+$ rather than $\nnorm{\cdot}_{G_1, \ldots, G_s}$. To this end, consider the group ${G_1} = \langle (\alpha, \beta)\rangle\subseteq\R^2$ for some irrational $\alpha,\beta\in\R$. If we wanted to find an algebra $Z_{G_1}\subseteq L^\infty(\mu)$ with the property \eqref{E: seminorm vs factor unplussed}, then it would have to be the closed linear span of functions of the form
\begin{align*}
    \CD_{G_1}(f) = \E_{m,m'\in\Z} T_1^{\sfloor{\alpha m'}-\sfloor{\alpha m}}T_2^{\sfloor{\beta m'}-\sfloor{\beta m}}f
\end{align*}
(with no $1$-step-nilsequence weight). However, it is not clear whether the resulting vector space is an algebra (i.e. closed under multiplication). The reason is that the product $\CD_{G_1}(f)\cdot\CD_{G_1}(g)$ equals
\begin{multline*}
    \CD_{G_1}(f)\cdot\CD_{G_1}(g) = \sum_{r_1,r_2=-1}^1\E_{n,n'\in\Z}\E_{m,m'\in\Z}\psi_{r_1,r_2}(m,m',n,n')\\
    T_1^{\sfloor{\alpha m'}-\sfloor{\alpha m}}T_2^{\sfloor{\beta m'}-\sfloor{\beta m}}(f\cdot T_1^{\sfloor{\alpha n'}-\sfloor{\alpha n}+r_1}T_2^{\sfloor{\beta n'}-\sfloor{\beta n}+r_2}g)
\end{multline*}
for some nontrivial weights $\psi_{r_1,r_2}$ that take care of the error terms coming from comparing expressions like $\sfloor{\alpha (m+n)}$ with $\sfloor{\alpha m} + \sfloor{\alpha n}$. It is unclear whether these weights, which can easily be seen to be degree-1 nilsequences, can always be removed. 

Once we allow for the presence of a degree-1 nilsequence weight in the definition of $\CD_{G_1}(f)$, then we get an algebra $Z_{G_1}^+$ corresponding to the seminorm $\nnorm{\cdot}_{{G_1}}^+$ rather than $\nnorm{\cdot}_{{G_1}}$ (see Lemma~\ref{L: algebra} and Proposition~\ref{P: structure theorem} below). More generally, if we want to find a factor associated with the seminorm $\nnorm{\cdot}_{G_1, \ldots, G_s}^+$, then our definition of dual functions needs to include more complicated weights that turn out to be well-approximable by degree-$s$ nilsequences.

\begin{lemma}\label{L: algebra}
    Let $G_1, \ldots, G_s\subseteq\R^\ell$ be finitely generated subgroups. Then $Z_{G_1, \ldots, G_s}^+$ is a closed $\C$-algebra invariant under $T_1, \ldots, T_\ell$ and closed under conjugation.
\end{lemma}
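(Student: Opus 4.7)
The space $Z^+_{G_1, \ldots, G_s}$ is $L^2$-closed by construction, so my plan is to verify that its spanning set of dual functions is preserved (modulo $L^2$-limits) under each $T_j$, complex conjugation, and pointwise multiplication.

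Invariance under $T_j$ and conjugation should be immediate from the defining formula \eqref{E: dual function}. Since $T_j$ commutes with every iterate $T^{\cdot}$ appearing in the product and distributes over pointwise products, I get
$$T_j\, \CD_{G_1, \ldots, G_s}((f_\ueps)_{\{0,1\}^s_*}; \psi) = \CD_{G_1, \ldots, G_s}((T_j f_\ueps)_{\{0,1\}^s_*}; \psi),$$
and similarly conjugating inside the definition yields
$$\overline{\CD_{G_1, \ldots, G_s}((f_\ueps)_{\{0,1\}^s_*}; \psi)} = \CD_{G_1, \ldots, G_s}((\overline{f_\ueps})_{\{0,1\}^s_*}; \overline{\psi}),$$
with $\overline{\psi} \in \Nil_s(G)$ by Corollary \ref{C: algebra}. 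Both are legitimate dual functions.

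The main step is multiplicative closure. Given two dual functions $\CD_1 = \CD_{G_1, \ldots, G_s}((f_\ueps); \psi_1)$ and $\CD_2 = \CD_{G_1, \ldots, G_s}((g_\ueps); \psi_2)$, I would write $\CD_1 \CD_2$ as a double average over two independent $G$-tuples, and change variables in the second by writing it as a shift $\bm + \bk$ of the first by a new running variable $\bk \in G$. Using the identity $\floor{x+y} = \floor{x}+\floor{y}+\epsilon(x,y)$ with $\epsilon(x,y) \in \{0,1\}^\ell$ depending only on fractional parts, the exponents split as $S(\ueps, \bm + \bk) = S(\ueps, \bm) + S(\ueps, \bk) + E(\ueps, \bm, \bk)$ for a bounded integer-valued error $E$, which lets me factor each $\ueps$-term as
$$T^{S(\ueps, \bm)} f_\ueps \cdot T^{S(\ueps, \bm+\bk)} g_\ueps = T^{S(\ueps, \bm)}\bigl(f_\ueps \cdot T^{S(\ueps, \bk)+E(\ueps, \bm, \bk)} g_\ueps\bigr).$$
Decomposing according to the finitely many possible values of the tuple $(E(\ueps, \cdot, \cdot))_\ueps$ and approximating the resulting indicators of fractional parts in $L^2$ by trigonometric polynomials (which are $1$-step, hence degree-$s$, nilsequences on $G \times G$), I would reduce to showing that expressions of the form
$$\E_{\bk \in G}\, \CD_{G_1, \ldots, G_s}\bigl((f_\ueps \cdot T^{S(\ueps, \bk)+\ur_\ueps} g_\ueps)_{\{0,1\}^s_*};\, \Psi_\bk\bigr)$$
belong to $Z^+_{G_1, \ldots, G_s}$ for each bounded integer shift tuple $(\ur_\ueps)_\ueps$, where $\Psi_\bk(\bm) := \psi_1(\bm)\psi_2(\bm+\bk)\tilde\psi(\bm, \bk)$ is, for each fixed $\bk$, a degree-$s$ nilsequence in $\bm$ by Corollary \ref{C: algebra} together with the translation-invariance of $\Nil_s$. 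Since each integrand under the $\bk$-average is a legitimate dual function with $L^\infty(\mu)$ coordinates, the $\bk$-average is an $L^2$-limit of finite averages of dual functions, and hence lies in the closure.

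The hard part will be controlling the floor-function error terms $E(\ueps, \bm, \bk)$: they are not themselves nilsequences, only piecewise constant functions of fractional parts, so they can only be approximated (rather than represented) by degree-$s$ nilsequences, and this approximation must be executed carefully enough to preserve $L^2$-convergence. This approximation step is exactly what forces the presence of a nilsequence weight $\psi$ in the definition of dual functions, and makes the $L^2$-closure in $Z^+_{G_1, \ldots, G_s}$ essential---the algebraic span of dual functions alone need not be multiplicatively closed, as already explained for $s=1$ in the discussion preceding the lemma.
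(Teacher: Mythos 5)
Your overall plan follows the same route as the paper: observe that invariance under $T_j$ and conjugation is immediate, then attack multiplicative closure by shifting the variables in the second dual function by those of the first, splitting floor functions, and absorbing the resulting error-term indicators into a nilsequence weight. That is exactly what the paper does (it groups the shifts via $\br\in\{-1,0,1\}^{\ell s}$ and the sets $E_\br$, and swaps the two averages by the Fubini principle for ergodic averages).

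The one genuine discrepancy is in how you handle the floor-error indicators. You assert that they ``can only be approximated (rather than represented) by degree-$s$ nilsequences'' and then lean on $L^2$-approximation by trigonometric polynomials, flagging this approximation as the delicate step. But under the paper's definition a degree-$s$ nilsequence is $F(g(\cdot)\Gamma)$ with $F$ merely bounded and \emph{Riemann integrable}, not continuous. Consequently, for fixed $(\bn,\bn')$ the indicator $1_{E_\br}(\bm,\bm',\bn,\bn')$ is \emph{literally} a degree-$1$ nilsequence in $(\bm,\bm')$: it has the form $F_{\bn,\bn'}(\{\bm\},\{\bm'\})$ for a Riemann integrable $F_{\bn,\bn'}$ on a torus, and the torus carries a degree-$1$ filtration. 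No approximation is needed, and therefore no uniformity-in-parameters issue arises; the product $\psi_1(\bm,\bm')\psi_2(\bm+\bn,\bm'+\bn')1_{E_\br}(\bm,\bm',\bn,\bn')$ lies in $\Nil_s(G)$ outright by Corollary~\ref{C: algebra}. Once you realize this, the rest of your plan closes cleanly: for each fixed $(\bn,\bn')$ and $\br$ the inner expression is a genuine dual function, and the outer average over $(\bn,\bn')$ is an $L^2$-limit of such, hence lands in the closed span. So the proof works; you simply introduced a more fragile approximation step where none is required, because you were tacitly using a stricter (continuous-$F$) notion of nilsequence than the paper adopts.
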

\begin{proof}
    Recall that $Z_{G_1, \ldots, G_s}^+\subseteq L^\infty(\mu)$ is defined to be an $L^2(\mu)$-closed linear span. It is easy to observe from the definition that it is invariant under $T_1, \ldots, T_\ell$ and closed under conjugation. The only property that remains is that it is closed under multiplication. That is, given
    \begin{align*}
D_1 &=\E_{(\bm, \bm')\in G}\; \psi_1(\bm,\bm')
     \prod_{\ueps\in\{0,1\}^s_*}T^{\sfloor{\bm_1^{\eps_1}}-\floor{\bm_1}+\cdots + \sfloor{\bm_s^{\eps_s}}-\floor{\bm_s}} f_{1,\ueps}\\
    D_2 &=\E_{(\bn, \bn')\in G}\; \psi_2(\bn, \bn')
     \prod_{\ueps\in\{0,1\}^s_*}T^{\sfloor{\bn_1^{\eps_1}}-\floor{\bn_1}+\cdots + \sfloor{\bn_s^{\eps_s}}-\floor{\bn_s}} f_{2,\ueps}, 
    \end{align*}
    we want to show that $D_1\cdot D_2$ is also in $Z_{G_1, \ldots, G_s}^+$ (above, we set $G = G_1\times G_1 \times \cdots \times G_s\times G_s$, $(\bm,\bm') = (\bm_1, \bm_1',\ldots, \bm_s, \bm_s')$ and $(\bn,\bn') = (\bn_1, \bn_1',\ldots, \bn_s, \bn_s')$, and we take $f_{j,\ueps}$ and $\psi_j$ to be 1-bounded).
    We note first that the averages converge along any F\o lner sequence, and so $D_2$ will converge to the same limit if we replace all $\bn_i$ and $\bn_i'$ by $\bm_i+\bn_i$ and $\bm_i'+\bn_i'$ respectively for any $\bn_i,\bn_i'\in G_i$. 
    Multiplying the two limits together, we therefore get
    \begin{multline*}
        D_1\cdot D_2 = \sum_{\br\in \{-1,0,1\}^{\ell s}}\E_{(\bm, \bm')\in G}  \E_{(\bn, \bn')\in G} \psi_1(\bm,\bm')\psi_2(\bm+\bn, \bm'+\bn')1_{E_\br}(\bm,\bm',\bn,\bn')\\
        \prod_{\ueps\in\{0,1\}^s_*}T^{\sfloor{\bm_1^{\eps_1}}-\floor{\bm_1}+\cdots + \sfloor{\bm_s^{\eps_s}}-\floor{\bm_s}} (f_{1,\ueps} \cdot T^{\sfloor{\bn_1^{\eps_1}} - \sfloor{\bn_1} + \cdots + \sfloor{\bn_s^{\eps_s}} - \sfloor{\bn_s}+ \ueps\cdot \br}f_{2,\ueps}),
    \end{multline*}
    where 
    $\ueps\cdot \br = \eps_1\cdot \br_1+\cdots + \eps_s\cdot\br_s$, and
    \begin{multline*}
        E_{\br} = \{(\bm,\bm',\bn,\bn')\in G\times G:\; \sfloor{\bm_i^{\eps_i}+\bn_i^{\eps_i}}-\sfloor{\bm_i + \bn_i}\\
        = \sfloor{\bm_i^{\eps_i}}-\sfloor{\bm_i}+\sfloor{\bn_i^{\eps_i}} - \sfloor{\bn_i} + \eps_i\cdot \br_{i}\; \textrm{for\; all}\; i, \eps_i\}.
    \end{multline*}

    Swapping the order of the averages along $(\bm, \bm')$ and $(\bn, \bn')$ {by the Fubini principle for ergodic averages \cite[Lemma 1.1]{BL15}}, the limit will follow if we can establish that the sequence
    \begin{align*}
        \psi_{\br, \bn,\bn'}(\bm,\bm') = \psi_1(\bm,\bm')\psi_2(\bm+\bn, \bm'+\bn')1_{E_\br}(\bm,\bm',\bn,\bn')
    \end{align*}
     is in $\Nil_s(G)$.  Since $\psi_1, \psi_2\in\Nil_s(G)$ by assumption, 
     the result will follow from Corollary~\ref{C: algebra} once we show that $(\bm,\bm')\mapsto 1_{E_\br}(\bm,\bm',\bn,\bn')\in\Nil_1(G)$. 
     Such a sequence is a product of sequences coming from the constraints
    \begin{align*}
        \sfloor{\bm_i^{\eps_i}+\bn_i^{\eps_i}}-\sfloor{\bm_i + \bn_i}
        = \sfloor{\bm_i^{\eps_i}}-\sfloor{\bm_i}+\sfloor{\bn_i^{\eps_i}} - \sfloor{\bn_i} + \eps_i\cdot \br_{i},
    \end{align*}
    and so it suffices to show that the indicator function of the set of $(\bm_i,\bm'_i)$ satisfying such a constraint for fixed $i,\eps_i,\bn,\bn'$ is a nilsequence. A standard argument shows that such an indicator function takes the form $F_{\bn_i,\bn_i'}(\srem{\bm_i}, \srem{\bm_i'})$ for some 1-bounded Riemann integrable function $F_{\bn_i,\bn_i'}:[0,1)^{2\ell}\to\R$, and hence it is indeed a degree-1 nilsequence. 
    \end{proof}

Using a standard identification between factors of a dynamical system and closed invariant subalgebras of its Hilbert space, we obtain the following corollary.
\begin{corollary}
Let $G_1, \ldots, G_s\subseteq\R^\ell$ be finitely generated subgroups. There exists a factor $\CZ_{G_1, \ldots, G_s}^+\subseteq \CX$ such that $L^\infty(\CZ_{G_1, \ldots, G_s}^+) = Z_{G_1, \ldots, G_s}^+$.
\end{corollary}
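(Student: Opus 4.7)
The plan is to invoke the standard correspondence in ergodic theory between sub-$\sigma$-algebras of $\CX$ and $L^2$-closed, conjugation-closed $\C$-subalgebras of $L^\infty(\mu)$ that contain the constants, and then verify that $Z_{G_1, \ldots, G_s}^+$ meets all the hypotheses of this correspondence. Lemma \ref{L: algebra} will do essentially all the heavy lifting; the only thing I need to add is that the constants lie in $Z_{G_1, \ldots, G_s}^+$ and that ``invariant subalgebra'' translates to ``factor''.

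First I would recall the classical measure-theoretic fact (see e.g.\ Furstenberg's \emph{Recurrence in Ergodic Theory and Combinatorial Number Theory}): on a standard Lebesgue probability space $(X, \CX, \mu)$ there is a bijection, modulo $\mu$-null sets, between sub-$\sigma$-algebras $\CY \subseteq \CX$ and those $L^\infty$-subspaces $\CA$ that are $L^2$-closed, closed under multiplication, closed under complex conjugation, and contain the constants; explicitly, $\CY$ is generated by the level sets of all real-valued elements of $\CA$, and conversely $\CA = L^\infty(\CY, \mu)$. Under this bijection, invariance of $\CA$ under a measure-preserving transformation $T$ translates exactly into the condition $T^{-1}\CY = \CY$, i.e.\ into $\CY$ being a factor.

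Next I would verify the hypotheses for $Z_{G_1, \ldots, G_s}^+$. By Lemma \ref{L: algebra} it is $L^2(\mu)$-closed, closed under multiplication, closed under conjugation, and invariant under each $T_j$. To see that it contains the constants, take $f_\ueps \equiv 1$ for every $\ueps \in \{0,1\}^s_*$ and $\psi \equiv 1$ in the definition of a dual function: the iterates in \eqref{E: dual function} all act on the constant function $1$ and hence the whole average equals $1$, so $1 \in Z_{G_1, \ldots, G_s}^+$.

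Applying the correspondence, we obtain a sub-$\sigma$-algebra $\CZ_{G_1, \ldots, G_s}^+ \subseteq \CX$ with $L^\infty(\CZ_{G_1, \ldots, G_s}^+) = Z_{G_1, \ldots, G_s}^+$, and the $T_j$-invariance of the algebra makes $\CZ_{G_1, \ldots, G_s}^+$ a factor of $(X,\CX,\mu,T_1,\ldots,T_\ell)$, as required. There is no real obstacle here beyond bookkeeping; the substantive content sits in Lemma \ref{L: algebra}.
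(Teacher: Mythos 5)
Your proposal is correct and follows exactly the route the paper takes: the paper derives this corollary from Lemma \ref{L: algebra} via what it calls ``a standard identification between factors of a dynamical system and closed invariant subalgebras of its Hilbert space,'' which is the correspondence you invoke. Your only addition is to spell out the check that $1\in Z_{G_1, \ldots, G_s}^+$ (taking $f_\ueps\equiv 1$, $\psi\equiv 1$), a detail the paper leaves implicit; this is harmless and correct.
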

Whenever $G_i = \langle \bv_i\rangle$, we also write $\CZ^+_{G_1, \ldots, G_s} = \CZ^+_{\bv_1, \ldots, \bv_s}$, and if $G_1 = \cdots = G_s = G$, we additionally abbreviate $\CZ^+_{G_1, \ldots, G_s} = \CZ^+_{G^{\times s}}$.


The next result relates the factor $\CZ_{G_1, \ldots, G_s}^+$ to the seminorm $\nnorm{\cdot}_{G_1, \ldots, G_s}^+$.
\begin{proposition}[Weak structure theorem for $\nnorm{\cdot}_{G_1, \ldots, G_s}^+$]\label{P: structure theorem}
    Let $G_1, \ldots, G_s\subseteq\R^\ell$ be finitely generated subgroups. For any $f\in L^\infty(\mu)$, the relationship \eqref{E: seminorm vs factor} holds.
\end{proposition}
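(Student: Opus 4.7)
The plan is to prove the two directions of \eqref{E: seminorm vs factor} separately. By Lemma \ref{L: algebra} and the identification $L^\infty(\CZ^+_{G_1,\ldots,G_s}) = Z^+_{G_1,\ldots,G_s}$, the condition $\E(f \mid \CZ^+_{G_1,\ldots,G_s}) = 0$ is equivalent to $f$ being $L^2(\mu)$-orthogonal to every dual function $\CD_{G_1,\ldots,G_s}((f_\ueps); \psi)$ with $f_\ueps \in L^\infty(\mu)$ and $\psi \in \Nil_s(G)$, where $G = G_1 \times G_1 \times \cdots \times G_s \times G_s$.

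The first direction $(\nnorm{f}^+_{G_1,\ldots,G_s} = 0 \Rightarrow \E(f \mid \CZ^+_{G_1,\ldots,G_s}) = 0)$ will follow from a Gowers--Cauchy--Schwarz-type bound
\begin{equation*}
\bigl|\langle f, \CD_{G_1,\ldots,G_s}((f_\ueps); \psi)\rangle\bigr|^{2^{s+1}} \leq C \cdot \bigl(\nnorm{f}^+_{G_1,\ldots,G_s}\bigr)^{2^{s+1}},
\end{equation*}
which I would derive by writing the inner product as $\E_{(\bm,\bm') \in G} \overline{\psi(\bm,\bm')} \int f \prod_{\ueps \in \{0,1\}^s_*} T^{\sfloor{\bm^{\eps}} - \sfloor{\bm}} \overline{f_\ueps} \, d\mu$ and iteratively applying Cauchy--Schwarz in the $2s$ averaging variables $\bm_i, \bm_i'$, followed by a final Cauchy--Schwarz that introduces a square modulus inside the $G$-average. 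The reason why the $+$ version of the seminorm (rather than $\nnorm{\cdot}_{G_1,\ldots,G_s}$) is the correct one here is precisely that this final step produces a square modulus matching the definition of $\nnorm{\cdot}^+_{G_1,\ldots,G_s}$.

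The reverse direction $(\E(f \mid \CZ^+_{G_1,\ldots,G_s}) = 0 \Rightarrow \nnorm{f}^+_{G_1,\ldots,G_s} = 0)$ proceeds by shifting inside the $\Delta$-expansion by $T^{-\sfloor{\bm_1} - \cdots - \sfloor{\bm_s}}$ to pull out the $\ueps = \underline{0}$ factor, obtaining
\begin{equation*}
\bigl(\nnorm{f}^+_{G_1,\ldots,G_s}\bigr)^{2^{s+1}} = \E_{(\bm,\bm') \in G} \Bigl| \int f \cdot \tilde D_{\bm,\bm'} \, d\mu \Bigr|^2, \qquad \tilde D_{\bm,\bm'} := \prod_{\ueps \in \{0,1\}^s_*} \CC^{|\ueps|} T^{\sfloor{\bm^{\eps}} - \sfloor{\bm}} f.
\end{equation*}
Expanding the square modulus and applying Fubini rewrites the right-hand side as $\int_{X \times X} (f \otimes \overline{f}) \cdot \overline{H_f} \, d(\mu \times \mu)$, where $H_f(x,y) := \E_{(\bm,\bm') \in G} \tilde D_{\bm,\bm'}(x) \overline{\tilde D_{\bm,\bm'}(y)}$ is exactly the product-system dual function $\CD_{X \times X;\, G_1,\ldots,G_s}((g_\ueps); 1)$ (for the diagonal action of each $T_j \times T_j$) with simple-tensor inputs $g_\ueps = \CC^{|\ueps|} f \otimes \CC^{|\ueps|+1} f$. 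Unpacking Fubini gives
\begin{equation*}
\bigl(\nnorm{f}^+_{G_1,\ldots,G_s}\bigr)^{2^{s+1}} = \E_{(\bm,\bm') \in G} \langle f, F_{\bm,\bm'} \rangle \cdot \overline{\langle f, G_{\bm,\bm'}\rangle},
\end{equation*}
where for each $(\bm,\bm')$ the functions $F_{\bm,\bm'}, G_{\bm,\bm'}$ are products of shifts of $f$ and $\overline{f}$. Decomposing the $(\bm,\bm')$-dependent maps $\langle f, F_{\bm,\bm'}\rangle$ against nilsequence weights and applying the Gowers--Cauchy--Schwarz bound from the first direction turns each component into a pairing of $f$ with a genuine dual function, which vanishes by hypothesis.

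\textbf{Main obstacle.} The main difficulty is the nilsequence-weight bookkeeping in the first direction: each Cauchy--Schwarz step multiplies $\psi$ by a shifted conjugate, so after $s$ iterations one is left with a product of $2^s$ shifted conjugates. Verifying that this stays inside $\Nil_s(G)$ with a uniform $L^\infty$-bound relies on Corollary \ref{C: algebra} together with the (straightforward) fact that translations of $\Q^K$-nilsequences remain $\Q^K$-nilsequences. A secondary difficulty is the nilsequence approximation step in the reverse direction, needed to upgrade pointwise-in-$(\bm,\bm')$ pairings to orthogonality with the averaged dual function; this likely calls for an induction on $s$ or an appeal to the algebraic structure of $\Nil_s(G)$.
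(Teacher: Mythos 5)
The first direction is sound in spirit but more laborious than necessary: there is no need to track the nilsequence weight through $2s$ iterations of Cauchy--Schwarz. One can Cauchy--Schwarz it away in the very first step, writing $\int f \cdot \CD_{G_1,\ldots,G_s}((f_\ueps);\psi)\, d\mu = \E_{(\bm,\bm')\in G}\, \psi(\bm,\bm')\int \prod_{\ueps\in\{0,1\}^s} T^{\sfloor{\bm_1^{\eps_1}}+\cdots+\sfloor{\bm_s^{\eps_s}}} f_\ueps\, d\mu$ (using invariance of $\mu$ and $f_{\underline{0}}=f$), applying one Cauchy--Schwarz in $(\bm,\bm')$ to bound the square by $\E_{(\bm,\bm')}\bigl|\int\cdots\bigr|^2$ since $\psi$ is $1$-bounded, and then invoking the Gowers--Cauchy--Schwarz inequality as a black box. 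This is exactly why the seminorm with the $+$ is what appears: the square modulus is introduced by a single outer Cauchy--Schwarz, not by a final step after $2s$ inner ones. Your worry about whether a product of $2^s$ shifted conjugates of $\psi$ stays in $\Nil_s(G)$ is a self-created obstacle and can be sidestepped entirely.

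The reverse direction has a genuine gap. You correctly expand $\bigl(\nnorm{f}^+_{G_1,\ldots,G_s}\bigr)^{2^{s+1}} = \int f\cdot \CD_{G_1,\ldots,G_s}(f;\psi)\,d\mu$ where $\psi(\bm,\bm') = \int\prod_{\ueps}\CC^{|\ueps|}T^{\sfloor{\bm_1^{\eps_1}}+\cdots+\sfloor{\bm_s^{\eps_s}}}\overline{f}\,d\mu$ is the multicorrelation sequence, but this $\psi$ is \emph{not} a priori a nilsequence, so the hypothesis $\E(f\mid\CZ^+_{G_1,\ldots,G_s})=0$ cannot be applied directly. The crucial technical input that your sketch does not identify is Proposition~\ref{P: multicorrelation}: for every $\veps>0$ the multicorrelation sequence decomposes as $\psi = \psi_\veps + \text{(an $\veps$-nullsequence)}$ with $\psi_\veps$ a $1$-bounded degree-$s$ nilsequence on $G$. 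This is a structure theorem about correlation sequences (in the spirit of Frantzikinakis--Host and Koutsogiannis, using weak-anti-uniformity and regularity of the sequence $\widetilde\psi\circ\phi$ on $\Z^K$), not a consequence of the algebraic closure properties of $\Nil_s(G)$ or of an induction on $s$, which is where your "secondary difficulty" paragraph points. Once one has $\psi_\veps$, the argument closes immediately: $\bigl|\bigl(\nnorm{f}^+\bigr)^{2^{s+1}} - \int f\cdot\CD(f;\psi_\veps)\,d\mu\bigr|\leq\veps^{2^{s+1}}$, and the second integral vanishes by orthogonality, giving $\nnorm{f}^+\leq\veps$. Your proposed detour through $X\times X$ and the function $H_f$ circles the same expression but leaves the actual decomposition step undone, and the final sentence ("decomposing... against nilsequence weights and applying the Gowers--Cauchy--Schwarz bound from the first direction") does not specify what decomposition is meant; absent Proposition~\ref{P: multicorrelation}, there is no reason the $(\bm,\bm')$-dependence can be broken into nilsequence pieces.
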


Before we prove Proposition \ref{P: structure theorem}, we need to understand the structure of multicorrelation sequences appearing in the definition of generalized box seminorms.

\begin{definition}[Nullsequences on $G$]
    Let $G\subseteq\R^\ell$ be a finitely generated subgroup. For $\veps>0$, we call $\psi: G\to\C$ an \textit{$\veps$-nullsequence on $G$} if
    \begin{align*}
        \oE_{\bm\in G}|\psi(\bm)|^2\leq \veps.
    \end{align*}
\end{definition}

\begin{proposition}[Decomposition of multicorrelation sequences]\label{P: multicorrelation}
    Let $G_1, \ldots, G_s\subseteq\R^\ell$ be finitely generated subgroups and set $G = G_1\times G_1 \times \cdots \times G_s\times G_s$. For $(\bm,\bm')\in G$ and  1-bounded functions $f_\ueps\in L^\infty(\mu)$, define 
    \begin{align*}
        \psi(\bm, \bm')= \int \prod_{\ueps\in\{0,1\}^s}T^{\sfloor{\bm_1^{\eps_1}}+\cdots + \sfloor{\bm_s^{\eps_s}}} f_\ueps\; d\mu.
    \end{align*}
    Then for every $\veps>0$, we can decompose $\psi = \psi_1 + \psi_2$, where $\psi_1$ is a 1-bounded degree-$s$ nilsequence on $G$ and $\psi_2$ is an $\veps$-nullsequence on $G$.
\end{proposition}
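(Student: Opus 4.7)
My plan is to reduce the decomposition of $\psi$ to a multicorrelation structure theorem for commuting transformations along affine integer iterates. The starting point is the coordinate-map reduction used to prove the $L^{2}$-convergence of dual functions: choose coordinate maps $\phi_{i}:\Q^{K_{i}}\to\Span_\Q G_{i}$, and embed $G$ as a finite-index subgroup of $\widetilde G := \widetilde G_{1}\times\widetilde G_{1}\times\cdots\times\widetilde G_{s}\times\widetilde G_{s}$, where $\widetilde G_{i}:=\phi_{i}(\Z^{K_{i}})$. As observed in the proof of convergence of dual functions, Fourier expansion on the finite quotient $\widetilde G_{i}/G_{i}$ shows that $1_{G_{i}}\in\Nil_{1}(\widetilde G_{i})$. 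Hence it suffices to establish the decomposition for $\psi$ viewed as a function on $\widetilde G$, and then to multiply through by $1_{G}$ using Corollary \ref{C: algebra}.

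Next, I would handle the floor functions. Writing
\[
\sfloor{\phi_{i}(\um_{i}^{\eps_{i}})}=\phi_{i}(\um_{i}^{\eps_{i}})-\rem{\phi_{i}(\um_{i}^{\eps_{i}})}
\]
and approximating the $1$-periodic, Riemann-integrable $\R^{\ell}$-valued function $\rem{\phi_{i}(\cdot)}$ uniformly (to within $\delta$) on its fundamental domain by a trigonometric polynomial yields, after multiplying the $2^{s}$ factors together and expanding, a representation of $\psi$ as a finite sum (plus an error that is an $O(\delta)$-nullsequence on $\widetilde G$) of terms of the form
\[
\chi(\bm,\bm')\cdot\int\prod_{\ueps\in\{0,1\}^{s}}T^{\bp_{\ueps}(\bm,\bm')}f_{\ueps}\,d\mu,
\]
where each $\bp_{\ueps}$ is a $\Z^{\ell}$-valued affine form in the underlying integer parameters and $\chi$ is a $1$-bounded degree-$1$ nilsequence on $\widetilde G$ coming from the fractional-part weights.

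Finally, each inner integral is a multiparameter correlation of the commuting transformations $T_{1},\dots,T_{\ell}$ along affine iterates in a box configuration of dimension $s$. The Bergelson--Host--Kra/Frantzikinakis--Host-type structure theorem for multicorrelation sequences along commuting transformations (cf.\ \cite{FH18}) yields, for any $\delta'>0$, a decomposition of this integral as a $1$-bounded degree-$s$ nilsequence plus a $\delta'$-nullsequence on $\widetilde G$. The product of the degree-$1$ weight $\chi$ with the degree-$s$ nilsequence remains a degree-$s$ nilsequence by Corollary \ref{C: algebra}; multiplying by $1_{G}$ and passing back through the coordinate map places this nilsequence on $G$, and summing the finitely many pieces preserves this class. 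Choosing $\delta,\delta'$ sufficiently small in terms of $\veps$ delivers the required decomposition $\psi=\psi_{1}+\psi_{2}$. The main obstacle is the careful bookkeeping of the floor-function Fourier expansion, where one must ensure that the trigonometric approximation error translates into an error that is small \emph{in uniform density over $G$}, rather than in some weaker sense; this is achieved by making the $L^{\infty}$-approximation on the fundamental domain uniform and then invoking the F\o lner-averaging definition of an $\veps$-nullsequence.
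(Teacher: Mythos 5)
Your framework matches the paper's: reduce to $\widetilde G=\phi(\Z^K)$ via coordinate maps, recognize $1_G$ as a degree-$1$ nilsequence on $\widetilde G$, invoke a Frantzikinakis--Host-type multicorrelation structure theorem, and restrict back to $G$ using the F\o lner-averaging definition of nullsequence. That part is sound, and the last restriction step is handled essentially verbatim in the paper.

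The gap is in how you propose to eliminate the floor functions. Writing $\sfloor{\phi_i(\um_i^{\eps_i})} = \phi_i(\um_i^{\eps_i}) - \rem{\phi_i(\um_i^{\eps_i})}$ and approximating the fractional part by a trigonometric polynomial does \emph{not} produce an integral $\int\prod_\ueps T^{\bp_\ueps(\bm,\bm')}f_\ueps\,d\mu$ with $\bp_\ueps$ a $\Z^\ell$-valued affine form. The floor appears in the exponent of $T$, not as a multiplicative weight, so a trigonometric approximation of $\rem{\phi_i(\cdot)}$ gives a complex- or real-valued correction; subtracting it from $\phi_i(\um_i)$ (itself typically not $\Z^\ell$-valued when $G_i$ has irrational generators) does not yield a legitimate integer iterate. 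Concretely, in the simplest case $\ell=s=1$, $G_1=\langle\alpha\rangle$ with $\alpha$ irrational, the map $m\mapsto\sfloor{\alpha m}$ is not expressible as an integer-coefficient affine form times (or plus) a trigonometric polynomial in $\rem{\alpha m}$, and $T^{\alpha m - Q(\rem{\alpha m})}$ is undefined. The only setting in which your proposed manipulation makes literal sense is on the level of individual eigenfunctions via the spectral theorem, which does not extend to the general multicorrelation with several commuting $T_j$'s.

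The device that replaces this step in the paper is the suspension-flow trick of \cite[Theorem~4.1]{Ko18}: one passes to the product system $X\times\T^{\text{(something)}}$ on which the single transformation $\widehat T$ satisfies $\widehat T^m(x,\theta)=(T^{\sfloor{\alpha m+\theta}}x,\rem{\alpha m+\theta})$, so the floor disappears from the index and one obtains a genuine $\Z^K$-action. The paper then verifies the two hypotheses of the regularity criterion from \cite[Theorem~2.1]{Ko18} (weak-anti-uniformity via van der Corput, regularity via \cite[Proposition~2.1]{FH18}) for this lifted correlation sequence. Your proposal needs to invoke a device of this kind before the correlation sequence can be fed into a structure theorem for commuting actions along affine integer iterates; without it, the intermediate representation you claim for $\psi$ does not exist.
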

\begin{proof}
   Proposition \ref{P: multicorrelation} can be proved via a straightforward but tedious adaptation of the methods of \cite{Fr15, FH18, Ko18}. 
Let $\phi:\Z^K\to G$ be a coordinate map.
    By a natural multiparameter extension of \cite[Theorem 2.1]{Ko18}, it suffices to show that the sequence $\psi\circ \phi:\Z^K\to\C$ satisfies two conditions: 
    \begin{enumerate}
        \item it is \textit{$(s+1)$-weak-anti-uniform} in that for every uniformly bounded $b:\Z^K\to\C$, the average
    \begin{align*}
        \oE_{\um\in\Z^K}\psi\circ \phi(\um)b(\um)
    \end{align*}
    is quantitatively controlled by a degree-$(s+1)$ uniformity seminorm on $\Z^K$ (in the sense of \cite{Ko18});
        \item it is \textit{$(s+1)$-regular} in that the uniform limit
    \begin{align*}
        \E_{\um\in\Z^K}\psi\circ \phi(\um)b(\um)
    \end{align*}
    exists
    for any $s$-step nilsequence $b:\Z^K\to\C$.
    \end{enumerate}

    
The condition of $(s+1)$-weak-anti-uniformity follows by combining a suspension flow trick \cite[Theorem~4.1]{Ko18} (which allows us to remove integer parts from expressions like $T^{\floor{\bm}}$) with a standard van der Corput argument (which can be found e.g. at the end of the proof of \cite[Theorem~2.6]{FH18}). Similarly, $(s+1)$-regularity can be deduced by combining \cite[Theorem~4.1]{Ko18} with \cite[Proposition~2.1]{FH18}. 
For every $\veps>0$, the natural multiparameter extension of \cite[Theorem 2.1]{Ko18} then allows us to decompose $\psi\circ \phi = \psi_1+\psi_2$, where $\psi_1$ is a degree-$s$ nilsequence on $\Z^{K}$ and $\psi_2$ is an $\veps$-nullsequence on $\Z^K$. This gives the claimed result for ${\psi}.$
%
\end{proof}
    

\begin{proof}[Proof of Proposition \ref{P: structure theorem}]
We show  that $\nnorm{f}_{G_1, \ldots, G_s}^+ = 0$ if and only if $f$ is orthogonal to dual functions $\CD_{G_1, \ldots, G_s}((f_\ueps)_{\{0,1\}^s_*}; \psi)$ with the functions $f_\ueps$ and degree-$s$ nilsequence being 1-bounded. The invariance of the measure gives
\begin{multline*}
     \int f \cdot \CD_{G_1, \ldots, G_s}((f_\ueps)_{\{0,1\}^s_*}; \psi)\;d\mu\\
     = \E_{\bm_1, \bm_1'\in G_1}\cdots \E_{\bm_s, \bm_s'\in G_s}\psi(\bm,\bm')\cdot\int \prod_{\ueps\in\{0,1\}^s}T^{\sfloor{\bm_1^{\eps_1}}+\cdots + \sfloor{\bm_s^{\eps_s}}} f_\ueps\, d\mu,
\end{multline*}
where $f_{\underline{0}} = f$.
    Using the Cauchy-Schwarz inequality, we can remove the nilsequence and bound
    \begin{multline*}
        \abs{\int f \cdot \CD_{G_1, \ldots, G_s}((f_\ueps)_{\{0,1\}^s_*}; \psi)\; d\mu}^2\\
        \leq  \E_{\bm_1, \bm_1'\in G_1}\cdots \E_{\bm_s, \bm_s'\in G_s}\abs{\int \prod_{\ueps\in\{0,1\}^s}T^{\sfloor{\bm_1^{\eps_1}}+\cdots + \sfloor{\bm_s^{\eps_s}}} f_\ueps\; d\mu}^2.
    \end{multline*}
    By the Gowers-Cauchy-Schwarz inequality (e.g. \cite[Lemma 3.3]{DKKST24}), the last expression is bounded by $(\nnorm{f}_{G_1, \ldots, G_s}^+)^{1/2^s}$. In particular, if the seminorm is 0, then $f$ is orthogonal to the function $\CD_{G_1, \ldots, G_s}((f_\ueps)_{\{0,1\}^s_*}; \psi)$.

    In the direction covered so far, $\psi(\bm,\bm')$ could be any bounded weight; its nilsequence structure will be used in the converse direction. Expanding the definition of $\nnorm{f}_{G_1, \ldots, G_s}^+$, we get 
    \begin{align*}
        (\nnorm{f}_{G_1, \ldots, G_s}^+)^{2^{s+1}} = \int f\cdot \CD_{G_1, \ldots, G_s}(f; \psi)\; d\mu
    \end{align*}
    for
    \begin{align*}
        \psi(\bm,\bm') = \int \prod_{\ueps\in\{0,1\}^s}\CC^{|\ueps|}T^{\sfloor{\bm_1^{\eps_1}}+\cdots + \sfloor{\bm_s^{\eps_s}}} \overline{f}\, d\mu.
    \end{align*}
    Fix $\veps>0$, and let $\psi_\veps$ be a 1-bounded degree-$s$ nilsequence coming from Proposition~\ref{P: multicorrelation} such that
    \begin{align*}
        \oE_{(\bm,\bm')\in G}\abs{\psi(\bm,\bm')-\psi_\veps(\bm,\bm')} \leq \veps^{2^{s+1}}.
    \end{align*}
    Then (assuming $f$ is 1-bounded, which we may)
    \begin{align*}
        \abs{(\nnorm{f}_{G_1, \ldots, G_s}^+)^{2^{s+1}} - \int f\cdot \CD_{G_1, \ldots, G_s}(f; \psi_\veps)\; d\mu}\leq \oE_{(\bm,\bm')\in G}\abs{\psi(\bm,\bm')-\psi_\veps(\bm,\bm')}\leq \veps^{2^{s+1}}.
    \end{align*}
    The orthogonality of $f$ to $\CD_{G_1, \ldots, G_s}(f; \psi_\veps)$ gives the upper bound $\nnorm{f}_{G_1, \ldots, G_s}^+\leq \veps$, and the result follows by letting $\veps\to 0$.
\end{proof}

As a consequence of relevant properties of seminorms (see Lemma \ref{L: basic properties of seminorms}), we obtain the following properties of factors.
\begin{lemma}[Basic properties of factors]\label{L: basic properties of factors}
   Let $G_1, \ldots, G_s\subseteq\R^\ell$ be finitely generated subgroups. Then: 
     \begin{enumerate}
        \item (Symmetry) For any permutation $\sigma:[s]\to[s]$, we have $$\CZ_{G_{\sigma(1)}, \ldots, G_{\sigma(s)}}^+ = \CZ_{G_1, \ldots, G_s}^+;$$
        \item (Monotonicity) For any finitely generated $G_{s+1}\subseteq\R^\ell$, we have $$\CZ_{G_1, \ldots, G_s}^+\subseteq\CZ_{G_1, \ldots, G_{s+1}}^+;$$
        \item (Subgroup property) If $G'_i\subseteq G_i$ for all $i\in[s]$, then $$\CZ_{G_1, \ldots, G_s}^+\subseteq \CZ_{G_1', \ldots, G_s'}^+,$$ with equality whenever $G_i/G_i'$ has finite index for all $i\in[s]$;
        \item (Rescaling property) If for every $i\in[s]$ we have
        \begin{align*}
            G_i = \langle \bg_{i1}, \ldots, \bg_{iK_i}\rangle\quad \textrm{and}\quad G'_i = \langle \alpha_{i1}\bg_{i1}, \ldots, \alpha_{iK_i} \bg_{iK_i}\rangle 
        \end{align*}
        for some $K_i\in\N$ and $\alpha_{ij}\neq 0$, then
    \begin{align*}
       \CZ^+_{G'_1, \ldots, G'_s}=\CZ^+_{G_1, \ldots, G_s}.
    \end{align*}
    \end{enumerate}
\end{lemma}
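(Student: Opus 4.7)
The plan is to deduce all four items directly from the corresponding properties of the seminorms established in Lemma \ref{L: basic properties of seminorms}, via Proposition \ref{P: structure theorem}, which characterizes $\CZ^+_{G_1,\ldots,G_s}$ as the factor whose orthogonal complement in $L^2(\mu)$ consists exactly of those $f\in L^\infty(\mu)$ with $\nnorm{f}^+_{G_1,\ldots,G_s}=0$.

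The single abstract observation I will use throughout is the following. Suppose two factors $\CZ_A,\CZ_B\subseteq\CX$ are associated with seminorms $\nnorm{\cdot}_A,\nnorm{\cdot}_B$ in the sense that $\nnorm{f}_\ast=0\Longleftrightarrow \E(f|\CZ_\ast)=0$ for every $f\in L^\infty(\mu)$, and suppose there is a constant $C>0$ with $\nnorm{f}_A\leq C\nnorm{f}_B$ for all $f\in L^\infty(\mu)$. Then $\CZ_A\subseteq \CZ_B$. Indeed, any $f$ with $\E(f|\CZ_B)=0$ satisfies $\nnorm{f}_B=0$, hence $\nnorm{f}_A=0$, and therefore $\E(f|\CZ_A)=0$, so the $L^2(\mu)$-orthogonal complement of $L^2(\CZ_B)$ is contained in that of $L^2(\CZ_A)$. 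Taking orthogonal complements reverses the inclusion and yields $\CZ_A\subseteq \CZ_B$.

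Applying this to the bounds in Lemma \ref{L: basic properties of seminorms} delivers every item. For (i), the two seminorms are literally equal, so the factors coincide. For (ii), the monotonicity inequality $\nnorm{f}^+_{G_1,\ldots,G_s}\leq \nnorm{f}^+_{G_1,\ldots,G_{s+1}}$ gives $\CZ^+_{G_1,\ldots,G_s}\subseteq \CZ^+_{G_1,\ldots,G_{s+1}}$. For (iii), when $G_i'\subseteq G_i$ for all $i\in[s]$, the subgroup inequality $\nnorm{f}^+_{G_1,\ldots,G_s}\leq \nnorm{f}^+_{G'_1,\ldots,G'_s}$ gives $\CZ^+_{G_1,\ldots,G_s}\subseteq \CZ^+_{G'_1,\ldots,G'_s}$; and under the finite-index assumption, the reverse bound $\nnorm{f}^+_{G'_1,\ldots,G'_s}\ll_{D,s}\nnorm{f}^+_{G_1,\ldots,G_s}$ from Lemma \ref{L: basic properties of seminorms}(iii) produces the opposite inclusion, hence equality. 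For (iv), the two-sided bound $\nnorm{\cdot}^+_{G'_1,\ldots,G'_s}\asymp \nnorm{\cdot}^+_{G_1,\ldots,G_s}$ forces the two seminorms to have the same vanishing set, so the associated factors are equal.

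There is no real obstacle in this argument; its only subtlety is correctly tracking the contravariance between seminorm bounds and factor inclusions, namely that dominating the seminorm produces the \emph{larger} factor. All four items reduce to a single application of the paragraph above.
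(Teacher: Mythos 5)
Your proposal is correct and is essentially the route the paper intends: the paper states Lemma \ref{L: basic properties of factors} as an immediate consequence of the seminorm properties in Lemma \ref{L: basic properties of seminorms} via the correspondence of Proposition \ref{P: structure theorem}, and your abstract ``seminorm domination implies factor inclusion'' observation is exactly the right way to make that deduction precise (including the density of $L^\infty(\mu)\cap L^2(\CZ_B)^\perp$ in $L^2(\CZ_B)^\perp$, which you use implicitly when passing from $L^\infty$ test functions to the full Hilbert-space orthogonal complement). The only detail worth stating explicitly is in item (iv): the rescaling inequality of Lemma \ref{L: basic properties of seminorms}(iv) is stated for $K_i\leq K$ and $1/K\leq|\alpha_{ij}|\leq K$, so one should note that for a \emph{fixed finite} collection of groups one may always choose such a $K$, after which the two-sided comparison of the $+$-seminorms follows and your argument closes.
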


\section{Relative concatenation}\label{S: concatenation}

Having defined generalized box factors and established their various properties, we now show that factors along smaller groups can be merged (``concatenated'') into (higher-degree) factors along larger groups. Once again, we fix a system $(X, \CX, \mu, T_1, \ldots, T_\ell)$ throughout this section. The main result of this section is as follows.

\begin{theorem}[Relative concatenation]\label{T: relative concatenation}
Let $s\in\N_0$, $d,d'\in\N$, and $$G_1, \ldots, G_s, H_1, \ldots, H_d, H'_1, \ldots, H'_{d'}\subseteq\R^\ell$$ be finitely generated subgroups. Then the following statements hold:
\begin{enumerate}
    \item We have
\begin{align*}
    \mathcal{Z}^+_{G_{1},\dots,G_s,H_{1},\dots,H_{d}}\cap \mathcal{Z}^+_{G_{1},\dots,G_s,H'_{1},\dots,H'_{d'}}\subseteq \mathcal{Z}^+_{G_{1},\dots,G_{s},(H_{i}+H'_{i'})_{i\in[d], i'\in[d']}}.
\end{align*}
    \item If moreover there exists a subgroup $H\subseteq \R^\ell$ such that $H_i + H'_{i'} = H$ for all $i\in[d]$ and $i'\in[d']$, then
\begin{align*}
    \mathcal{Z}^+_{G_{1},\dots,G_s, H_{1},\dots,H_{d}}\cap \mathcal{Z}^+_{G_{1},\dots,G_s, H'_{1},\dots,H'_{d'}} \subseteq \CZ^+_{G_{1},\dots,G_s, H^{\times (d+d'-1)}},
\end{align*}
		where we recall that $H^{\times d}$ denotes $d$-copies of $H$.
\end{enumerate}
\end{theorem}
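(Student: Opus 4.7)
The plan is to adapt the Tao--Ziegler concatenation argument of \cite{TZ16} to the generalized box factor setting, treating the shared subgroups $G_1,\ldots,G_s$ as a fixed ``background'' throughout. Writing $\CZ^+_A,\CZ^+_B,\CZ^+_C$ for the three factors in the statement, the inclusion in (i) is equivalent, via Proposition~\ref{P: structure theorem}, to the following orthogonality statement: any $g\in L^2(\mu)$ with $\nnorm{g}^+_{G_1,\ldots,G_s,(H_i+H'_{i'})_{i,i'}}=0$ must be orthogonal to $L^\infty(\CZ^+_A)\cap L^\infty(\CZ^+_B)$. Since elements of $\CZ^+_A$ (respectively $\CZ^+_B$) are $L^2$-approximated by linear combinations of dual functions along $G_1,\ldots,G_s,H_1,\ldots,H_d$ (respectively $G_1,\ldots,G_s,H'_1,\ldots,H'_{d'}$), the task reduces to estimating inner products of the form $\int g\cdot\overline{\CD^A\cdot\CD^B}\,d\mu$ in terms of $\nnorm{g}^+_C$.

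\textbf{Step 1 (Cauchy--Schwarz interleaving).} The main step is to expand both dual functions via their defining averages and apply the Cauchy--Schwarz inequality iteratively in the $H_i$- and $H'_{i'}$-variables. For each pair $(i,i')$, one application of Cauchy--Schwarz combined with a PET-type change of variables $\bh_i\mapsto\bh_i+\bh'_{i'}$ converts the shifts $T^{\sfloor{\bh_i}}$ on the $A$-side and $T^{\sfloor{\bh'_{i'}}}$ on the $B$-side into shifts paired along $H_i+H'_{i'}$. Crucially, because $G_1,\ldots,G_s$ appear symmetrically in both dual functions, the $G_j$-averages survive untouched, producing the $G_1,\ldots,G_s$ prefix on the final seminorm. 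Applying the Gowers--Cauchy--Schwarz inequality \cite[Lemma~3.3]{DKKST24} then bounds the resulting expression by $\nnorm{g}^+_C$ raised to a suitable power, giving (i).

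\textbf{Step 2 (Part (ii) via bookkeeping).} Naively, Step 1 produces $dd'$ copies of $H$ when every sum $H_i+H'_{i'}$ coincides with a single group $H$. The sharper count $d+d'-1$ follows from a more efficient organization of the Cauchy--Schwarz iteration: after concatenating the first $H_1$-variable with all $d'$ of the $H'_{i'}$-variables (producing $d'$ copies of $H$), each remaining $H_i$ for $i\ge 2$ need only be concatenated with a single representative $H'_{i'}$, contributing one additional copy of $H$, for a total of $d'+(d-1)=d+d'-1$ copies. The hypothesis that all $H_i+H'_{i'}$ coincide is precisely what allows this abbreviation, since any pairing produces the same group.

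\textbf{Main obstacle.} The principal technical difficulty is the bookkeeping of the degree-$1$ nilsequence weights that arise from integer-part error terms of the form $\sfloor{\bh_i+\bh'_{i'}}-\sfloor{\bh_i}-\sfloor{\bh'_{i'}}$ (together with analogous corrections from the $G$-variables), via the same mechanism as in the proof of Lemma~\ref{L: algebra}. Ensuring that these weights combine consistently across all Cauchy--Schwarz iterations, and that the symmetry of the shared subgroups $G_1,\ldots,G_s$ is preserved throughout, is expected to be the main challenge and will require careful exploitation of the algebra and symmetry properties of $\Nil_s(G)$ and the generalized box seminorms developed in Section~\ref{SS: factors}.
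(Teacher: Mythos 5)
Your reduction via Proposition~\ref{P: structure theorem} is the right opening move, but the very next step contains a genuine gap. You claim that since elements of $\CZ^+_A$ (resp.\ $\CZ^+_B$) are approximated by linear combinations of dual functions along the corresponding groups, ``the task reduces to estimating inner products of the form $\int g\cdot\overline{\CD^A\cdot\CD^B}\,d\mu$.'' This does not follow: a function $f$ in the intersection $\CZ^+_A\cap\CZ^+_B$ is approximated in $L^2(\mu)$ both by linear combinations of $\CD^A$'s \emph{and} (separately) by linear combinations of $\CD^B$'s, but it is not, in general, a limit of linear combinations of \emph{products} $\CD^A\cdot\CD^B$. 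The intersection of two factors is not spanned by products of their generators, so your reduction to controlling $\int g\cdot\overline{\CD^A\cdot\CD^B}\,d\mu$ is unjustified, and the entire Cauchy--Schwarz interleaving in Step~1 is built on a false premise.

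What is actually needed is a \emph{nested} approximation. After approximating $f$ by a dual function $\CD_{G_1,\dots,G_s,H_1,\dots,H_d}((f_\ueps);\psi)$, one must invoke the restriction-to-factor property (Lemma~\ref{L: substituting factor}), which says $\CZ^+_{G_1,\dots,G_s,H_1,\dots,H_d}(\CY) = \CY\cap\CZ^+_{G_1,\dots,G_s,H_1,\dots,H_d}$ for any factor $\CY$; taking $\CY = \CZ^+_{G_1,\dots,G_s,H'_1,\dots,H'_{d'}}$, this lets one assume each constituent $f_\ueps$ is itself $\CZ^+_B$-measurable, and hence further approximable by dual functions along the $B$-side groups. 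It is this iterated expansion — not a product decomposition — that eventually produces an ergodic average along the sum group, to which a van der Corput argument and Gowers--Cauchy--Schwarz can be applied. Your proposal never mentions Lemma~\ref{L: substituting factor}, which is the crucial structural ingredient.

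Beyond this, your plan to handle general $d,d'$ directly in a single pass is also more ambitious than what the paper attempts: the paper establishes only the ``fundamental'' case $d=d'=1$ analytically (Theorem~\ref{T: fundamental concatenation}), and then deduces the full statement by a purely combinatorial induction on an abstract axiom system (CMS-families, Definition~\ref{D: CIS} and Propositions~\ref{P: C_f is CIS}, \ref{P: CIS induction}) that uses only three formal properties of the collection $\{(G_1,\ldots,G_s): f\in Z^+_{G_1,\ldots,G_s}\}$ — Concatenation (the fundamental case), Monotonicity, and Symmetry. In particular, the $d+d'-1$ count in (ii) does not come from an ad hoc reorganization of Cauchy--Schwarz pairings as you describe; it falls out of tracing lattice paths in the $[0,d]\times[0,d']$ rectangle in that combinatorial induction. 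Your Step~2 sketch is too vague to verify and would still rest on the incorrect product reduction from Step~1.
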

When $s=0$ and $H_i, H'_{i'}\subseteq\Z^\ell$, Theorem \ref{T: relative concatenation} has been established by Tao and Ziegler in \cite[Theorems 1.15 and 1.16]{TZ16};\footnote{Tao-Ziegler's results apply more broadly to box factors along subgroups that are sums of finitely generated and profinite groups. For the second statement, their result additionally assumes that $H_1 = \cdots = H_d$ and $H'_1 = \cdots = H_{d'}'$.} it is their influential paper that introduced the idea of concatenating factors and seminorms to ergodic theory, additive combinatorics, and analytic number theory. We extend Tao-Ziegler's result in two ways: by relativizing it (i.e. allowing $s>0$ groups that do not get concatenated), and by extending it to generalized box factors.

We shall use the following consequence of Theorem~\ref{T: relative concatenation}, which generalizes \cite[Corollaries~1.20 and 1.21]{TZ16}, and which can be deduced from Theorem~\ref{T: relative concatenation} in the same way in which \cite[Corollaries~1.20 and 1.21]{TZ16} are derived from \cite[Theorems 1.15 and 1.16]{TZ16}.

\begin{corollary}\label{C: relative concatenation}
    Let $s\in\N_0$, $d,d'\in\N$, and $$G_1, \ldots, G_s, H_1, \ldots, H_d, H'_1, \ldots, H'_{d'}\subseteq\R^\ell$$ be finitely generated subgroups. Let $f\in L^\infty(\mu)$. Then the following statements hold:
    \begin{enumerate}
        \item If $\E(f|\mathcal{Z}^+_{G_{1},\dots,G_{s},(H_{i}+H'_{i'})_{i\in[d], i'\in[d']}})=0$, then we can decompose $f = g_1 + g_2$ for some $g_i\in L^\infty(\mu)$ satisfying
        \begin{align*}
            \E(g_1|\mathcal{Z}^+_{G_{1},\dots,G_s,H_{1},\dots,H_{d}}) = 0\quad\textrm{and}\quad \E(g_2|\mathcal{Z}^+_{G_{1},\dots,G_s,H'_{1},\dots,H'_{d'}}) = 0.
        \end{align*}
        \item If moreover there exists a subgroup $H\subseteq \R^\ell$ such that $H_i + H'_{i'} = H$ for all $i\in[d]$ and $i'\in[d']$, and if $\E(f|\CZ^{+}_{G_{1},\dots,G_s, H^{\times (d+d'-1)}})=0$, then we can decompose $f = g_1 + g_2$ for some $g_i\in L^\infty(\mu)$ satisfying
        \begin{align*}
            \E(g_1|\mathcal{Z}^+_{G_{1},\dots,G_s, H_{1},\dots,H_{d}}) = 0\quad\textrm{and}\quad \E(g_2|\mathcal{Z}^+_{G_{1},\dots,G_s,H'_{1},\dots,H'_{d'}})=0.
        \end{align*}
    \end{enumerate}
\end{corollary}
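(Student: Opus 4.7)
Both parts follow from Theorem~\ref{T: relative concatenation} by a standard Hilbert-space duality argument, exactly as \cite[Corollaries~1.20 and 1.21]{TZ16} are derived from \cite[Theorems~1.15 and 1.16]{TZ16}. I sketch part~(i); part~(ii) is identical upon relabelling the concatenated factor.

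The first step is to pass from the factor inclusion to its $L^2$-orthogonality dual. Set
\begin{align*}
    V_1 &= L^2(\CZ^+_{G_{1},\dots,G_s,H_{1},\dots,H_{d}}), \\
    V_2 &= L^2(\CZ^+_{G_{1},\dots,G_s,H'_{1},\dots,H'_{d'}}), \\
    V &= L^2(\CZ^+_{G_{1},\dots,G_{s},(H_{i}+H'_{i'})_{i\in[d],i'\in[d']}}),
\end{align*}
so that Theorem~\ref{T: relative concatenation}(i) reads $V_1\cap V_2\subseteq V$. The conclusion of the corollary translates to the dual inclusion
\begin{align*}
    V^{\perp}\subseteq V_{1}^{\perp}+V_{2}^{\perp},
\end{align*}
since $\E(f|\CZ^+_{G_{1},\dots,G_{s},(H_{i}+H'_{i'})_{i,i'}})=0$ exactly says $f\in V^\perp$ (using $V_i^\perp=\{h\in L^2(\mu):\E(h|\CZ_i)=0\}$), and the desired decomposition $f=g_1+g_2$ with $\E(g_i|\CZ_i)=0$ is exactly the assertion $f\in V_1^\perp+V_2^\perp$.

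To establish the dual inclusion, I would invoke Hahn-Banach: any $\phi\in L^{2}(\mu)$ annihilating $V_{1}^{\perp}+V_{2}^{\perp}$ lies in $(V_{1}^{\perp})^{\perp}\cap (V_{2}^{\perp})^{\perp}=V_{1}\cap V_{2}\subseteq V$, so $\langle f,\phi\rangle=0$ for any $f\in V^{\perp}$. This yields $V^{\perp}\subseteq \overline{V_{1}^{\perp}+V_{2}^{\perp}}$ in the $L^2$-norm. One then verifies that $V_{1}^{\perp}+V_{2}^{\perp}$ is in fact closed in $L^{2}(\mu)$ -- equivalently that $V_{1}+V_{2}$ is closed -- to remove the closure and obtain an exact decomposition. $L^{\infty}$ control on the pieces $g_1,g_2$ is then retained by post-composing with the conditional-expectation operators $\E(\cdot|\CZ_i)$, which are $L^{\infty}$-contractions and preserve membership in $V_i^{\perp}$.

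The main obstacle is the passage from the $L^2$-approximate decomposition (which is immediate from the inclusion of orthogonal complements) to an exact one with $L^\infty$ pieces; this is the technical core of the Tao-Ziegler derivation and is carried out in full detail in \cite{TZ16}, so the present corollary is obtained simply by plugging Theorem~\ref{T: relative concatenation} into their template.
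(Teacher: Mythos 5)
Your opening observation is correct: the paper itself proves nothing here, it just cites \cite{TZ16}, so ``plug Theorem~\ref{T: relative concatenation} into the template of the corresponding TZ corollaries'' is indeed the paper's route. The issue is that the template you then sketch is not actually a proof, and I do not believe it matches what Tao and Ziegler do.

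The easy step is fine: from $V_1\cap V_2\subseteq V$ one gets $V^\perp\subseteq (V_1\cap V_2)^\perp=\overline{V_1^\perp+V_2^\perp}$. But the entire content of the corollary is hidden in the sentence ``one then verifies that $V_1^\perp+V_2^\perp$ is in fact closed.'' This is not automatic, and you give no argument for it. For a pair of closed subspaces of an infinite-dimensional Hilbert space, the sum of the orthogonal complements need not be closed (e.g.\ take $V_1=\overline{\mathrm{span}}\{e_{2n}\}$ and $V_2=\overline{\mathrm{span}}\{e_{2n}\cos\theta_n+e_{2n+1}\sin\theta_n\}$ with $\theta_n\to 0$; then $V_1\cap V_2=\{0\}$ but $V_1^\perp+V_2^\perp$ is a proper dense subspace). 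So closedness is a genuine structural statement about the factors $\CZ^+_{G_1,\dots,G_s,H_1,\dots,H_d}$ and $\CZ^+_{G_1,\dots,G_s,H'_1,\dots,H'_{d'}}$, not a functional-analytic triviality, and it is exactly the thing that needs to be extracted from \cite{TZ16} and re-verified for the generalized box factors introduced in this paper. Equivalently, the von Neumann alternating-projection expansion $f=\sum_k Q_1(P_2P_1)^kf+\sum_k Q_2P_1(P_2P_1)^kf+(P_2P_1)^nf$ gives partial sums in $V_1^\perp$ and $V_2^\perp$ whose total converges to $f$, but the two partial sums need not individually converge; showing they do is the same obstruction.

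The $L^\infty$ upgrade is also not correct as written. If $g_1\in V_1^\perp$ then $\E(g_1|\CZ_1)=0$, so ``post-composing $g_1$ with $\E(\cdot|\CZ_1)$'' yields the zero function, not an $L^\infty$-bounded version of $g_1$. If instead you intend $g_1:=f-\E(f|\CZ_1)$ and $g_2:=\E(f|\CZ_1)$ (which are genuinely $L^\infty$-bounded), then $\E(g_1|\CZ_1)=0$ for free, but the required identity $\E(g_2|\CZ_2)=0$ becomes $\E(\E(f|\CZ_1)|\CZ_2)=0$, which amounts to relative independence of $\CZ_1$ and $\CZ_2$ over $\CZ_1\cap\CZ_2$ and does not follow from the hypothesis $\E(f|\CZ)=0$ alone. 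Either way, something substantive is missing, and it is precisely the same thing as in the closedness discussion. To turn this into a proof you would need to actually reproduce the relevant argument from \cite{TZ16} (which, as I recall, exploits the dual-function structure of the factors rather than a pure Hilbert-space duality) and check it against the definitions of $\CZ^+_{\cdots}$, $\Nil_s(G)$, and the weak structure theorem (Proposition~\ref{P: structure theorem}) used in this paper.
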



If we specialize to subgroups of $\Z^\ell$ in Theorem \ref{T: relative concatenation} and Corollary \ref{C: relative concatenation}, the factors $\CZ^+$ can be replaced by the more classical factors $\CZ$, defined e.g. in \cite{HK05a, TZ16} and satisfying the property \eqref{E: seminorm vs factor unplussed} (recall that in this case  $Z_{G_1, \ldots, G_s}(\CY)\subseteq L^\infty(\mu)$ is defined to be the $L^2(\mu)$-closed linear span of functions of the form 
$\CD_{G_1, \ldots, G_s}((f_\ueps)_{\{0,1\}^s_*})$ for $f_\ueps\in L^\infty(\CY, \mu)$). The proof in this case is almost identical; the only difference is that since we only need to consider dual functions with nilsequences $\psi = 1$, we can control expressions like $\int g \cdot \CD_{G_1, \ldots, G_s}f\; d\mu$ by $\nnorm{g}_{G_1, \ldots, G_s}$ rather than $\nnorm{g}_{G_1, \ldots, G_s}^+$.  We leave the details to the interested readers.

For general finitely generated subgroups $G_1, \ldots, G_s\subseteq\R^\ell$, such factors in general do not exist; see the discussions at the beginning of Section \ref{SS: factors} and below Definition \ref{D: factor}.

\subsection{Fundamental concatenation theorem}

Theorem \ref{T: relative concatenation} will be deduced by iterating the following result whose proof is the main subject of this subsection.
\begin{theorem}[Fundamental concatenation theorem]\label{T: fundamental concatenation}
Let $G_1, \ldots, G_s, G_s'\subseteq\R^\ell$ be finitely generated subgroups.
Then $$\mathcal{Z}^+_{G_1,\dots,G_{s-1},G_s}\cap \mathcal{Z}^+_{G_1,\dots,G_{s-1},G'_s}=\mathcal{Z}^+_{G_1,\dots,G_{s-1},G_s+G'_s}.$$ 
\end{theorem}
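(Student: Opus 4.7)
The inclusion $\supseteq$ is immediate from the subgroup property of factors (Lemma~\ref{L: basic properties of factors}(iii)): since $G_s, G_s' \subseteq G_s+G_s'$, each of $\mathcal{Z}^+_{G_1,\ldots,G_{s-1}, G_s}$ and $\mathcal{Z}^+_{G_1,\ldots,G_{s-1}, G_s'}$ contains $\mathcal{Z}^+_{G_1,\ldots,G_{s-1}, G_s+G_s'}$, and hence so does their intersection.

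For the substantive inclusion $\subseteq$, the plan is to dualize the statement and reduce it to a seminorm inequality. By Proposition~\ref{P: structure theorem} together with standard Hilbert space duality, the desired containment is equivalent to the decomposition statement that every $g\in L^\infty(\mu)$ with $\nnorm{g}^+_{G_1,\ldots,G_{s-1},G_s+G_s'}=0$ lies in the $L^2$-closed sum of the spaces $\{g_1 : \nnorm{g_1}^+_{G_1,\ldots,G_{s-1},G_s}=0\}$ and $\{g_2 : \nnorm{g_2}^+_{G_1,\ldots,G_{s-1},G_s'}=0\}$. Following the strategy of Tao--Ziegler \cite[Proof of Theorem~1.15]{TZ16}, this decomposition will in turn follow from a Cauchy--Schwarz/van der Corput inequality of the form
\begin{align*}
(\nnorm{f}^+_{G_1,\ldots,G_{s-1},G_s+G_s'})^{C} \;\lesssim\; \nnorm{f}^+_{G_1,\ldots,G_{s-1},\, G_s^{\times p},\,(G_s')^{\times q}}
\end{align*}
for suitable fixed exponents $C,p,q$; once this inequality is in hand, a duality argument patterned on \cite{TZ16}---but carried out for dual functions twisted by nilsequences, as in Definition~\ref{D: factor}---produces the decomposition and hence the factor inclusion.

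To prove the seminorm inequality, I would parametrize each element of $G_s+G_s'$ as a sum of a $G_s$-element and a $G_s'$-element, using that $G_s + G_s'$ is a quotient of $G_s \oplus G_s'$ by its diagonal kernel. Expanding $(\nnorm{f}^+_{G_1,\ldots,G_{s-1},G_s+G_s'})^{2^{s+1}}$ then turns the innermost average into one over $(G_s\times G_s')\times (G_s\times G_s')$, and two successive applications of the Gowers--Cauchy--Schwarz inequality (\cite[Lemma~3.3]{DKKST24})---first in the $G_s'$ variables and then in the $G_s$ variables---separate the integrand into a seminorm with additional copies of $G_s$ and $G_s'$. Symmetrization in the two groups then yields the required inequality.

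The main obstacle will be the interaction between the floor functions and the nilsequence weights intrinsic to $\nnorm{\cdot}^+$. Each time one uses $\floor{\bn+\bn^*}=\floor{\bn}+\floor{\bn^*}+\varepsilon$ with $\varepsilon\in\{-1,0,1\}^\ell$, a degree-$1$ nilsequence in the fractional parts (exactly as in the algebra proof of Lemma~\ref{L: algebra}) must be introduced. These weights proliferate through the Cauchy--Schwarz iterations and must be recombined into degree-$s$ nilsequences on the enlarged product group so that the resulting dual functions remain admissible for Definition~\ref{D: factor}. Once this bookkeeping is settled, and one verifies that the Tao--Ziegler duality step still functions in the presence of the nilsequence twist, the hard inclusion follows.
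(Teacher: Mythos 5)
Your treatment of the easy inclusion $\supseteq$ is fine (and indeed the subgroup property (iii) of Lemma~\ref{L: basic properties of factors} is the precise reference, since $G_s\subseteq G_s+G_s'$). The hard inclusion, however, cannot be reached by the route you sketch. The seminorm inequality you propose,
\begin{align*}
\bigl(\nnorm{f}^+_{G_1,\ldots,G_{s-1},G_s+G_s'}\bigr)^{C} \lesssim \nnorm{f}^+_{G_1,\ldots,G_{s-1},\, G_s^{\times p},\,(G_s')^{\times q}},
\end{align*}
is already an instant consequence of Lemma~\ref{L: basic properties of seminorms}(ii)--(iii) (smaller groups give larger seminorms; more groups give larger seminorms), so the parametrization/Gowers--Cauchy--Schwarz argument you outline only re-derives the trivial direction. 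Translated into factors, it says $\CZ^+_{G_1,\ldots,G_{s-1},G_s+G_s'}\subseteq \CZ^+_{G_1,\ldots,G_{s-1},G_s^{\times p},(G_s')^{\times q}}$, which contains no information about the intersection $\CZ^+_{G_1,\ldots,G_{s-1},G_s}\cap\CZ^+_{G_1,\ldots,G_{s-1},G_s'}$. Worse, the reverse seminorm inequality is simply false in general: for instance with $G_s=\langle\be_1\rangle$, $G_s'=\langle\be_2\rangle$, $G_s+G_s'=\Z^2$, one has $\nnorm{f}_{\Z^2}=\norm{\E(f\mid\CI(T_1,T_2))}_{L^2(\mu)}$, which can vanish while the box seminorm $\nnorm{f}_{\be_1,\be_2}$ remains positive. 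No pointwise comparison between the two families of seminorms can encode the decomposition $g=g_1+g_2$, precisely because a decomposition is not a seminorm statement. Your citation of Tao--Ziegler here is also misplaced: their quantitative concatenation estimates (their Theorems 1.11--1.13) are a separate track, while their Theorem 1.15 (the factor inclusion) is proved by the dual-function argument that the paper also uses, and their Corollaries 1.20--1.21 (the decomposition) are then deduced from the factor inclusion by orthogonal complements, not the other way round.

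What the actual proof does is exploit the intersection structure directly: take $f\in L^\infty(\CZ^+_{G_1,\ldots,G_{s-1},G_s}\cap\CZ^+_{G_1,\ldots,G_{s-1},G_s'})$ and write it (after an approximation) as a dual function $\CD_{G_1,\ldots,G_{s-1},G_s}$ whose coefficient functions, by Lemma~\ref{L: substituting factor}, may be taken $\CZ^+_{G_1,\ldots,G_{s-1},G_s'}$-measurable, hence themselves dual functions along $G_s'$. Substituting the inner dual functions into the outer one and collecting floor-function errors into the lower-level part, one sees that
\begin{align*}
f = \E_{\bm_s,\bm_s'\in G_s}\E_{\bn_s,\bn_s'\in G_s'} T^{\sfloor{\bm_s'+\bn_s'}-\sfloor{\bm_s+\bn_s}}f_{\underline{0}1,\underline{0}1}\cdot D_{\bm_s,\bm_s',\bn_s,\bn_s'}
\end{align*}
with $D_{\bm_s,\bm_s',\bn_s,\bn_s'}$ lying (up to a bounded constant) in $\CB_{G_1,\ldots,G_{s-1}}$, i.e.\ $f$ becomes an ergodic average over $G_s+G_s'$ twisted by lower-level structured weights. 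A single Cauchy--Schwarz followed by van der Corput then bounds $|\int f\cdot g\;d\mu|$ by $\nnorm{g}^+_{G_1,\ldots,G_{s-1},G_s+G_s'}$, using Lemmas~\ref{L: convex hull} and~\ref{L: correlation with dual} to handle the dual weights. Your final paragraph on the nilsequence bookkeeping does touch on a genuine technical point that arises in this expansion, but it cannot rescue the argument because the central inequality driving your plan points the wrong way.
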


Before we supply the proof of Theorem \ref{T: fundamental concatenation}, we state several lemmas that will show up in the proof. The first one is a consequence of Proposition \ref{P: structure theorem} and follows from the same proof as \cite[Corollary 2.5]{TZ16}.
\begin{lemma}[Restriction to a factor]\label{L: substituting factor}
    Let $G_1, \ldots, G_s\subseteq\R^\ell$ be finitely generated subgroups. 
    For any factor $\CY\subseteq\CX$, we have
    \begin{align*}
        \CY \cap \CZ^+_{G_1, \ldots, G_s}(\CX) = \CZ^+_{G_1, \ldots, G_s}(\CY).
    \end{align*}
\end{lemma}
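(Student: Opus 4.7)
The plan is to mimic the proof of \cite[Corollary 2.5]{TZ16}, leveraging the characterization of $\CZ^+_{G_1,\ldots,G_s}$ through the seminorm $\nnorm{\cdot}^+_{G_1,\ldots,G_s}$ provided by Proposition \ref{P: structure theorem}. The inclusion $\supseteq$ is straightforward: every generating dual function $\CD_{G_1,\ldots,G_s}((f_\ueps)_{\{0,1\}^s_*};\psi)$ of $Z^+_{G_1,\ldots,G_s}(\CY)$ is built from $f_\ueps\in L^\infty(\CY,\mu)$ and iterates of $T^{\sfloor{\cdot}}$ that preserve $\CY$ (as $\CY$ is $T_1,\ldots,T_\ell$-invariant), hence is $\CY$-measurable. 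Taking $L^2(\mu)$-closed linear spans then yields $\CZ^+_{G_1,\ldots,G_s}(\CY)\subseteq \CY$. Trivially, each such dual function is also a generator of $Z^+_{G_1,\ldots,G_s}(\CX)$, giving $\CZ^+_{G_1,\ldots,G_s}(\CY)\subseteq \CZ^+_{G_1,\ldots,G_s}(\CX)$.

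For the converse inclusion, fix $f\in L^\infty(\CY\cap\CZ^+_{G_1,\ldots,G_s}(\CX),\mu)$ and set
\[
g:=f-\E\bigl(f\mid \CZ^+_{G_1,\ldots,G_s}(\CY)\bigr).
\]
Both summands are $\CY$-measurable and (by the inclusion already established) $\CZ^+_{G_1,\ldots,G_s}(\CX)$-measurable, so the same is true of $g$; moreover, $\E(g\mid \CZ^+_{G_1,\ldots,G_s}(\CY))=0$ by construction. In particular, $g$ is orthogonal to every dual function $\CD_{G_1,\ldots,G_s}(g;\psi)$ with $\psi\in \Nil_s(G)$, since such functions are $\CY$-measurable when $g$ is.

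Rerunning the second half of the proof of Proposition \ref{P: structure theorem} — which only requires orthogonality of $g$ against the family of dual functions $\CD_{G_1,\ldots,G_s}(g;\psi_\veps)$ produced by the nilsequence approximation of Proposition \ref{P: multicorrelation} — then yields $\nnorm{g}^+_{G_1,\ldots,G_s}=0$. A second application of Proposition \ref{P: structure theorem}, this time on the full system, forces $\E(g\mid \CZ^+_{G_1,\ldots,G_s}(\CX))=0$; combined with the $\CZ^+_{G_1,\ldots,G_s}(\CX)$-measurability of $g$, this forces $g=0$, i.e.\ $f=\E(f\mid \CZ^+_{G_1,\ldots,G_s}(\CY))\in L^\infty(\CZ^+_{G_1,\ldots,G_s}(\CY))$.

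The only point requiring care is the step $\E(g\mid \CZ^+_{G_1,\ldots,G_s}(\CY))=0\Rightarrow \nnorm{g}^+_{G_1,\ldots,G_s}=0$: one must check that the specific dual functions $\CD_{G_1,\ldots,G_s}(g;\psi_\veps)$ appearing in the argument for Proposition \ref{P: structure theorem} genuinely lie in $Z^+_{G_1,\ldots,G_s}(\CY)$ when $g$ is $\CY$-measurable, and this is immediate since they are assembled from $\CY$-measurable functions and $\CY$-preserving transformations. Everything else — the Gowers--Cauchy--Schwarz bound and the approximation of the multicorrelation weight by a nilsequence modulo an $\veps$-nullsequence — goes through verbatim.
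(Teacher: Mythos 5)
Your proof is correct and follows essentially the route the paper intends (mirroring \cite[Corollary 2.5]{TZ16} via Proposition~\ref{P: structure theorem}). The key observations — that for $\CY$-measurable $g$ the dual functions $\CD_{G_1,\ldots,G_s}(g;\psi_\veps)$ built in the second half of the proof of Proposition~\ref{P: structure theorem} actually lie in $Z^+_{G_1,\ldots,G_s}(\CY)$, so that $\E(g\mid\CZ^+_{G_1,\ldots,G_s}(\CY))=0$ forces $\nnorm{g}^+_{G_1,\ldots,G_s}=0$, and then applying the structure theorem again on the full system — are exactly the right chain of implications, and the two inclusions for the easy direction are handled properly.
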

    

Before we state the second lemma, let $\CB_{G_1, \ldots, G_s}$ be the closed convex hull of dual functions $\CD_{G_1,\dots,G_{s}}((f_{\ueps})_{\ueps\in\{0,1\}^{s}_{\ast}};\psi)$ for 1-bounded functions $f_\ueps\in L^\infty(\mu)$ and 1-bounded degree-$s$ nilsequences $\psi$ on $G = G_1\times G_1\times \cdots \times G_s\times G_s$.
\begin{lemma}[Approximate multiplicative closedness of $\CB_{G_1, \ldots, G_s}$]\label{L: convex hull}
    Let $G_1, \ldots, G_s\subseteq\R^\ell$ be finitely generated subgroups.
    If $D_1, D_2\in \CB_{G_1, \ldots, G_s}$, then $D_1\cdot D_2\in 3^{\ell s}\cdot \CB_{G_1, \ldots, G_s}$.
\end{lemma}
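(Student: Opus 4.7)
The natural strategy is to reduce to the case where $D_{1},D_{2}$ are single dual functions, since $\CB_{G_{1},\dots,G_{s}}$ is, by definition, the closed convex hull of such functions. So suppose first that
\begin{align*}
D_{i}=\CD_{G_{1},\dots,G_{s}}((f_{i,\ueps})_{\{0,1\}^{s}_{\ast}};\psi_{i})
\qquad(i=1,2),
\end{align*}
with 1-bounded $f_{i,\ueps}\in L^{\infty}(\mu)$ and $\psi_{i}\in\Nil_{s}(G)$. The key observation is that the proof of Lemma~\ref{L: algebra} has essentially already done the work: after using translation-invariance of the F\o{}lner average along $G$ to replace each $\bn_{i},\bn_{i}'$ in $D_{2}$ by $\bm_{i}+\bn_{i},\bm_{i}'+\bn_{i}'$, one expands the product and re-indexes via the integer-part discrepancies to obtain
\begin{align*}
D_{1}\cdot D_{2}\;=\;\sum_{\br\in\{-1,0,1\}^{\ell s}}\E_{(\bn,\bn')\in G}\; D_{\br,\bn,\bn'},
\end{align*}
where each $D_{\br,\bn,\bn'}$ is the dual function of the 1-bounded functions $f_{1,\ueps}\cdot T^{\sfloor{\bn_{1}^{\eps_{1}}}-\sfloor{\bn_{1}}+\cdots+\sfloor{\bn_{s}^{\eps_{s}}}-\sfloor{\bn_{s}}+\ueps\cdot\br}f_{2,\ueps}$ weighted by $\psi_{\br,\bn,\bn'}(\bm,\bm')=\psi_{1}(\bm,\bm')\psi_{2}(\bm+\bn,\bm'+\bn')\mathbf{1}_{E_{\br}}(\bm,\bm',\bn,\bn')$.

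The proof of Lemma~\ref{L: algebra} already verified that $\psi_{\br,\bn,\bn'}\in\Nil_{s}(G)$ (combining $\psi_{1},\psi_{2}\in\Nil_{s}(G)$, the algebra property from Corollary~\ref{C: algebra}, and the fact that $\mathbf{1}_{E_{\br}}(\,\cdot\,,\bn,\bn')\in\Nil_{1}(G)$ via the Riemann-integrable fractional-part description). Moreover this weight is 1-bounded, as are the shifted function factors. Hence each $D_{\br,\bn,\bn'}$ is a bona-fide 1-bounded generator of $\CB_{G_{1},\dots,G_{s}}$. The outer average $\E_{(\bn,\bn')\in G}D_{\br,\bn,\bn'}$ is then an $L^{2}(\mu)$-limit of finite convex combinations of such generators (by approximating the F\o{}lner average by finite Ces\`aro sums), so it lies in the closed convex hull $\CB_{G_{1},\dots,G_{s}}$. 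Summing over the $3^{\ell s}$ values of $\br$ places $D_{1}\cdot D_{2}$ in $3^{\ell s}\cdot\CB_{G_{1},\dots,G_{s}}$.

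For the general case $D_{1},D_{2}\in\CB_{G_{1},\dots,G_{s}}$, approximate each in $L^{2}(\mu)$ by finite convex combinations $D_{i}^{(n)}=\sum_{k}\lambda_{i,k}^{(n)}D_{i,k}^{(n)}$ of dual-function generators, all uniformly bounded (one may truncate, since the generators are uniformly bounded in $L^{\infty}$). Bilinearity gives
\begin{align*}
D_{1}^{(n)}\cdot D_{2}^{(n)}\;=\;\sum_{j,k}\lambda_{1,j}^{(n)}\lambda_{2,k}^{(n)}\,D_{1,j}^{(n)}\cdot D_{2,k}^{(n)}
\;\in\;3^{\ell s}\cdot\CB_{G_{1},\dots,G_{s}},
\end{align*}
since each pairwise product lies in $3^{\ell s}\cdot\CB$ by the generator case and the coefficients $\lambda_{1,j}^{(n)}\lambda_{2,k}^{(n)}$ sum to $1$. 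Finally, $L^{2}(\mu)$-continuity of multiplication on uniformly bounded sequences (dominated convergence) gives $D_{1}^{(n)}\cdot D_{2}^{(n)}\to D_{1}\cdot D_{2}$ in $L^{2}(\mu)$, and closedness of $\CB_{G_{1},\dots,G_{s}}$ yields $D_{1}\cdot D_{2}\in 3^{\ell s}\cdot\CB_{G_{1},\dots,G_{s}}$.

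The only nontrivial step is the generator-case expansion, but that computation is already in the proof of Lemma~\ref{L: algebra}; the present lemma is essentially a bookkeeping statement about where the pieces land in the convex hull, and tracking that the number of $\br$-terms is $3^{\ell s}$ explains the stated blow-up factor.
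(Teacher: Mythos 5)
Your proof is correct and takes essentially the same approach as the paper: the paper's argument also reduces by convexity to the case where $D_1,D_2$ are single dual functions and then cites the expansion from the proof of Lemma~\ref{L: algebra} to see that $D_1D_2$ is a sum of at most $3^{\ell s}$ elements of $\CB_{G_1,\ldots,G_s}$. You have merely made explicit the two steps the paper leaves implicit — that the F\o{}lner average over $(\bn,\bn')$ of the dual functions $D_{\br,\bn,\bn'}$ is an $L^2$-limit of finite convex combinations of generators (hence lands in the closed convex hull), and the bilinearity/limiting argument that extends from generators to general $D_1,D_2\in\CB_{G_1,\ldots,G_s}$.
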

\begin{proof}
    By convexity, it suffices to prove the result when $D_1, D_2$ are themselves dual functions $\CD_{G_1,\dots,G_{s}}((f_{\ueps})_{\ueps\in\{0,1\}^{s}_{\ast}};\psi)$ for 1-bounded functions $f_\ueps\in L^\infty(\mu)$ and 1-bounded 
    $\psi\in\Nil_s(G)$ for $G = G_1\times G_1\times \cdots \times G_s\times G_s$. By inspecting the proof of Lemma~\ref{L: algebra}, we see that $D_1\cdot D_2$ is a sum of at most $3^{\ell s}$ elements in $\CB_{G_1, \ldots, G_s}$, corresponding to $3^{\ell s}$ possible values of $\br\in\{-1,0,1\}^{\ell s}$.
\end{proof}

\begin{lemma}[Anti-uniformity of $\CB_{G_1, \ldots, G_s}$]\label{L: correlation with dual}
    Let $G_1, \ldots, G_s\subseteq\R^\ell$ be finitely generated subgroups. 
    If $f\in \CB_{G_1, \ldots, G_s}$, $g\in L^\infty(\mu)$, then $\abs{\int f\cdot g\; d\mu}\leq \nnorm{g}_{G_1, \ldots, G_s}^+$.
\end{lemma}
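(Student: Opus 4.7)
The plan is to dualize. By convexity of $f \mapsto \abs{\int f g\, d\mu}$ (immediate from the triangle inequality), it suffices to establish the bound when $f$ is itself a single dual function $\CD_{G_1,\ldots,G_s}((f_\ueps)_{\ueps \in \{0,1\}^s_*}; \psi)$ with $1$-bounded functions $f_\ueps$ and a $1$-bounded $\psi \in \Nil_s(G)$, where $G = G_1 \times G_1 \times \cdots \times G_s \times G_s$.

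Unfolding the definition of the dual function and using the invariance of $\mu$ under $T^{\floor{\bm_1} + \cdots + \floor{\bm_s}}$, one sees, with the convention $f_{\underline{0}} := g$, that
\begin{align*}
\int g \cdot f \, d\mu = \E_{(\bm,\bm') \in G} \psi(\bm,\bm') \int \prod_{\ueps \in \{0,1\}^s} T^{\sfloor{\bm_1^{\eps_1}} + \cdots + \sfloor{\bm_s^{\eps_s}}} f_\ueps \, d\mu.
\end{align*}
Bounding $\abs{\psi} \leq 1$ and applying the Cauchy-Schwarz inequality in the variable $(\bm,\bm')$ yields
\begin{align*}
\abs{\int g \cdot f \, d\mu}^2 \leq \E_{(\bm,\bm') \in G} \abs{\int \prod_{\ueps \in \{0,1\}^s} T^{\sfloor{\bm_1^{\eps_1}} + \cdots + \sfloor{\bm_s^{\eps_s}}} f_\ueps \, d\mu}^2.
\end{align*}

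The key remaining step is to recognize the right-hand side, after expanding the square via Fubini, as a Gowers correlation on the product system $(X \times X, \mu \times \mu, T_1 \times T_1, \ldots, T_\ell \times T_\ell)$ with input functions $F_\ueps := f_\ueps \otimes \overline{f_\ueps}$. Applying the standard Gowers-Cauchy-Schwarz inequality for box seminorms (\cite[Lemma~3.3]{DKKST24}) on the product system then bounds the expression by the product over $\ueps \in \{0,1\}^s$ of the box seminorms of $F_\ueps$ computed on the product system. By the tensor-product formulation of $\nnorm{\cdot}^+$ built into its definition in Section 2, each such factor equals $\bigbrac{\nnorm{f_\ueps}^+_{G_1,\ldots,G_s}}^2$. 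Since $f_\ueps$ is $1$-bounded (and hence has $\nnorm{\cdot}^+$ at most $1$) for every $\ueps \neq \underline{0}$, only the $\ueps = \underline{0}$ factor survives, leaving $\bigbrac{\nnorm{g}^+_{G_1,\ldots,G_s}}^2$ on the right; taking square roots completes the proof.

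The main (mild) technical point is this last identification: recasting the absolute-value-squared correlation on $X$ as a standard Gowers correlation on the doubled system $X \times X$ converts the problem from one about $\nnorm{\cdot}^+$ into one about the ordinary $\nnorm{\cdot}$ applied to tensored functions, where the standard Gowers-Cauchy-Schwarz inequality directly applies. Minor care is also needed to accommodate the absence of an alternating conjugation pattern on the $F_\ueps$, which is handled by inserting conjugations at no cost, since the box seminorm is invariant under complex conjugation of its argument.
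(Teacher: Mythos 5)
Your proof follows the same route as the paper's: reduce by convexity to a single dual function, unfold the correlation using the invariance of $\mu$, apply Cauchy--Schwarz to discard the nilsequence, and close with the Gowers--Cauchy--Schwarz inequality. The only difference is cosmetic — you unpack the $\nnorm{\cdot}^+$ Gowers--Cauchy--Schwarz step explicitly via the product system $(X\times X,\mu\times\mu)$ and the identity $\nnorm{h}^{+}_{G_1,\ldots,G_s}=\nnorm{h\otimes\overline h}^{1/2}_{G_1,\ldots,G_s}$, while the paper delegates this bookkeeping to the citation of \cite[Lemma 3.3]{DKKST24}; the bound $\abs{\int f\cdot g\;d\mu}\leq\nnorm{g}^+_{G_1,\ldots,G_s}$ comes out correctly either way.
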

\begin{proof}
    By convexity, it suffices to consider the case when $f$ is a dual function, in which case the result follows from the Gowers-Cauchy-Schwarz inequality much like in the proof of Proposition~\ref{P: structure theorem}. 
\end{proof}
We are now ready to prove Theorem~\ref{T: fundamental concatenation}.
\begin{proof}[Proof of Theorem~\ref{T: fundamental concatenation}]
The converse inclusion is a straightforward consequence of the monotonicity property of factors (Lemma \ref{L: basic properties of factors} (ii)). For the forward inclusion, it suffices to show via Proposition~\ref{P: structure theorem} that any
\begin{align*}
    f\in L^\infty(\mathcal{Z}^+_{G_1,\dots,G_{s-1},G_s}\cap \mathcal{Z}^+_{G_1,\dots,G_{s-1},G'_s})
\end{align*}
is orthogonal to $g\in L^{\infty}(\mu)$ with $\nnorm{g}^+_{G_1,\dots, G_{s-1}, G_s+G_s'}=0$. Indeed, if the latter statement holds, then 
\begin{align*}
    0 &= \int f\cdot (\bar{f}-\mathbb{E}(\bar{f}\vert \CZ^+_{G_1,\dots, G_{s-1}, G_s+G_s'}))\ d\mu\\
    &= \norm{f}_{L^2(\mu)}^2 - \norm{\mathbb{E}(f\vert \CZ^+_{G_1,\dots, G_{s-1}, G_s+G_s'})}_{L^2(\mu)}^2,
\end{align*}
and so $f = \mathbb{E}(f\vert \CZ^+_{G_1,\dots, G_{s-1}, G_s+G_s'})\in Z^+_{G_1,\dots, G_{s-1}, G_s+G_s'}$. 

By an $L^2(\mu)$ approximation argument, it suffices to consider $$f = \CD_{G_1,\dots,G_{s-1},G_{s}}((f_{\ueps})_{\ueps\in\{0,1\}^{s}_{\ast}};\psi)$$ for 1-bounded functions $f_\ueps\in L^\infty(\mu)$ and 
$\psi\in\Nil_s(G)$ for $G = G_1\times G_1\times \cdots \times G_s\times G_s$. Since the nilsequence $\psi$ can be approximated in  $$\norm{v}_{\ell^2(G)}:=\Bigbrac{\oE_{\bm\in G}|v(\bm)|^2}^{1/2}$$ by a linear combination of nilsequences
\begin{align*}
  \psi_s(\bm_s,\bm_s') \psi_{s-1}(\bm_1, \bm_1', \ldots, \bm_{s-1},\bm_{s-1}')  
\end{align*}
(where $\bm_i,\bm_i'\in G_i$),
we can assume that $\psi$ takes such form.

Applying Lemma \ref{L: substituting factor} with $\CY =  \mathcal{Z}^+_{G_1,\dots,G_{s-1},G'_s}$,
we can take every $f_\veps$ to be $\mathcal{Z}^+_{G_1,\dots,G_{s-1},G'_s}$-measurable. 
A similar approximation argument as before allows us to take each $f_\ueps$ to equal
$f_{\ueps}=\mathcal{D}_{G_1,\dots,G_{s-1},G_{s}'}((f_{\ueps,\ueps'})_{\ueps'\in\{0,1\}^{s}_{\ast}}; \psi_{\ueps})$ for some $\mathcal{Z}^+_{G_1,\dots,G_{s-1},G'_s}$-measurable functions and nilsequences $\psi_\ueps\in\Nil_s(G)$, all of which can be assumed to be 1-bounded. 

Inspecting the definition of dual functions, we observe that $f$ can be recast  as
\begin{multline*}
    f = \E_{\bm_s,\bm_s'\in G_s} \psi_s(\bm_s,\bm_s')\cdot T^{\sfloor{\bm_s'}-\sfloor{\bm_s}}f_{\underline{0}1} \\ \CD_{G_1,\dots,G_{s-1}}((f_{\ueps 0}\cdot T^{\sfloor{\bm_s'}-\sfloor{\bm_s}} f_{\ueps 1})_{\ueps\in\{0,1\}^{s-1}_{\ast}}; \psi_{s-1}).
\end{multline*}
Arguing similarly, we can recast $f_{\underline{0}1}$ as 
\begin{multline*}
    f_{\underline{0}1} = \E_{\bn_s,\bn_s'\in G_s'} \psi_{\underline{0}1, s}(\bn_s,\bn_s')\cdot T^{\sfloor{\bn_s'}-\sfloor{\bn_s}}f_{\underline{0}1, \underline{0}1} \\ \CD_{G_1,\dots,G_{s-1}}((f_{\underline{0}1, \ueps 0}\cdot T^{\sfloor{\bn_s'}-\sfloor{\bn_s}} f_{\underline{0}1, \ueps 1})_{\ueps\in\{0,1\}^{s-1}_{\ast}};\psi_{\underline{0}1,s-1}).
\end{multline*}
Plugging in the formula for $f_{\underline{0}1}$ into the formula for $f$ and setting
\begin{multline*}
    D_{\bm_s,\bm_s', \bn_s,\bn_s'} = \psi_s(\bm_s,\bm_s')\cdot\CD_{G_1,\dots,G_{s-1}}((f_{\ueps 0}\cdot T^{\sfloor{\bm_s'}-\sfloor{\bm_s}} f_{\ueps 1})_{\ueps\in\{0,1\}^{s-1}_{\ast}}; \psi_{s-1})\\
    \psi_{\underline{0}1, s}(\bn_s,\bn_s')\cdot T^{\sfloor{\bm_s'}-\sfloor{\bm_s}}\CD_{G_1,\dots,G_{s-1}}((f_{\underline{0}1, \ueps 0}\cdot T^{\sfloor{\bn_s'}-\sfloor{\bn_s}} f_{\underline{0}1, \ueps 1})_{\ueps\in\{0,1\}^{s-1}_{\ast}};\psi_{\underline{0}1,s-1}),
\end{multline*}
we deduce that
\begin{align*}
    f = \E_{\bm_s,\bm_s'\in G_s} \E_{\bn_s,\bn_s'\in G_s'} T^{\sfloor{\bm_s'} +\sfloor{\bn_s'}- \sfloor{\bm_s}-\sfloor{\bn_s}}f_{\underline{0}1, \underline{0}1} \cdot D_{\bm_s,\bm_s', \bn_s,\bn_s'}.
\end{align*}
By incorporating emerging error terms into $D_{\bm_s,\bm_s', \bn_s,\bn_s'}$, we can reduce to the case where $f$ equals
\begin{align*}
    f = \E_{\bm_s,\bm_s'\in G_s} \E_{\bn_s,\bn_s'\in G_s'} 
    T^{\sfloor{\bm_s'+\bn_s'}- \sfloor{\bm_s+\bn_s}}f_{\underline{0}1, \underline{0}1} \cdot D_{\bm_s,\bm_s', \bn_s,\bn_s'}.   
\end{align*}

The point of all these maneuvers is that we have by now expressed $f$ as (essentially) an ergodic average along $G_s + G_s'$ twisted by 
1-bounded $\CZ^+_{G_1, \ldots, G_{s-1}}$-measurable functions $D_{\bm_s,\bm_s', \bn_s,\bn_s'}$. By removing the dual function in a more or less standard manner, we will be able to control $\int f\cdot g\; d\mu$ by $\nnorm{g}^+_{G_1,\dots, G_{s-1}, G_s+G_s'}$, and this will conclude the proof of the forward direction.

Using the invariance of measure and the Cauchy-Schwarz inequality, we can bound
\begin{align*}
    \abs{\int f\cdot g\; d\mu} &= \abs{\int f_{\underline{0}1, \underline{0}1} \cdot \E_{\bm_s,\bm_s'\in G_s} \E_{\bn_s,\bn_s'\in G_s'} 
    T^{\sfloor{\bm_s+\bn_s}-\sfloor{\bm_s'+\bn_s'}}(g\cdot D_{\bm_s,\bm_s', \bn_s,\bn_s'})\; d\mu}\\
    &\leq \norm{\E_{\bm_s,\bm_s'\in G_s} \E_{\bn_s,\bn_s'\in G_s'} 
    T^{\sfloor{\bm_s+\bn_s}-\sfloor{\bm_s'+\bn_s'}}(g\cdot D_{\bm_s,\bm_s', \bn_s,\bn_s'})}_{L^2(\mu)}.
\end{align*}
Applying the van der Corput lemma (e.g. \cite[Lemma 2.1]{DKS22}), composing the resulting integral with $T^{\sfloor{\bm_s'+\bn_s'}- \sfloor{\bm_s+\bn_s}}$, handling error terms, and then pigeonholing in $\bm_s,\bm_s',\bn_s,\bn_s'$, we deduce that the left-hand side vanishes whenever
\begin{align}\label{E: after vdC}
    \E_{\bh_s,\bh_s'\in G_s} \E_{\bk_s,\bk_s'\in G_s'}\abs{\int D_{\substack{\bh_s,\bh_s', \bk_s,\bk_s'}} \cdot g\cdot T^{\sfloor{\bh_s+\bk_s}-\sfloor{\bh_s'+\bk_s'}}\overline{g}\; d\mu}^2
\end{align}
vanishes for some
1-bounded $\CZ^+_{G_1, \ldots, G_{s-1}}$-measurable $D_{\substack{\bh_s,\bh_s', \bk_s,\bk_s'}}$.

The last step is to apply Lemma \ref{L: correlation with dual} and \cite[Lemma 4.1]{DKKST24} in order to bound the expression above in terms of 
\begin{align*}
    \E_{\bh_s,\bh_s'\in G_s} \E_{\bk_s,\bk_s'\in G_s'}\brac{\nnorm{g\cdot T^{\sfloor{\bh_s+\bk_s}-\sfloor{\bh_s'+\bk_s'}}\overline{g}}_{G_1, \ldots, G_{s-1}}^+}^{2^s} = \brac{\nnorm{g}_{G_1, \ldots, G_{s-1}, G_s+G_s'}^+}^{2^{s+1}}.
\end{align*}
Analyzing how we arrived at \eqref{E: after vdC}, we see that each $D_{\substack{\bh_s,\bh_s', \bk_s,\bk_s'}}$ is a linear combination of $O_{\ell,s}(1)$ 1-bounded degree-$s$ nilsequences times 4 dual functions from $\CB_{G_1, \ldots, G_{s-1}}$. Using Lemma~\ref{L: convex hull}, we can therefore find absolute $C=C(\ell,s)>0$ such that $$D_{\substack{\bh_s,\bh_s', \bk_s,\bk_s'}}\in C\cdot \CB_{G_1, \ldots, G_{s-1}},$$ and so by Lemma~\ref{L: correlation with dual}, the average \eqref{E: after vdC} vanishes whenever $\nnorm{g}_{G_1, \ldots, G_{s-1}, G_s+G_s'}^+=0.$ This concludes the proof.
\end{proof}

\subsection{Induction scheme}

The next goal is to present an induction scheme that allows us to upgrade Theorem \ref{T: fundamental concatenation} to Theorem \ref{T: relative concatenation}. Our induction is based on the following formalism.
\begin{definition}\label{D: CIS}
   Let $G$ be an abelian group, $\text{sub}(G)$ be the set of all subgroups of $G$, and $\text{sub}(G)^{\omega}:=\sqcup_{n=1}^{\infty}\text{sub}(G)^{n}$. We say that a subset $\mathcal{C}$ of $\text{sub}(G)^{\omega}$ is a \emph{CMS-family} if the followings properties hold:
   \begin{enumerate}
 		\item (Concatenation) For all $d\in\mathbb{N}$ and subgroups $H_{1},\dots,H_{d},H'_{d}\subseteq G$,
 		$$(H_{1},\dots,H_{d-1},H_{d}), (H_{1},\dots,H_{d-1},H'_{d})\in \mathcal{C}\Rightarrow (H_{1},\dots,H_{d-1},H_{d}+H'_{d})\in \mathcal{C}.$$
 		\item (Monotonicity) For all $d\in\mathbb{N}$ and subgroups $H_{1},\dots,H_{d}\subseteq G$,  
 		$$(H_{1},\dots,H_{d-1})\in \mathcal{C}\Rightarrow (H_{1},\dots,H_{d-1},H_{d})\in \mathcal{C}.$$
 		\item (Symmetry) For all $d\in\mathbb{N}$, subgroups $H_{1},\dots,H_{d}\subseteq G$ and permutations $\tau:[d]\to[d]$, we have
 		$$(H_{1},\dots,H_{d})\in \mathcal{C}\Rightarrow (H_{\tau(1)},\dots,H_{\tau(d)})\in \mathcal{C}.$$
 	\end{enumerate}
\end{definition}

For $f\in L^\infty(\mu)$, consider the following family of subgroups:
\begin{align*}
    \CC_f \colon= \{(G_1, \ldots, G_s)\in \text{sub}(\R^\ell)^\omega:\; f\in Z^+_{G_1,\ldots, G_s}\}.
\end{align*}
As it turns out, $\CC_f$ satisfies the conditions of Definition \ref{D: CIS}.
\begin{proposition}\label{P: C_f is CIS}
    For every $f\in L^\infty(\mu)$, the collection $\mathcal{C}_{f}$ is a CMS-family. 
\end{proposition}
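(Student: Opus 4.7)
The plan is to verify each of the three defining properties of a CMS-family by invoking results already established in the previous sections, namely the basic properties of the factors $\CZ^+_{G_1,\ldots,G_s}$ collected in Lemma~\ref{L: basic properties of factors}, together with the fundamental concatenation theorem (Theorem~\ref{T: fundamental concatenation}). None of the three properties should require new input beyond these.

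First I would handle \emph{symmetry}. Suppose $(H_1,\ldots,H_d)\in\CC_f$, meaning $f\in Z^+_{H_1,\ldots,H_d}$. By the symmetry clause (i) of Lemma~\ref{L: basic properties of factors}, $\CZ^+_{H_{\tau(1)},\ldots,H_{\tau(d)}} = \CZ^+_{H_1,\ldots,H_d}$ for every permutation $\tau$ of $[d]$, and hence $f\in Z^+_{H_{\tau(1)},\ldots,H_{\tau(d)}}$, giving $(H_{\tau(1)},\ldots,H_{\tau(d)})\in\CC_f$.

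Next I would dispatch \emph{monotonicity}. If $(H_1,\ldots,H_{d-1})\in\CC_f$, i.e. $f\in Z^+_{H_1,\ldots,H_{d-1}}$, then by the monotonicity clause (ii) of Lemma~\ref{L: basic properties of factors}, $\CZ^+_{H_1,\ldots,H_{d-1}}\subseteq\CZ^+_{H_1,\ldots,H_{d-1},H_d}$ for any finitely generated subgroup $H_d\subseteq\R^\ell$, so $f\in Z^+_{H_1,\ldots,H_{d-1},H_d}$, which places $(H_1,\ldots,H_{d-1},H_d)\in\CC_f$.

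Finally, \emph{concatenation} is essentially a restatement of Theorem~\ref{T: fundamental concatenation}. Assuming $(H_1,\ldots,H_{d-1},H_d), (H_1,\ldots,H_{d-1},H_d')\in\CC_f$, we have
\[
f\in L^\infty\bigl(\CZ^+_{H_1,\ldots,H_{d-1},H_d}\bigr)\cap L^\infty\bigl(\CZ^+_{H_1,\ldots,H_{d-1},H_d'}\bigr).
\]
Theorem~\ref{T: fundamental concatenation} identifies this intersection with $L^\infty(\CZ^+_{H_1,\ldots,H_{d-1},H_d+H_d'})$, so $f\in Z^+_{H_1,\ldots,H_{d-1},H_d+H_d'}$ and $(H_1,\ldots,H_{d-1},H_d+H_d')\in\CC_f$. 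Since none of the three verifications requires any computation beyond quoting the cited statements, I expect the proof to be short; the only subtlety is to make sure the reader sees that the ``concatenation'' axiom is really the content of the fundamental concatenation theorem, and the real work has already been carried out there.
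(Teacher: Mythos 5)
Your proposal is correct and follows exactly the same approach as the paper: the symmetry and monotonicity clauses of Lemma~\ref{L: basic properties of factors} handle the first two axioms, and Theorem~\ref{T: fundamental concatenation} supplies the concatenation axiom. The paper states this in a single sentence; you simply spell out the same argument in more detail.
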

\begin{proof}
    The Monotonicity and Symmetry properties follow from Lemma \ref{L: basic properties of factors} while the Concatenation property follows from Theorem \ref{T: fundamental concatenation}.
\end{proof}

We will derive Theorem \ref{T: relative concatenation} by combining Proposition \ref{P: C_f is CIS} (which one could see as the ``base case'' of induction) with Proposition \ref{P: CIS induction} below (which provides the inductive scheme).
\begin{proposition}\label{P: CIS induction}
	Let $s\in\N_0$, $d,d'\in\N$, and $G_1, \ldots, G_s, H_1, \ldots, H_d, H_1', \ldots, H_{d'}'$ be subgroups of an abelian group $G$.
    Suppose that $\mathcal{C}$ is a CMS-family. Then the following statements hold:
\begin{enumerate}
    \item We have 
    \begin{gather*}
        (G_{1},\dots,G_{s},H_{1},\dots,H_{d}),(G_{1},\dots,G_{s},H'_{1},\dots,H'_{d'})\in \mathcal{C}\\
        \Rightarrow(G_{1},\dots,G_{s},(H_{i}+H'_{i'})_{i\in[d], i'\in[d']})\in \mathcal{C}.
    \end{gather*}
    \item If $H_{i}+H'_{i'}=H$ for all $i\in[d], i'\in[d']$ for some subgroup $H\subseteq G$, then 
    \begin{gather*}
        (G_1, \ldots, G_s, H_{1},\dots,H_{d}), (G_1, \ldots, G_s, H'_{1},\dots,H'_{d'})\in \mathcal{C}\\
        \Rightarrow (G_1, \ldots, G_s, H^{\times (d+d'-1)})\in \mathcal{C}.
    \end{gather*}
\end{enumerate}
		
%
\end{proposition}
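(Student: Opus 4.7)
The plan is to prove both parts of Proposition~\ref{P: CIS induction} by a simultaneous induction on $d + d'$, handling (i) first in each inductive layer and deducing (ii) in parallel. The base case $d = d' = 1$ is immediate: the Concatenation axiom applied to $(G_1, \ldots, G_s, H_1)$ and $(G_1, \ldots, G_s, H'_1)$ yields $(G_1, \ldots, G_s, H_1 + H'_1) \in \mathcal{C}$, which covers both (i) and (ii) since $d + d' - 1 = 1$. For the inductive step, by the symmetry of the claim under swapping the two input tuples, I may assume $d \geq 2$.

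For part (i) there are two subcases. If $d' = 1$, I would apply the inductive hypothesis (i) at $(d-1, 1)$ with the prefix $(G_1, \ldots, G_s, H_d)$, using the tuples $(G_1, \ldots, G_s, H_d, H_1, \ldots, H_{d-1})$ (from the first input by Symmetry) and $(G_1, \ldots, G_s, H_d, H'_1)$ (from the second by Monotonicity and Symmetry) to obtain $(G_1, \ldots, G_s, H_d, H_1 + H'_1, \ldots, H_{d-1} + H'_1) \in \mathcal{C}$. Since Monotonicity applied to $(G_1, \ldots, G_s, H'_1)$ also gives $(G_1, \ldots, G_s, H'_1, H_1 + H'_1, \ldots, H_{d-1} + H'_1) \in \mathcal{C}$, Concatenation at the slot where the two tuples disagree ($H_d$ versus $H'_1$, placed last after Symmetry) produces the desired tuple $(G_1, \ldots, G_s, H_1 + H'_1, \ldots, H_d + H'_1) \in \mathcal{C}$. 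If instead $d, d' \geq 2$, I would invoke the inductive hypothesis twice: (i) at $(d-1, d')$ with prefix $(G_1, \ldots, G_s, H_d)$ yields $T_1 := (G_1, \ldots, G_s, H_d, (H_i + H'_{i'})_{i \in [d-1], i' \in [d']}) \in \mathcal{C}$, and (i) at $(d, d'-1)$ with prefix $(G_1, \ldots, G_s, H'_{d'})$ yields $T_2 := (G_1, \ldots, G_s, H'_{d'}, (H_i + H'_{i'})_{i \in [d], i' \in [d'-1]}) \in \mathcal{C}$. Using Monotonicity I would inflate $T_1$ by the $d'-1$ missing sums $\{H_d + H'_{i'} : i' \in [d'-1]\}$ and $T_2$ by the $d-1$ missing sums $\{H_i + H'_{d'} : i \in [d-1]\}$, arriving at two tuples of the common size $s + dd'$ whose multisets coincide on $G_1, \ldots, G_s$ together with all $dd' - 1$ sums $(H_i + H'_{i'})_{(i,i') \neq (d, d')}$ and which differ only in the slot holding $H_d$ or $H'_{d'}$. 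A Symmetry rearrangement places the distinguished coordinate last, and Concatenation there produces $(G_1, \ldots, G_s, (H_i + H'_{i'})_{(i,i') \neq (d,d')}, H_d + H'_{d'}) \in \mathcal{C}$, which is the target tuple up to permutation.

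Part (ii) follows from the same mechanism. When $d' = 1$, the hypothesis $H_i + H'_1 = H$ identifies the conclusion of (i) at $(d, 1)$ with $(G_1, \ldots, G_s, H^{\times d}) \in \mathcal{C}$, so this case is subsumed by part (i). When $d, d' \geq 2$, the inductive hypotheses (ii) at $(d-1, d')$ and $(d, d'-1)$ (with the same prefix tricks) yield $(G_1, \ldots, G_s, H_d, H^{\times d+d'-2})$ and $(G_1, \ldots, G_s, H'_{d'}, H^{\times d+d'-2})$ in $\mathcal{C}$; these already share the common size $s + d + d' - 1$ and differ in the single slot $H_d$ versus $H'_{d'}$, so Concatenation together with $H_d + H'_{d'} = H$ yields $(G_1, \ldots, G_s, H^{\times d+d'-1}) \in \mathcal{C}$. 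The main obstacle I anticipate is that the Monotonicity axiom only permits enlarging tuples, so the naive idea of stripping a coordinate from an input to invoke a smaller instance of the inductive hypothesis is unavailable. The trick is to run Monotonicity in the opposite direction, inflating the two intermediate tuples produced by distinct inductive instances up to a common target size, with the added sums chosen so that after Symmetry they differ in exactly one coordinate; once this alignment is arranged, the three CMS-axioms compose cleanly and (ii) comes essentially for free from the same setup.
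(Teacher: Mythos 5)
Your proof is correct. It is driven by the same underlying toolbox as the paper's — vary the ``prefix'' (the paper allows an arbitrary multifamily $\bJ$ of subgroups in front, which is exactly what your prefix trick buys you, since the claim is universally quantified over $s$ and $G_1,\dots,G_s$), inflate with Monotonicity, rearrange with Symmetry, and fuse the single discrepant slot with Concatenation — but the organization is genuinely different. For (i), the paper runs a two-layer scheme: first show the implication for all $(d,1)$ and $(1,d')$ by single induction, then a separate induction $(d,d')\Rightarrow(d+1,d')$ that reuses the $(1,d')$ step. You collapse this into a single induction on $d+d'$, invoking the hypothesis at $(d-1,d')$ and $(d,d'-1)$ with judiciously chosen prefixes, inflating both intermediate tuples to the common size $s+dd'$ so that they agree everywhere except one coordinate, then concatenating. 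For (ii), the paper runs a separate lattice argument on $[0,d]\times[0,d']\setminus\{(0,0)\}$ with a bespoke notion of ``$(i,i')$ good'', and notably does \emph{not} reuse part (i); you instead thread (ii) through the same $d+d'$ induction, feeding part (i) in at $d'=1$ (which is sound because you establish (i) before (ii) at each layer, so there is no circularity) and then using (ii) at $(d-1,d')$ and $(d,d'-1)$ with the same prefix-and-concatenate step. Your unified treatment is shorter and somewhat cleaner than the paper's, at the cost of coupling (ii) to (i); the paper's version keeps the two parts independent, which may be preferable if one wanted (ii) in a setting where (i) is unavailable.
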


\begin{proof}[Proof of Theorem \ref{T: relative concatenation} assuming Proposition \ref{P: CIS induction}]
    Let $f\in L^\infty(\mu)$. To prove the first claim in the language of CMS-families, it suffices to show that if $\CC_f$ contains 
    \begin{align*}
        (G_{1},\dots,G_s,H_{1},\dots,H_{d})\quad \textrm{and}\quad (G_{1},\dots,G_s,H'_{1},\dots,H_{d}'),
    \end{align*}
    then it also contains 
    \begin{align*}
        (G_{1},\dots,G_{s},(H_{i}+H'_{i'})_{i\in[d], i'\in[d']}).
    \end{align*}
    This follows from the fact that $\CC_f$ is a CMS-family (Proposition \ref{P: C_f is CIS}) and the induction scheme given by the first part of Proposition \ref{P: CIS induction}. Similarly, the second claim follows from Proposition \ref{P: C_f is CIS} and the second part of Proposition \ref{P: CIS induction}.
\end{proof}

We now prove Proposition \ref{P: CIS induction}.

\begin{proof}[Proof of Proposition \ref{P: CIS induction}]

To simplify the notation, we write $[H_{1},\dots,H_{d}]$ if $(H_{1},\dots,H_{d})$ belongs to $\mathcal{C}$, and we write $A+B$ to indicate that claims $A$ and $B$ both hold.

We start with the first part of the proposition.  Our claim can then be rephrased in this language as
\begin{align*}
    [G_1, \ldots, G_s, H_1, \ldots, H_d] + [G_1, \ldots, G_s, H'_1, \ldots, H_d'] \Rightarrow   [G_1, \ldots, G_s, (H_{i}+H'_{i'})_{i\in[d], i'\in[d']}].
\end{align*}

For any $(d,d')\in\N^2$, we say that $(d,d')$ is \textit{good} if the implication
	\begin{equation}\label{E: (d,d')}
	\begin{split}
	[\bold{J},H_{1},\dots,H_d]+[\bold{J},H'_{1},\dots,H'_{d'}]
	\Rightarrow[\bold{J},(H_{i}+H'_{i'})_{i\in[d], i'\in[d']}]
	\end{split}
	\end{equation} 
	holds for any finite multifamily of subgroups $\bold{J}=(K_{j})_{j\in J}$ of $G$ (that is, $\vert J\vert<\infty$, elements are counted with multiplicity, and the family may be empty).
	Then our goal is to show that every $(d,d')\in\N^2$ is good. We will prove this by double induction, first showing that all pairs $(d,1)$ and $(1,d')$ are good and then establishing the statement for arbitrary $(d,d')\in\N^2$.
    
    That $(1,1)$ is good is equivalent to the Concatenation property. Suppose that $(d,1)$ is good for some $d\in\N$, i.e. 
    \begin{align}\label{E: (d,1)}
        [\bJ, H_1, \ldots, H_d] + [\bJ, H_1'] \Rightarrow [\bJ, (H_i+H_1')_{i\in[d]}]
    \end{align}
    holds for all $\bJ$. We want to show that $(d+1,1)$ is good as well. Applying Monotonicity, and then Symmetry as well as the implication \eqref{E: (d,1)} for the family $\bJ\cup\{H_{d+1}\}$, we obtain the implication
    \begin{align*}
        [\bJ, H_1, \ldots, H_{d+1}] + [\bJ, H_1'] &\Rightarrow [\bJ, H_1, \ldots, H_{d+1}] + [\bJ, H_1', H_{d+1}]\\
        &\Rightarrow [\bJ, H_{d+1}, (H_i+H_1')_{i\in[d]}].
    \end{align*}
    It remains to concatenate $H_1'$ with $H_{d+1}$. By Monotonicity, followed by Symmetry and Concatenation,
    we deduce that
    \begin{align*}
        [\bJ, H_{d+1}, (H_i+H_1')_{i\in[d]}] + [\bJ, H_1'] &\Rightarrow [\bJ, H_{d+1}, (H_i+H_1')_{i\in[d]}] + [\bJ, H_1', (H_i+H_1')_{i\in[d]}]\\
        &\Rightarrow [\bJ, (H_i+H_1')_{i\in[d+1]}].
    \end{align*}
    Hence $(d+1,1)$ is good. By induction and symmetry, $(d,1)$ and $(1,d')$ are good for any $d,d'\in\N$.

       Now, suppose that $(d,d')\in\N^2$ is good; we want to show that $(d+1,d')$ is good. In doing so, we will inductively apply the goodness of $(d,d')$ and $(1,d')$. Applying Monotonicity first and then Symmetry and \eqref{E: (d,d')} for the family $\bJ\cup\{H_{d+1}\}$ (this is where we apply the case $(d,d')$), we get
       \begin{align*}
           [\bold{J},H_{1},\dots,H_{d+1}]+[\bold{J},H'_{1},\dots,H'_{d'}] &\Rightarrow [\bold{J},H_{1},\dots,H_{d+1}]+[\bold{J},H'_{1},\dots,H'_{d'}, H_{d+1}]\\
           &\Rightarrow [\bold{J}, H_{d+1}, (H_{i}+H'_{i'})_{i\in[d], i'\in[d']}].
       \end{align*}
        It remains to concatenate $H_{d+1}$ with the subgroups $H'_{i'}$. By Monotonicity, followed by Symmetry and  \eqref{E: (d,d')} for the family $\bJ\cup\{H_{i}+H'_{i'}:\; i\in[d], i'\in[d']\}$ (this is where we apply the case $(1,d')$), we obtain
        \begin{multline*}
            [\bold{J}, H_{d+1}, (H_{i}+H'_{i'})_{i\in[d], i'\in[d']}] + [\bold{J},H'_{1},\dots,H'_{d'}]\\ \Rightarrow [\bold{J}, H_{d+1}, (H_{i}+H'_{i'})_{i\in[d], i'\in[d']}] + [\bold{J},H'_{1},\dots,H'_{d'}, (H_{i}+H'_{i'})_{i\in[d], i'\in[d']}]\\
            \Rightarrow [\bold{J}, (H_{i}+H'_{i'})_{i\in[d+1], i'\in[d']}].
        \end{multline*}
        This establishes the case $(d+1,d')$. The case $(d,d'+1)$ follows by an identical argument, and hence the claim holds by induction.

        \

	We move on to the second part of the proposition. 
	Our starting assumptions can then be rephrased 
    as $[\bJ, H_{1},\dots,H_{d}]$ and $[\bJ, H'_{1},\dots,H'_{d'}]$, where this time we fix $\bJ = \{G_1, \ldots, G_s\}$.
	For $(i,i')\in([0,d]\times[0,d'])\backslash\{(0,0)\}$, we say that $(i,i')$ is \emph{good} if 
    $$[\bJ, H_{1},\dots,H_{d-i},H'_{1},\dots,H'_{d'-i'},H^{\times (i+i'-1)}].$$ 
    Then our goal is to show that $(d,d')$  is good, which will follow from the following subclaims:
    \begin{enumerate}
        \item if $(i+1,i')$ and $(i,i'+1)$ are good for some  $i\in[0,d-1], i'\in[0,d'-1]$, then so is $(i+1,i'+1)$;
        \item $(i,0)$ and $(0,i')$ are good for all $i\in[d], i'\in[d']$.
    \end{enumerate}
      It is not hard to see that both claims imply together that $(d,d')$ is good; to see this, visualize $(i,i')$ as lattice points on the rectangle $([0,d]\times[0,d'])\backslash\{(0,0)\}$.
	
     For the first subclaim, we infer from the Symmetry and Concatenation properties that
	\begin{equation}\nonumber
	\begin{split}
	&\quad [\bJ, H_{1},\dots,H_{d-(i+1)},H'_{1},\dots,H'_{d'-(i'+1)},H'_{d'-i'},H^{\times (i+i')}]
	\\&+[\bJ, H_{1},\dots,H_{d-(i+1)},H'_{1},\dots,H'_{d'-(i'+1)},H_{d-i},H^{\times (i+i')}]
	\\& \Rightarrow[\bJ, H_{1},\dots,H_{d-(i+1)},H'_{1},\dots,H'_{d'-(i'+1)},H^{\times(i+i'+1)}],
	\end{split}
	\end{equation} 
	which implies that $(i+1,i'+1)$ is good whenever $(i+1,i')$ and $(i,i'+1)$ are. 
	
	To show that $(0,i')$ is good, we use the Monotonicity property to deduce that
	$$[\bJ, H_{1},\dots,H_{d}]\Rightarrow [\bJ, H_{1},\dots,H_{d},H'_{1},\dots,H'_{d'-i'},H^{\times(i'-1)}]$$
	for all $i'\in[d']$. Since $[\bJ, H_{1},\dots,H_{d}]$ by assumption, we deduce that $(0,i')$ is good for all $i'\in[d']$. By an analogous argument, $(i,0)$ is good for all $i\in[d]$. This proves the second subclaim and thus finishes the proof.
\end{proof}

\section{Seminorm smoothing}\label{S: smoothing}
Our next goal is to derive improved seminorm estimates on arbitrary multiple ergodic averages of commuting transformations along Hardy sequences of polynomial growth. In \cite[Theorem 10.1]{DKKST24}, we obtained very general estimates on such averages; in this section, we upgrade them to ones that are more useful for our applications via a new variant of the seminorm smoothing method from \cite{DKKST24, FrKu22b, FrKu22a}. This is the first step in proving that the difference and product conditions in Definition \ref{D: good for joint ergodicity} are \textit{sufficient} for joint ergodicity.

To state our main results, we need the following definitions. 

\begin{definition}[Ordered Hardy family]\label{D: ordered Hardy family}
    Let $\CQ = (q_1, \ldots, q_m)\subseteq\CH$ be a family of Hardy sequences such that $q_i\succ 1$ for all $i\in[m]$. We say that $\CQ$ is \textit{an ordered Hardy family} (or simply \textit{ordered}) if there exist integers $1\leq m_1\leq m_2\leq m_3\leq m$ such that the following holds:
        \begin{enumerate}
            \item $1\prec q_1(x) \prec \cdots \prec q_{m_1}(x)\ll\log x$;
            \item $\log x\prec q_{m_1+1}(x) \prec \cdots \prec q_{m_2}(x) \prec x^\veps$ for every $\veps>0$;
            \item for $m_2 < i \leq m_3$, we have $q_i(x) = x^{l_i}$ for positive integers $l_{m_2+1}< \cdots~<~l_{m_3}$;
            \item $x^\veps \prec q_{m_3+1}(x) \prec \cdots \prec q_m(x)$ for some $\veps>0$, and for each $m_3 < i\leq m$, the function $q_i$ is strongly nonpolynomial.
        \end{enumerate}    
\end{definition}
For instance, the family $(\log x, (\log x)^2, x, x^2, x^{2/3}, x\log x, x^{3/2})$ is ordered. It is a priori strange to put monomials $x, x^2$ before slower growing strongly nonpolynomial functions like $x^{2/3}$ of positive fractional degree, but this ordering turns out to come out naturally in the seminorm bounds from \cite{DKKST24} stated below. More specifically, strongly nonpolynomial functions of positive fractional degree like $x^{2/3}$ can be Taylor-expanded as polynomials of arbitrarily high degree on short intervals, and in this sense they attain higher degrees than polynomials $x, x^2, \ldots$ whose Taylor expansions are polynomials of the same degree.
\begin{definition}[Extended span]
    Let $\CQ = (q_1, \ldots, q_m)\subseteq\CH$ be an ordered Hardy family. Then the \textit{extended span} of $\CQ$, denoted by $\extspan_\R\CQ$, is the collection of sequences $a\in\CH$ satisfying
    $$\lim_{x\to\infty}\abs{a(x) - \sum\limits_{i=1}^m \beta_i q_i(x) - \beta_0} = {0}$$ for some $\beta_0, \ldots,\beta_m\in\R$.
\end{definition}

 
\begin{definition}[Leading coefficients]\label{d43}
    Let $\CQ = (q_1, \ldots, q_m)\subseteq\CH$ be an ordered Hardy family, and suppose that $\ba\in(\extspan_\R\CQ)^\ell$ satisfies $$\lim_{x\to\infty}\abs{\ba(x) - \sum\limits_{i=1}^m \bbeta_i q_i(x) - \bbeta_0} = {0}$$ for some $\bbeta_0, \ldots,\bbeta_m\in\R^\ell$. Then we call $\bbeta_i$ the \textit{leading coefficient} of $\ba$ if $\bbeta_i\neq \mathbf{0}$ and $\bbeta_{i'} = \mathbf{0}$ for all $i<i'\leq m$ {(note that the choices of $\bbeta_{i}$ are unique)}. If $\bbeta_0 = \cdots = \bbeta_m = \bf{0}$, then we say that $\ba$ has leading coefficient $\bf{0}.$
\end{definition}

All the concepts in this section involving degrees, leading coefficients and independence are defined with respect to a fixed ordered family. We will not specify the family as it is always clear from the context.

Then, \cite[Theorem 10.1]{DKKST24} gives the following seminorm bounds. 
     \begin{theorem}\label{T: old estimates}

        Let $\ell\in\N$. Let $\CQ\subseteq\CH$ be an ordered Hardy family, and let $a_1, \ldots, a_\ell\in\extspan_\R\CQ$ with $a_1\succ \log$. Then there exists a positive integer $s=O_{\ell, \CQ}(1)$ and vectors $\bv_{1},\dots,\bv_{s}$ coming from the set of the leading coefficients of $$\{a_1 \be_1,\; a_1 \be_1 - a_2 \be_2,\; \ldots,\; a_1 \be_1 - a_\ell \be_\ell\},$$
         such that for all systems $(X, \CX, \mu, T_1, \ldots, T_\ell)$ and functions $f_1, \ldots, f_\ell\in L^\infty(\mu)$, we have 
\begin{align*}
         \lim_{N\to\infty} 
         \norm{\E_{n\in [N]} 
         \prod_{j\in[\ell]}  T_{j}^{\sfloor{a_{j}(n)}} f_{j}}_{L^2(\mu)}=0
    \end{align*}
    whenever $\nnorm{f_1}_{\bv_{1},\dots,\bv_{s}}=0$.
    \end{theorem}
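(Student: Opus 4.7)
The plan is to prove this by PET induction, the standard strategy for establishing seminorm control along polynomial and Hardy sequences (see \cite{Fr15, Ts22, DKKST24}). First I would apply the $L^2$-isometry trick: composing the inner average with $T_1^{-\sfloor{a_1(n)}}$ (which preserves $L^2(\mu)$ norms) rewrites it as
\[
\E_{n\in[N]} f_1\cdot \prod_{j=2}^\ell T^{\sfloor{a_j(n)}\be_j-\sfloor{a_1(n)}\be_1} f_j,
\]
isolating $f_1$ and replacing the translation directions for $j\geq 2$ by the vector-valued sequences $(a_j(n)\be_j - a_1(n)\be_1)_n$. This alone explains why the leading coefficients of $a_1\be_1 - a_j\be_j$ (rather than those of $a_j\be_j$ individually) are what ultimately appear in the bounding seminorm, together with the leading coefficient of $a_1\be_1$ itself.

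Then I would run a PET-type induction on the tuple $\{a_1\be_1\}\cup\{a_j\be_j - a_1\be_1:2\leq j\leq \ell\}$, using van der Corput's inequality at each step to reduce complexity. Each application replaces a vector sequence $\ba(n)$ by its finite difference $\ba(n+h)-\ba(n)$, modulo integer-part errors of size $O(1)$ per coordinate which can be absorbed via a pigeonhole argument over finitely many shift patterns. The complexity is measured by a well-ordering on multisets of Hardy sequences that respects the stratification of the ordered family $\CQ$ into sub-logarithmic, sub-polynomial, integer-monomial, and strongly nonpolynomial bands. The key fact is that a single finite difference strictly lowers a sequence within this stratification: integer monomials descend in the usual way; strongly nonpolynomial functions of positive fractional degree descend in the Hardy-field order; and sub-polynomial terms like $(\log x)^k$ descend similarly. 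After $O_{\ell,\CQ}(1)$ iterations, every surviving sequence has descended to a linear term whose coefficient is an integer multiple of the leading coefficient of one of the original sequences $a_1\be_1$ or $a_1\be_1-a_j\be_j$.

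At this base case, Cauchy--Schwarz applied iteratively, together with the very definition of the generalized box seminorm $\nnorm{\cdot}_{\bv_1,\ldots,\bv_s}$, yields the desired bound, the $\bv_i$ being precisely these leading coefficients, with $s$ controlled by the number of van der Corput iterations. The main obstacles are twofold. First, one must choose a PET well-ordering compatible with the ordered family: in particular, integer monomials like $x,x^2$ must be placed \emph{before} slower-growing strongly nonpolynomial terms like $x^{2/3}$, because the latter admit Taylor expansions to arbitrary polynomial degree on short intervals and thus behave as if of higher degree in the inductive scheme; this is exactly the ordering built into Definition \ref{D: ordered Hardy family}. Second, one needs careful bookkeeping to verify that the direction vectors produced at the base case are exactly leading coefficients of $\{a_1\be_1\}\cup\{a_1\be_1-a_j\be_j:2\leq j\leq \ell\}$, and not some spurious linear combinations introduced by the finite-difference cascade. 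This amounts to tracking how each van der Corput step propagates the leading-coefficient data of the top surviving term, and is exactly what Definition \ref{d43} is engineered to encode.
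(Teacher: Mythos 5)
The paper does not actually prove this statement: the sentence immediately preceding it says ``Then, \cite[Theorem 10.1]{DKKST24} gives the following seminorm bounds,'' so the result is imported wholesale from the authors' earlier work and no argument appears here. A line-by-line comparison with a proof in this manuscript is therefore not possible. Your sketch does capture the correct broad architecture of that external argument --- composing with $T_1^{-\sfloor{a_1(n)}}$ to isolate $f_1$, running a PET-style van der Corput induction against the stratification of the ordered Hardy family, and finishing with iterated Cauchy--Schwarz --- and you correctly identify that integer monomials sit before slower fractional-degree strongly nonpolynomial terms in that stratification.

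There is, however, a genuine gap at precisely the point you wave through. For strongly nonpolynomial iterates, naive finite differencing is the wrong reduction. Take $a_1(x)=x^{3/2}$: one discrete derivative gives $\asymp h\sqrt{n}$, which is still strongly nonpolynomial; a second gives $\asymp hh'/\sqrt{n}\to 0$, so the integer parts eventually vanish and the cascade collapses to a trivial bound rather than to the stated box seminorm of $f_1$. In particular your claim that ``every surviving sequence has descended to a linear term whose coefficient is an integer multiple of the leading coefficient'' is false for this iterate --- finite differencing never produces a linear term from $x^{3/2}$. The mechanism actually used in \cite{DKKST24} (following \cite{Ts22}) is a short-interval change-of-variable: $[N]$ is partitioned into blocks of length $\asymp N^{1-\epsilon}$, each strongly nonpolynomial $a_j$ is Taylor-expanded on each block as a genuine polynomial of bounded degree with error $o(1)$, and the polynomial PET is run blockwise; the Taylor coefficients supply the real parameters of the generalized box seminorm along the directions given by the leading coefficients of $a_1\be_1$ and $a_1\be_1 - a_j\be_j$. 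That expansion is not a bookkeeping nicety that places $x^{2/3}$ in the well-ordering --- it \emph{is} the substantive reduction, and it is missing from your sketch. A secondary gap is that PET plus Cauchy--Schwarz a priori controls the average by a box seminorm along the multiset of van der Corput difference vectors, not automatically along leading coefficients; passing to the latter requires the composition arguments and dual-sequence weighting carried out across \cite[\S\S 6--11]{DKKST24}, which your closing ``bookkeeping'' remark acknowledges as necessary but does not provide.
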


The upgraded seminorm estimates that we prove in this paper can then be restated as follows. We recall that various notions of independence of Hardy sequences are defined in Definition \ref{D: independence}. 
     \begin{theorem}\label{T: new estimates}

        Let $\ell\in\N$. Let $\CQ\subseteq\CH$ be an ordered Hardy family, and let $a_1, \ldots, a_\ell\in\extspan_\R\CQ$ with $a_1\succ \log$.  Then there exists a positive integer $s=O_{\ell, \CQ}(1)$ and vectors $\bv_{1},\dots,\bv_{s}$ coming from the set of the leading coefficients of 
        \begin{align*}
            \{a_1 \be_1\}\cup\{a_i \be_i - a_j \be_j:\; i\neq j,\; a_i, a_j\; \textrm{are\; dependent},\; a_i, a_j\succ \log\},
        \end{align*}
         such that for all systems $(X, \CX, \mu, T_1, \ldots, T_\ell)$ and functions $f_1, \ldots, f_\ell\in L^\infty(\mu)$, we have 
\begin{align}\label{goal00}
        \lim_{N\to\infty} 
         \norm{\E_{n\in [N]} 
         \prod_{j\in[\ell]}  T_{j}^{\sfloor{a_{j}(n)}} f_{j}}_{L^2(\mu)}=0
    \end{align}
    whenever $\nnorm{f_1}_{\bv_{1},\dots,\bv_{s}}=0$. 
    \end{theorem}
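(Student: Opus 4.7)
I would prove Theorem \ref{T: new estimates} by an iterated ping-pong-concatenate procedure with Theorem \ref{T: old estimates} as the base case. Theorem \ref{T: old estimates} gives a seminorm bound $\nnorm{f_1}^+_{\bv_1,\ldots,\bv_s}=0\Rightarrow \lim_N\norm{\E_{n\in[N]}\prod_{j\in[\ell]}T_j^{\sfloor{a_j(n)}}f_j}_{L^2(\mu)}=0$, where each $\bv_k$ is the leading coefficient of some $a_1\be_1-a_{j_k}\be_{j_k}$ (with the convention $a_0\equiv 0$). I declare a direction $\bv_k$ \emph{bad} if the corresponding $j_k$ satisfies either $a_{j_k}\preceq \log$ or $a_1,a_{j_k}$ independent, and \emph{good} otherwise. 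My aim is to replace every bad direction, one at a time, by a direction coming from the leading coefficient of some $a_i\be_i-a_j\be_j$ with $(a_i,a_j)$ dependent and both $\succ \log$, thus recovering exactly the list in Theorem \ref{T: new estimates}.

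\textbf{One smoothing step.} To upgrade a single bad direction $\bv_s$ (say coming from $a_1\be_1-a_j\be_j$), the \emph{ping} step uses Proposition \ref{P: structure theorem} to reduce to $f_1\in Z^+_{\bv_1,\ldots,\bv_s}$ and, by the $L^2$ approximation inherent in Definition \ref{D: factor}, to $f_1=\CD_{\bv_1,\ldots,\bv_s}((h_\ueps)_{\ueps\in\{0,1\}^s_*};\psi)$. Substituting and commuting the outer dual-function averages past the $n$-average via the Fubini-type principle invoked in the proof of Lemma \ref{L: algebra} yields a parametrized family of shifted averages of the same shape as the original. In the \emph{pong} step I apply Theorem \ref{T: old estimates} to each shifted average after rotating the ``pivot'' away from index $1$ toward some $i\neq j$ for which $(a_i,a_j)$ is dependent and both $\succ \log$; the same van der Corput / leading-coefficient analysis that underlies Theorem \ref{T: old estimates} then produces an auxiliary bound $\nnorm{f_1}^+_{\bv_1',\ldots,\bv_{s-1}',\bw}=0\Rightarrow \lim_N\norm{\cdots}=0$, where $\bw$ is the leading coefficient of $a_i\be_i-a_j\be_j$ and a careful choice of pivot and error-term handling forces $\bv_1',\ldots,\bv_{s-1}'$ to lie in the subgroup generated by $\bv_1,\ldots,\bv_{s-1}$. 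In the \emph{concatenate} step, the two bounds place $f_1$ in both $\CZ^+_{\bv_1,\ldots,\bv_{s-1},\bv_s}$ and $\CZ^+_{\bv_1,\ldots,\bv_{s-1},\bw}$, so Corollary \ref{C: relative concatenation} (its ``relative'' feature being crucial to preserving $\bv_1,\ldots,\bv_{s-1}$ intact) places $f_1$ in $\CZ^+_{\bv_1,\ldots,\bv_{s-1},\langle\bv_s,\bw\rangle}$, and the rescaling property Lemma \ref{L: basic properties of factors} (iv) lets me swap the subgroup $\langle\bv_s,\bw\rangle$ for the single useful direction $\bw$.

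\textbf{Iteration, degenerate cases, and main obstacle.} Iterating this step over all bad indices $j$ terminates after at most $\ell$ passes, each contributing $O(1)$ to the seminorm degree, so the final bound has $s=O_{\ell,\CQ}(1)$ and directions drawn only from leading coefficients of $\{a_1\be_1\}\cup\{a_i\be_i-a_j\be_j : i\neq j,\ a_i,a_j\text{ dependent},\ a_i,a_j\succ \log\}$, which is what Theorem \ref{T: new estimates} claims. The main obstacle is engineering the pong step so that the auxiliary bound it produces has exactly the desired new direction $\bw$ while sharing the first $s-1$ directions with the original bound (otherwise relative concatenation would introduce spurious terms). Two degenerate sub-cases require separate handling: when $a_j\preceq \log$, the sequence $\sfloor{a_j(n)}$ attains only $O(\log N)$ distinct values on $[N]$, so pigeonholing on level sets reduces the problem to an $\ell-1$-term average in which the $T_j$-factor is absorbed into a constant, and we may invoke induction on $\ell$; when $a_j\succ \log$ but no dependent partner $i$ exists for $j$, Boshernitzan's equidistribution criterion (Theorem \ref{T: Boshernitzan}) combined with independence forces the contribution along $\bv_s$ to be absorbed back into $\be_1$ without introducing any new direction. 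Coordinating these two degenerate reductions with the ping-pong-concatenate loop, and bookkeeping the ordered family $\CQ$ across the recursion via Lemma \ref{L: basic properties of seminorms}, completes the proof.
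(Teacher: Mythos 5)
The high-level three-step smoothing loop you describe --- ping, pong, relative concatenation --- is indeed the skeleton of the paper's argument, and you have correctly identified that the ``relative'' feature of Corollary \ref{C: relative concatenation} is what keeps the first $s-1$ directions intact while the last one gets upgraded. But several of the details in your proposal are wrong or missing in ways that would stop the proof from going through.

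\emph{Classification of ``bad'' directions.} You declare $\bv_k$ bad when $a_{j_k}\preceq\log$ or when $a_1,a_{j_k}$ are independent. But if $a_{j_k}\preceq\log$ (more precisely, $\deg a_{j_k}\leq m_1<\deg a_1$), the leading coefficient of $a_1\be_1-a_{j_k}\be_{j_k}$ is already $\hat a_1\be_1$, which is in the target set; your ``degenerate case 1'' of pigeonholing on level sets of $\sfloor{a_{j_k}(n)}$ is unnecessary and does not match anything the paper does. Conversely, if $a_1$ and $a_{j_k}$ are independent but have the same degree, the leading coefficient of $a_1\be_1-a_{j_k}\be_{j_k}$ is $\hat a_1\be_1-\hat a_{j_k}\be_{j_k}$, and this \emph{is} the case that requires the whole machinery; your ``degenerate case 2'' hand-waves it away by invoking Boshernitzan's theorem, but Theorem \ref{T: Boshernitzan} is an equidistribution criterion and says nothing about absorbing a mixed direction $\hat a_1\be_1-\hat a_{j_k}\be_{j_k}$ into $\be_1$ at the level of box seminorms. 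That step would genuinely fail.

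\emph{Induction structure.} The paper's proof (Proposition \ref{newestimate3}, Claim \ref{C: smoothing claim}) is a double induction: on the ``complexity'' of a type $\eta\colon I\to I$, measured by $\sum_{\tilde\imath}|\eta^{-1}(\tilde\imath)|^2$ via the descendance relation $\searrow$, and on the number of terms in the average. Your proposal has no type formalism and claims termination ``after at most $\ell$ passes,'' but each ping step does not eliminate an index; it passes to an average of a strictly less complex type $\eta'=\sigma_{(1,1)\to(i_0,j_0)}(\eta)$ and to a seminorm of a \emph{different} function $f_{\tilde u}$. The pong step then converts the auxiliary control on $f_{\tilde u}$ (using a shorter average with a dual sequence replacing one factor) back into control on $f_{1,1}$. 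Without tracking the type and the minority pre-image, you cannot see why the process terminates, nor why the vector budget stays $O_{\ell,\CQ}(1)$, nor why the relative-concatenation step produces a group $\langle\bu_k,\bu\rangle$ that is guaranteed to contain an element of the target set $\CG_\eta$ --- this last fact depends on $\bu$ coming either from $\CG_\eta$ or from $\CG_{\eta'}\setminus\CG_\eta$ with the specific form $\hat a_{\eta(i_0,j_0)}\be_{\eta(i_0,j_0)}-\hat a_{\eta(1,j)}\be_{\eta(1,j)}$, which is invisible in your framing. Finally, a small error: passing from $\langle\bv_s,\bw\rangle$ to a single useful direction in it is the subgroup property of Lemma \ref{L: basic properties of seminorms} (iii), not the rescaling property (iv); rescaling cannot change the rank of a group.
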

    In fact, a close inspection of Theorem~\ref{T: new estimates general}, from which this result follows, shows that we only need to consider the leading coefficient of $a_i \be_i - a_j \be_j$ for those dependent $a_i, a_j$ of the same or higher degree than $a_1$ in the sense of Definition~\ref{D: degree and independence}.

A priori, there seems to be no good reason why the vectors featuring in Theorem~\ref{T: new estimates} should be any better than those from Theorem~\ref{T: old estimates}. However, using the difference ergodicity condition, we will later be able to further replace all $\bv_i$'s in Theorem~\ref{T: new estimates} by copies of $\be_1$. This will give us Host-Kra seminorm control over the average in Theorem~\ref{T: new estimates} assuming the difference ergodicity condition.

\begin{example}
    For comparison, consider the tuple $$(T_1^{\sfloor{n^{3/2}}}, T_2^{\sfloor{n^{3/2}+\log n}}, T_3^{\sfloor{n^{3/2}+n\log n}}, T_4^{\sfloor{n^{3/2}+n\log n}})_n.$$ Then Theorem \ref{T: old estimates} gives that the corresponding multiple ergodic average would be controlled by a seminorm of $f_1$ involving many copies of $\be_1,\; \be_1 - \be_2,\; \be_1-\be_3,\; \be_1 - \be_4$. The iterates can be divided into two families of pairwise dependent functions, $\{n^{3/2},\; n^{3/2} + \log n\}$ and $\{n^{3/2}+ n\log n,\; n^{3/2} + n\log n\}$,  hence Theorem~\ref{T: new estimates} gives us control in terms of a seminorm of $f_1$ involving many copies of $\be_1,\; \be_1 - \be_2,\; \be_3-\be_4$. If the difference ergodicity condition holds, i.e. the sequences
    \begin{equation}\label{E: ergodic actions}
        (T_1^{\sfloor{n^{3/2}}} T_2^{-\sfloor{n^{3/2}+\log n}})_n\quad \textrm{and} \quad (T_3^{\sfloor{n^{3/2} + n\log n}}T_4^{-\sfloor{n^{3/2} + n\log n}})_n
    \end{equation}
    are ergodic, then we can replace each of the vectors $\be_1 - \be_2,\; \be_3-\be_4$ in the seminorm by many copies of $\be_1$. For $\be_3-\be_4$, this is immediate: the ergodicity of the second action in \eqref{E: ergodic actions}
    implies that $T_3T_4\inv$ is ergodic, hence $\nnorm{f}_{\be_3-\be_4}\leq \nnorm{f}_{\be_1}$ by e.g. \cite[Lemma 2.2]{FrKu22a}. For $\be_1-\be_2$, this requires extra work, and the general method for unpacking the ergodicity of sequences like $(T_1^{\sfloor{n^{3/2}}} T_2^{-\sfloor{n^{3/2}+\log n}})_n$ will be presented in the next sections.
\end{example}

It is worth noting that one can use the concatenation theorem to merge Theorems~\ref{T: old estimates} and ~\ref{T: new estimates} into a stronger result. For instance, the previous example gives one estimate involving the vectors $\be_{1},\be_{1}-\be_{2},\be_{1}-\be_{3},\be_{1}-\be_{4}$ and another one involving $\be_{1},\be_{1}-\be_{2},\be_{3}-\be_{4}$. Concatenating them, we get an estimate involving $\be_{1},\be_{1}-\be_{2},\langle\be_{1}-\be_{3},\be_{1}-\be_{4}\rangle$. However, we do not require strengthenings like this in the current paper.

\subsection{General formalism}

The proof of Theorem \ref{T: new estimates} uses no properties of Hardy functions except the initial seminorm estimates provided by Theorem \ref{T: old estimates}. We therefore phrase its proof in a more general language that makes no reference to Hardy functions.

Our basic setup is captured by the following definition.
\begin{definition}\label{d47}[Ordered family, degree, and independence]\label{D: degree and independence}
   An \textit{ordered family of sequences} is an ordered collection $\CQ =(q_1, \ldots, q_m)$ of sequences $q_i: \N\to\R$ whose germs are affinely independent.\footnote{That means that for every  $N\in\N$, the restrictions of $1, q_1, \ldots, q_m$ to $[N, \infty)$ are $\R$-independent.} Additionally, we assume that each ordered family of sequences comes with a fixed integer $0\leq m_1<m$.

    Suppose that $a_1, \ldots, a_\ell:\N\to\R$ satisfy
        $$\lim_{x\to\infty}\abs{a_j(x) - \sum\limits_{i=1}^m \beta_{j,i} q_i(x) - \beta_{j,0}} = {0}$$ for some $\beta_{ji}\in\R$; like before, we call the collection of such sequences the \textit{extended span} of $\CQ$ and denote it via $\extspan_\R\CQ$.
    We define the \textit{degree} of $a_j$ via $$\deg a_j := \max\{i\in[0, m]:\; \beta_{j,i}\neq 0\}$$
    (the notion is well-defined since the choices of $\beta_{j,i}$ are unique),
    letting it be 0 if $\beta_{j,m} = \cdots = \beta_{j,0} = 0$. Lastly, we call $\beta_{j, \deg a_j}$ the \textit{leading coefficient} of $a_j$.
    We call the sequences:
    \begin{itemize}
        \item $a_j, a_j'$  \textit{independent}  if $(\beta_{j, m_1+1}, \ldots, \beta_{j, m})$ and $(\beta_{j', m_1+1}, \ldots, \beta_{j',m})$ are $\R$-linearly independent, otherwise we call them \textit{dependent};
        \item $a_1, \ldots, a_m$ \textit{pairwise independent} if for every $1\leq j < j' \leq \ell$, the sequences $a_j, a_{j'}$ are independent. 
    \end{itemize}
\end{definition}
In other words, pairwise independence means that no nontrivial linear combination $c_j a_j + c_{j'}a_{j'}$ for $1\leq j < j' \leq \ell$ lies in $\extspan_\R\{a_1, \ldots, a_{m_1}\}$. For Hardy sequences defined with respect to an ordered Hardy basis, the notion of pairwise independence above coincides with the analogous notion from Definition \ref{D: independence}.

The definitions of degree, leading coefficients and pairwise (in)dependence naturally extend to sequences $\N^\ell\to\R$ defined as $\R^\ell$-linear combinations of elements of the ordered basis $\CQ$.

The averages studied in this section will typically be twisted by dual sequences defined below. For the definition of dual functions $\CD_{s, T_j}(f)$, see Definition \ref{D: dual functions}. 
\begin{definition}[Dual sequences]\label{D: dual sequences}
    Let $d,\ell\in\N$ and $(X, \CX, \mu, T_1, \ldots, T_\ell)$ be a system. Then $\FD_d$ denotes the class of sequences $\CD:\Z\to L^\infty(\mu)$ given by $\CD(n) = T_j^n\CD_{s, T_j}(f)$ for some $s\in[d]$, $j\in[\ell]$, and $f\in L^\infty(\mu)$. We refer to elements of $\FD_d$ as \textit{level-$d$ dual sequences}.
\end{definition}

The initial seminorm estimates instantiated by Theorem \ref{T: old estimates} are a special case of the following definition. 
\begin{definition}[Families good for smoothing]
    We call an ordered family of sequences $\CQ$ \textit{good for smoothing} if for all $d,\ell\in\N$, $J\in\N_0$, sequences $a_1, \ldots, a_\ell$, $b_1, \ldots, b_J\in \extspan_\R\CQ$  
     satisfying $$\deg a_1 = \max_j\deg a_j > m_1,$$
    systems $(X, \CX, \mu, T_1, \ldots, T_\ell)$, 
    and functions $f_1, \ldots, f_\ell\in L^\infty(\mu)$, we have 
     \begin{align}\label{E: gen box bound}
         \lim_{N\to\infty}\sup_{\substack{\CD_1, \ldots, \CD_J \in \FD_d}} \sup_{|c_n|\leq 1}\norm{\E_{n\in [N]} c_{n}\cdot \prod_{j\in[\ell]}  T_{j}^{\floor{a_j(n)}} f_j \cdot \prod_{j\in[J]}\CD_j(\floor{b_j(n)})}_{L^2(\mu)} = 0
    \end{align}
    whenever $\nnorm{f_1}_{\bv_{1},\dots,\bv_{s}}=0$
    for some $s = O_{d,J,\ell, \CQ}(1)$ and vectors $\bv_1, \ldots, \bv_s$ from the set of the leading coefficients of 
    $$\{a_1 \be_1,\; a_1 \be_1 - a_2 \be_2,\; \ldots,\; a_1 \be_1 - a_\ell \be_\ell\}.$$
    \end{definition}

    Any ordered Hardy family $\CQ$ is good for smoothing via \cite[Corollary~6.4 \& Theorem~10.2]{DKKST24},  where $m_1$ in Definition~\ref{D: degree and independence} is the same as $m_1$ in Definition~\ref{D: ordered Hardy family}. Theorem~\ref{T: new estimates} is then a consequence of the following general result.
         \begin{theorem}\label{T: new estimates general}
        Let $\ell\in\N$. Suppose that an ordered family of sequences $\CQ$ is {good for smoothing}, and let $a_1, \ldots, a_\ell\in\extSpan_\R \CQ$ with $\deg(a_{1})>m_{1}$.
        Then there exists a positive integer $s=O_{\ell, \CQ}(1)$ and vectors $\bv_{1},\dots,\bv_{s}$ coming from the set of the leading coefficients of 
        \begin{align*}
            \{a_1 \be_1\}\cup\{a_i \be_i - a_j \be_j:\; i\neq j,\; a_i, a_j\; \textrm{are\; dependent and } \deg(a_{i})=\deg(a_{j})\geq \deg(a_{1})\},
        \end{align*}
         such that for all systems $(X, \CX, \mu, T_1, \ldots, T_\ell)$ and functions $f_1, \ldots, f_\ell\in L^\infty(\mu)$, we have 
\begin{align*}
                  \lim_{N\to\infty} 
         \norm{\E_{n\in [N]} 
         \prod_{j\in[\ell]}  T_{j}^{\sfloor{a_{j}(n)}} f_{j}}_{L^2(\mu)}=0
    \end{align*}
    whenever $\nnorm{f_1}_{\bv_{1},\dots,\bv_{s}}=0$. 
    \end{theorem}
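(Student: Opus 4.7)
The plan is to iteratively upgrade the initial seminorm bound provided by the good-for-smoothing hypothesis (the analogue of Theorem~\ref{T: old estimates}), replacing at each step one ``bad'' difference vector in the controlling seminorm by a ``good'' one. Call an index $j\neq 1$ \emph{good} if $a_1, a_j$ are dependent and $\deg(a_j)=\deg(a_1)$; call the remaining indices \emph{bad}. The hypothesis furnishes an initial bound: the average vanishes whenever $\nnorm{f_1}^+_{\bv_1, \ldots, \bv_s}=0$ (where any surplus dual sequences have been absorbed into $\FD_d$), with each $\bv_i$ a leading coefficient of $a_1\be_1$ or of some $a_1\be_1-a_j\be_j$. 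The target is to eliminate all bad $\bv_i$'s.

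The core step is a three-stage ``ping--pong--concatenate'' procedure, an evolution of the smoothing techniques of \cite{FrKu22b, FrKu22a, DKKST24}. \emph{Ping}: apply the van der Corput inequality along $n$, producing a double average over $(n,h)$. Taylor-expanding $a_i(n+h)$ around $a_i(n)$ and absorbing the slowly varying factors as level-$d$ dual sequences in $\FD_d$ (this is precisely where the flexibility built into ``good for smoothing'' is used), the inner average becomes another multiple ergodic average governed by Hardy-type sequences derived from derivatives of the $a_i$'s. \emph{Pong}: reapply good-for-smoothing to this inner average, obtaining an auxiliary seminorm estimate where the bad vector $\bv_s$ is replaced by an auxiliary vector $\bv_s''$ arising from the derivative structure. \emph{Concatenate}: invoke Corollary~\ref{C: relative concatenation} with $\bv_1, \ldots, \bv_{s-1}$ as the relativizing family and $\langle \bv_s\rangle, \langle \bv_s''\rangle$ as the two singleton families to be merged; the combined bound controls the average by $\nnorm{f_1}^+_{\bv_1, \ldots, \bv_{s-1}, \langle \bv_s, \bv_s''\rangle}$. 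By the subgroup property of factors (Lemma~\ref{L: basic properties of factors}(iii)) and the tower property for conditional expectations, this estimate is in fact implied by $\nnorm{f_1}^+_{\bv_1, \ldots, \bv_{s-1}, \bu}=0$ for any $\bu\in\langle \bv_s, \bv_s''\rangle$. A careful choice of the van der Corput direction in the ping step ensures that some such $\bu$---for instance $\bv_s\pm\bv_s''$---is a good vector, completing the replacement.

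Performing this replacement once per bad direction, the induction terminates in $O_{\ell, \CQ}(1)$ steps, yielding a seminorm controlled only by good vectors, which is the conclusion of the theorem.

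The main obstacle lies in the pong step. After Taylor expansion, one must verify that the new leading sequence in the inner average still satisfies the degree hypothesis $\deg(\cdot)>m_1$ required by good-for-smoothing, and one must identify precisely which leading coefficient emerges as $\bv_s''$. This requires careful tracking of how the ordered basis $\CQ$ interacts with derivatives, and choosing the van der Corput direction (whether to difference in $n$ along the whole average or to exploit the specific bad pair $(a_1,a_j)$) so that $\langle \bv_s, \bv_s''\rangle$ provably contains a good vector. The relativization afforded by Corollary~\ref{C: relative concatenation}---keeping the prefix $\bv_1, \ldots, \bv_{s-1}$ fixed across both estimates being concatenated---is what enables the induction to close: without it, the progress made when eliminating one bad vector would be undone when treating the next.
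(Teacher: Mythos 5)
The high-level architecture you propose---iteratively eliminating bad vectors via a three-stage ping, pong, relative-concatenation loop, with relative concatenation providing the glue that keeps the already-upgraded prefix $\bv_1,\dots,\bv_{s-1}$ intact across the two estimates being merged---is indeed the shape of the paper's proof. But the internals of your ping and pong steps do not match what the argument actually does, and this gap is load-bearing. You describe the ping step as van der Corput in $n$ followed by Taylor expansion of $a_i(n+h)$, producing a new average along derivative sequences whose Taylor remainders are absorbed as dual sequences. That is the mechanism behind the \emph{proof} of the good-for-smoothing property (Theorem \ref{T: old estimates}), not the mechanism of the smoothing step itself. In the paper's ping step, one never differences in $n$: one uses \cite[Lemma 11.7]{DKKST24} to replace $f_1$ by a structured weak limit $\tilde f_1$, unpacks the known seminorm bound on $\tilde f_1$ via dual--difference interchange, and then exploits the approximate $\bu_k$-invariance of the resulting factors to literally swap the transformation acting on $f_1$ for a different transformation $T_{\eta'(1,1)}$. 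The auxiliary vectors $\bu'_1,\dots,\bu'_{t'}$ emerging from the pong step are then leading coefficients of differences $a_{i,j}\be_{\eta'(i,j)}-a_{i',j'}\be_{\eta'(i',j')}$ for the \emph{new} type $\eta'$---not ``derivative'' objects.

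This matters because your key claim---that ``a careful choice of the van der Corput direction'' ensures $\langle\bv_s,\bv_s''\rangle$ contains a good vector---is precisely the point at which the paper does genuine work that your proposal does not replicate. The paper introduces a formalism of proper types $\eta\colon I\to I$, a descendance relation with a loopless complexity functional $\sum_{\tilde i}|\eta^{-1}(\tilde i)|^2$, and the notion of minority pre-image, and the whole point is that the auxiliary vector $\bu$ produced by pong has the explicit form $\hat a_{\eta(i_0,j_0)}\be_{\eta(i_0,j_0)}-\hat a_{\eta(1,j)}\be_{\eta(1,j)}$, while $\bu_k=\hat a_{\eta(1,1)}\be_{\eta(1,1)}-\hat a_{\eta(i_0,j_0)}\be_{\eta(i_0,j_0)}$, so that $\langle\bu_k,\bu\rangle$ contains $\hat a_{\eta(1,1)}\be_{\eta(1,1)}-\hat a_{\eta(1,j)}\be_{\eta(1,j)}\in\CG_\eta$. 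Without the type bookkeeping there is no reason for the auxiliary vector to combine with $\bu_k$ to produce a good vector, and ``choosing the van der Corput direction'' is not a mechanism available in this argument. There are two further gaps: your notion of ``good'' index $j$ (dependent on $a_1$ and of equal degree) is narrower than the theorem's target set, which allows $a_i\be_i-a_j\be_j$ for \emph{any} dependent pair $i\neq j$ with $\deg a_i=\deg a_j\geq\deg a_1$, not only pairs involving index 1; and you do not handle the case where $a_1$ does not have maximum degree, which the paper treats by a separate outer induction (Case 2 of the proof of Proposition \ref{newestimate3}) that ``pings'' off a max-degree term $a_{2,1}$ before appealing to Case 1.
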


In the proof of Theorem \ref{T: new estimates general}, it will matter a lot whether pairs of sequences $a_i, a_j$ are dependent or not. We therefore state a slight generalization of Theorem \ref{T: new estimates general} that makes it easier to read off the dependence or independence of particular sequences and better matches with the way the proof goes. In what follows, let $\hat{a}_{i}$ denote the leading coefficient of $a_{i}$. 
     
     \begin{proposition}\label{newestimate3}
        Let $d, r, \ell_{1},\dots,\ell_{r}\in\N$, $\ell=\ell_{1}+\dots+\ell_{r}$, and $J\in\N_0$. Suppose that an ordered family of sequences $\CQ$
        is {good for smoothing}, and let $a_{i,j}\in\extSpan_\R \CQ$ (for $i\in[r], j\in[\ell_i]$) be sequences
        with the following properties:
        \begin{enumerate}
             \item $a_{i,j}$ is independent from $a_{i',j'}$ if and only if $i\neq i'$;
              \item 
              there exists $r'\in[r]$ such that for all $i\in[r]$, $j\in[\ell_1]$, and $j'\in[\ell_i]$, we have that $\deg(a_{1,j})>\deg(a_{i,j'})$ if and only if $i>r'$;
             \item $\deg(a_{1,1})>m_{1}$.
        \end{enumerate}

                 Then there exists a positive integer $s=O_{d,J,\ell, \CQ}(1)$\footnote{Note that $r,r'\leq \ell$, and so the dependence of $s$ and other parameters on $r, r'$ is implicit in their dependence on $\ell$.} and vectors $\bv_{1},\dots,\bv_{s}$ coming from the set 
                 \begin{align*}
                     \CG:=\{\hat{a}_{1,1}\be_{1,1}, \hat{a}_{i,j}\be_{i,j}-\hat{a}_{i,j'}\be_{i,j'}:\; i\in[r'],\; j,j'\in[\ell_{i}],\; j\neq j'\}
                 \end{align*}
         such that for all systems $(X, \CX, \mu, (T_{i,j})_{\substack{i\in[r], j\in[\ell_i]}})$,  functions $f_{i,j}\in L^\infty(\mu)$ and sequences $b_1, \ldots, b_J\in\Span_\R\CQ$, we have 
\begin{multline}\label{goal}
         \limsup_{N\to\infty}\sup_{\substack{
         \CD_1, \ldots, \CD_J \in \FD_d}} \sup_{|c_n|\leq 1}\norm{\E_{n\in [N]} c_{n}\cdot \prod_{i\in[r]}\prod_{j\in[\ell_i]}  T_{i,j}^{\floor{a_{i,j}(n)}} f_{i,j} \cdot \prod_{j\in[J]}\CD_j(\floor{b_j(n)})}_{L^2(\mu)}
    \end{multline}
    is zero whenever $\nnorm{f_1}_{\bv_{1},\dots,\bv_{s}}=0$. 
    \end{proposition}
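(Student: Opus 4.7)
The plan is to prove this by induction on the complexity of the family $(a_{i,j})_{i,j}$, starting from the seminorm bound provided by the hypothesis that $\CQ$ is good for smoothing, and then iteratively refining the controlling vector set via a three-step ping-pong-concatenate cycle, as previewed in the introduction. Applying the good-for-smoothing hypothesis to the full tuple $(a_{i,j})$ gives an initial bound: (\ref{goal}) vanishes whenever $\nnorm{f_{1,1}}_{\bu_1, \ldots, \bu_s} = 0$ for vectors drawn from
$$\CG_0 := \{\hat{a}_{1,1}\be_{1,1}\} \cup \bigl\{\hat{a}_{1,1}\be_{1,1} - \hat{a}_{i,j}\be_{i,j} : (i,j) \neq (1,1)\bigr\}.$$
The uniformity of this bound over all bounded coefficients $c_n$ and level-$d$ dual sequences is essential, because each smoothing cycle will introduce additional such weights; this is precisely why the statement is formulated with the auxiliary parameters $J$ and $d$.

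Given this initial control, the goal is to iteratively replace each cross-group vector $\mathbf{w} := \hat{a}_{1,1}\be_{1,1} - \hat{a}_{i,j}\be_{i,j}$ in the controlling set by a within-group vector $\hat{a}_{i,j}\be_{i,j} - \hat{a}_{i,j'}\be_{i,j'}$ (when $i \leq r'$ and $\ell_i \geq 2$), or to absorb it into the already-present vectors (when $\ell_i = 1$ or $i > r'$). One cycle of the argument proceeds as follows. In the \emph{ping} step, Proposition \ref{P: structure theorem} projects $f_{1,1}$ onto the factor $\CZ^+_{G_1, \ldots, G_{s-1}, \langle \mathbf{w}\rangle}$ associated to the current controlling seminorm, so that up to $L^2(\mu)$-error, $f_{1,1}$ becomes a dual function $\CD_{G_1, \ldots, G_{s-1}, \langle\mathbf{w}\rangle}((f_\ueps)_\ueps; \psi)$. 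In the \emph{pong} step, this dual expression is substituted back into the target average; swapping the $n$-average with the box-parameter averages via the Fubini principle for ergodic averages and applying a van der Corput inequality to the $n$-variable produces an auxiliary multiple ergodic average with shifted iterates. Treating this auxiliary average by the good-for-smoothing hypothesis yields a second bound of the form $\nnorm{f_{1,1}}_{\bu_1, \ldots, \bu_{s-1}, \mathbf{w}'} = 0$, where the shift vector $\mathbf{w}'$ is chosen so that $\mathbf{w} + \mathbf{w}'$ equals the desired upgraded vector.

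The key \emph{concatenate} step invokes the relative concatenation result (Corollary \ref{C: relative concatenation}) with $H = \langle \mathbf{w} + \mathbf{w}'\rangle$: the two bounds for $f_{1,1}$ along $(G_1, \ldots, G_{s-1}, \langle \mathbf{w}\rangle)$ and $(G_1, \ldots, G_{s-1}, \langle \mathbf{w}'\rangle)$ together imply control by a seminorm involving the sum vector $\mathbf{w} + \mathbf{w}'$, arranged to be either a within-group difference (for $i \leq r'$ with $\ell_i \geq 2$) or a rescaled copy of an already-present vector (allowing elimination). For low-degree groups ($i > r'$), the strict inequality $\deg(a_{i,j}) < \deg(a_{1,1})$ provides the extra slackness that makes the van der Corput contribution of $\mathbf{w}$ negligible, so that the concatenation removes it altogether. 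A complexity count shows that the cycle terminates in $O_{\ell, \CQ}(1)$ steps, since each iteration strictly reduces the number of cross-group vectors while raising the level $d$ and the number of auxiliary dual weights by at most a bounded amount per step, which remains within the uniform scope of the good-for-smoothing hypothesis.

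The main obstacle is orchestrating the van der Corput shift in the pong step so that the resulting auxiliary vector $\mathbf{w}'$ satisfies the precise algebraic relation needed for the concatenation to produce the target vector, while keeping the auxiliary averages within the scope of the good-for-smoothing hypothesis. This requires carefully exploiting the internal dependence structure of each group: within a single dependence-group, the leading coefficients are $\R$-proportional modulo the lower-order basis elements $q_1, \ldots, q_{m_1}$, so differences of iterates within the group produce cancellations of the higher-order parts of $a_{i,j}$ that survive the shift and are detectable by the generalized box seminorms. The independence between distinct groups (condition (i)) ensures in turn that shifts across groups do not create spurious cancellations, so that the replacement process is well-defined at each step.
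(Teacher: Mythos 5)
Your high-level architecture — start from the bound furnished by the good-for-smoothing hypothesis, then run a ping-pong cycle and close the loop with relative concatenation (Corollary \ref{C: relative concatenation}), iterating until only $\CG$-vectors survive — is the same as the paper's. However, several of the mechanics you describe do not match what actually has to happen, and at least one is a genuine error rather than a matter of presentation.

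First, your initial controlling set $\CG_0$ is wrong. The good-for-smoothing hypothesis gives vectors from the \emph{leading coefficients} of $\{a_{1,1}\be_{1,1}\}\cup\{a_{1,1}\be_{1,1}-a_{i,j}\be_{i,j}\}$, and for indices with $i>r'$ one has $\deg a_{i,j}<\deg a_{1,1}$, so the leading coefficient of $a_{1,1}\be_{1,1}-a_{i,j}\be_{i,j}$ degenerates to $\hat a_{1,1}\be_{1,1}$, which already lies in $\CG$. Those cross-group vectors simply never appear in the initial bound. Your later paragraph arguing that the $i>r'$ terms are removed because degree slackness makes "the van der Corput contribution of $\mathbf{w}$ negligible" is therefore attempting to fix a problem that does not exist, and the mechanism you propose for doing so is not one the argument needs.

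Second, your account of the ping step is not what the argument can do. You say one projects $f_{1,1}$ onto the factor $\CZ^+_{G_1,\dots,G_{s-1},\langle\mathbf{w}\rangle}$ and replaces it by a dual function. The factor formalism of Section 2 is used only inside the concatenation step; in the ping step one instead replaces $f_{1,1}$ by a weak limit $\tilde f_{1,1}$ obtained from a reverse Cauchy–Schwarz argument (\cite[Lemma 11.7]{DKKST24}), unpacks the controlling seminorm via \cite[Lemma 11.11]{DKKST24}, and — crucially — exploits the $\bu_k$-almost-invariance of the resulting weight factors via \cite[Lemma 11.10]{DKKST24} to \emph{switch the transformation} in the first slot. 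This produces an auxiliary average of a lower-complexity \emph{type} $\eta'$ whose controlling seminorm is of a \emph{different} function $f_{\tilde u}$ with $\tilde u\neq(1,1)$, to which the induction hypothesis applies. The pong step then translates that auxiliary control back into a bound on $f_{1,1}$ via a second reverse Cauchy–Schwarz argument. You compress this two-function, two-stage loop into what reads like a single direct upgrade from $\nnorm{f_{1,1}}_{\bu_1,\dots,\bu_{s-1},\mathbf{w}}$ to $\nnorm{f_{1,1}}_{\bu_1,\dots,\bu_{s-1},\mathbf{w}'}$ "where $\mathbf{w}'$ is chosen so that $\mathbf{w}+\mathbf{w}'$ equals the desired upgraded vector." You do not get to choose $\mathbf{w}'$; the auxiliary vectors come out of the recursion as a whole multi-set $\CR$, and the algebra that saves the day is that for every $\bu\in\CR$ the group $\langle\bu_k,\bu\rangle$ contains some element of $\CG$, so part (i) of Corollary \ref{C: relative concatenation} applies with $H_{i'}=\langle\bu_k\rangle$ and $H'_{i'}=\langle\bu\rangle$. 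This is what makes the induction close; without some mechanism to change which transformation is paired with $f_{1,1}$, there is no way to make the cross-group vector disappear.

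Finally, you say nothing about the case $\deg a_{1,1}<\max_{i,j}\deg a_{i,j}$. The good-for-smoothing hypothesis only speaks when $a_1$ has the maximum degree, so a separate preliminary reduction is needed there (the paper's Case 2), which performs one further reverse Cauchy–Schwarz to swap in a maximal-degree iterate before the main argument can be invoked. Your sketch implicitly assumes $f_{1,1}$ is always in the right position to apply the hypothesis directly, which is not the case. The overall plan is right, but without the type-switching maneuver, the correct identification of which cross-group vectors actually occur, and the low-degree reduction, the proposal does not constitute a proof.
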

We observe that the set $\CH$ does not depend on the ``lower-degree'' sequences $a_{i,j}$ with $r'<i\leq r$ and $j\in[\ell_i]$. This is one way in which Proposition \ref{newestimate3} is stronger than Theorem \ref{T: new estimates general}; the other is the presence of dual functions in \eqref{goal}.

\subsection{A motivating example}\label{SS: concatenation example}

Before we give the proof of Proposition \ref{newestimate3} in full, we discuss its strategy for the average
\begin{align}\label{E: smoothing example average}
    \limsup_{N\to\infty}\norm{\E_{n\in [N]} T_{1,1}^{a_{1}(n)} f_{1,1}\cdot T_{1,2}^{a_{1}(n)} f_{1,2}\cdot T_{2,1}^{a_{2}(n)} f_{2,1}\cdot T_{2,2}^{a_{2}(n)} f_{2,2}}_{L^2(\mu)},
\end{align}
where, say, $a_1(n) = \sfloor{n^{3/2}}$ and $a_2(n) = \sfloor{n^{3/2}+n\log n}$ (what matters is that $a_1, a_2$ are independent and have the same leading term $n^{3/2}$). Throughout, all the functions are 1-bounded. While we strive to make this section as self-contained as possible, we strongly encourage the reader to first familiarize themselves with the examples of smoothing carried out in \cite[Section 8.1]{FrKu22a} and \cite[Section 4]{FrKu22b} for more details on various technical steps (e.g. dual-difference interchange) on which we will not elaborate in this expository section.

Theorem \ref{T: old estimates} gives us the following control over \eqref{E: smoothing example average}, which serves as a starting point of our argument.
\begin{proposition}\label{P: old estimates example}
    There exists $s\in\N$ and vectors
    \begin{align}\label{E: old vectors example}
        \bv_1, \ldots, \bv_s\in \{\be_{1,1},\; \be_{1,1}-\be_{1,2},\; \be_{1,1}-\be_{2,1},\; \be_{1,1}-\be_{2,2}\}.
    \end{align}
    such that the average \eqref{E: smoothing example average} is $0$ whenever $\nnorm{f_{1,1}}_{\bv_1, \ldots, \bv_s}=0$.
\end{proposition}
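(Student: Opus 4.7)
The plan is to apply Theorem~\ref{T: old estimates} directly, since Proposition~\ref{P: old estimates example} is essentially a bookkeeping application of that theorem. First I would relabel the four transformations $T_{1,1},T_{1,2},T_{2,1},T_{2,2}$ as $T_1,T_2,T_3,T_4$ with associated Hardy iterates $(a_1,a_1,a_2,a_2)$, so that the average in \eqref{E: smoothing example average} fits the framework of Theorem~\ref{T: old estimates} with $\ell=4$. To verify the hypotheses, I pick an ordered Hardy family whose extended span contains both $a_1$ and $a_2$: for the concrete choice $a_1(n)=\sfloor{n^{3/2}}$ and $a_2(n)=\sfloor{n^{3/2}+n\log n}$, the family $\CQ=(n\log n,\,n^{3/2})$ is ordered (both terms are strongly nonpolynomial and $n\log n\prec n^{3/2}$) and $a_1,a_2\in\extspan_\R\CQ$. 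The required condition $a_1\succ\log$ is immediate since $n^{3/2}\succ \log n$.

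Next I would compute the leading coefficients of the four vector-valued sequences
\begin{equation*}
    a_1\be_1,\qquad a_1\be_1-a_1\be_2,\qquad a_1\be_1-a_2\be_3,\qquad a_1\be_1-a_2\be_4,
\end{equation*}
which by Theorem~\ref{T: old estimates} form the pool from which the seminorm vectors are drawn. The first two are straightforward: the leading coefficients are $\be_1$ and $\be_1-\be_2$ respectively. The key (if mild) subtlety is the third and fourth: since $a_1$ and $a_2$ share the same top term $n^{3/2}$, the expansion
\begin{equation*}
    a_1\be_1-a_2\be_j \;=\; n^{3/2}(\be_1-\be_j) \;-\; n\log n\cdot \be_j \qquad (j\in\{3,4\})
\end{equation*}
shows that the leading coefficient, taken at the \emph{highest} nontrivial degree $q_2=n^{3/2}$, is $\be_1-\be_j$, not a vector coming from the lower-order $n\log n$ correction. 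Translating back to the $\be_{i,j}$ notation, the resulting set of leading coefficients is exactly
\begin{equation*}
    \{\be_{1,1},\ \be_{1,1}-\be_{1,2},\ \be_{1,1}-\be_{2,1},\ \be_{1,1}-\be_{2,2}\}.
\end{equation*}

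Invoking Theorem~\ref{T: old estimates} then produces the required integer $s=O(1)$ and vectors $\bv_1,\ldots,\bv_s$ from this set such that $\nnorm{f_{1,1}}_{\bv_1,\ldots,\bv_s}=0$ forces the average \eqref{E: smoothing example average} to vanish. There is no genuine obstacle here: the proposition is deliberately stated as the naive starting point for the smoothing argument, and the real work of this section lies in upgrading the vector $\be_{1,1}-\be_{2,1}$ (or $\be_{1,1}-\be_{2,2}$) to $\be_{2,1}-\be_{2,2}$ using relative concatenation, which is carried out in the main body of the proof of Proposition~\ref{newestimate3} rather than here.
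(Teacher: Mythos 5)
Your proposal is correct and follows the paper's own (implicit) proof exactly: the paper simply asserts that Proposition~\ref{P: old estimates example} "serves as a starting point" obtained by applying Theorem~\ref{T: old estimates}, and your write-up supplies the bookkeeping details — the relabeling to $T_1,\dots,T_4$ with iterates $(a_1,a_1,a_2,a_2)$, the choice of ordered Hardy family $\CQ=(n\log n,\,n^{3/2})$, and the verification that the leading coefficients of $a_1\be_1,\ a_1\be_1-a_1\be_2,\ a_1\be_1-a_2\be_3,\ a_1\be_1-a_2\be_4$ are precisely $\be_{1,1},\ \be_{1,1}-\be_{1,2},\ \be_{1,1}-\be_{2,1},\ \be_{1,1}-\be_{2,2}$.
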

To get the conclusion of Theorem \ref{T: new estimates} (or Proposition \ref{newestimate3}) we want the following.
\begin{proposition}\label{P: new estimates example}
    There exists $s\in\N$ and vectors 
    \begin{align}\label{E: new vectors example}
    \bv_1, \ldots, \bv_s\in\{\be_{1,1},\; \be_{1,1}-\be_{1,2},\; \be_{2,1}-\be_{2,2}\}.
\end{align}
    such that the average \eqref{E: smoothing example average} is $0$ whenever $\nnorm{f_{1,1}}_{\bv_1, \ldots, \bv_s}=0$.
\end{proposition}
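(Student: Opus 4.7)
The plan is to combine the initial estimate from Proposition \ref{P: old estimates example} with an auxiliary seminorm estimate produced by a van der Corput manoeuvre, and then merge the two via relative concatenation (Corollary \ref{C: relative concatenation}) in order to eliminate the ``independent-pair'' vectors $\be_{1,1}-\be_{2,1}$ and $\be_{1,1}-\be_{2,2}$ in favour of the ``dependent-pair'' vector $\be_{2,1}-\be_{2,2}$. Throughout, let $A_N$ denote the average in \eqref{E: smoothing example average}.

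Ping step. Proposition \ref{P: old estimates example} gives $\lim_{N}\norm{A_N}_{L^2(\mu)}=0$ whenever $\E(f_{1,1}\mid\CZ^+_{\mathrm{old}})=0$, with $\CZ^+_{\mathrm{old}}$ generated by many copies of $\be_{1,1}, \be_{1,1}-\be_{1,2}, \be_{1,1}-\be_{2,1}, \be_{1,1}-\be_{2,2}$. To obtain a complementary control, I apply the Cauchy--Schwarz inequality to $\norm{A_N}_{L^2(\mu)}^2$ and then van der Corput in $n$ with a shift parameter $h$. Since $a_1$ and $a_2$ share the leading monomial $n^{3/2}$, the differenced iterates $a_1(n+h)-a_1(n)$ and $a_2(n+h)-a_2(n)$ have the same principal term $\tfrac{3}{2}hn^{1/2}$, so the dependence pattern of the four differenced iterates in the resulting double average is richer than in the original. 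A dual-difference interchange in the style of \cite[Sec.~4]{FrKu22b}, \cite[Sec.~8]{FrKu22a}, and \cite[Sec.~10]{DKKST24} rewrites the bound in a form amenable to a second application of the good-for-smoothing hypothesis.

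Pong step and concatenation. I then apply Proposition \ref{P: old estimates example} to the intermediate differenced average, yielding an auxiliary control $\lim_N\norm{A_N}_{L^2(\mu)}=0$ whenever $\E(f_{1,1}\mid\CZ^+_{\mathrm{aux}})=0$. The factor $\CZ^+_{\mathrm{aux}}$ is generated by the same shared ``good'' subgroups $G_1=\langle\be_{1,1}\rangle$ and $G_2=\langle\be_{1,1}-\be_{1,2}\rangle$ as $\CZ^+_{\mathrm{old}}$, together with ``bad'' subgroups $H'_{i'}$ which turn out to be placements of the old bad subgroups $H_i\in\{\langle\be_{1,1}-\be_{2,1}\rangle,\langle\be_{1,1}-\be_{2,2}\rangle\}$ in different slots of the seminorm's defining product (the ``slot swap'' comes from the $h$-shift absorbed into the seminorm after ping). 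Corollary \ref{C: relative concatenation}(i), applied with $G_1, G_2$ fixed across both factors, now decomposes any $f_{1,1}$ killed by the concatenated factor $\CZ^+_{\mathrm{cat}}$ (generated by $G_1, G_2$ and the pairwise sums $H_i+H'_{i'}$) as $f_{1,1}=g_1+g_2$ with $\E(g_1\mid\CZ^+_{\mathrm{old}})=\E(g_2\mid\CZ^+_{\mathrm{aux}})=0$, so linearity gives $\lim_N\norm{A_N}_{L^2(\mu)}=0$. The ping step is designed so that the sums $H_i+H'_{i'}$, after absorbing their shared-good content into extra copies of $G_1, G_2$, reduce to subgroups generated by vectors in $\{\be_{1,1},\be_{1,1}-\be_{1,2},\be_{2,1}-\be_{2,2}\}$; Proposition \ref{P: structure theorem} then converts this factor control into the seminorm bound claimed.

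The main obstacle will be the careful bookkeeping inside the ping and pong steps: one has to design the van der Corput shift and the dual-difference interchange so that the pong produces bad subgroups $H'_{i'}$ which pair with the $H_i$ of $\CZ^+_{\mathrm{old}}$ in exactly the way required for the sums $H_i+H'_{i'}$ to land, modulo shared-good slots, in (multiples of) $\langle\be_{2,1}-\be_{2,2}\rangle$. This matching is precisely where the \emph{relative} version of the concatenation theorem (with $G_1, G_2$ fixed throughout) pays off, and where the argument genuinely extends beyond the earlier smoothing arguments of \cite{DKKST24, FrKu22b, FrKu22a}; it is also the mechanism that will be iterated in the proof of Proposition \ref{newestimate3} to handle several dependent groups of sequences simultaneously.
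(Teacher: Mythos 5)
Your high-level plan — set Proposition \ref{P: old estimates example} against a second auxiliary seminorm estimate and merge them through Corollary \ref{C: relative concatenation} — is the right idea and matches the paper's structure. However, the way you produce the auxiliary estimate is both misdescribed and, as stated, insufficient for the concatenation step to close.

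The central gap is in your pong step, where you claim $\CZ^+_{\mathrm{aux}}$ has bad subgroups that are ``placements of the old bad subgroups $H_i\in\{\langle\be_{1,1}-\be_{2,1}\rangle,\langle\be_{1,1}-\be_{2,2}\rangle\}$ in different slots.'' Seminorms and factors are invariant under permutation of slots (Lemma \ref{L: basic properties of seminorms}(i), Lemma \ref{L: basic properties of factors}(i)), so a ``slot swap'' yields literally the same factor $\CZ^+_{\mathrm{old}}$, and Corollary \ref{C: relative concatenation}(i) then returns a concatenated factor that still contains diagonal sums $H_i + H'_{i'} = H_i + H_i = H_i$. Those are the bad subgroups you were trying to eliminate; they survive, and the final factor is not contained in $\CZ^+_{G_1,G_2,\langle\be_{2,1}-\be_{2,2}\rangle,\ldots}$. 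Your own concluding paragraph acknowledges that you need ``the sums $H_i+H'_{i'}$ to land, modulo shared-good slots, in (multiples of) $\langle\be_{2,1}-\be_{2,2}\rangle$,'' but a slot swap cannot deliver that. The paper instead removes one bad vector $\bu_k$ at a time: the shared factor $G_1,\ldots,G_s$ carries $\bv_1,\ldots,\bv_t,\bu_1,\ldots,\bu_{k-1}$, the original side contributes a single $H_1=\langle\bu_k\rangle$ (so $d=1$, killing the diagonal-sum issue), and the auxiliary side contributes genuinely new vectors $H'_{i'}$ coming from a lower-complexity type. The auxiliary vectors are either already in $\CG$ or equal to $\be_{1,2}-\be_{2,1}$, and the key algebraic observation — which you do not establish — is $\langle\be_{1,1}-\be_{2,1},\;\be_{1,2}-\be_{2,1}\rangle\ni\be_{1,1}-\be_{1,2}\in\CG$. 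This is what makes all pairwise sums contain a good vector, enabling the subgroup property (Lemma \ref{L: basic properties of seminorms}(iii)) to upgrade the concatenated seminorm.

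Your ping step also does not match (and does not obviously substitute for) the paper's. You propose raw Cauchy--Schwarz plus van der Corput differencing in $n$, arguing from the shared leading term $n^{3/2}$, followed by a second application of good-for-smoothing. The paper's ping instead passes $f_{1,1}$ to a structured weak limit (via \cite[Lemma 11.7]{DKKST24}), unpacks a seminorm, and exploits that the cross-term $h_{\um,\ldots}$ is approximately $\bu_k$-invariant to replace $T_{1,1}$ by $T_{2,1}$ — passing from type $\eta=((1,1),(1,2);(2,1),(2,2))$ to $\eta'=((2,1),(1,2);(2,1),(2,2))$ — and then invokes the induction hypothesis (Claim \ref{C: lower type example}) to control a seminorm of $f_{1,2}$. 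Those two mechanisms are not the same: one differences the iterates; the other collapses two transformations into one. If you want to run a van der Corput argument instead, you would need to identify exactly what new vectors appear in the resulting seminorm estimate and verify the algebraic property above for them; nothing in your proposal does this, and the ``same principal term $\tfrac{3}{2}hn^{1/2}$'' observation alone does not produce the needed directional vectors $\be_{1,2}-\be_{2,1}$.
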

    
The goal thus is to show that each vector $\be_{1,1}-\be_{2,j}$ among \eqref{E: old vectors example} can be replaced by a bounded number of vectors from \eqref{E: new vectors example} so that the resulting seminorm still controls \eqref{E: smoothing example average}. In doing so, we will pass to auxiliary averages of the form
\begin{align}\label{E: smoothing example average eta}
        \limsup_{N\to\infty}\norm{\E_{n\in [N]} T_{\eta(1,1)}^{a_{1}(n)} f_{1,1}\cdot T_{\eta(1,2)}^{a_{1}(n)} f_{1,2}\cdot T_{\eta(2,1)}^{a_{2}(n)} f_{2,1}\cdot T_{\eta(2,2)}^{a_{2}(n)} f_{2,2}}_{L^2(\mu)}
\end{align}
for various choices $\eta: I\to I$ (here, $I = \{(1,1), (1,2), (2,1), (2,2)\}$), possibly twisted by weights $c_n$ or dual terms $\CD_j$. For convenience, we say that \eqref{E: smoothing example average eta} has \textit{type} $\eta$
or $$(\eta(1,1),\eta(1,2);\eta(2,1),\eta(2,2)).$$


Much like in previous smoothing arguments \cite{DKKST24, FrKu22b, FrKu22a}, in order to prove Proposition~\ref{P: new estimates example}, we will induct on two parameters: 
\begin{itemize}
    \item a ``complexity'' of the type $\eta$, which can be measured in terms of the variance $\sum\limits_{i,j\in[2]}|\eta\inv(i,j)|^2$ (the higher the variance, the lower the complexity, hence averages in which all transformations are distinct are most complex);
    \item the length of the average, measured by the number of functions appearing in the average (weights $c_n$ or dual terms $\CD_j$ do not count).
\end{itemize}
Hence Proposition \ref{P: new estimates example} can be deduced inductively from the following two statements, one providing seminorm control for averages of types $((2,j),(1,2);(2,1),(2,2))$ (for $j\in[2]$), and the other dealing with averages of length 3.
\begin{claim}\label{C: lower type example}
    Fix $j\in[2]$. There exists $s\in\N$ such that 
    \begin{align*}
    \limsup_{N\to\infty}\norm{\E_{n\in [N]} T_{2,j}^{a_{1}(n)} f_{1,1}\cdot T_{1,2}^{a_{1}(n)} f_{1,2}\cdot T_{2,1}^{a_{2}(n)} f_{2,1}\cdot T_{2,2}^{a_{2}(n)} f_{2,2}}_{L^2(\mu)} = 0
    \end{align*}
    whenever $\nnorm{f_{1,2}}_{\be_{1,2}^{\times s}, (\be_{1,2}-\be_{2,j})^{\times s}, (\be_{2,1}-\be_{2,2})^{\times s}}=0$.
\end{claim}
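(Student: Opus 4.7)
The plan is to derive Claim~\ref{C: lower type example} as a direct instance of Proposition~\ref{newestimate3} (equivalently, Theorem~\ref{T: new estimates general}), applied to the modified average with $f_{1,2}$ promoted to the role of the leading function and the four iterates regrouped by the sequence they carry. The crucial point is that the type $((2,j),(1,2),(2,1),(2,2))$ of the modified average is strictly less complex than the original type $((1,1),(1,2),(2,1),(2,2))$ in the two-parameter induction sketched just above the claim, so the appeal to Proposition~\ref{newestimate3} is a genuine inductive step and not a circular invocation.

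Concretely, I would set $r = 2$, $\ell_1 = \ell_2 = 2$, and organize the iterates into two families indexed by their underlying sequence: Family~$1$ (sequence $a_1$) consisting of $(T_{1,2}, f_{1,2})$ as the leading pair together with $(T_{2,j}, f_{1,1})$, and Family~$2$ (sequence $a_2$) consisting of $(T_{2,1}, f_{2,1})$ and $(T_{2,2}, f_{2,2})$. The hypotheses of Proposition~\ref{newestimate3} are then immediate: sequences within a family are literally equal (hence dependent), sequences across families are $a_1, a_2$ which are independent by the standing assumption of the example, and both sequences have the same degree $\deg a_1 = \deg a_2 > m_1$, which forces $r' = r = 2$ and ensures the leading coefficients $\hat{a}_1, \hat{a}_2$ are nonzero. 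The set $\CG$ produced by Proposition~\ref{newestimate3} thus reduces to
\[
\CG = \{\,\hat{a}_1\be_{1,2},\; \hat{a}_1(\be_{1,2} - \be_{2,j}),\; \hat{a}_2(\be_{2,1} - \be_{2,2})\,\},
\]
the leading vector coming from $f_{1,2}$ itself and the two differences from the two intra-family pairs; no inter-family difference appears precisely because $a_1, a_2$ are independent. Dropping the nonzero scalars via the rescaling property (Lemma~\ref{L: basic properties of seminorms}(iv)) replaces $\CG$ by $\{\be_{1,2},\be_{1,2}-\be_{2,j},\be_{2,1}-\be_{2,2}\}$, and the monotonicity property (Lemma~\ref{L: basic properties of seminorms}(ii)) absorbs any multiplicity discrepancy to give control by the specific seminorm $\nnorm{f_{1,2}}_{\be_{1,2}^{\times s}, (\be_{1,2}-\be_{2,j})^{\times s}, (\be_{2,1}-\be_{2,2})^{\times s}}$ appearing in the claim, for some $s = O(1)$.

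The main obstacle is the non-circularity of the appeal to Proposition~\ref{newestimate3} above. To justify it one must organize the ambient induction on a measure of ``complexity of the type'' so that the modified type in the claim ranks strictly below the original; the natural choice is the variance $\sum_{i,j}\abs{\eta^{-1}(i,j)}^2$, which equals $6$ for the modified type versus $4$ for the original fully distinct type, with larger variance corresponding to smaller complexity. Checking that this complexity measure is well-behaved under the CS/van der Corput reductions used in the wider smoothing argument is the only bookkeeping task, but the present claim itself requires nothing beyond applying Proposition~\ref{newestimate3} at a strictly earlier stage of that induction.
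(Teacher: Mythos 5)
Your proposal is correct and takes essentially the same route as the paper. The paper itself states, just after Claim~\ref{C: lower length example}, that ``the seminorm appearing in both claims above are of the form provided by Proposition~\ref{newestimate3}: we can therefore think of both claims as the induction hypothesis,'' and the Example following Claim~\ref{C: smoothing claim} spells out the same mechanism from the bottom up: the type $\eta' = ((2,j),(1,2);(2,1),(2,2))$ is proper, descends (by one more application of the smoothing step) to the extreme type $\eta''=((2,j),(2,2);(2,1),(2,2))$, and at $\eta''$ the ``good for smoothing'' assumption provides the base-case estimate directly. Your top-down phrasing — regroup the iterates by underlying Hardy sequence, invoke Proposition~\ref{newestimate3} with $f_{1,2}$ promoted to position $(1,1)$, note that the variance measure $\sum_{i,j}|\eta^{-1}(i,j)|^2$ strictly increases from $4$ to $6$ so there is no circularity — encodes the identical inductive justification, and your identification of $\CG=\{\be_{1,2},\be_{1,2}-\be_{2,j},\be_{2,1}-\be_{2,2}\}$ (after discarding the scalars $\hat a_1=\hat a_2=1$ of the running example) matches the paper's $\CG_{\eta'}$. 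The one point worth flagging, though it does not affect this specific example, is that in the general smoothing argument the iterate at a position whose type has been modified carries the rescaled sequence $\frac{\hat a_{\eta(i,j)}}{\hat a_{i,j}}a_{i,j}$, so the clean identification of the claim's unscaled average with an instance of \eqref{E: average to be controled} is only transparent here because $\hat a_1 = \hat a_2$; the paper sidesteps this by choosing $a_1(n)=n^{3/2}$, $a_2(n)=n^{3/2}+n\log n$ precisely so the leading coefficients agree.
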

\begin{claim}\label{C: lower length example}
    Fix $j\in[2]$ and $d\in\N$. There exists $s\in\N$ such that for every level-$d$ dual function $\CD$, we have
    \begin{align*}
    \limsup_{N\to\infty}\norm{\E_{n\in [N]} T_{1,1}^{a_{1}(n)} f_{1,1}\cdot \CD({a_{1}(n)})\cdot T_{2,1}^{a_{2}(n)} f_{2,1}\cdot T_{2,2}^{a_{2}(n)} f_{2,2}}_{L^2(\mu)} = 0
    \end{align*}
    whenever $\nnorm{f_{1,1}}_{\be_{1,1}^{\times s}, (\be_{2,1}-\be_{2,2})^{\times s}}=0$.
\end{claim}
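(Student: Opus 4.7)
The plan is to recognize this length-$3$ average as an instance of the general inductive statement (Proposition~\ref{newestimate3}), applied with strictly fewer ``true'' functions than in Proposition~\ref{P: new estimates example}. Treating the dual weight $\CD(a_1(n))$ as a level-$d$ dual sequence (Definition~\ref{D: dual sequences}) with underlying iterate $b_1=a_1$ absorbs it into the framework at no additional cost.

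To match the notation of Proposition~\ref{newestimate3}, I will set $r=2$, $\ell_1=1$, $\ell_2=2$, grouping $a_{1,1}=a_1$ (with transformation $T_{1,1}$ and function $f_{1,1}$) against $a_{2,1}=a_{2,2}=a_2$ (with transformations $T_{2,1},T_{2,2}$ and functions $f_{2,1},f_{2,2}$), and take $J=1$. The requisite hypotheses of Proposition~\ref{newestimate3} hold in this setup: the sequences $a_1$ and $a_2$ are independent, since they agree to leading order at $x^{3/2}$ but are separated by the $\R$-linearly independent term $x\log x$; every $a_{i,j}$ has the same degree as $a_{1,1}$, forcing $r'=r=2$; and $\deg(a_1)>m_1$ since $x^{3/2}\succ \log x$.

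Invoking Proposition~\ref{newestimate3} as an inductive hypothesis at the shorter length $\ell=3$ then produces an integer $s'=O_{d,\CQ}(1)$ and vectors $\bv_1,\dots,\bv_{s'}$ drawn from
\[
\CG=\{\hat{a}_{1}\be_{1,1},\; \hat{a}_{2}\be_{2,1}-\hat{a}_{2}\be_{2,2}\}=\{\hat{a}_{1}\be_{1,1},\; \hat{a}_{2}(\be_{2,1}-\be_{2,2})\},
\]
such that the average vanishes whenever $\nnorm{f_{1,1}}_{\bv_1,\dots,\bv_{s'}}=0$. Because the leading coefficients $\hat{a}_1,\hat{a}_2\in\R\setminus\{0\}$ may be absorbed by the rescaling property of seminorms (Lemma~\ref{L: basic properties of seminorms}~(iv)), and because monotonicity and symmetry (Lemma~\ref{L: basic properties of seminorms}~(i)--(ii)) let us pad each vector up to a common multiplicity, choosing $s\geq s'$ large enough yields
\[
\nnorm{f_{1,1}}_{\bv_1,\dots,\bv_{s'}}\;\lesssim\; \nnorm{f_{1,1}}_{\be_{1,1}^{\times s},\,(\be_{2,1}-\be_{2,2})^{\times s}},
\]
from which the claim follows.

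The main obstacle is structural rather than analytic: the argument relies on having the inductive scaffolding of Proposition~\ref{newestimate3} set up in such a way that we are legitimately allowed to invoke its $\ell=3$ instance while proving the $\ell=4$ instance encoded by Proposition~\ref{P: new estimates example}. Once induction on the total number of genuine functions is in place, Claim~\ref{C: lower length example} is essentially a clean base case in which no additional smoothing, van der Corput maneuver, or concatenation step is required beyond the general statement.
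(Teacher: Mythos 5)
Your proposal is correct and takes exactly the approach the paper intends: Claim~\ref{C: lower length example} is stated in Section~\ref{SS: concatenation example} precisely so that it can be recognized as an instance of Proposition~\ref{newestimate3} at shorter length ($\ell'=\ell_1+\ell_2=3$ versus $\ell'=4$ in Proposition~\ref{P: new estimates example}), treated as an induction hypothesis with the dual weight absorbed via $J=1$ and $b_1=a_1$. Your identification of the parameters ($r=2$, $\ell_1=1$, $\ell_2=2$, $r'=2$), verification of independence of $a_1, a_2$ and of $\deg a_1>m_1$, computation of $\CG$, and the final rescaling/monotonicity step to convert vectors from $\CG$ into the form $\be_{1,1}^{\times s}, (\be_{2,1}-\be_{2,2})^{\times s}$ all match the intended reading. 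One small terminological quibble: Claim~\ref{C: lower length example} is not a ``base case'' of the induction in Proposition~\ref{newestimate3} (the base case there is $\ell_1+\cdots+\ell_{r'}=1$); it is one rung down in the inductive ladder, so invoking it still implicitly leans on the full machinery (smoothing, concatenation, etc.) having been carried out at lengths $1$ and $2$. Your broader point---that no analytic work beyond citing the general proposition is needed to close this particular claim---is nonetheless accurate.
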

Note that the seminorm appearing in both claims above are of the form provided by Proposition \ref{newestimate3}: we can therefore think of both claims as the induction hypothesis.


\begin{proof}[Sketch of proof of Proposition \ref{P: new estimates example} assuming Claims \ref{C: lower type example} and \ref{C: lower length example}]
   We now outline the proof of Proposition \ref{P: new estimates example} assuming the preceding auxiliary results. Our focus will be on the sequence of necessary reductions to averages of lower complexity or length rather than on the exact way in which these reductions are carried out. To get the sense of the technicalities involved in the latter, the reader should consult the examples from \cite[Section 8.1]{FrKu22a} and \cite[Section 4]{FrKu22b}.

   Our goal is to show that \eqref{E: smoothing example average} is 0 if $\nnorm{f_{1,1}}_{\bv_{1},\dots,\bv_{s}}=0$ for some $s\in\N$ and $$\bv_{1},\dots,\bv_{s}\in \mathcal{G}:=\{\be_{1,1},\; \be_{1,1}-\be_{1,2},\; \be_{2,1}-\be_{2,2}\}.$$

By Proposition \ref{P: old estimates example}, we have a preliminary control which asserts that \eqref{E: smoothing example average} is controlled by
  $\nnorm{f_{1,1}}_{\bv_{1},\dots,\bv_{t},\bu_{1},\dots,\bu_k}$ for some $k,t\in\N$ and vectors $\bv_{1},\dots,\bv_{t}\in \mathcal{G}$ (in fact, they lie in $\{\be_{1,1},\; \be_{1,1}-\be_{1,2}\}$) and $\bu_{1},\dots,\bu_k\in\{\be_{1,1}-\be_{2,1},\be_{1,1}-\be_{2,2}\}$. 
  Our aim is to remove $\bu_k$ at the cost of enlarging the number $t$, and then iterate this procedure with other $\bu_1, \ldots, \bu_{k-1}$.

 \smallskip
   \textbf{Step 1: Ping.} 
   \smallskip
   
   This step emulates the Ping step from the earlier smoothing arguments \cite{DKKST24, FrKu22b, FrKu22a}. Assume for concreteness that $\bu_k=\be_{1,1}-\be_{2,1}$ (the case $\bu_k=\be_{1,1}-\be_{2,2}$ being handled analogously). Then the Ping step essentially asserts that if some $\nnorm{f_{1,1}}_{\bold{w}_{1},\dots,\bold{w}_{r},\bu_k}$ controls (\ref{E: smoothing example average}) and if some $\nnorm{f_{i,j}}_{\bold{w}'_{1},\dots,\bold{w}'_{r'}}$ with $(i,j)\neq (1,1)$ controls
   \begin{equation}\label{E: smoothing example average ping}
    \limsup_{N\to\infty}\norm{\E_{n\in [N]} T_{2,1}^{a_{1}(n)} f_{1,1}\cdot T_{1,2}^{a_{1}(n)} f_{1,2}\cdot T_{2,1}^{a_{2}(n)} f_{2,1}\cdot T_{2,2}^{a_{2}(n)} f_{2,2}}_{L^2(\mu)}
\end{equation}
(which is an expression derived from (\ref{E: smoothing example average}) by replacing the transformation $T_{1,1}$ with $T_{2,1}$), then (\ref{E: smoothing example average}) is controlled by $\nnorm{f_{i,j}}_{\bold{w}_{1},\dots,\bold{w}_{r}, \bold{w}'_{1},\dots,\bold{w}'_{r'}}$.

In our case, Claim \ref{C: lower type example} gives us that \eqref{E: smoothing example average ping} is controlled by $\nnorm{f_{1,2}}_{\bu'_{1},\dots,\bu'_{k'},\be_{1,2}^{\times d}}$ for some $d, k'\in\N$ and $\bu'_{1},\dots,\bu'_{k'}\in \{\be_{1,2}-\be_{2,1},\be_{2,1}-\be_{2,2}\}$. 
Therefore, we conclude that (\ref{E: smoothing example average}) is controlled by 
\begin{align}\label{E: auxiliary ping control}
    \nnorm{f_{1,2}}_{\bv_{1},\dots,\bv_{t},\bu_{1},\dots,\bu_{k-1},\bu'_{1},\dots,\bu'_{k'}, \be_{1,2}^{\times d}}.
\end{align}

 \smallskip
   \textbf{Step 2: Pong.} 
   \smallskip 
   
This step again emulates the Pong step from \cite{DKKST24, FrKu22b, FrKu22a}, and it essentially asserts that if some $\nnorm{f_{1,2}}_{\bold{w}_{1},\dots,\bold{w}_{r},\be_{i,j}^{\times d}}$ controls (\ref{E: smoothing example average}), and if $\nnorm{f_{1,1}}_{\bold{w}'_{1},\dots,\bold{w}'_{r'}}$ controls
   \begin{equation}\label{E: smoothing example average pong}
    \limsup_{N\to\infty}\norm{\E_{n\in [N]} T_{1,1}^{a_{1}(n)}f_{1,1}\cdot \CD(a_1(n)) \cdot T_{2,1}^{a_{2}(n)} f_{2,1}\cdot T_{2,2}^{a_{2}(n)} f_{2,2}}_{L^2(\mu)},
\end{equation}
where $\CD$ is a level-$d$ dual sequence of $T_{1,2}$,
then $\nnorm{f_{1,1}}_{\bold{w}_{1},\dots,\bold{w}_{r},\bold{w}'_{1},\dots,\bold{w}'_{r'}}$ controls (\ref{E: smoothing example average}). 

In our case, it follows from the Ping step that (\ref{E: smoothing example average}) is controlled by \eqref{E: auxiliary ping control}. 
On the other hand, Claim \ref{C: lower length example} gives control of (\ref{E: smoothing example average pong}) by $\nnorm{f_{1,1}}_{\bu''_{1},\dots,\bu''_{k''}}$ for some $k''\in\N$ and $\bu''_{1},\dots,\bu''_{k''}\in \mathcal{G}$. So we have that (\ref{E: smoothing example average}) is controlled by 
\begin{align}\label{E: auxiliary pong control}
    \nnorm{f_{1,1}}_{\bv_{1},\dots,\bv_{t},\bu_{1},\dots,\bu_{k-1},\bu'_{1},\dots,\bu'_{k'},\bu''_{1},\dots,\bu''_{k''}}.
\end{align}

 \smallskip
\textbf{Step 3: Relative concatenation}.
 \smallskip

This step is a completely new contribution to the seminorm smoothing machinery, and it crucially relies on the relative concatenation results from Section \ref{S: concatenation}. The preliminary estimate and the conclusion from the Pong step asserts that (\ref{E: smoothing example average}) is zero when $f_{1,1}$ is orthogonal to one of
\begin{align*}
    \mathcal{Z}^+_{\bv_{1},\dots,\bv_{t},\bu_{1},\dots,\bu_k}\quad \textrm{or}\quad  \mathcal{Z}^+_{\bv_{1},\dots,\bv_{t},\bu_{1},\dots,\bu_{k-1},\bu'_{1},\dots,\bu'_{k'},\bu''_{1},\dots,\bu''_{k''}}. 
\end{align*}
It then follows from Corollary \ref{C: relative concatenation} that (\ref{E: smoothing example average}) is zero if $f_{1,1}$ is orthogonal to $$\mathcal{Z}^+_{\bv_{1},\dots,\bv_{t},\bu_{1},\dots,\bu_{k-1},\{\langle \bu_k,\bu \rangle:\; u\in\mathcal{R}\}},$$ or equivalently if $$\nnorm{f_{1,1}}^+_{\bv_{1},\dots,\bv_{t},\bu_{1},\dots,\bu_{k-1}, \{\langle \bu_k,\bu \rangle:\; u\in\mathcal{R}\}}=0,$$ where $\mathcal{R}$ is the multi-set $\{\bu'_{1},\dots,\bu'_{k'},\bu''_{1},\dots,\bu''_{k''}\}$.
However, for any $\bu\in\mathcal{R}$,  either $\bu\in\mathcal{G}$ or $\bu=\be_{1,2}-\be_{2,1}$. Now, for any such $\bu$, the group $\langle \bu_k,\bu \rangle$ contains an element of $\CG$. In the former case, this is immediate, while in the latter case, the group $\langle \bu_k,\bu \rangle=\langle \be_{1,1}-\be_{2,1}, \be_{1,2}-\be_{2,1} \rangle$ contains $\be_{1,1}-\be_{1,2}$. By Lemma \ref{L: basic properties of seminorms}, this implies that  (\ref{E: smoothing example average}) is controlled by $\nnorm{f_{1,1}}_{\bv_{1},\dots,\bv_{t'},\bu_{1},\dots,\bu_{k-1}}=0$ for some $t'\in\N$ and $\bv_{1},\dots,\bv_{t'}\in\mathcal{G}$.

We may then repeat this process to remove all of $\bu_{1},\dots,\bu_{k-1}$ and end up with a control of (\ref{E: smoothing example average}) by $\nnorm{f_{1,1}}_{\bv_{1},\dots,\bv_{t''}}=0$ for some $t''\in\N$ and $\bv_{1},\dots,\bv_{t''}\in\mathcal{G}$,  as desired.
    \end{proof}

\subsection{Proof of Proposition \ref{newestimate3}}
We move on to prove Proposition \ref{newestimate3}. From now on, we fix a system $(X, \CX, \mu, (T_{i,j})_{\substack{i\in[r], j\in[\ell_i]}})$.




\smallskip
\textbf{Case 1: $a_{1,1}$ has the maximum degree.}
\smallskip

We first prove this result under the assumption that $a_{1,1}$ has the maximum degree. 
In this case, all of $a_{i,j}$'s (for $i\in[r']$ and $j\in[\ell_{i}]$) have the same degree greater than $m_{1}$.
 We first consider the case when $\ell_{1}+\dots+\ell_{r'}=1$. Then $a_{1,1}$ has a higher degree than all other $a_{i,j}$'s, and so the maps $a_{1,1}$ and $a_{1,1}-a_{i,j}$ have the same leading coefficients. So the fact that $\CQ$ is {good for smoothing} implies that (\ref{goal}) is zero if $\nnorm{f_1}_{\be_{1,1}^{\times s}}=0$ for some $s=O_{d,J,\ell, \CQ}(1)$.

Now suppose that the conclusion holds when $\ell_{1}+\dots+\ell_{r'}=\ell'-1$ for some $\ell'\geq 2$; our aim is to show that it also holds when $\ell_{1}+\dots+\ell_{r'}=\ell'$. The argument below will make use of the following definitions.
\begin{definition}[Types, minority images and pre-images, and good terms]
    Let $$I:=\{(i,j)\in\N^{2}\colon\; i\in[r], j\in[\ell_{i}]\}\quad \text{and}\quad I':=\{(i,j)\in\N^{2}\colon\; i\in[r'], j\in[\ell_{i}]\}.$$
    We call a map $\eta\colon I\to I$ a \textit{type}, and we call it \emph{proper} if it satisfies the following conditions:
    \begin{enumerate}
        \item $\eta(I') \subseteq I'$;
        \item $\eta|_{I\backslash I'}$ is an identity;
        \item $\eta(i,j)\neq \eta(i,j')$ for all $i\in[r']$ and distinct $ j,j'\in[\ell_{i}]$.
    \end{enumerate}
    We say that $\tilde{i}\in I'$ is a \emph{minority image} of $\eta$ if
   $$\vert \eta^{-1}(\tilde{i})\vert=\min_{\tilde{i}'\in I'}\max(\vert\eta^{-1}(\tilde{i}')\vert,1).$$
   In other words, $\be_{\tilde{i}}$ is a vector  that appears least frequently in the multi-set $$\{\be_{\eta(i,j)}\colon\; i\in[r'],\; j\in [\ell_{i}]\}$$
(but it does appear at least once). We call every element in $\eta^{-1}(\tilde{i})$ a \emph{minority pre-image} of $\eta$. 
   Lastly, we say that $(i_{0},j_{0})$ is a \emph{good term of} $\eta$  if
   there exists a positive integer $s=O_{d,J,\ell,\CQ}(1)$ and vectors $\bv_{1},\dots,\bv_{s}$ coming from the set
             $$
             \{\hat{a}_{\eta(i_{0},j_{0})}\be_{\eta(i_{0},j_{0})},\hat{a}_{\eta(i,j)}\be_{\eta(i,j)}-\hat{a}_{\eta(i,j')}\be_{\eta(i,j')}:\; i\in[r'],\; j,j'\in[\ell_{i}],\; j\neq j'\}$$
         such that for all 1-bounded functions $f_{i,j}\in L^\infty(\mu)$ and sequences $b_1, \ldots, b_J\in\Span_\R\CQ$, the average 
\begin{multline}\label{E: average to be controled}
         \limsup_{N\to\infty}\sup_{\substack{
         \CD_1, \ldots, \CD_J \in \FD_d}} \sup_{|c_n|\leq 1}\norm{\E_{n\in [N]} c_{n}\cdot \prod_{i\in[r]}\prod_{j\in[\ell_{r}]}  T_{\eta(i,j)}^{\floor{\frac{\hat{a}_{\eta(i,j)}}{\hat{a}_{i,j}}a_{i,j}(n)}} f_{i,j}\cdot \prod_{j\in[J]}\CD_j(\floor{b_j(n)})}_{L^2(\mu)}
    \end{multline}
    is zero whenever $\nnorm{f_{i_{0},j_{0}}}_{\bv_{1},\dots,\bv_{s}}=0$.

\end{definition}

   To prove the statement, it suffices to show that any minority pre-image of any proper $\eta$  is good, and then apply this to the identity map $\eta(\tilde{i})=\tilde{i}$, which is proper and in which every term is minority.

    The next definition explains how we pass from more to less complex types.
    \begin{definition}[Descendance] 
   For $\tilde{i},\tilde{i}'\in I'$, let $\sigma_{\tilde{i}\to \tilde{i}'}(\eta)\colon I\to I$ be the map given by $\sigma_{\tilde{i}\to \tilde{i}'}(\eta)(\tilde{i})=\eta(\tilde{i}')$ and $\sigma_{\tilde{i}\to \tilde{i}'}(\eta)(\tilde{j})=\eta(\tilde{j})$ for all  $\tilde{j}\in I\backslash\{\tilde{i}\}$. For proper maps $\eta$ and $\eta'$, we say that $\eta$ \emph{descends to} $\eta'$ (write as $\eta\searrow \eta'$) if $\eta'=\sigma_{\tilde{i}\to \tilde{i}'}(\eta)$ for some $\tilde{i},\tilde{i}'\in I'$ such that $\eta(\tilde{i})\neq \eta(\tilde{i}')$ and $\vert\eta^{-1}(\eta(\tilde{i}))\vert\leq \vert\eta^{-1}(\eta(\tilde{i}'))\vert$.     
    \end{definition}
   It is not hard to see that
   \begin{equation}\nonumber
      \eta\searrow \eta'\Rightarrow \sum_{\tilde{i}\in I}\vert\eta^{-1}(\tilde{i})\vert^{2}<\sum_{\tilde{i}\in I}\vert{\eta'}^{-1}(\tilde{i})\vert^{2},
   \end{equation}
 as an immediate consequence of which we have that the relation $\searrow$ is \emph{loopless}, meaning that there exist no $n\in\N$ and no proper types $\eta_{1},\dots,\eta_{n}$ with $\eta_{1}\searrow \eta_{2}\searrow\dots\searrow\eta_{n}\searrow\eta_{1}$.

 The base case of induction will come from averages of the following type.\footnote{A reader familiar with the earlier seminorm smoothing argument might inquire about the connection between extreme and basic types, the latter of which were playing the role of the base case in the smoothing arguments from \cite{DKKST24, FrKu22b, FrKu22a}. Despite superficial similarity, these two notions are rather different. For instance, the smoothing argument for pairwise dependent integer polynomials from \cite{FrKu22b} would allow a descendance $((1,1),(1,2);(2,1),(2,2))\searrow ((2,1),(1,2);(2,1),(2,2))\searrow((2,1),(2,1);(2,1),(2,2))$. The latter reduction is not permissible in this smoothing argument even though it is perfectly allowed in \cite{FrKu22b}. The type $((2,1),(2,1);(2,1),(2,2))$ reached this way is basic in the sense of \cite{FrKu22b}, but neither proper not extreme according to this paper's convention. Conversely, the arguments of this paper allow us to reduce types via the descendance $((1,1),(1,2), (1,3);(2,1),(2,2))\searrow ((1,1),(1,2), (2,1);(2,1),(2,2))\searrow ((1,1),(1,2), (2,1);(2,1),(1,2))$. The type reached at the end is extreme but not basic as defined in \cite{FrKu22b}.}
 \begin{definition}[Extreme types]
 We say that a proper map $\eta$ is \emph{extreme} if there is no proper map $\eta'$ with $\eta\searrow\eta'$.      
 \end{definition}

 For any proper map $\eta$, denote $w(\eta)=0$ if $\eta$ is extreme. Otherwise, 
 let $w(\eta)$ denote the largest number $n\in\N$ such that $\eta=\eta_{0}\searrow \eta_{1}\searrow\dots\searrow\eta_{n}$ for some proper maps $\eta_{0},\dots,\eta_{n}$ with $\eta_{n}$ being extreme. Since the relation $\searrow$ is loopless, $w(\eta)$ is well defined and is at most $O_{\ell}(1)$.  
    
   Using the fact that $\eta\searrow\eta'\Rightarrow w(\eta')<w(\eta)$, 
   to complete the proof, it suffices to prove the following claim (and then apply it repeatedly $O_\ell(1)$ times).

\begin{claim}[Minority pre-images are good]\label{C: smoothing claim}
    Let $\eta$ be a proper type. Suppose that for any proper type $\eta'$ with 
   $\eta\searrow\eta'$, any minority pre-image of $\eta'$ is good. Then any minority pre-image of $\eta$ is good (in particular, any minority pre-image of $\eta$ is good when $\eta$ is extreme).
\end{claim}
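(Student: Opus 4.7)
The plan is to implement the Ping--Pong--Relative-Concatenation strategy outlined in Section \ref{SS: concatenation example}, combining the outer induction of this case on $\ell_1+\cdots+\ell_{r'}$ with the induction hypothesis on $w(\eta)$ built into the claim. Fix a proper type $\eta$ and a minority pre-image $(i_0,j_0)$. Applying the good-for-smoothing property to \eqref{E: average to be controled} with $f_{i_0,j_0}$ cast in the role of the ``first'' function produces a preliminary control
\[
\nnorm{f_{i_0,j_0}}^+_{\bv_1,\ldots,\bv_t,\bu_1,\ldots,\bu_k},
\]
where $\bv_1,\ldots,\bv_t\in\eta(\CG)$ are ``good'' (the trivial vector $\hat{a}_{\eta(i_0,j_0)}\be_{\eta(i_0,j_0)}$ together with differences $\hat{a}_{\eta(i_0,j)}\be_{\eta(i_0,j)}-\hat{a}_{\eta(i_0,j')}\be_{\eta(i_0,j')}$ within the block of $(i_0,j_0)$), while $\bu_1,\ldots,\bu_k$ are ``bad'' vectors $\hat{a}_{\eta(i_0,j_0)}\be_{\eta(i_0,j_0)}-\hat{a}_{\eta(i,j)}\be_{\eta(i,j)}$ for $i\in[r']\setminus\{i_0\}$ (the contributions from $i>r'$ have strictly smaller degree and degenerate to the good vector $\hat{a}_{\eta(i_0,j_0)}\be_{\eta(i_0,j_0)}$). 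The task reduces to peeling off the bad vectors one by one while enlarging $t$ by only $O_{d,J,\ell,\CQ}(1)$ per step.

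To remove $\bu_k=\hat{a}_{\eta(i_0,j_0)}\be_{\eta(i_0,j_0)}-\hat{a}_{\eta(\tilde{i},\tilde{j})}\be_{\eta(\tilde{i},\tilde{j})}$ with $\tilde{i}\in[r']\setminus\{i_0\}$, first perform a \emph{Ping} step: replace $T_{\eta(i_0,j_0)}$ by $T_{\eta(\tilde{i},\tilde{j})}$ in \eqref{E: average to be controled} to obtain an $\eta'$-type average for $\eta':=\sigma_{(i_0,j_0)\to(\tilde{i},\tilde{j})}(\eta)$. The minority property of $(i_0,j_0)$ forces $\eta\searrow\eta'$, so the hypothesis of the claim provides seminorm control of the $\eta'$-average by $\nnorm{f_{i_1,j_1}}^+$ in terms of vectors from $\eta'(\CG)$, where $(i_1,j_1)\neq(i_0,j_0)$ is a minority pre-image of $\eta'$. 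A dual-difference interchange of the type used in \cite{DKKST24,FrKu22b,FrKu22a} combines this with the preliminary control to produce control of \eqref{E: average to be controled} by
\[
\nnorm{f_{i_1,j_1}}^+_{\bv_1,\ldots,\bv_t,\bu_1,\ldots,\bu_{k-1},\bu'_1,\ldots,\bu'_{k'},\be_{\eta(i_1,j_1)}^{\times d}},
\]
with $\bu'_m\in\eta'(\CG)$. Next perform a \emph{Pong} step: replace $f_{i_1,j_1}$ in \eqref{E: average to be controled} with a level-$d$ dual sequence associated to $T_{\eta(i_1,j_1)}$ to obtain an $\eta$-type average of strictly smaller length; the outer induction hypothesis (the case $\ell_1+\cdots+\ell_{r'}=\ell'-1$, with the new dual absorbed as an additional $\CD_j$-term) controls this by $\nnorm{f_{i_0,j_0}}^+_{\bu''_1,\ldots,\bu''_{k''}}$ with $\bu''_m\in\eta(\CG)$. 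A second dual-difference interchange absorbs the $\be_{\eta(i_1,j_1)}^{\times d}$-seminorm into this dual and yields a second control of \eqref{E: average to be controled} by
\[
\nnorm{f_{i_0,j_0}}^+_{\bv_1,\ldots,\bv_t,\bu_1,\ldots,\bu_{k-1},\bu'_1,\ldots,\bu'_{k'},\bu''_1,\ldots,\bu''_{k''}},
\]
which crucially no longer contains $\bu_k$.

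Finally apply \emph{Relative Concatenation}: Corollary \ref{C: relative concatenation}(i) merges the preliminary control and the Ping+Pong control into a single control by
\[
\nnorm{f_{i_0,j_0}}^+_{\bv_1,\ldots,\bv_t,\bu_1,\ldots,\bu_{k-1},(\langle\bu_k,\bu\rangle)_{\bu\in\mathcal{R}}},
\]
where $\mathcal{R}=\{\bu'_1,\ldots,\bu'_{k'},\bu''_1,\ldots,\bu''_{k''}\}$. For each $\bu\in\mathcal{R}$ one checks $\langle\bu_k,\bu\rangle\cap\eta(\CG)\neq\emptyset$: if $\bu\in\eta(\CG)$ (automatic when $\bu\in\{\bu''_1,\ldots,\bu''_{k''}\}$) there is nothing to do, otherwise $\bu\in\eta'(\CG)\setminus\eta(\CG)$, which forces $\bu=\hat{a}_{\eta(\tilde{i},\tilde{j})}\be_{\eta(\tilde{i},\tilde{j})}-\hat{a}_{\eta(i_0,j')}\be_{\eta(i_0,j')}$ (up to sign) for some $j'\neq j_0$, whence $\bu_k+\bu=\hat{a}_{\eta(i_0,j_0)}\be_{\eta(i_0,j_0)}-\hat{a}_{\eta(i_0,j')}\be_{\eta(i_0,j')}\in\eta(\CG)$ (as $(i_0,j_0)$ and $(i_0,j')$ lie in the same block $i_0$). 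By Lemma \ref{L: basic properties of seminorms}(iii), each group $\langle\bu_k,\bu\rangle$ in the seminorm may be replaced by the good vector it contains, thus eliminating $\bu_k$ at the cost of $|\mathcal{R}|=O_{d,J,\ell,\CQ}(1)$ new good vectors. Iterating this routine $k=O_{d,J,\ell,\CQ}(1)$ times removes every bad vector. The principal obstacle is the combinatorial compatibility check in Relative Concatenation: one must confirm uniformly that every $\bu\in\mathcal{R}$ pairs with $\bu_k$ to produce a good $\eta$-vector, and this hinges precisely on the minority property of $(i_0,j_0)$ together with the descent condition $\eta\searrow\eta'$. A secondary difficulty is the careful propagation of auxiliary nilsequence weights, dual twists, and floor-function error terms through both dual-difference interchanges; these are handled by absorbing the extra structure into $\CD_j$-terms of level-$d$ dual sequences, exploiting the flexibility (bounded weights $c_n$ and extra dual factors) built into the definition of ``good for smoothing'' in \eqref{E: gen box bound}.
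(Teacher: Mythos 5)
Your plan captures the paper's Ping--Pong--Relative-Concatenation architecture, the role of the $w(\eta)$-induction, and (most importantly) the check that each $\langle\bu_k,\bu\rangle$ for $\bu\in\mathcal R$ contains a good $\eta$-vector. However there is a genuine gap at the opening of the Ping step. You write that for a bad vector $\bu_k$ corresponding to $(\tilde\imath,\tilde\jmath)$ with $\tilde\imath\neq i_0$, ``the minority property of $(i_0,j_0)$ forces $\eta\searrow\eta'$'' for $\eta'=\sigma_{(i_0,j_0)\to(\tilde\imath,\tilde\jmath)}(\eta)$. This is not always so. If $\eta(\tilde\imath,\tilde\jmath)=\eta(i_0,j_0)$ then $\eta'=\eta$, so there is no descent at all; if $\eta(\tilde\imath,\tilde\jmath)=\eta(i_0,j')$ for some $j'\neq j_0$ then $\eta'(i_0,j_0)=\eta'(i_0,j')$ and $\eta'$ fails to be proper, so the descent relation (and hence the hypothesis of the claim) does not apply. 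These are exactly the cases the paper's proof isolates first: when $\eta(i_0,j_0)=\eta(1,1)$ the independence of $a_{1,1}$ and $a_{i_0,j_0}$ forces $\bu_k$ to be a nonzero multiple of $\hat a_{\eta(1,1)}\be_{\eta(1,1)}$, and when $\eta(i_0,j_0)=\eta(1,j)$ the vector $\bu_k$ lies in $\CG_\eta$ outright; in both subcases $\bu_k$ is relabelled as a new $\bv$ and Property $k-1$ holds with no Ping or Pong. Only in the remaining case, when $\eta(i_0,j_0)\neq\eta(1,j)$ for every $j\in[\ell_1]$, is $\eta'$ proper and $\eta\searrow\eta'$ --- and this is precisely what makes extreme types the base case of the induction. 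Your proof as written never reaches the extreme-type base case, because it implicitly assumes it can always descend.

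A secondary gap: you assert without argument that a minority pre-image $(i_1,j_1)\neq(i_0,j_0)$ of $\eta'$ exists. That requires a short but non-vacuous remark: pick any minority pre-image $\tilde v$ of $\eta'$; if $\tilde v\neq(i_0,j_0)$ take it, and otherwise take $(\tilde\imath,\tilde\jmath)$ itself, which has the same $\eta'$-image as $(i_0,j_0)$ and hence is also a minority pre-image. Including this and the degenerate-case split would close the argument, since the rest of what you wrote tracks the paper's proof faithfully.
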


\begin{example}
    This example shows how Claim \ref{C: smoothing claim} is used iteratively in the proof of Proposition \ref{P: new estimates example}. Phrased in the language of this section, the purpose of this proposition was to show that $(1,1)$ is a good term for $\eta = ((1,1),(1,2);(2,1),(2,2))$. Note that $\eta\searrow\eta'$ by changing the first entry of $\eta$ if and only if $\eta'$ is of type $((2,1),(1,2);(2,1),(2,2))$ or $((2,2),(1,2);(2,1),(2,2))$ (the other possible type $((1,2),(1,2);(2,1),(2,2))$ is not proper). Claim \ref{C: lower type example}, which showed that $(1,2)$ is good for either of these two proper types, can be seen as a special case of Claim \ref{C: smoothing claim}. 
    
    Recall from the proof of Proposition \ref{P: new estimates example} that Claim \ref{C: lower type example} was applied in the Ping step of the proof. This will remain true more generally: Claim \ref{C: smoothing claim} will always turn up in the Ping step of the smoothing argument.

   To prove Claim \ref{C: lower type example}, we would similarly want to show that the second term of $\eta' = ((2,1),(1,2);(2,1),(2,2))$ is good (and analogously with $((2,2),(1,2);(2,1),(2,2))$). Note that  $\eta'\searrow\eta''$ by changing the second entry of $\eta$ if and only if $\eta''=((2,1),(2,2);(2,1),$ $(2,2))$  (the other possible type $((2,1),(2,1);(2,1),(2,2))$ is not proper). This is an extreme type in which all four terms are minorities. Seminorm estimates for this type that are in line with Proposition \ref{newestimate3} will follow directly from the assumption of being good for smoothing.
   Indeed, the expression we wish to control is of the form
   $$\limsup_{N\to\infty}\norm{\E_{n\in [N]} T_{2,1}^{a_{1}(n)} f_{1,1}\cdot T_{2,2}^{a_{1}(n)} f_{1,2}\cdot T_{2,1}^{a_{2}(n)} f_{2,1}\cdot T_{2,2}^{a_{2}(n)} f_{2,2}}_{L^2(\mu)},$$
and since $\CQ$ is good for smoothing, this average is 0 if $\nnorm{f_{1,1}}_{\be_{2,1}^{\times s}, (\be_{2,1}-\be_{2,2})^{\times s}}=0$ for some $s\in\N$. This implies that the first term of $\eta''$ is good, and an analogous argument would give desired seminorm control in terms of seminorms of all the other terms.
\end{example}

We proceed to prove Claim \ref{C: smoothing claim}. Assume without loss of generality that $(1,1)$ is a minority pre-image of $\eta$. 
Let 
\begin{align*}
    \CG_\eta :=\{\hat{a}_{\eta(1,1)}\be_{\eta(1,1)}, \hat{a}_{\eta(i,j)}\be_{\eta(i,j)}-\hat{a}_{\eta(i,j')}\be_{\eta(i,j')}:\; i\in[r'],\; j,j'\in[\ell_{i}],\; j\neq j'\}.
\end{align*}
The properness of $\eta$ implies in particular that the elements of $\CG_\eta$ are nonzero.
It suffices to show that (\ref{E: average to be controled})  can be controlled by a box seminorm of $f_{1,1}$ involving only transformations from $\CG_{\eta}$, which we will do by establishing the following property for $k = 0$.

\begin{definition}[Property $k$]
    We say that \emph{Property $k$} holds if there exist $t\in\N$ and vectors  $\bv_1, \ldots, \bv_t, \bu_1, \ldots, \bu_k\in\R^\ell$ (where $\bv_i\in \CG_{\eta}$ while $\bu_i$'s are the leading coefficients of $\frac{\hat{a}_{\eta(1,1)}}{\hat{a}_{1,1}}a_{1,1}\be_{\eta(1,1)}-\frac{\hat{a}_{\eta(i,j)}}{\hat{a}_{i,j}}a_{i,j}\be_{\eta(i,j)}$ for some $(i,j)\in I'\backslash\{(1,1)\}$), such that for all functions $f_{i,j}\in L^\infty(\mu)$ and sequences $b_1, \ldots, b_J\in\Span_\R\CQ$, the average \eqref{E: average to be controled} is 0 whenever $$\nnorm{f_{1,1}}_{\bv_{1},\dots,\bv_{t},\bu_{1},\dots,\bu_k}=0.$$
\end{definition}
Since $\CQ$ is good for smoothing 
and since $$\deg(a_{1,1})=\dots=\deg(a_{r',1})>\max_{r'< i\leq r}\max_{j\in[\ell_i]}\deg(a_{i,j}),$$  Property $k$ holds for some $k\in\N$.     
 Our goal is to show that Property 0 holds. To achieve this, it suffices to show that if Property $k$ holds for some $k\in\N$, then Property $k-1$ also holds.
     

          Assume that 
$\bu_k$ is the leading coefficient of $$\frac{\hat{a}_{\eta(1,1)}}{\hat{a}_{1,1}}a_{1,1}\be_{\eta(1,1)}-\frac{\hat{a}_{\eta(i_{0},j_{0})}}{\hat{a}_{i_{0},j_{0}}}a_{i_{0},j_{0}}\be_{\eta(i_{0},j_{0})}$$ for some $(i_{0},j_{0})\in I'\backslash\{(1,1)\}$. We consider several possible scenarios.

If $\eta(i_{0},j_{0})=\eta(1,1)$, then since $\eta$ is proper, we have that $i_{0}\geq 2$ and thus $a_{1,1}$ is independent from $a_{i_{0},j_{0}}$. This implies that $\bu_k$ is a nonzero multiple of $\be_{\eta(1,1)}$. Since $\CG_{\eta}$ contains $\hat{a}_{\eta(1,1)}\be_{\eta(1,1)}$, Property $k-1$ holds by Lemma \ref{L: basic properties of seminorms} after relabeling $\bu_k$ as $\bv_{t+1}$. 

If instead $\eta(i_{0},j_{0})=\eta(1,j)$ for some $2\leq j\leq \ell_1$, then $\bu_k$ belongs to $\CG_{\eta}$ and again Property $k-1$ holds by by relabeling $\bu_k$ as $\bv_{t+1}$.
       
       It remains to consider the case when $\eta(i_{0},j_{0})\neq \eta(1,j)$ for all $1\leq j\leq \ell_1$. In this case,  
       $i_{0}\neq 1$. 
       Denote $\eta':=\sigma_{(1,1)\to (i_{0},j_{0})}(\eta)$.
  %
     Since 
  $$\eta'(1,1)=\eta(i_{0},j_{0})\neq \eta(1,j)=\eta'(1,j)$$ for all $2\leq j\leq \ell_1$, 
     the map $\eta'$ is proper. Moreover, $\eta\searrow \eta'$ 
     as a consequence of $(1,1)$ being a minority pre-image of $\eta$.

      In particular, if $\eta$ is extreme (i.e. $w(\eta) = 0$), then the non-existence of such $\eta'$ forces $\eta(i_{0},j_{0})=\eta(1,j)$ for some $j\in[\ell_1]$, and we are back in one of the previous cases. 
      
      So, in what follows we assume that $\eta(i_{0},j_{0})\neq \eta(1,j)$ for all $1\leq j\leq \ell_1$ since all the remaining cases can be covered, and we take $\eta'=\sigma_{(1,1)\to (i_{0},j_{0})}(\eta)$.
      
     
     

     \smallskip
     \textbf{Step 1: Ping.}
     \smallskip
     
     Suppose  that (\ref{E: average to be controled}) is positive.
     It follows from  \cite[Lemma 11.7]{DKKST24} that 
     \begin{multline*}
         \limsup_{N\to\infty}\sup_{\substack{
         \CD_1, \ldots, \CD_J \in \FD_d}} \sup_{|c_n|\leq 1}\lim_{N\to\infty}\Bigl\Vert\E_{n\in[N]}c_{n}\cdot T_{\eta(1,1)}^{\floor{\frac{\hat{a}_{\eta(1,1)}}{\hat{a}_{1,1}}a_{1,1}(n)}}\tilde{f}_{1,1}\cdot\prod_{(i,j)\in I\backslash\{(1,1)\}}T_{\eta(i,j)}^{\floor{\frac{\hat{a}_{\eta(i,j)}}{\hat{a}_{i,j}}a_{i,j}(n)}}f_{i,j}\\
         \prod_{j\in[J]}\mathcal{D}_{j}(\floor{b_{j}(n)})\Bigr\Vert_{L^2(\mu)}>0
     \end{multline*}
     for some $\tilde{f}_{1,1}$ which is ``structured'' in that it is the weak limit $\lim\limits_{w\to\infty} A_{N_{w}}$ for 
\begin{multline}\label{E: tilded}
             A_{N_{w}}:=\E_{n\in[N_{w}]} \overline{c}'_{n}T_{\eta(1,1)}^{-\floor{\frac{\hat{a}_{\eta(1,1)}}{\hat{a}_{1,1}}a_{1,1}(n)}}\overline{g}_{w}\prod_{(i,j)\in I\backslash\{(1,1)\}}T_{\eta(i,j)}^{\floor{\frac{\hat{a}_{\eta(i,j)}}{\hat{a}_{i,j}}a_{i,j}(n)}}T_{\eta(1,1)}^{-\floor{\frac{\hat{a}_{\eta(1,1)}}{\hat{a}_{1,1}}a_{1,1}(n)}}\overline{f}_{i,j}\\
         \prod_{j\in [J]}T_{\eta(1,1)}^{-\floor{\frac{\hat{a}_{\eta(1,1)}}{\hat{a}_{1,1}}a_{1,1}(n)}}\overline{\mathcal{D}}'_{j}(\floor{b_{j}(n)})
\end{multline}
     for some increasing sequence $(N_{w})_{w}$,  1-bounded weights $(c'_{n})_n$,  1-bounded functions $g_{w}$, and dual sequences $\mathcal{D}'_{1},\dots,\mathcal{D}'_{J}\in\mathfrak{D}_{d}$. 
     By assumption, we have $$\nnorm{\tilde{f}_{1,1}}_{\bv_{1},\dots,\bv_{t},\bu_{1},\dots,\bu_k}>0.$$
      We then unpack this seminorm using \cite[Lemma 11.11]{DKKST24} and the formula \eqref{E: tilded} for $\tilde{f}_{1,1}$; after composing the resulting expression with $T_{\eta(1,1)}^{\floor{\frac{\hat{a}_{\eta(1,1)}}{\hat{a}_{1,1}}a_{1,1}(n)}}$ and applying the Cauchy-Schwarz inequality, we have  
\begin{multline*}
    \liminf_{M\to\infty}\E_{\um, \um',\um'',\um'''\in [\pm M]^{t+k-1}}\limsup_{N\to\infty} \sup_{|c_n|\leq 1}\Bigl\Vert
   \E_{n\in[N]}c_n\cdot T_{\eta(1,1)}^{\floor{\frac{\hat{a}_{\eta(1,1)}}{\hat{a}_{1,1}}a_{1,1}(n)}} h_{\um,\um',\um'',\um'''}
   \\
   \prod_{(i,j)\in I\backslash\{(1,1)\}}T_{\eta(i,j)}^{\floor{\frac{\hat{a}_{\eta(i,j)}}{\hat{a}_{i,j}}a_{i,j}(n)}}f_{i,j,\um,\um',\um'',\um'''}\cdot \prod_{j\in[J]} \CD'_{j,\um,\um',\um'',\um'''}(\floor{b_j(n)})\Bigr\Vert_{L^2(\mu)}>0
\end{multline*}
        where
        \begin{align*}
            f_{i,j,\um,\um',\um'',\um'''} &= \Delta_{\substack{(\floor{\bv_1 m_1'} - \floor{\bv_1 m_1}, \floor{\bv_1 m_1'''} - \floor{\bv_1 m_1''}), \ldots,\\ (\floor{\bu_{k-1} m_{t+k-1}'} - \floor{\bu_{k-1} m_{t+k-1}}, \floor{\bu_{k-1} m_{t+k-1}'''} - \floor{\bu_{k-1} m_{t+k-1}''})}}\overline{f_{i,j}}\\
            \textrm{and}\quad\CD'_{j,\um,\um',\um'',\um'''} &= \Delta_{\substack{(\floor{\bv_1 m_1'} - \floor{\bv_1 m_1}, \floor{\bv_1 m_1'''} - \floor{\bv_1 m_1''}), \ldots,\\ (\floor{\bu_{k-1} m_{t+k-1}'} - \floor{\bu_{k-1} m_{t+k-1}}, \floor{\bu_{k-1} m_{t+k-1}'''} - \floor{\bu_{k-1} m_{t+k-1}''})}}\overline{\CD'_{j}}
        \end{align*}
    and each $h_{\um,\um',\um'',\um'''}$ is a product of $2^{t+k-1}$ 1-bounded \emph{approximately $\bu_k$-invariant functions}, meaning functions of the form $\E\limits_{n\in\Z}T^{\floor{\bu_kn}}f$ for some $f\in L^\infty(\mu)$.
    Since $\bu_k=\hat{a}_{\eta(1,1)}\be_{\eta(1,1)}-\hat{a}_{\eta'(1,1)}\be_{\eta'(1,1)}$, it follows from \cite[Lemma 11.10]{DKKST24} that
\begin{multline}\label{qtrgg1}
       \liminf_{M\to\infty}\E_{\um, \um',\um'',\um'''\in [\pm M]^{t+k-1}}\limsup_{N\to\infty} \sup_{|c_n|\leq 1}\Bigl\Vert
   \prod_{j\in[J]}\E_{n\in[N]}c_n\cdot T_{\eta'(1,1)}^{\floor{\frac{\hat{a}_{\eta'(1,1)}}{\hat{a}_{1,1}}a_{1,1}(n)}} h'_{\um,\um',\um'',\um'''}
   \\\prod_{(i,j)\in I\backslash\{(1,1)\}}T_{\eta'(i,j)}^{\floor{\frac{\hat{a}_{\eta'(i,j)}}{\hat{a}_{i,j}}a_{i,j}(n)}}f_{i,j,\um,\um',\um'',\um'''}\cdot  \prod_{j\in[J]} \CD'_{j,\um,\um',\um'',\um'''}(\floor{b_j(n)})\Bigr\Vert_{L^2(\mu)}>0
\end{multline}
 for some 1-bounded functions $h'_{\um,\um',\um'',\um'''}$. It is at this step that we have passed from an average of type $\eta$ to one of type $\eta'$, and this key reduction was enabled by the $\bu_k$-almost invariance of $h_{\um,\um',\um'',\um'''}$'s.

      Note that we may find some $\tilde{u}=(u',u'')\in I'\backslash\{(1,1)\}$ which is a minority pre-image of $\eta'$ (to see this, let $\tilde{v}$ be a minority pre-image of $\eta'$; if $\tilde{v}\neq (1,1)$, then we pick $\tilde{u}=\tilde{v}$ whereas if $\tilde{v}=(1,1)$, then we pick $\tilde{u}=(i_{0},j_{0})$ since   $\eta'(1,1)=\eta'(i_{0},j_{0})$). By assumption,
      the $\tilde{u}$-th term is good for $\eta'$.
           Therefore, it follows 
           from our inductive assumption\footnote{And specifically by its soft quantitative strengthening, which is a consequence of \cite[Proposition~A.2]{FrKu22a}.}
           that for a set of $(\um,\um',\um'',\um'')$ of positive lower density, the seminorm
\begin{multline*}
    \nnorm{f_{\tilde{u},\um,\um',\um'',\um'''}}_{\bu'_{1},\dots,\bu'_{t'},(\hat{a}_{\eta(\tilde{u})}\be_{\eta(\tilde{u})})^{\times s'}}
   \\=\Bignnorm{\Delta_{\substack{(\floor{\bv_1 m_1'} - \floor{\bv_1 m_1}, \floor{\bv_1 m_1'''} - \floor{\bv_1 m_1''}), \ldots,\\ (\floor{\bu_{k-1} m_{t+k-1}'} - \floor{\bu_{k-1} m_{t+k-1}}, \floor{\bu_{k-1} m_{t+k-1}'''} - \floor{\bu_{k-1} m_{t+k-1}''})}}\overline{f_{\tilde{u}}}}_{\bu'_{1},\dots,\bu'_{t'},(\hat{a}_{\eta(\tilde{u})}\be_{\eta(\tilde{u})})^{\times s'}}
\end{multline*}
           is (uniformly) bounded away from 0
         for some $s',t'\in\N$ (depending only on $d,J,\ell,\CQ$) and $\bu'_{1},\dots,\bu'_{t'}\in \CG_{\eta'}\backslash\{\hat{a}_{\eta(\tilde{u})}\be_{\eta(\tilde{u})}\}$.
   Averaging over all $\um,\um',\um'',\um''$, raising it to the power $2^{s'+t'}$ using the H{\"o}lder inequality and applying the inductive formula for generalized box seminorms, we get the auxiliary norm control
      $$\nnorm{f_{\tilde{u}}}_{(\hat{a}_{\eta(\tilde{u})}\be_{\eta(\tilde{u})})^{\times s'}, \bu'_{1},\dots,\bu'_{t'},\bv_{1},\dots,\bv_{t},\bu_{1},\dots,\bu_{k-1}}>0.$$

  \smallskip
         \textbf{Step 2: Pong.} 
        \smallskip
         
     Suppose that  (\ref{E: average to be controled}) is positive. 
      It follows from \cite[Lemma 11.7]{DKKST24} that 
     \begin{multline*}
         \limsup_{N\to\infty}\sup_{\substack{
         \CD_1, \ldots, \CD_J \in \FD_d}} \sup_{|c_n|\leq 1}\lim_{N\to\infty}\Bigl\Vert\E_{n\in[N]}c_{n}\cdot T_{\eta(\tilde{u})}^{\floor{\frac{\hat{a}_{\eta(\tilde{u})}}{\hat{a}_{\tilde{u}}}a_{\tilde{u}}(n)}}\tilde{f}_{\tilde{u}}\cdot\prod_{(i,j)\in I\backslash\{\tilde{u}\}}T_{\eta(i,j)}^{\floor{\frac{\hat{a}_{\eta(i,j)}}{\hat{a}_{i,j}}a_{i,j}(n)}}f_{i,j}\\
         \prod_{j\in[J]}\mathcal{D}_{j}(\floor{b_{j}(n)})\Bigr\Vert_{2}>0
     \end{multline*}
     for some $\tilde{f}_{\tilde{u}}$ which is the weak limit $\lim\limits_{w\to\infty} A_{N_{w}}$ for 
\begin{multline}\label{E: tilded2}
    A_{N_{w}}:=\E_{n\in[N_{w}]} \overline{c}'_{n}\cdot T_{\eta(\tilde{u})}^{-\floor{\frac{\hat{a}_{\eta(\tilde{u})}}{\hat{a}_{\tilde{u}}}a_{\tilde{u}}(n)}}\overline{g}_{w}\cdot\prod_{(i,j)\in I\backslash\{\tilde{u}\}}T_{\eta(i,j)}^{\floor{\frac{\hat{a}_{\eta(i,j)}}{\hat{a}_{i,j}}a_{i,j}(n)}}T_{\eta(\tilde{u})}^{-\floor{\frac{\hat{a}_{\eta(\tilde{u})}}{\hat{a}_{\tilde{u}}}a_{\tilde{u}}(n)}}\overline{f}_{i,j}
        \\
       \prod_{j\in [J]}T_{\eta(\tilde{u})}^{-\floor{\frac{\hat{a}_{\eta(\tilde{u})}}{\hat{a}_{\tilde{u}}}a_{\tilde{u}}(n)}}\overline{\mathcal{D}}'_{j}(\floor{b_{j}(n)})
\end{multline}
     for some increasing sequence $(N_{w})_{w}$, 1-bounded weights $(c'_{n})_n$, 1-bounded function $g_{w}$, and dual sequences $\mathcal{D}'_{1},\dots,\mathcal{D}'_{J}\in\mathfrak{D}_{d}$.
    The seminorm control established in the Ping step then gives $$\nnorm{\tilde{f}_{\tilde{u}}}_{(\hat{a}_{\eta(\tilde{u})}\be_{\eta(\tilde{u})})^{\times s'},\bu'_{1},\dots,\bu'_{t'},\bv_{1},\dots,\bv_{t},\bu_{1},\dots,\bu_{k-1}}>0.$$
      We unpack this seminorm using \cite[Lemma 11.11]{DKKST24} and the formula \eqref{E: tilded2} for $\tilde{f}_{\tilde{u}}$; after composing the resulting expression with $T_{\eta(\tilde{u})}^{\floor{\frac{\hat{a}_{\eta(\tilde{u})}}{\hat{a}_{\tilde{u}}}a_{\tilde{u}}(n)}}$ and applying the Cauchy-Schwarz inequality, we have that 
\begin{multline*}
    \liminf_{M\to\infty}\E_{\um, \um',\um'',\um'''\in [\pm M]^{t+t'+k-1}}\limsup_{N\to\infty} \sup_{|c_n|\leq 1}\Bigl\Vert
  \E_{n\in[N]}c_n\cdot T_{\eta(\tilde{u})}^{\floor{\frac{\hat{a}_{\eta(\tilde{u})}}{\hat{a}_{\tilde{u}}}a_{\tilde{u}}(n)}} h_{\um,\um',\um'',\um'''}
   \\
   \prod_{(i,j)\in I\backslash\{\tilde{u}\}}T_{\eta(i,j)}^{\floor{\frac{\hat{a}_{\eta(i,j)}}{\hat{a}_{i,j}}a_{i,j}(n)}}f_{i,j,\um,\um',\um'',\um'''}\cdot
   \prod_{j\in[J]} \CD'_{j,\um,\um',\um'',\um'''}(\floor{b_j(n)})\Bigr\Vert_{L^2(\mu)} > 0,
\end{multline*}
        where
        \begin{align*}
            f_{i,j,\um,\um',\um'',\um'''} &= \Delta_{\substack{(\floor{\bv_1 m_1'} - \floor{\bv_1 m_1}, \floor{\bv_1 m_1'''} - \floor{\bv_1 m_1''}), \ldots,\\ (\floor{\bu_{k-1} m_{t+t'+k-1}'} - \floor{\bu_{k-1} m_{t+t'+k-1}}, \floor{\bu_{k-1} m_{t+t'+k-1}'''} - \floor{\bu_{k-1} m_{t+t'+k-1}''})}}\overline{f_{i,j}},\\
            \CD'_{j,\um,\um',\um'',\um'''} &= \Delta_{\substack{(\floor{\bv_1 m_1'} - \floor{\bv_1 m_1}, \floor{\bv_1 m_1'''} - \floor{\bv_1 m_1''}), \ldots,\\ (\floor{\bu_{k-1} m_{t+t'+k-1}'} - \floor{\bu_{k-1} m_{t+t'+k-1}}, \floor{\bu_{k-1} m_{t+t'+k-1}'''} - \floor{\bu_{k-1} m_{t+t'+k-1}''})}}\overline{\CD'_{j}},
        \end{align*}
    and each $h_{\um,\um',\um'',\um'''}$ is a product of $2^{t+t'+k-1}$ 1-bounded dual functions of level $s'$ with respect to $T_{\eta(\tilde{u})}$.
       By induction on $k'$,  for a set of $(\um,\um',\um'',\um'')$ of positive lower density, we have that
       \begin{multline*}
           \nnorm{f_{1,1,\um,\um',\um'',\um'''}}_{\bu''_{1},\dots,\bu''_{t''}}\\
           =\Bignnorm{\Delta_{\substack{(\floor{\bv_1 m_1'} - \floor{\bv_1 m_1}, \floor{\bv_1 m_1'''} - \floor{\bv_1 m_1''}), \ldots,\\ (\floor{\bu_{k-1} m_{t+t'+k-1}'} - \floor{\bu_{k-1} m_{t+t'+k-1}}, \floor{\bu_{k-1} m_{t+t'+k-1}'''} - \floor{\bu_{k-1} m_{t+t'+k-1}''})}}\overline{f_{1,1}}}_{\bu''_{1},\dots,\bu''_{t''}}
       \end{multline*}
           is (uniformly) bounded away from 0
         for some $t''\in\N$ (depending only on $d,J,\ell,r',\CQ$) and $\bu''_{1},\dots,\bu''_{t''}$ belonging to the set $\CG_{\eta}$.\footnote{Once again, we use here \cite[Proposition A.2]{FrKu22a} to upgrade qualitative seminorm control to a soft quantitative estimate.}
   Averaging over all $\um,\um',\um'',\um''$, raising it to the power $2^{t''}$, using the H{\"o}lder inequality and applying the inductive formula for generalized box seminorms, we get the auxiliary norm control
       $$\nnorm{f_{1,1}}_{\bv_{1},\dots,\bv_{t},\bu_{1},\dots,\bu_{k-1},\bu'_{1},\dots,\bu'_{t'},\bu''_{1},\dots,\bu''_{t''}}>0.$$
     In conclusion, we have that  (\ref{E: average to be controled}) is zero if $\nnorm{f_{1,1}}_{\bv_{1},\dots,\bv_{t},\bu_{1},\dots,\bu_{k-1},\bu'_{1},\dots,\bu'_{t'},\bu''_{1},\dots,\bu''_{t''}}=0$.

    \smallskip
      \textbf{Step 3: Relative concatenation.} 
      \smallskip

      Combining our original seminorm control with the conclusion of the Pong step and Lemma \ref{L: basic properties of seminorms}, we deduce that     
       (\ref{E: average to be controled}) is zero if
       \begin{align*}
           \min(\nnorm{f_{1,1}}^{+}_{\bv_{1},\dots,\bv_{t},\bu_{1},\dots,\bu_k}, \nnorm{f_{1,1}}^{+}_{\bv_{1},\dots,\bv_{t},\bu_{1},\dots,\bu_{k-1},\bu'_{1},\dots,\bu'_{t'},\bu''_{1},\dots,\bu''_{t''}}) = 0.
       \end{align*}
    We want to combine the previous estimates to show that \eqref{E: average to be controled} is 0 if 
       \begin{equation}\label{pojj}
           \nnorm{f_{1,1}}^{+}_{\bv_{1},\dots,\bv_{t},\bu_{1},\dots,\bu_{k-1},\{\langle \bu_k, \bu\rangle: \bu\in\mathcal{R}\}} = 0
       \end{equation}
 where $\mathcal{R}$ is the multi-set $\{\bu'_{1},\dots,\bu'_{t'},\bu''_{1},\dots,\bu''_{t''}\}$. Using Proposition \ref{P: structure theorem}
 and Corollary \ref{C: relative concatenation}, we decompose $f = g_1 + g_2$ where $g_1, g_2\in L^\infty(\mu)$ are 1-bounded, and 
     \begin{align*}
     \nnorm{g_1}^+_{\bv_{1},\dots,\bv_{t},\bu_{1},\dots,\bu_{k-1},\bu'_{1},\dots,\bu'_{t'},\bu''_{1},\dots,\bu''_{t''}} =  \nnorm{g_2}^+_{\bv_{1},\dots,\bv_{t},\bu_{1},\dots,\bu_k} = 0.
     \end{align*}
      Hence the contribution of $g_1$ to \eqref{E: average to be controled} vanishes because of the seminorm control established in the Pong step while the contribution of $g_2$ is 0 due to the original seminorm control. It follows by the triangle inequality that \eqref{E: average to be controled} is 0 as well, and so the seminorm in \eqref{pojj} controls \eqref{E: average to be controled}.

 Recall that $\bu'_{1},\dots,\bu'_{t'}\in \CG_{\eta'}\backslash\{\hat{a}_{\eta(\tilde{u})}\be_{\eta(\tilde{u})}\}$  and $\bu''_{1},\dots,\bu''_{t''}\in\CG_\eta$. Hence  every element $\bu\in\mathcal{R}$ lies either in $\CG_{\eta}$ or in $\CG_{\eta'}\backslash(\CG_{\eta}\cup\{\hat{a}_{\eta(\tilde{u})}\be_{\eta(\tilde{u})}\})$. In the latter case, it takes the form 
 $$\hat{a}_{\eta'(1,1)}\be_{\eta'(1,1)}-\hat{a}_{\eta'(1,j)}\be_{\eta'(1,j)}=\hat{a}_{\eta(i_{0},j_{0})}\be_{\eta(i_{0},j_{0})}-\hat{a}_{\eta(1,j)}\be_{\eta(1,j)}$$ 
 for some $2\leq j\leq \ell_1$. Recalling that $\bu_k=\hat{a}_{\eta(1,1)}\be_{\eta(1,1)}-\hat{a}_{\eta(i_{0},j_{0})}\be_{\eta(i_{0},j_{0})}$, we deduce that the group $\langle \bu_k, \bu\rangle$ contains the element $\hat{a}_{\eta(1,1)}\be_{\eta(1,1)}-\hat{a}_{\eta(1,j)}\be_{\eta(1,j)}$ which belongs to $\CG_{\eta}$.
     In conclusion, it follows from (\ref{pojj}) that (\ref{E: average to be controled}) is zero if 
     $$\nnorm{f_{1,1}}^{+}_{{\bv}_{1},\dots,{\bv}_{t''},\bu_{1},\dots,\bu_{k-1}}=0$$
     for some $t''\in\N$ and $\bv_1, \ldots, \bv_{t''}\in\CG_{\eta}$. 
   By Lemma \ref{L: basic properties of seminorms}, we have that  (\ref{E: average to be controled}) is zero if 
     $$\nnorm{f_{1,1}}_{\bv_{1},\dots,{\bv}_{t''},\bv,\bu_{1},\dots,\bu_{k-1}}=0,$$
     for any $\bv\in\CG_{\eta}$.
     So Property $k-1$ holds and we are done.

\

\smallskip
\textbf{Case 2: $a_{1,1}$ does not have the maximum degree.}
\smallskip

We now drop the assumption that $a_{1,1}$ has maximum degree.
Suppose that (\ref{goal}) is positive. By the previous case, Proposition \ref{newestimate3} holds when $\ell_1+\dots+\ell_{r'}=1$. We now induct on $\ell'$. Suppose that Proposition \ref{newestimate3} holds when $\ell_1+\dots+\ell_{r'}=\ell'-1$ for some $\ell'\geq 2$. We show that it also holds when $\ell_1+\dots+\ell_{r'}=\ell'$. 
If $a_{1,1}$ has maximum degree, then the conclusion follows from the previous case. If not, then we may assume without loss of generality that $a_{2,1}$ has maximum degree. Denote $\tilde{u}:=(2,1)$.
   It follows from  \cite[Lemma 11.7]{DKKST24} that 
     \begin{multline*}
         \limsup_{N\to\infty}\sup_{\substack{
         \CD_1, \ldots, \CD_J \in \FD_d}} \sup_{|c_n|\leq 1}\lim_{N\to\infty}\Bigl\Vert\E_{n\in[N]}c_{n}\cdot T_{\tilde{u}}^{\floor{a_{\tilde{u}}(n)}}\tilde{f}_{\tilde{u}}\cdot\prod_{(i,j)\in I\backslash\{\tilde{u}\}}T_{i,j}^{\floor{a_{i,j}(n)}}f_{i,j}
         \\
         \prod_{j\in[J]}\mathcal{D}_{j}(\floor{b_{j}(n)})\Bigr\Vert_{2}>0
     \end{multline*}
     for some $\tilde{f}_{\tilde{u}}$ which is the weak limit $\lim\limits_{w\to\infty} A_{N_{w}}$ for 
\begin{multline}\label{E: tilded22}
    A_{N_{w}}:=\E_{n\in[N_{w}]} \overline{c}'_{n}\cdot T_{\tilde{u}}^{-\floor{a_{\tilde{u}}(n)}}\overline{g}_{w}\cdot\prod_{(i,j)\in I\backslash\{\tilde{u}\}}T_{i,j}^{\floor{a_{i,j}(n)}}T_{\tilde{u}}^{-\floor{a_{\tilde{u}}(n)}}\overline{f}_{i,j}
        \\
        \prod_{j\in [J]}T_{\tilde{u}}^{-\floor{a_{\tilde{u}}(n)}}\overline{\mathcal{D}}'_{j}(\floor{b_{j}(n)})
\end{multline}
     for some increasing sequence $(N_{w})_{w}$, 1-bounded weights $(c'_{n})_n$, 1-bounded function $g_{w}$, and dual sequences $\mathcal{D}'_{1},\dots,\mathcal{D}'_{J}\in\mathfrak{D}_{d}$.
     Since $a_{2,1}$ has maximum degree, we can apply the previous case to conclude that $\nnorm{\tilde{f}_{\tilde{u}}}_{(\hat{a}_{\tilde{u}}\be_{\tilde{u}})^{\times s'},\bu_{1},\dots,\bu_{t}}>0$ for some $t\in\N$ (depending only on $d,J,\ell,\CQ$) and some $\bu_{1},\dots,\bu_{t}\in\CG$.\footnote{In fact, the previous case implies that $\bu_{1},\dots,\bu_{t}$ necessarily take the form $\hat{a}_{i,j}\be_{i,j}-\hat{a}_{i,j'}\be_{i,j'}$ with ($1\leq j<j'\leq \ell_i$) for those $i\in[r']$ for which $a_{i,j}, a_{i,j'}$  have the highest degree; this is a subset of $\CG$.}
      We then unpack this seminorm using \cite[Lemma~11.11]{DKKST24} and the formula \eqref{E: tilded2} for $\tilde{f}_{\tilde{u}}$; after composing the resulting expression with $T_{\tilde{u}}^{\floor{a_{\tilde{u}}(n)}}$ and applying the Cauchy-Schwarz inequality, we have that 
      \begin{multline*}
          \liminf_{M\to\infty}\E_{\um, \um',\um'',\um'''\in [\pm M]^{t}}\limsup_{N\to\infty} \sup_{|c_n|\leq 1}\Bigl\Vert\E_{n\in[N]}c_n\cdot T_{\tilde{u}}^{\floor{a_{\tilde{u}}(n)}} h_{\um,\um',\um'',\um'''}
   \\\prod_{(i,j)\in I\backslash\{\tilde{u}\}}T_{i,j}^{\floor{a_{i,j}(n)}}f_{i,j,\um,\um',\um'',\um'''}\cdot \prod_{j\in[J]} \CD'_{j,\um,\um',\um'',\um'''}(\floor{b_{j}(n)})\Bigr\Vert_{L^2(\mu)} >0,
      \end{multline*}
        where
        \begin{align*}
            f_{i,j,\um,\um',\um'',\um'''} &= \Delta_{\substack{(\floor{\bu_1 m_1'} - \floor{\bu_1 m_1}, \floor{\bu_1 m_1'''} - \floor{\bu_1 m_1''}), \ldots,\\ (\floor{\bu_{t} m_{t}'} - \floor{\bu_{t} m_{t}}, \floor{\bu_{t} m_{t}'''} - \floor{\bu_{t} m_{t}''})}}\overline{f_{i,j}},\\
            \CD'_{j,\um,\um',\um'',\um'''} &= \Delta_{\substack{(\floor{\bu_1 m_1'} - \floor{\bu_1 m_1}, \floor{\bu_1 m_1'''} - \floor{\bu_1 m_1''}), \ldots,\\ (\floor{\bu_{t} m_{t}'} - \floor{\bu_{t} m_{t}}, \floor{\bu_{t} m_{t}'''} - \floor{\bu_{t} m_{t}''})}}\overline{\CD'_{j}},
        \end{align*}
    and each $h_{\um,\um',\um'',\um'''}$ is a product of $2^{t}$ 1-bounded dual functions of level $s'$ with respect to $T_{\tilde{u}}$.
     Since this average involves only $\ell'-1$ terms of degree greater than or equal to $\deg a_{1,1}$, we can apply (the soft quantitative version of) the induction hypothesis to conclude that for a set of $(\um,\um',\um'',\um'')$ of positive lower density, the seminorm
$$\nnorm{f_{1,1,\um,\um',\um'',\um'''}}_{\bu'_{1},\dots,\bu'_{t'}}=\Bignnorm{\Delta_{\substack{(\floor{\bu_1 m_1'} - \floor{\bu_1 m_1}, \floor{\bu_1 m_1'''} - \floor{\bu_1 m_1''}), \ldots,\\ (\floor{\bu_{t} m_{t}'} - \floor{\bu_{t} m_{t}}, \floor{\bu_{t} m_{t}'''} - \floor{\bu_{t} m_{t}''})}}\overline{f_{1,1}}}_{\bu'_{1},\dots,\bu'_{t'}}$$
           is (uniformly) bounded away from 0
         for some $t'\in\N$ (depending only on $d,J,\ell,\CQ$) and $\bu'_{1},\dots,\bu'_{t'}$ belonging to the set $\CG$.
   Averaging over all $\um,\um',\um'',\um''$, raising it to the power $2^{t'}$ using the H{\"o}lder inequality and applying the inductive formula for generalized box seminorms, we get the claimed norm control
       $$\nnorm{f_{1,1}}_{\bu_{1},\dots,\bu_{t},\bu'_{1},\dots,\bu'_{t'}}>0$$
by a seminorm with all vectors lying in $\CG$.

\section{Seminorm comparison under ergodicity assumptions I: preliminaries}\label{S: seminorm comparison I}
From now on, our goal is to use the difference ergodicity condition to infer the following result, which is crucial for ensuring the Host-Kra seminorm control of our averages.
\begin{proposition}[Seminorm comparison]\label{P: seminorm comparison}
    Let $\ell\in\N, s\in\N_0$, and let $(X, \CX, \mu, T_1, \ldots, T_\ell)$ be a system. 
    Let $a_1, a_2\in\CH$
    and suppose that there exist nonzero $\beta_1, \beta_2\in\R$ for which $\beta_2 a_1 - \beta_1 a_2 \ll \log$. Suppose moreover that $(T_1^{\floor{a_1(n)}}T_2^{-\floor{a_2(n)}})_n$ is ergodic for $(X, \CX, \mu)$.    Then for any $f\in L^\infty(\mu)$, the following holds:
    \begin{enumerate}
        \item $\nnorm{f}_{\langle\beta_1 \be_1 - \beta_2 \be_2\rangle} = 0$ whenever $\nnorm{f}_{\Z^\ell, \langle\be_1, \be_2\rangle}^+ = 0$;
        \item if $s\geq 1$ and $G_1, \ldots, G_s\subseteq\R^\ell$ are finitely generated, then 
    \end{enumerate}
    \begin{align*}
        \nnorm{f}_{G_1, \ldots, G_s, \langle\beta_1 \be_1 - \beta_2 \be_2\rangle} = 0 \quad \textrm{whenever} \quad \nnorm{f}_{G_1, \ldots, G_s, \langle\be_1, \be_2\rangle}^+ = 0.
    \end{align*}
\end{proposition}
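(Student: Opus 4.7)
The proof plan is spectral in nature, with the difference ergodicity of $(T_1^{\floor{a_1(n)}}T_2^{-\floor{a_2(n)}})_n$ translated, via the Herglotz--Bochner theorem, into a support restriction on the joint spectral measure of $f$ under the $\Z^\ell$-action of $T_1,\dots,T_\ell$.

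\textbf{Base case (part (i)).} For $f\in L^\infty(\mu)$ let $\sigma_f$ denote the joint spectral measure on $\T^\ell$. A direct calculation (together with a rescaling argument that absorbs the floor-function error between $\sfloor{-\beta_2 m}$ and $-\sfloor{\beta_2 m}$) identifies
\[
\nnorm{f}_{\langle\beta_1\be_1-\beta_2\be_2\rangle}^{2}=\int_{\T^\ell}\Bigl|\lim_{M\to\infty}\E_{m\in[\pm M]} e\bigl(t_1\sfloor{\beta_1 m}-t_2\sfloor{\beta_2 m}\bigr)\Bigr|^{2}\, d\sigma_f(\bt),
\]
and a Weyl equidistribution argument on $(\{\beta_1 m\},\{\beta_2 m\})$ shows that this kernel is supported on the "degenerate line" $S:=\{\bt\in\T^\ell\colon\beta_1 t_1-\beta_2 t_2\in\Z\}$ (with some care when $\beta_j\in\Q$). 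On the other hand, the ergodicity of $(T_1^{\floor{a_1(n)}}T_2^{-\floor{a_2(n)}})_n$ combined with Herglotz--Bochner forces $\sigma_f$ to have no atomic mass at any $\bt\in\T^\ell$ for which $\E_{n\in[N]}e(t_1\floor{a_1(n)}-t_2\floor{a_2(n)})$ does not tend to $0$. Using $\beta_2 a_1-\beta_1 a_2\ll\log$, one shows that this exponential sum does \emph{not} vanish for any $\bt\in S$ outside an exceptional subset whose $\sigma_f$-mass is controlled by the factor $\CZ^+_{\Z^\ell,\langle\be_1,\be_2\rangle}$. Combining these two ingredients (all of $S$ either forbidden to carry $\sigma_f$-atoms or absorbed into the factor), and appealing to Proposition~\ref{P: structure theorem}, yields part~(i).

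\textbf{Inductive step (part (ii)).} For $s\geq 1$ I would peel off the outer groups by the standard inductive formula
\[
\bigl(\nnorm{f}_{G_1,\dots,G_s,\langle\beta_1\be_1-\beta_2\be_2\rangle}\bigr)^{2^{s+1}}
=\E_{m,m'\in\Z}\bigl(\nnorm{\Delta_{(\sfloor{m(\beta_1\be_1-\beta_2\be_2)},\sfloor{m'(\beta_1\be_1-\beta_2\be_2)})}f}_{G_1,\dots,G_s}\bigr)^{2^{s}},
\]
and a similar (though slightly more delicate) unpacking of $\nnorm{f}^+_{G_1,\dots,G_s,\langle\be_1,\be_2\rangle}$, where the inner $+$-seminorm enters through $\nnorm{F}^{+}=\nnorm{F\otimes\bar F}^{1/2}$ on the product system. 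Fixing $m,m'$, the resulting comparison is the $s=0$ case applied to the $\Delta$-function on the product system $(X\times X,\mu\times\mu,T_i\times T_i)$; the ergodicity of $(T_1^{\floor{a_1(n)}}T_2^{-\floor{a_2(n)}}\times T_1^{\floor{a_1(n)}}T_2^{-\floor{a_2(n)}})_n$ on the ergodic components of $\mu\times\mu$ is enough to run the spectral analysis from part~(i). Integrating over $m,m'$ (and applying dominated convergence / Fubini for ergodic averages as in \cite[Lemma 1.1]{BL15}) recovers part~(ii). The presence of $\langle\be_1,\be_2\rangle$ (rather than merely $\Z^\ell$) on the right-hand side is exactly what is needed for this inductive step to close: the $G_1,\dots,G_s$ play the role the $\Z^\ell$ played in the base case.

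\textbf{Main obstacle.} The delicate point is the exponential-sum analysis behind the claim that $\E_{n\in[N]}e(t_1\floor{a_1(n)}-t_2\floor{a_2(n)})$ fails to vanish on the line $S$ under $\beta_2 a_1-\beta_1 a_2\ll\log$. This requires a case-by-case study depending on whether $\beta_1,\beta_2$ are rational, on the lower-order behaviour of $a_1,a_2$, and on the fractional-part errors introduced by the floor functions; Theorem~\ref{T: Boshernitzan} and its quantitative refinements will be the main tools. This technical heart of the proof is presumably the business of Section~\ref{S: seminorm comparison II}; the current section would set up the spectral/factor-theoretic framework and reduce the claim to that exponential-sum input.
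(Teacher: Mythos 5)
Your spectral blueprint — express both seminorms as $L^2(\sigma_f)$-norms of exponential-sum kernels via Herglotz–Bochner, use the ergodicity hypothesis to kill the spectral mass where the $a$-kernel is nonzero, and then analyze the residual exceptional set on the line $\{\beta_1 t_1 - \beta_2 t_2 \in \Z\}$ — is exactly the philosophy of the paper's Section~\ref{S: seminorm comparison II}, and the proposed inductive reduction to a base case is also how the paper structures the argument. However, there are three substantive gaps.

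\textbf{Missing preliminary reduction.} Before any spectral analysis can start, one needs the exponential sums $\E_{n\in[N]}e(t_1\floor{a_1(n)}-t_2\floor{a_2(n)})$ to converge. They need not when $a_1, a_2$ differ from each other or from a polynomial by a nonconstant function between $1$ and $\log$. The paper's Proposition~\ref{P: reduction of ergodicity} performs a nontrivial reduction (via Lemma~\ref{L: dyadic intervals}, a density argument using Corollary~\ref{C: density}, and the Boshernitzan–Kolesnik–Quas–Wierdl convergence input) to the normalized form $a_1 = a$, $a_2 = \beta a + u$ with $a$ either polynomial or staying logarithmically away from $\R[x]$; only for such $a$ does Lemma~\ref{L: convergence of exp sum} guarantee the limits exist. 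Your proposal starts the spectral argument directly from the raw hypotheses, so it would stall immediately on convergence.

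\textbf{Missing rational/irrational dichotomy.} The exceptional-line analysis you describe is the paper's argument for irrational $\beta$ only (Proposition~\ref{P: beta irrational}; the $E_3$ decomposition, Lemma~\ref{L: scaling}). For rational $\beta=l/k$ the paper's argument (Proposition~\ref{P: beta rational}) is not spectral at all: it reduces to a polynomial identity $\sum_i c_i S^{d_i}f=0$ among powers of $S=T_1^wT_2^v$, shows all relevant roots must be roots of unity of bounded order, and concludes $\CI(T_1^kT_2^{-l})=\CI(H)$ for a fixed finite-index $H\subseteq\Z^2$. You need this separate treatment because the constants $c_{\beta,\gamma}$ governing the eigenvalue comparison degenerate exactly when $\gamma, \beta\gamma$ become integers, which is precisely what happens along rational $\beta$.

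\textbf{Inductive step.} The paper does \emph{not} pass to the product system for $s\geq1$. It instead invokes \cite[Proposition~A.2]{FrKu22a} to upgrade the base case to a soft quantitative statement ($\forall\veps\,\exists\delta$: $\nnorm{f}^+_{\langle\be_1,\be_2\rangle}\leq\delta\Rightarrow\nnorm{f}_{\be_1-\beta\be_2}\leq\veps$), then peels the \emph{first} $s$ groups — yielding $\nnorm{\Delta_{(\bm_1,\bm_1'),\ldots}f}_{\be_1-\beta\be_2}$ in exactly the base-case shape — and applies the quantitative bound together with a density argument. Two problems with your route: your displayed peeling takes off the last group and leaves $\nnorm{\Delta f}_{G_1,\ldots,G_s}$, which is \emph{not} an $s=0$ seminorm and so does not close the induction; and your appeal to ergodicity of the diagonal action on (ergodic components of) $\mu\times\mu$ is not justified by the hypotheses, which only provide ergodicity of $(T_1^{\floor{a_1(n)}}T_2^{-\floor{a_2(n)}})_n$ on $(X,\mu)$ — the product version is essentially the product-ergodicity condition, which is not assumed here.
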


Proposition \ref{P: seminorm comparison} needs no lower bound on the growth of $a_1, a_2$. In the degenerate case when they are both slower than log, the ergodicity assumption forces $\mu$ to be the Dirac measure $\delta_x$ at some $x\in X$ and $T_1 x = T_2 x = x$ (see Lemma \ref{L: slowly growing} for a related computation).

The proof of Proposition \ref{P: seminorm comparison} will be broken down into several cases depending on the rationality or irrationality of $\beta_1/\beta_2$ as well as the properties of $a_1, a_2$. Its first step is the following reduction which allows us to simplify the iterates appearing in the difference ergodicity condition.
\begin{proposition}\label{P: reduction of ergodicity}
    Let $(X, \CX, \mu, T_1, T_2)$ be a system. Let $a_1, a_2\in\CH$,
    and suppose that there exist nonzero $\beta_1, \beta_2\in\R$ for which $\beta_2 a_1 - \beta_1 a_2 \ll \log$. Suppose moreover that $(T_1^{\floor{a_1(n)}}T_2^{-\floor{a_2(n)}})_n$ is ergodic for $(X, \CX, \mu)$. 
    Then there exist $u\in\R$ and $a\in \CH$, which is either in $\R[x]$ or stays logarithmically away from $\R[x]$, such that $(T_1^{\floor{{a}(n)}}T_2^{-\floor{\beta{a}(n)+u}})_n$ is also ergodic for $(X, \CX, \mu)$, where $\beta = \beta_2/\beta_1$. 
\end{proposition}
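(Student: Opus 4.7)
The strategy is to analyze the residual Hardy function $h := a_2 - \beta a_1 \in \CH$, which by assumption satisfies $h \ll \log$. Since Hardy functions are eventually monotone, $h$ has a limit $u_\infty \in \R \cup \{\pm \infty\}$ at $\infty$. The proof will split on the value of this limit, crossed with a dichotomy on the structure of $a_1$, and will explicitly construct $a$ and $u$ in each case.

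\textbf{Dichotomy on $a_1$.} For each $p \in \R[x]$, the Hardy function $(a_1 - p)/\log$ is eventually monotone and so has a limit in $[0, \infty]$; accordingly either $a_1 - p \ll \log$ or $|a_1 - p| \gg \log$. Varying $p$ yields the exhaustive dichotomy: either $a_1$ stays logarithmically away from $\R[x]$, or $a_1 - p \ll \log$ for some $p \in \R[x]$.

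\textbf{Convergent subcase.} If $u_\infty \in \R$, I would take $a := a_1$ when $a_1$ stays log-away from $\R[x]$ (so $a$ inherits this property), or $a := p$ when $a_1 - p \ll \log$ (so $a \in \R[x]$); in either situation set $u := u_\infty$. The error $\varepsilon(n) := a_2(n) - \beta a(n) - u$ then tends to $0$. Applying Boshernitzan's Theorem~\ref{T: Boshernitzan} to $\beta a + u$, which is log-away from $\Q[x]$ in the first sub-case, or arguing directly about $\{\beta p(n) + u\}$ in the polynomial sub-case, one obtains $\floor{a_2(n)} = \floor{\beta a(n) + u}$ on a density-one subset of $\N$; ergodicity of the original sequence then transfers immediately to the new one.

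\textbf{Divergent subcase; main obstacle.} The genuinely delicate case is $|h| \to \infty$ at a sub-logarithmic rate. Here no constant shift aligns $a_2$ with $\beta a_1$, so I would modify $a$ rather than $u$: setting $a := a_2/\beta$ and $u := 0$ makes $\beta a + u = a_2$ exactly. The dichotomy on $a_1$ is preserved under $a_1 \mapsto a_2/\beta = a_1 + h/\beta$ since $h/\beta \ll \log$, so $a$ still has the required form. The remaining task, which is the principal obstacle, is to deduce ergodicity of $(T_1^{\floor{a(n)}} T_2^{-\floor{a_2(n)}})_n$ given that it differs from the ergodic sequence $(T_1^{\floor{a_1(n)}} T_2^{-\floor{a_2(n)}})_n$ by an unbounded iterate $T_1^{c(n)}$ with $c(n) = O(\log n)$; density-one comparison no longer suffices. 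I would handle this by passing via the Herglotz--Bochner theorem to a pointwise decay statement on exponential sums $\E_n e(t_1 \floor{a(n)} - t_2 \floor{a_2(n)})$, and then exploiting Boshernitzan-type equidistribution of $a$ mod~$1$ (valid because $a$ either stays log-away from $\R[x]$ or is a polynomial) to reduce to the already-established decay of the original exponential sums $\E_n e(t_1' \floor{a_1(n)} - t_2 \floor{a_2(n)})$ via a change of spectral variables absorbing the log-bounded shift $c(n)$. The slow-growth calculations analogous to those feeding Lemma~\ref{L: slowly growing}, referenced in the acknowledgments, should provide the technical input for the most degenerate instances of this step (in particular when $a_1, a_2 \prec \log$).
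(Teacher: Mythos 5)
Your plan has two genuine gaps, both rooted in the failure to handle residual functions that are unbounded but $o(\log)$, such as $\sqrt{\log x}$. In the convergent subcase, when $a_1 - p \ll \log$ for some $p \in \R[x]$ with $a_1 - p$ unbounded (take $a_1(x) = x^2 + \sqrt{\log x}$, $p(x) = x^2$, $a_2 = a_1$, $\beta = 1$), your choice $a := p$, $u := u_\infty$ yields $\varepsilon(n) = a_2(n) - \beta a(n) - u = \beta(a_1(n) - p(n)) + (h(n) - u_\infty)$, whose first summand $\beta(a_1(n) - p(n)) = \sqrt{\log n} \to \infty$; the claimed convergence $\varepsilon(n) \to 0$ and the ensuing density-one identification of floors both fail. (Only the sub-sub-case where $a_1$ is log-away from $\R[x]$ survives.) In the divergent subcase, the choice $a := a_2/\beta$ typically violates the conclusion's requirement on $a$: if $a_1 = p + g$ with nonzero $g \ll \log$, then $a_2/\beta = p + (g + h/\beta)$ is a polynomial plus a nonzero log-bounded function, hence neither in $\R[x]$ nor log-away from $\R[x]$. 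Even leaving that aside, the sketched ``change of spectral variables absorbing the log-bounded shift $c(n)$'' is not an argument: the exponential sums $\E_n e(t_1\floor{a(n)} - t_2\floor{a_2(n)})$ and $\E_n e(t_1'\floor{a_1(n)} - t_2\floor{a_2(n)})$ are not related by any reparametrization of $(t_1, t_2)$ when $\floor{a(n)} - \floor{a_1(n)}$ is unbounded.

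The paper's actual proof uses no dichotomy on $h$; it is unified. One always sets $a$ to be the polynomial (or log-away) core of $a_1$, and writes $g_1 := a_1 - a$, $g_2 := \beta g_1 + (a_2 - \beta a_1)$, both $\ll \log$. Rather than appearing as a limit, $u$ is defined via Corollary \ref{C: density}, which shows $(\{g_1(n)\}, \{g_2(n)\})_n$ is dense in some affine subtorus $(0, u) + H \subseteq \T^2$ (after absorbing a constant into $a$). The mechanism your plan lacks is slow variation on short multiplicative blocks: since $g_j \ll \log$ one has $g_j(n) = g_j(N) + O(\delta)$ for $n \in [N, (1+\delta)N)$, and along times $N_k$ with $\{g_1(N_k)\}, \{g_2(N_k) - u\} \leq \delta$ (provided by density) the integer parts decompose additively outside a set of $O(\delta)$ proportion, using the required form of $a$ to control boundary effects. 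Lemma \ref{L: dyadic intervals} transfers Ces\`aro ergodicity to these blocks, Lemma \ref{L: convergence of exp sum} provides convergence of the target average $L$, and conjugating by $T_1^{\floor{g_1(N_k)}} T_2^{-\floor{g_2(N_k) - u}}$ and sending $\delta \to 0$ forces $L = \int L\, d\mu$. This simultaneously covers what you call the convergent and divergent cases, which is exactly why the paper never needs to analyze the limit of $h$.
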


In other words, Proposition \ref{P: reduction of ergodicity} allows us to cast away several (partially overlapping) cases: 
\begin{enumerate}
    \item $1\prec \beta_2 a_1 - \beta_1 a_2 \ll \log$, i.e. $a_2 = (\beta_2/\beta_1)a_1 + g$ for some $1\prec g \ll \log$;
    \item $\beta_2 a_1 - \beta_1 a_2$ is not constant but converges to a constant, i.e. $a_2 = (\beta_2/\beta_1)a_1 + u + g$, where $g$ is nonconstant but goes to 0;
    \item $a_1 = a + g_1$ and $a_2 = \beta a + u + g_2$ for some $a\in\R[x]$, $u\in\R$, and nonconstant $g_i$, each of which either goes to 0 or satisfies $1\prec g_i\ll \log$.
\end{enumerate}

Proposition \ref{P: reduction of ergodicity} will be proved in Section \ref{SS: preliminary reduction}.

The gist of the proof of Proposition \ref{P: seminorm comparison} consists in obtaining the following result, which one should (morally) think of as the $s=0$ case of Proposition \ref{P: seminorm comparison}.
\begin{proposition}\label{P: seminorm comparison, s = 0}
    Let $(X, \CX, \mu, T_1, T_2)$ be a system.
    Let $u\in\R$ and $a\in\CH$, where $a$ is either in $\R[x]$ or stays logarithmically away from $\R[x]$.
    Suppose that $(T_1^{\floor{a(n)}}T_2^{-\floor{\beta a(n)+u}})_n$ is ergodic for $(X, \CX, \mu)$. Then for any $f\in L^\infty(\mu)$, we have
    \begin{align*}
        \nnorm{f}_{\be_1 - \beta \be_2} = 0 \quad \textrm{whenever} \quad \nnorm{f}_{\langle\be_1, \be_2\rangle}^+ = 0.
    \end{align*}
\end{proposition}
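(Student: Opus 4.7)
The plan is to reduce the problem to a spectral question about the $\Z^2$-action generated by $(T_1,T_2)$ and then to analyze two explicit exponential-sum multipliers on $\T^2$. Let $\sigma_f$ denote the spectral measure of $f$ on $\T^2$ associated with this action, so that $\int T_1^{k_1}T_2^{k_2}f\cdot\bar f\,d\mu=\widehat{\sigma_f}(k_1,k_2)$ for every $(k_1,k_2)\in\Z^2$. Unpacking the definitions and passing the Ces\`aro limit inside the spectral integral (which is legal by dominated convergence) gives
\begin{equation*}
\nnorm{f}_{\be_1-\beta\be_2}^2=\int_{\T^2}\abs{\phi(t_1,t_2)}^2\,d\sigma_f,\qquad \phi(t_1,t_2):=\lim_{N\to\infty}\E_{m\in[N]}\,e\bigl(mt_1+\floor{-\beta m}t_2\bigr),
\end{equation*}
and the same calculation applied to the iterates $(T_1^{\floor{a(n)}}T_2^{-\floor{\beta a(n)+u}})_n$ shows that the given ergodicity hypothesis is equivalent to $\int_{\T^2}\abs{\psi}^2\,d\sigma_f=0$, where
\begin{equation*}
\psi(t_1,t_2):=\lim_{N\to\infty}\E_{n\in[N]}\,e\bigl(\floor{a(n)}t_1-\floor{\beta a(n)+u}t_2\bigr).
\end{equation*}
Hence $\sigma_f$ is supported (modulo a null set) on $\{\psi=0\}$. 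At the same time, Wiener's lemma on $\T^2$ yields $(\nnorm{f}^+_{\langle\be_1,\be_2\rangle})^4=\sum_{t\in\T^2}\abs{\sigma_f(\{t\})}^2$, so the standing hypothesis $\nnorm{f}^+_{\langle\be_1,\be_2\rangle}=0$ is equivalent to $\sigma_f$ being continuous on $\T^2$ (and forces $\int f\,d\mu=0$).

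It therefore suffices to prove that the set $E:=\{(t_1,t_2)\in\T^2\setminus\{(0,0)\}:\phi(t_1,t_2)\neq 0\text{ and }\psi(t_1,t_2)=0\}$ is at most countable, since a continuous measure on $\T^2$ assigns zero mass to any countable set. The first step is to locate $\{\phi\neq 0\}$. If $\beta=p/q\in\Q$ with $\gcd(p,q)=1$, splitting $m$ into residues modulo $q$ reduces the Ces\`aro limit to a finite sum times $\lim_{M\to\infty}\E_{m'\in[M]}e(m'(qt_1-pt_2))$, which vanishes unless $(t_1,t_2)$ lies on the one-dimensional closed subgroup $H_\beta:=\{qt_1\equiv pt_2\pmod 1\}$. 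If $\beta\notin\Q$, interpreting $\{-\beta m\}$ as an orbit of the rotation $x\mapsto x-\beta$ on $\T$ and expanding in its eigenfunctions $x\mapsto e(jx)$ shows $\{\phi\neq 0\}\subseteq\bigcup_{j\in\Z}L_j$, where $L_j:=\{t_1-\beta t_2\equiv -j\beta\pmod 1\}$. Either way, $\{\phi\neq 0\}$ is a countable union of one-dimensional cosets, and it remains to check that $\psi$ has at most countably many zeros on each coset.

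The main obstacle is this last step, which is the heart of the argument. Parametrizing each coset by a single variable and using the identity $\beta\floor{a(n)}-\floor{\beta a(n)+u}=\{\beta a(n)+u\}-\beta\{a(n)\}-u$ rewrites $\psi$ on the coset as a Ces\`aro limit of the form $\lim_{N\to\infty}\E_{n\in[N]}\,e\bigl(G(\floor{a(n)})+F(\{a(n)\},\{\beta a(n)+u\})\bigr)$ for an explicit $G$ linear in its argument and an explicit continuous $F$ depending on the coset parameter. When $a\in\R[x]$, Weyl's equidistribution theorem for polynomial sequences evaluates the limit exactly and identifies the zero locus in the coset parameter as a discrete (hence countable) set, outside of degenerate rational configurations which are incompatible with the given ergodicity. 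When $a$ stays logarithmically away from $\R[x]$, Boshernitzan's equidistribution theorem (Theorem \ref{T: Boshernitzan}), together with its joint version applied to $a$ and $\beta a+u$, shows that $(\{a(n)\},\{\beta a(n)+u\})$ equidistributes along an appropriate closed subgroup of $\T^2$, which again forces only countably many zeros on each coset. The most delicate case is $\beta\in\Q$ combined with arithmetically rigid $a$, where the coset $H_\beta$ and the joint equidistribution subgroup interact nontrivially, and one must exploit both the rotation-by-$\beta$ structure revealed in the coset computation and the full strength of the ergodicity hypothesis to rule out a continuous family of zeros.
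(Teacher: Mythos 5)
Your overall plan is sound: the Herglotz--Bochner reduction, the identification of $\nnorm{f}^+_{\langle\be_1,\be_2\rangle}=0$ with non-atomicity of $\sigma_f$ via Wiener's lemma, and the reduction to countability of $E=\{\phi\neq 0,\;\psi=0\}$ are all correct, and for irrational $\beta$ this is essentially the paper's Section~7 argument (Proposition \ref{P: beta irrational}, built on Proposition \ref{P: E_3}). Your shortcut of deducing $\sigma_f(E)=0$ directly from non-atomicity plus countability is actually cleaner than the paper's route through the scaling Lemma \ref{L: scaling} and the projection bookkeeping, and it is perfectly valid. The identity $\beta\floor{a}-\floor{\beta a+u}=\{\beta a+u\}-\beta\{a\}-u$ is also correct.

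The real problem is the rational case. The paper deliberately \emph{does not} treat $\beta\in\Q$ spectrally: it uses a completely different algebraic argument (Proposition \ref{P: beta rational}), showing that the ergodicity hypothesis forces $\CI(T_1^kT_2^{-l})$ to coincide with $\CI(H)$ for some finite-index subgroup $H\subseteq\Z^2$, via a polynomial factorization and a sequence of claims about roots of unity; the seminorm comparison then follows from Lemma \ref{L: basic properties of seminorms} (v). This argument works for \emph{arbitrary} sequences $a:\N\to\R$, not just the reduced Hardy forms, which is a hint that the spectral route is genuinely harder here. Your sketch for $\beta\in\Q$ does not close this: it leaves the countability of $E$ on the subtorus $H_\beta$ unverified, and the phrase ``degenerate rational configurations which are incompatible with the given ergodicity'' misplaces the role of the hypothesis. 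In the spectral framework, ergodicity only tells you that $\sigma_f$ lives on $\{\psi=0\}$; it gives no information about the \emph{set} $E$ itself, so you cannot appeal to it to exclude a continuum of zeros of $\psi$ along $H_\beta$. What you actually need is an unconditional countability statement analogous to Proposition \ref{P: E_3}, but adapted to rational $\beta$ (where $\floor{-\beta n}=-\floor{\beta n}-1$ fails on the multiples of $q$, so even the basic formula for $\phi$ needs adjustment). Verifying this requires a case analysis of the kind carried out in Section \ref{SS: Q-dependent} --- including checking, as the paper does via holomorphicity and evaluation at a special point, that the relevant finite exponential sums are not identically zero --- and you have not done that work. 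Either supply that analysis or switch to the paper's invariant-factor argument for $\beta\in\Q$; as written there is a gap.
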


The proof of Proposition \ref{P: seminorm comparison} proceeds rather differently depending on whether $\beta$ is rational or irrational. These two cases are covered separately in Corollary \ref{C: seminorm comparison, beta rational 2} and Proposition \ref{P: beta irrational}, and their proofs occupy Sections \ref{SS: beta rational} and \ref{S: seminorm comparison II} respectively.

\begin{proof}[Proof of Proposition \ref{P: seminorm comparison} assuming Proposition \ref{P: seminorm comparison, s = 0}]

Let $a_1, a_2\in\CH$ be two sequences such that $\beta_2 a_1 - \beta_1 a_2 \ll \log$ for some nonzero $\beta_1, \beta_2\in\R$, and suppose that $(T_1^{\floor{a_1(n)}}T_2^{-\floor{a_2(n)}})_n$ is ergodic for $(X, \CX, \mu)$. Let $\beta = \beta_2/\beta_1$. By Proposition \ref{P: reduction of ergodicity}, there exist a number $u\in\R$ and a sequence $a\in\CH$, which is either in $\R[x]$ or stays logarithmically away from $\R[x]$, such that $(T_1^{\floor{a(n)}}T_2^{-\floor{\beta a(n)+u}})_n$ is ergodic for $(X, \CX, \mu)$. By \cite[Proposition A.2]{FrKu22a}, we obtain the following soft quantitative strengthening of Proposition \ref{P: seminorm comparison, s = 0}: for every $\veps>0$, we can find $\delta>0$ such that for any $f\in L^\infty(\mu)$,
\begin{align}\label{E: soft quantitative seminorm comparison}
    \nnorm{f}_{\be_1 - \beta \be_2} \leq \veps \quad \textrm{whenever} \quad \nnorm{f}_{\langle \be_1, \be_2\rangle}^+\leq \delta.
\end{align}
Importantly, the parameter $\delta$ is allowed to depend on the system and $\beta$, but is uniform in the choice of $f\in L^\infty(\mu)$. We will use \eqref{E: soft quantitative seminorm comparison} in conjunction with the basic properties of seminorms in order to derive Proposition \ref{P: seminorm comparison}.

Suppose first that $s\geq 1$. Lemma \ref{L: basic properties of seminorms} gives the upper bound 
\begin{align*}
    \nnorm{f}_{G_1, \ldots, G_s, \langle\beta_1 \be_1 - \beta_2 \be_2\rangle}\ll_{\beta} \nnorm{f}_{G_1, \ldots, G_s, \langle \be_1 - \beta \be_2\rangle}.
\end{align*}
To show Proposition \ref{P: seminorm comparison}, it suffices to show that the rightmost seminorm vanishes whenever $\nnorm{f}_{G_1, \ldots, G_s, \langle\be_1, \be_2\rangle}^+ = 0.$

Suppose then that $\nnorm{f}_{G_1, \ldots, G_s, \langle\be_1, \be_2\rangle}^+=0$; setting
\begin{align*}
 G = G_1\times \cdots\times G_{s},\quad \textrm{and} \quad\bm = (\bm_1, \ldots, \bm_{s})\quad \textrm{for} \quad\bm_i\in G_i,   
\end{align*}
we then have
\begin{align*}
    \E_{(\bm, \bm')\in G\times G}\brac{\nnorm{\Delta_{(\bm_1, \bm'_1), \ldots, (\bm_{s}, \bm'_{s})}f}^+_{\langle \be_1, \be_2\rangle}}^{2^{s+1}} =0.
\end{align*}
Let $\veps>0$, and let $\delta>0$ be the parameter satisfying \eqref{E: soft quantitative seminorm comparison}. The set of $(\bm, \bm')\in G\times G$ satisfying
\begin{align*}
    \nnorm{\Delta_{(\bm_1, \bm'_1), \ldots, (\bm_{s}, \bm'_{s})}f}^+_{\langle \be_1, \be_2\rangle}> \delta
\end{align*}
has 0 density with respect to any F\o lner sequence on $G\times G$. 
For all the rest, \eqref{E: soft quantitative seminorm comparison} gives
\begin{align*}
    \nnorm{\Delta_{(\bm_1, \bm'_1), \ldots, (\bm_{s}, \bm'_{s})}f}_{\be_1 -\beta\be_2}\leq \veps.
\end{align*}
Hence
\begin{align*}
    \nnorm{f}_{G_1, \ldots, G_s, \langle\be_1 - \beta \be_2\rangle}^{2^{s+1}} = \E_{(\bm, \bm')\in G\times G}\nnorm{\Delta_{(\bm_1, \bm'_1), \ldots, (\bm_{s}, \bm'_{s})}f}_{\be_1 -\beta\be_2}^{2^{s}} \leq\veps^{2^{s}},
\end{align*}
and the claim follows on taking $\veps\to 0$.

For the case $s=0$, Lemma \ref{L: basic properties of seminorms} gives
\begin{align*}
    \nnorm{f}_{\langle\beta_1 \be_1 - \beta_2 \be_2\rangle}\ll_{\beta} \nnorm{f}^+_{\langle \be_1 - \beta \be_2\rangle}\leq \nnorm{f}_{\Z^\ell, \langle \be_1 - \beta \be_2\rangle}.
\end{align*}
The previous argument shows that the rightmost seminorm vanishes if $\nnorm{f}_{\Z^\ell, \langle\be_1, \be_2\rangle}^+ = 0$ (by substituting $G$ with $\Z^{\ell}$).
\end{proof}

\subsection{Preliminary reduction}\label{SS: preliminary reduction}

This section is devoted to the proof Proposition \ref{P: reduction of ergodicity}, a preliminary reduction in the proof of Proposition \ref{P: seminorm comparison}. We begin with an auxiliary result on the density of some Hardy orbits on certain subtori.
\begin{lemma}\label{L: density}
    Let $\ell, m\in\N$. Let $g_1, \ldots, g_m\in \CH$ with $1\prec g_1(x)\prec \cdots \prec g_m(x)\prec x$, and define
    \begin{align*}
        a_j(x) &= \sum_{i=1}^m \alpha_{j,i}g_i(x) + \alpha_{j0}+r_j(x),\\ 
        b_j(x_1, \ldots, x_m) &= \sum_{i=1}^m \alpha_{j,i}x_i + \alpha_{j0}
    \end{align*}
    for all $j\in[\ell]$, some $\alpha_{j,i}\in\R$, and some $r_j\in\CH$ with $\lim\limits_{x\to\infty}r_j(x)=0$.
    Then $$(\{a_1(n)\}, \ldots, \{a_\ell(n)\})_n$$ is dense in
    \begin{align*}
        Z = \overline{\{(\{b_1(x_1, \ldots, x_m)\}, \ldots, \{b_\ell(x_1, \ldots, x_m)\}):\; x_1, \ldots, x_m\in\R\}},
    \end{align*}
    which is an affine subtorus of $\T^\ell$.\footnote{I.e. a set $\bu + H\subseteq \T^\ell$ for some compact (not necessarily connected) subgroup $H\subseteq \T^\ell$.}
\end{lemma}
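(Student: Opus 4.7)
The plan is to establish Lemma~\ref{L: density} in three main steps.

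\textbf{Step 1: $Z$ is an affine subtorus.} Writing $b(\bx) = L(\bx) + \alpha_0$ with $L\colon \R^m \to \R^\ell$ the linear part (with matrix $(\alpha_{j,i})$) and $\alpha_0 = (\alpha_{j,0})_j$, one has $Z = \alpha_0 + K$ where $K = \overline{L(\R^m) \bmod \Z^\ell}$. Since $L(\R^m)$ is a linear subspace of $\R^\ell$, its image in $\T^\ell$ is a connected subgroup, so its closure $K$ is a subtorus.

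\textbf{Step 2: Reduction from integer to continuous samples.} From $g_i \prec x$ in $\CH$, L'H\^opital's rule within a Hardy field forces $g_i'(x) \to 0$: the limit $\lim g_i'$ exists in $\R \cup \{\pm\infty\}$ and must equal $\lim g_i(x)/x = 0$. Similarly, from $r_j \in \CH$ with $r_j \to 0$ and the eventual monotonicity of Hardy functions, $r_j'(x) \to 0$. Hence $a_j'(x) \to 0$, and the mean value theorem implies that the closures of $\{(a_j(n))_j \bmod \Z^\ell : n \in \N\}$ and of $\{(a_j(x))_j \bmod \Z^\ell : x \geq x_0\}$ coincide in $\T^\ell$. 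Using $r_j(x) \to 0$, both further equal $\alpha_0 + \overline{\{L(\gamma(x)) \bmod \Z^\ell : x \geq x_0\}}$ with $\gamma(x) = (g_1(x), \ldots, g_m(x))$.

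\textbf{Step 3: Density of $L \circ \gamma$.} We show $\{L(\gamma(x)) \bmod \Z^\ell : x \geq x_0\}$ is dense in $K$ by induction on $m$. The base case $m = 1$ follows because the orbit is the positive ray $\{g_1(x)\bv_1 \bmod \Z^\ell : x \geq x_0\}$ in the one-parameter subgroup $\R\bv_1 \bmod \Z^\ell$, whose closure equals the full subgroup $\overline{\R\bv_1 \bmod \Z^\ell} = K$ (closed sub-semigroups of compact abelian Lie groups are subgroups). For the inductive step, reparametrize $s = g_m(x)$ (using that $g_m$ is eventually monotone with compositional inverse in $\CH_0$) and set $h_i = g_i \circ g_m^{-1} \in \CH$ for $i < m$; these satisfy $1 \prec h_1 \prec \cdots \prec h_{m-1} \prec s$. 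Decomposing $L(\gamma(x)) = s\bv_m + L'(\bh(s))$ with $\bv_i = L(\be_i)$ and $L'\colon \R^{m-1} \to \R^\ell$ having columns $\bv_1, \ldots, \bv_{m-1}$, the induction hypothesis gives density of $\{L'(\bh(s)) \bmod \Z^\ell : s \geq s_0\}$ in $K' := \overline{L'(\R^{m-1}) \bmod \Z^\ell}$, which combined with the ray-closure density in $K_m := \overline{\R\bv_m \bmod \Z^\ell}$ yields density in $K$.

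The hard part will be rigorously executing the scale decoupling in the inductive step. The key is that since $(L'\circ\bh)'(s) = \sum_{i<m} \bv_i h_i'(s) \to 0$, over any long window of $s$ during which $s\bv_m$ sweeps out $K_m$ to within $\veps$, the term $L'(\bh(s))$ changes by an arbitrarily small amount; one can therefore first use the induction hypothesis to align $L'(\bh(s))$ with any prescribed $\bw' \in K'$, and then exploit the fine-scale sweep of $s\bv_m$ through $K_m$ to approximate any combined target $\bw' + \bw_m$. Since the closure of $K' + K_m$ equals $K$, this yields the required density.
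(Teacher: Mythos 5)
Your proof is correct in substance and takes a genuinely different route from the paper's. The paper appeals to a weighted equidistribution theorem (Theorem~5.1 of Richter's \cite{R22}): it first chooses a Hardy averaging weight $W$ with $1\prec\log W\prec a_j\prec x$, invokes \cite{R22} to verify that the Weyl sums $\frac{1}{W(N)}\sum_{n\leq N}(W(n+1)-W(n))e(\bk\cdot\ba(n))$ vanish precisely for $\bk$ outside the frequency lattice $\Lambda$ defining $Z$, and then deduces density from a weighted Weyl criterion. This is short but outsources the arithmetic heart of the argument to a black-box equidistribution result. Your approach is a self-contained, dynamically flavoured density argument: after the (standard but correctly observed) reductions that $g_i'\to 0$, $r_j'\to 0$ by Hardy-field L'H\^opital so the continuous orbit tracks the discrete one, you establish density of $(L\circ\gamma)(x)\bmod\Z^\ell$ in $K$ by induction on $m$, exploiting the separation of scales $g_1\prec\cdots\prec g_m$: reparametrize by $s=g_m(x)$, observe that $L'(\bh(s))$ is slowly varying (derivative $\to 0$), and decouple the fast sweep $s\bv_m$ across $K_m$ from the slow drift of $L'(\bh(s))$ in $K'$. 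What this buys is a proof independent of \cite{R22} and a clearer geometric picture; what it costs is length and care in the scale-decoupling step. Two small remarks. First, in Step~2 the claim that the closures of the discrete and continuous orbits \emph{coincide} is slightly too strong: the continuous orbit at bounded $x$ may contain points not approached by integer samples. What you actually need, and do establish, is that the \emph{limit set} of the continuous orbit (which, using $r_j\to 0$ and Step~3's density for arbitrarily large $s_0$, equals $Z$) lies in the closure of the discrete orbit; this is what the MVT argument gives, and it suffices. Second, in closing Step~3 you write ``the closure of $K'+K_m$ equals $K$''; note that $K'+K_m$ is already closed, being the continuous image of the compact set $K'\times K_m$ under addition, so no closure is needed, and the identification $K'+K_m=K$ follows since it is a closed subgroup sandwiched between $(\R\bv_1+\cdots+\R\bv_m)\bmod\Z^\ell$ and $K$.
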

Note that because of possible rational dependencies between the coefficients $\alpha_{j,i}$, it is easier to describe the subtorus less explicitly (as we did above) than try to explicitly list its continuous generators.
\begin{proof} Let $\bu = (\alpha_{10}, \ldots, \alpha_{\ell 0})$ and $\Lambda\subseteq\Z^\ell$ be the subgroup of frequencies defining $Z$ via
\begin{align*}
    Z = \{\bz\in \T^\ell:\; \bk\cdot(\bz-\bu)\in\Z\;\; \textrm{for\; all}\;\; \bk\in \Z^\ell\}
\end{align*}
(note that $\bk\cdot\bz := k_1z_1 + \cdots + k_\ell z_\ell$ is well-defined for any $\bk\in\Z^\ell$ and $\bz\in\T^\ell$).
    Take a function $W\in\CH$ with growth $1\prec W(x) \ll x$ satisfying
    \begin{equation*}
        1\prec \log W(x)\prec a_j(x)\prec x
    \end{equation*}
    for all $j\neq \ell$ for which $a_j\succ 1$ (the simplest such example would be $W(x) = g_1(x)$). 
    Then, using~\cite[Theorem 5.1]{R22}, these growth inequalities yield 
    \begin{equation*}
     \lim\limits_{N\to\infty}   \frac{1}{W(N)}\sum_{n=1}^N \left(W(n+1)-W(n)\right)e\left(k_1a_1(n)+\cdots + k_\ell a_\ell(n)\right)=0
    \end{equation*}
    for all $\bk\in\Z^\ell\setminus{\Lambda}$. In the language of \cite{R22}, this means that $(\{a_1(n)\}, \ldots, \{a_\ell(n)\})_n$ is equidistributed on $Z$ with respect to the $W$-averaging scheme above. 

    A standard variation of Weyl's equidistribution theorem implies that for any Riemann integrable function $f: Z\to\C$, we get 
    $$\lim\limits_{N\to\infty}   \frac{1}{W(N)}\sum_{n=1}^N \left(W(n+1)-W(n)\right)F\left(\{a_1(n)\},\ldots, \{a_\ell(n)\}\right)=\int F(x)\,d\mu_Z(x),$$
    where $\mu_Z$ is the Haar measure on $Z$.
    For any (nonempty) open ball $U \subseteq Z$, the previous asymptotic implies that 
    \begin{equation*}
        \frac{1}{W(N)}\sum_{n=1}^N \left(W(n+1)-W(n)\right)1_U\left(\{a_1(n)\},\ldots, \{a_\ell(n)\}\right)=\mu_Z(U)>0,
    \end{equation*}
    and the result follows.
\end{proof}

All the information that we actually need is captured in the following corollary, which is an easy consequence of Lemma \ref{L: density} and a simple decomposition result for Hardy sequences \cite[Lemma A.3]{R22}. 
\begin{corollary}\label{C: density}
    Let $\ell\in\N$, and let $a_1, \ldots, a_\ell\in\CH$ with $a_j(x)\prec x$ for each $j\in[\ell]$. Then $$(\{a_1(n)\}, \ldots, \{a_\ell(n)\})_n$$ is dense in an affine subtorus of $\T^\ell$.
\end{corollary}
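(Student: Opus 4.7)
The plan is to reduce this to a direct application of Lemma \ref{L: density} by first finding a common Hardy basis against which all of the $a_j$ can be simultaneously expanded. The main tool is the decomposition lemma \cite[Lemma A.3]{R22}, which, given finitely many Hardy sequences, produces a finite ``basis'' of Hardy germs in terms of which each input sequence can be written as a linear combination, up to an additive constant and an error that tends to zero.

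First, I would apply \cite[Lemma A.3]{R22} to the tuple $(a_1,\dots,a_\ell)\in\CH^\ell$. This yields sequences $g_1,\dots,g_m\in\CH$ which can be taken to be strictly ordered by growth, with $1\prec g_1(x)\prec\cdots\prec g_m(x)$, together with real constants $\alpha_{j,i}$ (for $j\in[\ell]$, $i\in\{0,1,\dots,m\}$) and Hardy sequences $r_1,\dots,r_\ell\in\CH$ with $\lim_{x\to\infty}r_j(x)=0$, such that
\begin{equation*}
    a_j(x)=\sum_{i=1}^m \alpha_{j,i}\,g_i(x)+\alpha_{j,0}+r_j(x)\qquad\text{for every } j\in[\ell].
\end{equation*}
Since each $a_j$ satisfies $a_j(x)\prec x$, any $g_i$ that appears with a nonzero coefficient in some $a_j$ must itself satisfy $g_i(x)\prec x$; germs of higher growth can be removed from the basis. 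If after this cleanup some $g_i$ fails the strict upper bound $g_i\prec x$ but still appears in the basis, we drop it as well—no $a_j$ uses it. We may thus arrange $1\prec g_1\prec\cdots\prec g_m\prec x$, matching the hypotheses of Lemma \ref{L: density}.

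Having obtained such a decomposition, the corollary is immediate from Lemma \ref{L: density}: with $b_j(x_1,\dots,x_m):=\sum_{i=1}^m\alpha_{j,i}x_i+\alpha_{j,0}$, that lemma tells us that $(\{a_1(n)\},\dots,\{a_\ell(n)\})_n$ is dense in the set
\begin{equation*}
    Z=\overline{\{(\{b_1(\bx)\},\dots,\{b_\ell(\bx)\}):\bx\in\R^m\}}\subseteq\T^\ell,
\end{equation*}
which is an affine subtorus of $\T^\ell$. This is exactly the conclusion we want.

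I do not expect a serious obstacle here: the only potentially delicate points are (i)~ensuring that the basis $g_1,\dots,g_m$ can be chosen strictly ordered and strictly below $x$ in growth, which follows by standard manipulations within a Hardy field once one knows a basis exists, and (ii)~accommodating degenerate cases in which some (or all) $a_j$ are simply constants up to an $o(1)$ error, corresponding to $m=0$ or to $\alpha_{j,i}=0$ for all $i\geq 1$; in these degenerate cases $Z$ is just a point or a lower-dimensional affine subtorus, so the statement still holds.
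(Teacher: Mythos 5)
Your proof is correct and takes exactly the approach the paper intends: the paper gives no explicit argument, merely noting that the corollary is ``an easy consequence of Lemma~\ref{L: density} and a simple decomposition result for Hardy sequences \cite[Lemma A.3]{R22},'' and your write-up fills in precisely those steps (expand the $a_j$'s against a common Hardy basis, discard basis elements of growth $\succeq x$, and invoke Lemma~\ref{L: density}). The only minor remark is that the strict growth-ordering of the surviving basis germs is automatic from the structure of the ordered Hardy family produced by \cite[Lemma A.3]{R22} (once the polynomial block $x^{l_i}$ is removed, the remaining germs in categories (i), (ii), (iv) of Definition~\ref{D: ordered Hardy family} fall into disjoint growth ranges and are already strictly ordered within each), so no further ``standard manipulations'' are actually needed; and your handling of the $m=0$ degenerate case, where the orbit is eventually a single point, is the right observation.
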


We also need one more convergence result, an easy corollary of the convergence results of Boshernitzan, Kolesnik, Quas, and Wierdl \cite{BKQW05}.
\begin{lemma}\label{L: convergence of exp sum}
    Let $\ell\in\N$. Let $a\in\CH$, and suppose that $a$ is either in $\R[x]$ or stays logarithmically away from $\R[x]$. Then:
    \begin{enumerate}
        \item For every $\beta_1, s_1, u_1, \ldots, \beta_\ell, s_\ell, u_\ell\in\R$, the exponential sum
        \begin{align}\label{E: convergence of exponential sum}
        \E_{n\in[N]}e\brac{s_1\sfloor{\beta_1 a(n)+u_1} + \cdots + s_\ell\sfloor{\beta_\ell a(n)+u_\ell}}    
        \end{align}
         converges as $N\to\infty$.
        \item For every system $(X, \CX, \mu, T_1, \ldots, T_\ell)$ and every $f\in L^2(\mu)$, the average
        \begin{align*}
            \E_{n\in[N]}T_1^{\sfloor{\beta_1 a(n)+u_1}} \cdots T_\ell^{\sfloor{\beta_\ell a(n)+u_\ell}}f
        \end{align*}
        converges in $L^2(\mu)$ as $N\to\infty$.
    \end{enumerate}
\end{lemma}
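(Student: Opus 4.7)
The plan is to prove (i) directly and then deduce (ii) from it via the spectral theorem. For (ii), given commuting unitaries $T_1,\dots,T_\ell$ on $L^2(\mu)$ and $f\in L^2(\mu)$, let $A_Nf = \E_{n\in[N]}T_1^{\sfloor{\beta_1a(n)+u_1}}\cdots T_\ell^{\sfloor{\beta_\ell a(n)+u_\ell}}f$. The Cauchy difference expands via the spectral measure $\sigma_f$ on $\T^\ell$ as
\[
\norm{A_Nf-A_Mf}_{L^2(\mu)}^2 = \int_{\T^\ell}\abs{A_N(\vec t)-A_M(\vec t)}^2\,d\sigma_f(\vec t),
\]
where $A_N(\vec t) := \E_{n\in[N]}e(\sum_i t_i\sfloor{\beta_ia(n)+u_i})$. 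The integrand is uniformly bounded by $4$ and, assuming (i), converges to $0$ pointwise as $M,N\to\infty$; dominated convergence then makes $(A_Nf)_N$ Cauchy in $L^2(\mu)$, giving (ii).

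For (i), first dispose of the case where $a$ is bounded: every germ in $\CH$ is eventually monotonic, so if $|a(n)|\not\to\infty$ then $a$ converges to a finite limit and each $\sfloor{\beta_ia(n)+u_i}$ stabilizes, making the exponential sum trivially convergent. Assume $|a(n)|\to\infty$. Setting $\gamma=\sum_i s_i\beta_i$, $v=\sum_i s_iu_i$, $\vec\alpha=(\gamma,\beta_1,\dots,\beta_\ell)$, $\vec b=(v,u_1,\dots,u_\ell)$, and using $\sfloor x=x-\{x\}$, we rewrite
\[
e\Bigl(\sum_i s_i\sfloor{\beta_ia(n)+u_i}\Bigr) = \Phi\bigl(\{\vec\alpha\,a(n)+\vec b\}\bigr),
\]
where $\Phi:\T^{\ell+1}\to\C$ is the $1$-bounded Riemann integrable function given on $[0,1)^{\ell+1}$ by $\Phi(w_0,w_1,\dots,w_\ell)=e(w_0-\sum_i s_iw_i)$, whose discontinuity set $D$ lies in $\bigcup_{i:\,s_i\neq 0}\{w_i\equiv 0\}$.

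Next, establish convergence of $\E_{n\in[N]}\Phi(\{\vec\alpha\,a(n)+\vec b\})$ on characters: for $\vec k\in\Z^{\ell+1}$ one has $e(\vec k\cdot(\vec\alpha\,a(n)+\vec b))=e(\vec k\cdot\vec b)\,e(c\,a(n))$ with $c:=\vec k\cdot\vec\alpha$. If $c=0$ the average is constant. If $c\neq 0$, convergence of $\E_{n\in[N]}e(c\,a(n))$ follows from Weyl's theorem when $a\in\R[x]$, and from Theorem \ref{T: Boshernitzan} when $a$ stays logarithmically away from $\R[x]$ --- indeed, any $q\in\Q[x]$ satisfies $q/c\in\R[x]$, so $\abs{ca-q}=\abs c\cdot\abs{a-q/c}\succ\log$, i.e., $ca$ stays logarithmically away from $\Q[x]$ and $(ca(n))_n$ equidistributes $\!\!\mod 1$. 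By Stone--Weierstrass the convergence extends to all $\Psi\in C(\T^{\ell+1})$, and Riesz--Markov realizes the positive, normalized limit functional as integration against a Borel probability measure $\mu$ on $\T^{\ell+1}$.

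The main obstacle is the sandwich step: to pass from $C(\T^{\ell+1})$ to the Riemann integrable $\Phi$ one needs $\mu(D)=0$. Indices with $\beta_i=0$ contribute only a constant factor $e(s_i\sfloor{u_i})$ and can be pulled out in advance, so we may assume $\beta_i\neq 0$ whenever $s_i\neq 0$. The character computation above shows $\mu$ is supported on the closure $Z\subseteq\T^{\ell+1}$ of the orbit $\{t\vec\alpha+\vec b:t\in\R\}$ modulo $1$, an affine subtorus. Since $\beta_i\neq 0$, the projection of $Z$ onto its $i$-th coordinate is surjective onto $\T$ with Haar measure pushing forward to Haar, so $Z\cap\{w_i\equiv 0\}$ is a proper sub-affine-torus of $Z$ and hence $\mu$-null. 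Sandwiching $\Phi$ between continuous approximations with arbitrarily small $\mu$-discrepancy then upgrades the convergence from $C(\T^{\ell+1})$ to $\Phi$, completing the proof.
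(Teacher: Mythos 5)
Your part (ii) (spectral theorem plus dominated convergence) is correct, the reduction of (i) to a limit measure $\mu$ on $\T^{\ell+1}$ via Stone--Weierstrass/Riesz--Markov is a valid reframing, and the character computation — using Weyl for $a\in\R[x]$ and Theorem~\ref{T: Boshernitzan} otherwise — is correct. The gap is in the very last step: the inference ``$Z\cap\{w_i\equiv 0\}$ is a proper sub-affine-torus of $Z$ and hence $\mu$-null'' tacitly requires $\mu$ to be absolutely continuous with respect to the Haar measure on $Z$. You have only shown that $\mu$ is \emph{supported} on $Z$, and that is not enough; in fact the claim is false when $a\in\R[x]$.

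Explicitly, take $\ell=1$, $a(n)=n$, $\beta_1=1$, $u_1=0$, and $s_1$ irrational. Then $\gamma=s_1$, $\vec\alpha=(s_1,1)$, $\vec b=(0,0)$, and the orbit points are $\{(s_1 n,n)\bmod 1\}=(\{s_1 n\},0)$, all lying in $D=\{w_1=0\}$. Thus $\mu=\text{Haar}_{\T\times\{0\}}$ and $\mu(D)=1$, while $Z$ is the closure of the line of slope $1/s_1$, which is all of $\T^2$. So $Z\cap\{w_1=0\}=\T\times\{0\}$ is indeed a proper sub-affine-torus of $Z$, but it carries \emph{all} of the mass of $\mu$, not none. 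The sandwich step therefore does not conclude. (The exponential sum is still convergent, since $\sfloor{n}=n$ gives $\E_{n\in[N]}e(s_1 n)\to 0$, so only the argument breaks, not the statement.) More generally, when $a\in\R[x]$ and the positive-degree part of $\beta_i a$ has only rational coefficients, $\{\beta_i a(n)+u_i\}$ is eventually periodic and can hit $0$ with positive density.

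The rest of your argument is sound for the other branch of the hypothesis: if $a$ stays logarithmically away from $\R[x]$, then so does $(\vec k\cdot\vec\alpha)a$ for every $\vec k$ with $\vec k\cdot\vec\alpha\neq 0$; Theorem~\ref{T: Boshernitzan} then forces every character nontrivial on $Z$ to integrate to zero, so $\mu$ \emph{is} Haar on $Z$, and your sandwich goes through. To repair the polynomial case, the natural fix is to restrict $n$ to a single residue class mod $D$, with $D$ chosen so that the rational positive-degree parts of each $\beta_i a$ become integer-valued on the class. On each class the corresponding fractional parts are constant (and either avoid $0$, or the discontinuity is harmless because $\sfloor{\beta_i a(n)+u_i}$ collapses to an affine function of $a(n)$), while the remaining coordinates equidistribute; one then concludes class-by-class and averages over the $D$ classes.

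For comparison, the paper's proof takes the same basic route — write $\sfloor{x}=x-\{x\}$, approximate $e(s_i\{\cdot\})$ by trigonometric polynomials, reduce to linear exponential sums $\E_{n\in[N]}e(\tilde s\,a(n)+\tilde u)$, and invoke \cite[Theorem A]{BKQW05} — but defers the polynomial/atomic bookkeeping to the cited reference rather than making the limit measure explicit. Your version makes the measure-theoretic structure visible, which is appealing, but it is precisely in verifying $\mu(D)=0$ that the extra care is needed.
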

\begin{proof}
    For the first statement, we first decompose \eqref{E: convergence of exponential sum} as
    \begin{align*}
        \E_{n\in[N]}e\brac{s a(n) + u - s_1\srem{\beta_1 a(n)+u_1} - \cdots - s_\ell\srem{\beta_\ell a(n)+u_\ell}}
    \end{align*}
    for $s = s_1 \beta_1 + \cdots + s_\ell\beta_\ell$ and $u = s_1 u_1 + \cdots + s_\ell u_\ell$. Since the functions $f_i(x) = e(s_i\srem{x})$ are Riemann integrable, we can approximate them arbitrarily well by trigonometric polynomials, and so it suffices to prove the convergence of 
    \begin{align*}
        \E_{n\in[N]}e\brac{\tilde{s}a(n) + \tilde{u}},
    \end{align*}
    where $\tilde{s} = s + m_1 \beta_1 + \cdots + m_\ell \beta_\ell$ and $\tilde{u}= u + m_1 u_1 + \cdots + m_\ell u_\ell$ for all choices of $m_1, \ldots, m_\ell\in\Z$. This in turn is a straightforward consequence of \cite[Theorem A]{BKQW05}. The second part of the lemma follows from the first part and the spectral theorem.
\end{proof}

Lastly, we require the following averaging lemma, which shows that Ces\`aro convergence in Hilbert space implies convergence along dyadic intervals. 
\begin{lemma}\label{L: dyadic intervals}
Let $(a_N)_N$ be a sequence whose sequence of ratios $(a_{N+1}/a_N)_N$ converges to some number in $(1, \infty]$, and let $(v_n)_n$ be a bounded sequence in a Hilbert space. Suppose that $\E\limits_{n\in[N]}v_n$ converges strongly (resp. weakly) to some limit $L$. Then $\E\limits_{n\in(a_N, a_{N+1}]} v_n$ converges strongly (resp. weakly) to $L$ as well.
\end{lemma}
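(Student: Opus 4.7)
The plan is to express $\E_{n\in(a_N, a_{N+1}]}v_n$ as an affine combination of the two Ces\`aro averages $\E_{n\in[\sfloor{a_{N+1}}]}v_n$ and $\E_{n\in[\sfloor{a_N}]}v_n$, whose coefficients converge to explicit constants as $N\to\infty$. The full conclusion will then drop out by linearity of the limit.

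First, I would note that the ratio assumption forces $a_N\to\infty$, so eventually $\sfloor{a_N}<\sfloor{a_{N+1}}$ are positive integers. The key algebraic identity is the telescoping decomposition
\begin{align*}
    \sum_{n=\sfloor{a_N}+1}^{\sfloor{a_{N+1}}}v_n \;=\; \sfloor{a_{N+1}}\,\E_{n\in[\sfloor{a_{N+1}}]}v_n \;-\; \sfloor{a_N}\,\E_{n\in[\sfloor{a_N}]}v_n,
\end{align*}
from which one obtains
\begin{align*}
    \E_{n\in(a_N, a_{N+1}]}v_n \;=\; \frac{\sfloor{a_{N+1}}}{\sfloor{a_{N+1}}-\sfloor{a_N}}\,\E_{n\in[\sfloor{a_{N+1}}]}v_n \;-\; \frac{\sfloor{a_N}}{\sfloor{a_{N+1}}-\sfloor{a_N}}\,\E_{n\in[\sfloor{a_N}]}v_n.
\end{align*}

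Next, I would analyse the coefficients. Let $r=\lim_{N\to\infty}a_{N+1}/a_N\in(1,\infty]$. If $r<\infty$, then $\sfloor{a_{N+1}}/\sfloor{a_N}\to r$, whence the two coefficients tend to $r/(r-1)$ and $1/(r-1)$ respectively; in the case $r=\infty$, they tend to $1$ and $0$. In either case the coefficients are bounded and sum to $1$. Since by assumption $\E_{n\in[N]}v_n\to L$ strongly (respectively weakly), the subsequences $\E_{n\in[\sfloor{a_{N+1}}]}v_n$ and $\E_{n\in[\sfloor{a_N}]}v_n$ also converge strongly (resp.\ weakly) to $L$. Multiplying a convergent sequence of scalars by a strongly/weakly convergent sequence of vectors preserves the mode of convergence, so passing to the limit in the displayed identity yields $\frac{r}{r-1}L-\frac{1}{r-1}L=L$ when $r<\infty$ and $1\cdot L-0\cdot L=L$ when $r=\infty$.

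There is no real obstacle here; the only subtlety is keeping track of the two cases $r<\infty$ and $r=\infty$ uniformly, which is handled by observing that in both situations the coefficients stay bounded and their difference equals $1$. Boundedness of $(v_n)_n$ is not used for the main argument, but it harmlessly absorbs the $O(1)$ discrepancy coming from replacing $a_N$ by $\sfloor{a_N}$, should one prefer to work with the un-floored endpoints.
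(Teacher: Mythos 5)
Your proof is correct and follows essentially the same telescoping decomposition as the paper's argument, expressing $\E_{n\in(a_N, a_{N+1}]}v_n$ as a difference of rescaled Ces\`aro averages at $\sfloor{a_{N+1}}$ and $\sfloor{a_N}$ and then passing to the limit using the convergence of the coefficients (with $r=1+c$ matching the paper's notation). The only cosmetic difference is that you track the floors explicitly, which is slightly more careful than the paper's presentation; your closing aside about boundedness absorbing an $O(1)$ discrepancy is unnecessary since your floored identity is exact, but it does no harm.
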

\begin{proof}
We assume that all the limits are strong; the case when they are all weak follows identically.
    Let $S_N=v_1+\dots+v_N$. {For all sufficiently large $N$, the ratio condition implies that $a_{N+1}>a_N$,} in which case the average in question equals
    \begin{align*}
        \E\limits_{n\in(a_N, a_{N+1}]} v_n =\frac{1}{a_{N+1}-a_N} \big(S_{a_{N+1}} -S_{a_N}\big) = \frac{a_{N+1}}{a_{N+1}-a_N}\brac{\E\limits_{n\in[a_{N+1}]} v_n - \frac{a_N}{a_{N+1}}\E\limits_{n\in[a_{N}]} v_n}.
    \end{align*}
    By assumption, we have $\lim\limits_{N\to\infty}\frac{a_{N+1}}{a_N}>1$. If the limit is infinite, then $\frac{a_{N+1}}{a_{N+1}-a_N}\to 1$, and so
\begin{align*}
    \lim_{N\to\infty} \E\limits_{n\in(a_N, a_{N+1}]} v_n = \lim_{N\to\infty} \E\limits_{n\in[a_{N+1}]} v_n = L.
\end{align*}
If $\lim\limits_{N\to\infty}\frac{a_{N+1}}{a_N} = 1+c$ for some $c>0$, then $\frac{a_{N+1}}{a_{N+1}-a_N}\to \frac{1+c}{c}$, in which case
\begin{align*}
    \lim_{N\to\infty}\E\limits_{n\in(a_N, a_{N+1}]} v_n = \frac{1+c}{c}\Bigbrac{1-\frac{1}{1+c}}L = L.
\end{align*}
\end{proof}

We now proceed to prove Proposition \ref{P: reduction of ergodicity}. 
\begin{proof}[Proof of Proposition \ref{P: reduction of ergodicity}]
If  $\beta_2 a_1 - \beta_1 a_2 \ll \log$ and $\beta_1,\beta_2\neq 0$, then $(\beta_2/\beta_1) a_1 - a_2 \ll \log$ as well. To facilitate the notation, we reparametrize $a_1 = \tilde a$, $\beta = \beta_2/\beta_1$, and $a_2 = \beta \tilde a + \tilde g$ for some $\tilde g\ll\log$. Then our assumption reduces to $(T_1^{\floor{\tilde a(n)}}T_2^{-\floor{\beta \tilde a(n) + \tilde g(n)}})_n$ being ergodic.

We now split $\tilde a ={a} + {g}$ as follows:
    \begin{enumerate}
        \item if $\tilde a$ stays logarithmically away from $\R[x]$, then $a = \tilde{a} $ and ${g} = 0$;
        \item if $\tilde a = p + {g}$ for some $p\in\R[x]$ and ${g} \ll \log$, then ${a} = p$ (the representation is unique up to shifts by constants). 
    \end{enumerate}
    Then we can recast
    \begin{align*}
        T_1^{\floor{\tilde a(n)}}T_2^{-\floor{\beta \tilde a(n) + \tilde g(n)}} = T_1^{\floor{{a}(n)+g_1(n)}}T_2^{-\floor{\beta {a}(n) + g_2(n)}}
    \end{align*}
    for $g_1 = {g}$ and $g_2 = \beta {g} +\tilde g$.
    The statement of the proposition then amounts to showing that the ergodicity of $$(T_1^{\floor{{a}(n)+g_1(n)}}T_2^{-\floor{\beta {a}(n) + g_2(n)}})_n$$ implies the ergodicity of $(T_1^{\floor{{a}(n)}}T_2^{-\floor{\beta {a}(n) + u}})_n$ for some $u\in\R$.

    We now take a step back and discuss the distribution of $(\{g_1(n)\}, \{g_2(n)\})_n$ in $\T^2$, which we will need shortly. By Corollary~\ref{C: density}, $(\{g_1(n)\}, \{g_2(n)\})_n$ is dense in an {affine subtorus} $Z = (u_1,u_2)+H$ of $\T^2$. By absorbing $u_1$ in the definition of ${a}$, we can replace the shift $(u_1, u_2)$ by $(0,u)$ for some $u\in\R$. Then $(\{g_1(n)\}, \{g_2(n)-u\})_n$ is dense in $H$. Now, one of the two things happens: either $\{g_1(n)\} = \{g_2(n)-u\} = 0$ for all sufficiently large $n\in\N$, or $Z$ is 1- or 2-dimensional. In both cases, Corollary~\ref{C: density} implies that for all $\delta>0$, we can find an increasing sequence $(N_k)_k$ such that $(\{g_1(N_k)\}, \{g_2(N_k)-u\})_k$ lies in $[0, \delta)^2$ for all $k\in\N$.
    
    
Let us see how to complete the proof of the proposition using this newly acquired information about the distribution of $(\{g_1(n)\}, \{g_2(n)\})_n$ in $\T^2$. We fix a small parameter $\delta>0$.
By the ergodicity assumption, we have
\begin{align*}
    \lim_{N\to\infty}\E_{n\in[N]} T_1^{\floor{{a}(n)+g_1(n)}}T_2^{-\floor{\beta {a}(n) + g_2(n)}}f = \int f\; d\mu,
\end{align*}
and so 
\begin{align*}
    \lim_{N\to\infty}\E_{n\in [N, (1+\delta)N)} T_1^{\floor{{a}(n)+g_1(n)}}T_2^{-\floor{\beta {a}(n) + g_2(n)}}f = \int f\; d\mu
\end{align*}
as well by Lemma \ref{L: dyadic intervals}. Now, while integer parts are not additive, we can split
\begin{align*}
    \floor{{a}(n)+g_1(n)} &= \floor{{a}(n)} + \floor{g_1(n)}\\
    \floor{\beta {a}(n) + g_2(n)} &= \floor{\beta {a}(n)+u} + \floor{g_2(n)-u}
\end{align*}
unless
\begin{align}\label{E: boundary condition}
    \rem{{a}(n)} > 1 - \rem{g_1(n)} \quad \textrm{or}\quad \rem{\beta {a}(n)+u} > 1 - \rem{g_2(n)-u}.
\end{align}
Our goal is to show that \eqref{E: boundary condition} happens very rarely. 

Because $g_j\ll\log$, we have $g_j(n) = g_j(N) + O(\delta)$ for $n\in[N, (1+\delta)N)$, so if the boundary condition \eqref{E: boundary condition} is satisfied, then we must have
 \begin{align*}
     \rem{{a}(n)} > 1 - \rem{g_1(N)}-O(\delta) \quad \textrm{or}\quad \rem{\beta{a}(n)+u} > 1 - \rem{g_2(N)-u}-O(\delta).
 \end{align*}
 By the density considerations above, we can find an increasing sequence $(N_k)_k$ such that
\begin{align}\label{E: a dense subsequence}
    \rem{g_1(N_k)}, \rem{g_2(N_k)-u}\leq \delta,
\end{align}
so that \eqref{E: boundary condition} holds for $n\in[N_k, (1+\delta)N_k)$ only if
 \begin{align}\label{E: boundary condition 1}
     \min(\rem{{a}(n)}, \srem{\beta {a}(n)+u}) > 1 -O(\delta). 
 \end{align}
Now, since ${a}$ is either in $\R[x]$ or stays logarithmically away from $\R[x]$, we can deduce that at most a $O(\delta)$-proportion of $n\in[N_k, (1+\delta)N_k)$ can satisfy \eqref{E: boundary condition 1} for all sufficiently small $\delta>0$.\footnote{
If $a$ is in $\R[x],$ then each of the polynomials $a, \beta a+u$ is either in $\Q[x]+\R$ or in its complement. Hence the claim follows by periodicity (for sufficiently $\delta>0$) or by well distribution respectively. If $a$ stays logarithmically away from $\R[x],$ so does $\beta a +u,$ and the claim follows by equidistribution and Lemma~\ref{L: dyadic intervals}.}
Restricting to the sequence $(N_k)_k$, we then have
\begin{multline}\label{E: slow 1}
        \E_{n\in [N_k, (1+\delta)N_k)} T_1^{\floor{{a}(n)+g_1(n)}}T_2^{-\floor{\beta {a}(n) + g_2(n)}}f \\
        = T_1^{\floor{g_1(N_k)}}T_2^{-\floor{g_2(N_k)-u}}\E_{n\in [N_k, (1+\delta)N_k)} T_1^{\floor{{a}(n)}}T_2^{-\floor{\beta {a}(n)+u}}f +O(\delta).
\end{multline}

For reasons that will become clear later on, we want the average on $[N_k, (1+\delta)N_k)$ to converge along the usual interval $[N]$. Fortunately this is the case, and the $L^2(\mu)$ limit
\begin{align}\label{E: slow 2}
    L:= \lim_{N\to\infty}\E\limits_{n\in[N]} T_1^{\floor{{a}(n)}}T_2^{-\floor{\beta {a}(n)+u}}f
\end{align}
exists by Lemma \ref{L: convergence of exp sum}.
Therefore 
\begin{align*}
    \E_{n\in [N_k, (1+\delta)N_k)} T_1^{\floor{{a}(n)}}T_2^{-\floor{\beta {a}(n)+u}}f \to L
\end{align*}
in $L^2(\mu)$ as $k\to\infty$. If we can show that $L$ is constant, then $L = \int L\; d\mu = \int f\; d\mu$, and Proposition~\ref{P: reduction of ergodicity} will follow.

Comparing both sides of \eqref{E: slow 1} with the fact that the left-hand side converges to $\int f\; d\mu = \int L\; d\mu$, we get that
\begin{align*}
    \limsup_{k\to\infty}\norm{T_1^{\floor{g_1(N_k)}}T_2^{-\floor{g_2(N_k)-u}}\E_{n\in [N_k, (1+\delta)N_k)} T_1^{\floor{{a}(n)}}T_2^{-\floor{\beta {a}(n)+u}}f - \int L\; d\mu}_{L^2(\mu)}\ll \delta. 
\end{align*}
Using the invariance of measures, we can compose the integral with $T_1^{-\floor{g_1(N_k)+}}T_2^{\floor{g_2(N_k)-u}}$ and then use \eqref{E: slow 2} to get
\begin{align*}
    \norm{L - \int L\; d\mu}_{L^2(\mu)}\ll \delta.
\end{align*}
Since $\delta>0$ was arbitrary, we get that $L = \int L\; d\mu$, implying the result.
\end{proof}



\subsection{The case of rational $\beta$}\label{SS: beta rational}


The goal now is to prove Proposition \ref{P: seminorm comparison, s = 0} under the assumption that $\beta\in\Q$. This will follow from the more general result presented below.

\begin{proposition}\label{P: beta rational}
    Let $k,l\in\mathbb{Z}\backslash\{0\}$ and $u\in\R$. Then there exists a finite index subgroup $H\subseteq\mathbb{Z}^{2}$, which contains $(k,l)$ and depends only on $k$ and $l$, such that for any system $(X,\CX,\mu, T_{1},T_{2})$ and any sequence $a\colon\mathbb{N}\to\mathbb{R}$, the ergodicity of $(T_{1}^{\floor{ka(n)}}T_{2}^{-\floor{la(n)+u}})_{n}$ implies that $\CI(H)=\CI(T_{1}^{k}T_{2}^{-l})$.   
\end{proposition}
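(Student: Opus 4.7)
My approach reduces the problem to studying the factor $Y = X/\CI(T_1^k T_2^{-l})$, on which $T_1^k T_2^{-l}$ acts as the identity. For simplicity, assume $\gcd(k,l)=1$; the general case introduces an extra $\Z/\gcd(k,l)\Z$-torsion factor, which is handled identically. The induced $\Z^2/\langle (k,-l)\rangle$-action on $Y$ is isomorphic to a $\Z$-action generated by some transformation $S$ satisfying $T_1 = S^l$ and $T_2 = S^k$ on $Y$. The key algebraic identity
$$k\lfloor la(n)+u\rfloor - l\lfloor ka(n)\rfloor = ku - k\{la(n)+u\} + l\{ka(n)\} \in (ku-k,\,ku+l)\cap\Z$$
shows that this integer takes values in a finite set $\{-e_1,\ldots,-e_M\}$ of size at most $k+l$, depending only on $k,l,u$. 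Consequently $R_n := T_1^{\lfloor ka(n)\rfloor} T_2^{-\lfloor la(n)+u\rfloor}$ acts on $Y$ as $S^{-e_n}$, so $R_n$ takes only finitely many values among the powers of $S$.

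Next I would exploit the ergodicity of $R_n$ on $X$, which descends to $Y$. Passing to any subsequence along which the empirical densities $p_i := \lim_N |\{n\in[N]:e_n=e_i\}|/N$ exist, the operator identity $\sum_i p_i S^{-e_i} = P_0$ (projection onto constants) holds on $L^2(Y)$. Applying the spectral theorem (Herglotz-Bochner), the $S$-spectral measure of every $f\in L^2(Y)$ must be supported on $\{1\}\cup(P^{-1}(0)\cap S^1)$, where $P(z) := \sum_i p_i z^{-e_i}$. After shifting so the exponents are nonnegative, $P$ is a polynomial with nonnegative real coefficients summing to $1$ and degree at most $k+l-1$, hence has at most $k+l-1$ zeros on the unit circle. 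Therefore $Y$ has purely discrete spectrum with at most $k+l$ distinct $S$-eigenvalues, and since these eigenvalues form a finite subgroup of $S^1$, they constitute a cyclic group of some order $N\le k+l$. In particular $S^M = \mathrm{id}$ on $Y$ for $M = M(k,l) := \mathrm{lcm}(1,2,\ldots,k+l)$, which depends only on $k$ and $l$.

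Finally, define
$$H := \{(p,q)\in\Z^2 : M \mid lp+kq\},$$
a finite-index subgroup of $\Z^2$. It contains $(k,-l)$ since $lk+k(-l)=0$, so $T_1^kT_2^{-l}$ lies in the $H$-action, giving $\CI(H)\subseteq\CI(T_1^kT_2^{-l})$. Conversely, for any $(p,q)\in H$ the transformation $T_1^pT_2^q$ acts on $Y$ as $S^{lp+kq}=\mathrm{id}$, so $L^2(Y)\subseteq\CI(H)$, i.e.\ $\CI(T_1^kT_2^{-l})\subseteq\CI(H)$. The two inclusions yield the desired equality.

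The main technical obstacle I anticipate is the spectral argument in the second paragraph: carefully converting the $L^2$-ergodicity of $(R_n)_n$ on $Y$ into a pointwise statement on the supports of spectral measures via Herglotz-Bochner, and then using the explicit degree bound on $P$ to conclude finiteness and cyclicity of the $S$-spectrum on $Y$. A secondary subtlety is the $\gcd(k,l)>1$ case, where the quotient $\Z^2/\langle(k,-l)\rangle$ carries a torsion part $\Z/\gcd(k,l)\Z$ whose trivialization on $Y$ requires bounding an additional finite symmetry, contributing a further bounded factor to the modulus in the definition of $H$.
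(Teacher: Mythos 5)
Your approach is essentially the paper's in slightly different clothing: after reducing to $\gcd(k,l)=1$, both arguments use Bezout to obtain a single transformation $S$ with $T_1=S^l$, $T_2=S^k$ on the factor $Y=X/\CI(T_1^kT_2^{-l})$, observe that $e_n := k\lfloor la(n)+u\rfloor - l\lfloor ka(n)\rfloor$ takes only $O_{k,l}(1)$ integer values, pass to a subsequence on which the empirical densities of the $e_n$'s exist so that ergodicity yields a nontrivial identity $\sum_i p_i S^{-e_i}=\E(\cdot\,|\,\mathrm{const})$ on $L^2(Y)$, and then exploit the bounded degree of the associated one-variable polynomial. Your packaging via the factor $Y$ and the spectral measure of the unitary $S$ is cleaner than the paper's Claims~1--4 (which manipulate the operator identity and polynomial $g$ directly), and it sidesteps the merging step (Claim~4) since $S$ is unitary and hence already diagonalizable.

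There are, however, two concrete gaps. First, the step ``since these eigenvalues form a finite subgroup of $S^1$, they constitute a cyclic group of order $N\le k+l$'' is not justified: when $S|_Y$ is non-ergodic the product of two eigenfunctions can vanish, so the set of eigenvalues need not be closed under multiplication (e.g. $Y$ a disjoint union of a $\Z/2\Z$- and a $\Z/3\Z$-rotation has eigenvalues $\{1,-1,\omega,\omega^2\}$, which is not a group). The conclusion $S^{M}=\mathrm{id}$ with $M=\mathrm{lcm}(1,\dots,k+l)$ is still correct, but the right argument is the one-orbit one: if $\lambda$ is an $S$-eigenvalue on $Y$ with eigenfunction $f\ne 0$, then $f,f^2,\dots$ are nonzero eigenfunctions with eigenvalues $\lambda,\lambda^2,\dots$, all forced into the finite set $\{1\}\cup P^{-1}(0)$; hence $\lambda$ is a root of unity of order at most $1+\deg P \le k+l$. (This is also essentially what saves the paper's Claim~3, whose literal statement ``the minimal polynomial of $z_j$ divides $g$'' does not apply because $g$ has real, not rational, coefficients.) Second, the remark that the case $\gcd(k,l)=d>1$ ``introduces an extra $\Z/d\Z$-torsion factor, which is handled identically'' is too quick: there $\Z^2/\langle(k,-l)\rangle\cong\Z\times\Z/d\Z$ is not cyclic, so no single transformation $S$ satisfies $T_1=S^l$, $T_2=S^k$ on $Y$, and the argument as written does not run. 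The paper's route is to replace $a$ by $da$ so that $k/d,l/d$ become coprime; note that in the only place this proposition is used (Corollary~\ref{C: seminorm comparison, beta rational 2}) one takes $\beta=l/k$ in lowest terms, so the coprime case is all that is needed.
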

We emphasize the level of generality of Proposition \ref{P: beta rational}: it works for \textit{any} sequence $a:\N\to\R$, not necessarily a Hardy one. 

As an immediate consequence of Proposition \ref{P: beta rational} and Lemma \ref{L: basic properties of seminorms} (v), we get the following corollary.
\begin{corollary}\label{C: seminorm comparison, beta rational}
    Under the assumptions of Proposition \ref{P: beta rational}, we have $\nnorm{f}_{k\be_1 - l\be_2} = \nnorm{f}_H$ for any $f\in L^\infty(\mu)$.
\end{corollary}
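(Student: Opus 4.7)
The plan is to verify that this corollary follows in a few lines from the two ingredients singled out in the text, so the proof proposal is really just a short unpacking of definitions rather than a genuinely new argument.

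First I would invoke the mean ergodic theorem for finitely generated abelian actions: for any finitely generated subgroup $G\subseteq\Z^\ell$, the averaging operator
\begin{equation*}
    \E_{\bm\in G} T^{\sfloor{\bm}} f \;=\; \E_{\bm\in G} T^{\bm} f
\end{equation*}
(the floor is trivial for integer vectors) converges in $L^{2}(\mu)$ to $\E(f\,|\,\CI(G))$, where $\CI(G)=\bigcap_{\bm\in G}\CI(T^{\bm})$. Applying this to the two groups of interest, we get
\begin{equation*}
    \E_{\bm\in \langle k\be_1 - l\be_2\rangle} T^{\sfloor{\bm}} f \;=\; \E(f\,|\,\CI(T_1^k T_2^{-l}))
    \qquad\text{and}\qquad
    \E_{\bm\in H} T^{\sfloor{\bm}} f \;=\; \E(f\,|\,\CI(H)),
\end{equation*}
for every $f\in L^\infty(\mu)$.

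Next I would apply Proposition \ref{P: beta rational}, whose conclusion under the standing ergodicity hypothesis is precisely $\CI(H)=\CI(T_1^{k}T_2^{-l})$. Combined with the previous step, this yields the equality of the two ergodic averages on $L^\infty(\mu)$:
\begin{equation*}
    \E_{\bm\in \langle k\be_1 - l\be_2\rangle} T^{\sfloor{\bm}} f \;=\; \E_{\bm\in H} T^{\sfloor{\bm}} f.
\end{equation*}

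Finally, the equality of the seminorms is just the ``equality of ergodic averages'' clause of Lemma \ref{L: basic properties of seminorms}(v) applied in the case $s=1$ to the groups $G_1 = \langle k\be_1-l\be_2\rangle$ and $G'_1 = H$, giving
\begin{equation*}
    \nnorm{f}_{k\be_1 - l\be_2} \;=\; \nnorm{f}_{H}.
\end{equation*}
There is no real obstacle here; the only genuine content lies in Proposition \ref{P: beta rational}, which has already been established, so the corollary is indeed immediate once one has noticed that seminorms along integer subgroups depend only on the associated invariant $\sigma$-algebra.
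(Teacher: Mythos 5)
Your proposal is correct and follows exactly the route the paper intends: convert the equality of invariant $\sigma$-algebras from Proposition \ref{P: beta rational} into equality of ergodic averages via the mean ergodic theorem, then invoke Lemma \ref{L: basic properties of seminorms}(v) with $s=1$ for the groups $\langle k\be_1-l\be_2\rangle$ and $H$. The paper treats this as immediate and gives no proof beyond citing these two ingredients, so your unpacking is faithful to its argument.
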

Corollary \ref{C: seminorm comparison, beta rational} combined with Proposition \ref{P: reduction of ergodicity} quickly yield Proposition \ref{P: seminorm comparison, s = 0} whenever $\beta\in\Q$, which is restated in the corollary below.
\begin{corollary}\label{C: seminorm comparison, beta rational 2}
    Proposition \ref{P: seminorm comparison, s = 0} holds for nonzero $\beta\in\Q$. 
    
    More specifically, let $(X, \CX, \mu, T_1, T_2)$ be a system, let $u\in\R$ and $a\in\CH$, where $a$ is either in $\R[x]$ or stays logarithmically away from $\R[x]$, and suppose that $(T_1^{\floor{a(n)}}T_2^{-\floor{\beta a(n)+u}})_n$ is ergodic for $(X, \CX, \mu)$. Then for any $f\in L^\infty(\mu)$, we have
    \begin{align*}
        \nnorm{f}_{\be_1 - \beta \be_2} = 0 \quad \textrm{whenever} \quad \nnorm{f}_{\langle\be_1, \be_2\rangle}^+ = 0.
    \end{align*}
\end{corollary}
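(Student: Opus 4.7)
The plan is to reduce the statement to Proposition~\ref{P: beta rational} via a rational rescaling of $a$, use Corollary~\ref{C: seminorm comparison, beta rational} to convert the 1-dimensional seminorm $\nnorm{f}_{k\be_1 - l\be_2}$ into the 2-dimensional seminorm $\nnorm{f}_H$ along a finite-index subgroup $H \subseteq \Z^2$, and finally dominate this by the hypothesized $\nnorm{f}^+_{\langle \be_1, \be_2\rangle}$ through the standard subgroup and monotonicity properties in Lemma~\ref{L: basic properties of seminorms}.

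First I would write $\beta = l/k$ in lowest terms with $k \in \N$ and $l \in \Z \setminus \{0\}$ (using the hypothesis that $\beta \neq 0$), and set $b := a/k \in \CH$. Then $\floor{a(n)} = \floor{kb(n)}$ and $\floor{\beta a(n) + u} = \floor{lb(n) + u}$, so the assumed ergodicity of $(T_1^{\floor{a(n)}} T_2^{-\floor{\beta a(n) + u}})_n$ is exactly the hypothesis $(T_1^{\floor{kb(n)}} T_2^{-\floor{lb(n) + u}})_n$ being ergodic that Proposition~\ref{P: beta rational} takes as input (note that Proposition~\ref{P: beta rational} imposes no constraint on the nature of the sequence beyond being a map $\N \to \R$). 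Applying it furnishes a finite-index subgroup $H \subseteq \Z^2$ containing $(k, l)$ with $\CI(H) = \CI(T_1^k T_2^{-l})$, and then Corollary~\ref{C: seminorm comparison, beta rational} yields the key identity
\[\nnorm{f}_{k\be_1 - l\be_2} = \nnorm{f}_H.\]

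Next I would chain three comparisons. Since $\langle k\be_1 - l\be_2\rangle = k\langle \be_1 - \beta\be_2\rangle \subseteq \langle \be_1 - \beta\be_2\rangle$, the subgroup property (Lemma~\ref{L: basic properties of seminorms}(iii)) gives
\[\nnorm{f}_{\be_1 - \beta\be_2} \leq \nnorm{f}_{k\be_1 - l\be_2} = \nnorm{f}_H.\]
Monotonicity (Lemma~\ref{L: basic properties of seminorms}(ii)) gives $\nnorm{f}_H \leq \nnorm{f}_H^+$, and since $H$ has finite index in $\Z^2 = \langle \be_1, \be_2\rangle$, the finite-index part of the subgroup property (second half of Lemma~\ref{L: basic properties of seminorms}(iii)) gives $\nnorm{f}_H^+ \ll \nnorm{f}_{\langle \be_1, \be_2\rangle}^+$. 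Stringing these together shows $\nnorm{f}_{\be_1 - \beta\be_2} \ll \nnorm{f}_{\langle \be_1, \be_2\rangle}^+$, and the vanishing of the right-hand side by hypothesis forces $\nnorm{f}_{\be_1 - \beta\be_2} = 0$.

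There is no substantive obstacle here; the corollary is a purely formal consequence of the three named results. The only minor point requiring care is the direction of the seminorm inequalities (passing to a smaller group \emph{increases} the seminorm), and the fact that one must briefly move to the plussed seminorm $\nnorm{\cdot}^+$ in order to invoke the finite-index comparison between $H$ and $\Z^2$. Notice in particular that neither the Hardy assumption on $a$ nor the hypothesis that $a$ is a polynomial or stays logarithmically away from $\R[x]$ is used in this deduction; those assumptions only enter the picture through the derivation of Proposition~\ref{P: beta rational} itself, which is the genuinely hard input.
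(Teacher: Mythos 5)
Your proof is correct and follows the same route as the paper: write $\beta = l/k$, apply Proposition~\ref{P: beta rational} to the rescaled sequence $a/k$ to produce the finite-index subgroup $H \subseteq \Z^2$ with $\CI(H) = \CI(T_1^k T_2^{-l})$, invoke Corollary~\ref{C: seminorm comparison, beta rational}, and finish with the basic seminorm comparisons in Lemma~\ref{L: basic properties of seminorms}. The only difference is in how the chain of inequalities is arranged. The paper writes
\begin{align*}
    \nnorm{f}_{\langle \be_1, \be_2\rangle}^+ \asymp_{\beta} \nnorm{f}_{H}^+ = \nnorm{f}_{k\be_1 - l\be_2}^+ \asymp_{\beta}\nnorm{f}_{\be_1 - \beta\be_2}^+\geq \nnorm{f}_{\be_1 - \beta\be_2},
\end{align*}
in which the middle equality $\nnorm{f}_{H}^+ = \nnorm{f}_{k\be_1-l\be_2}^+$ is a plussed statement that goes slightly beyond what Corollary~\ref{C: seminorm comparison, beta rational} literally provides: since the plussed seminorm is the unplussed one of $f\otimes\bar f$ on the product system, the equality requires the invariant factors along $H$ and $\langle k\be_1-l\be_2\rangle$ to coincide on $X\times X$, and Proposition~\ref{P: beta rational} only yields this on $X$ (the product system need not inherit the ergodicity hypothesis). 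Your chain $\nnorm{f}_{\be_1-\beta\be_2} \leq \nnorm{f}_{k\be_1-l\be_2} = \nnorm{f}_H \leq \nnorm{f}_H^+ \ll \nnorm{f}_{\langle\be_1,\be_2\rangle}^+$ uses Corollary~\ref{C: seminorm comparison, beta rational} in exactly its stated unplussed form and only passes to the plussed seminorm for the finite-index comparison with $\Z^2$, so it sidesteps that subtlety cleanly. Both chains of course yield the same conclusion.
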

\begin{proof}[Proof of Corollary \ref{C: seminorm comparison, beta rational 2} assuming Corollary \ref{C: seminorm comparison, beta rational}]
Let $\beta = l/k\in\Q$ for nonzero $k,l\in\Z$. Then Proposition \ref{P: beta rational} applied to the sequence $(a(n)/k)_n$ gives a subgroup $H\subseteq \langle \be_1, \be_2\rangle$ of index $O_{\beta}(1)$ for which $\CI(T_1^k T_2^{-l}) = \CI(H)$. Then
\begin{align*}
    \nnorm{f}_{\langle \be_1, \be_2\rangle}^+ \asymp_{\beta} \nnorm{f}_{H}^+ = \nnorm{f}_{k\be_1 - l\be_2}^+ \asymp_{\beta}\nnorm{f}_{\be_1 - \beta\be_2}^+\geq \nnorm{f}_{\be_1 - \beta\be_2}
\end{align*}
by (various parts of) Lemma \ref{L: basic properties of seminorms}, from which the claimed result follows.
\end{proof}

We finally proceed to prove Proposition \ref{P: beta rational}.
\begin{proof}[Proof of Proposition \ref{P: beta rational}]
Rescaling $k,l$ and $a$ if necessary, we may assume without loss of generality that $k$ is coprime to $l$. Let $I_1, \ldots, I_K$ be the finite partition of $[0,1)$ into all the intervals with endpoints from the set
    \begin{align}\label{E: endpoints}
        \Bigl\{\frac{i}{\vert k\vert},\frac{j}{\vert l\vert}, \frac{j-\{u\}}{\vert l\vert}\colon 0\leq i\leq \vert k\vert,\; 0\leq j\leq \vert l\vert\Bigr\}.
    \end{align}
By the Bolzano-Weierstrass theorem, there exists a subsequence $(N_j)_j$ of $\mathbb{N}$ such that
the limit 
\begin{align}\label{E: weights}
    c_i =\lim_{j\to\infty}\frac{\vert\{n\in[N_j]\colon\; \{a(n)\}\in I_i\}\vert}{N_{j}}
\end{align}
    exists for all $i\in[K]$.


From now on, assume that $f$ has zero integral and is invariant under the transformation $R:=T_1^kT_2^{-l}$. Our objective is to show that $f$ is $\CI(H)$-measurable for some fixed finite index subgroup $H\subseteq \Z^2$ {which contains $(k,l)$}.

Since $(T_{1}^{\floor{ka(n)}}T_{2}^{-\floor{la(n)+u}})_{n}$ is ergodic, we have
\begin{equation}\label{kl1}
    \lim_{j\to\infty}\E_{n\in[N_j]}T_{1}^{\floor{ka(n)}}T_{2}^{-\floor{la(n)+u}}f=0
\end{equation}
in $L^2(\mu)$.
Using the identities
\begin{gather*}
 \floor{n x} = n\floor{x} + i \quad\textrm{whenever}\quad n\in\N,\; x\in\left[\frac{i}{n}, \frac{i+1}{n}\right)\\
 \textrm{and}\quad  \floor{x + y}=\floor{x}+\floor{y}+1_{\rem{x}+\rem{y}\geq 1},
\end{gather*}
we can find a function $r:[0,1)\to\Z^2$, constant on each $I_1, \ldots, I_K$ and satisfying the bound
\begin{align}\label{E: bound on r}
\norm{r}_\infty \leq  \max(|k|-1, |l|),    
\end{align}
 such that 
\begin{align*}
    T_{1}^{\floor{ka(n)}}T_{2}^{-\floor{la(n)+u}}=T_{1}^{k\floor{a(n)} + r_1(n)}T_{2}^{-l\floor{a(n)}-\floor{u}-r_2(n)} = R^{\floor{a(n)}}T_1^{r_1(n)}T_2^{-\floor{u}-r_2(n)},
\end{align*}
where $r(n)=(r_1(n), r_2(n))$.
Composing (\ref{kl1}) with $T_2^{\floor{u}}$ and using the $R$-invariance of $f$, we obtain that
\begin{equation}\label{kl2}
    0=\lim_{j\to\infty}\E_{n\in[N_{j}]}T_1^{r_1(n)}T_2^{-r_2(n)}f=\sum_{i=1}^{K}c_{i}\cdot T_1^{r_{1,i}}T_2^{-r_{2,i}} f,
\end{equation}
where 
the pair $(r_{1,i}, r_{2,i})$ is the value of $r$ on $I_i$ and $c_i$ is defined in \eqref{E: weights}.

Since $k$ is coprime to $l$, there exist $w,v\in\mathbb{Z}$ with $\max(\vert w\vert, |v|)=O_{k,l}(1)$ such that $wl-vk=1$. For all $i\in[K]$, we can then write $(r_{1,i}, r_{2,i}) = d_{i}(w,v)+d'_{i}(k,-l)$ for a unique choice of $d_i, d'_i\in\Z$, which are of size $O_{k,l}(1)$ by \eqref{E: bound on r}. Denote $S:=T_1^w T_2^v$. Using the $R$-invariance of $f$ again, we can recast
$T_1^{r_{1,i}}T_2^{-r_{2,i}} f = S^{d_i} f$ for all $i\in[K]$. 
So it follows from (\ref{kl2}) that 
\begin{equation}\label{kl3}
    0=\sum_{i=1}^{K}c_{i}S^{d_{i}}f. 
\end{equation}
Denote 
$$L(f):=\sum_{i=1}^{K}c_{i}S^{d_{i}+d_{\ast}}f$$
for all $f$, where $d_{\ast}=\max\{-d_{1},\dots,-d_{K},0\}$; clearly, $L(f) = 0$. 
By the definition of $d_*$ and the bounds on $d_i$, we have $0\leq d_{i}+d_{\ast} \ll_{k,l}1$, and hence
$$g(z):=\sum_{i=1}^{K}c_{i}z^{d_{i}+d_{\ast}}$$ is a real polynomial of degree $O_{k,l}(1)$. Since $c_{i}\geq 0$ and at least one $c_{i}$ is nonzero, the polynomial $g$ is nonzero. 

Thus, the study of the limit $L(f)$ can be reduced to the examination of the polynomial $g$. The point of the next few claims is that we can throw away all the roots of $g$ that are not roots of unity, and that we can combine them all together to find a finite-index subgroup $H\subseteq\Z^2$ containing $(k,-l)$ such that $f$ is $H$-invariant.
    
\begin{claim}\label{Claim 1}
Suppose that $f\neq 0$ and $Sf=\lambda f$ for some $\lambda\in\mathbb{C}$.
Then $\lambda$ must be a root~of~$g$ and a root of unity. 
\end{claim}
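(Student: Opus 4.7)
The plan is to split the claim into two halves. The first, that $\lambda$ is a root of $g$, is immediate from the definitions: I will apply the eigenvalue relation $Sf = \lambda f$ to the already-established identity $L(f) = \sum_{i=1}^{K} c_i\, S^{d_i + d_*} f = 0$, which gives $g(\lambda)\, f = 0$, and then use $f \neq 0$ in $L^\infty(\mu)$ to conclude $g(\lambda) = 0$.

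For the second half, that $\lambda$ is a root of unity, my plan is to argue by contradiction: suppose $\lambda$ is not a root of unity, and note first that $\lambda \neq 0$ because $S$ is invertible, so the powers $\lambda, \lambda^2, \lambda^3, \ldots$ are pairwise distinct. I would then show that for every integer $m \geq 1$, the function $f^m \in L^\infty(\mu)$ inherits all the properties of $f$ that entered the derivation of $L(f) = 0$. Specifically, $f^m$ is still nonzero, is still $R$-invariant (because $R$ acts as an algebra automorphism on $L^\infty(\mu)$, so $Rf = f$ implies $Rf^m = f^m$), and satisfies $Sf^m = (Sf)^m = \lambda^m f^m$; integrating this eigenvalue identity against the $S$-invariant measure $\mu$ yields $(1 - \lambda^m)\int f^m \, d\mu = 0$, and since $\lambda^m \neq 1$ by assumption I deduce $\int f^m \, d\mu = 0$. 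Then repeating verbatim the derivation that produced $L(f) = 0$, now with $f^m$ in place of $f$, will give $L(f^m) = 0$, and hence by the first half of the claim $g(\lambda^m) = 0$ for every $m \geq 1$.

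Since $g$ has degree at most $O_{k,l}(1)$ and therefore only finitely many roots, the existence of the infinite list of pairwise distinct roots $\lambda, \lambda^2, \lambda^3, \ldots$ is a contradiction, forcing $\lambda$ to be a root of unity. The main subtlety I expect to need to emphasize is that the polynomial $g$ itself is independent of the particular eigenfunction under consideration: the data $(c_i)$, $(d_i)$, $d_*$, and $K$ were determined entirely by $k$, $l$, $u$ and the fixed subsequence $(N_j)$, so the same bound on the number of roots of $g$ applies uniformly to every $f^m$, which is exactly what makes the contradiction go through.
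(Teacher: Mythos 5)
Your proof is correct and follows essentially the same strategy as the paper's: raise $f$ to the $m$-th power, deduce $\int f^m\,d\mu=0$ from $\lambda^m\neq 1$, conclude $g(\lambda^m)=0$ for all $m$ via $L(f^m)=0$, and invoke the fundamental theorem of algebra. The only cosmetic difference is that you isolate the root-of-$g$ deduction as a separate first step and cite invertibility of $S$ to get $\lambda\neq 0$ upfront, both of which the paper leaves implicit within the contradiction argument.
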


Indeed, assume that $\lambda$ is not a root of unity. Then for all $n\in \N$, $$\int f^n\; d\mu = \int S(f^n)\; d\mu= \int (Sf)^n\; d\mu=\lambda^n\int f^n\; d\mu,$$ so $f^n$ has zero integral.  
Since $f^n$ is $R$-invariant by assumption, we get $L(f^n)=0$ by \eqref{kl3}. Because $f^n\neq 0$, we further deduce that $\lambda\neq 0$ and $g(\lambda^n)=0$. By the fundamental theorem of algebra, the polynomial $g$ has only finitely many roots, and so the only way in which $g(\lambda^n)=0$ can happen for all $n\in\N$ is if $\lambda$ is a root of unity.

\

As an immediate corollary, we get the following.

\begin{claim}\label{Claim 2}
   Suppose that $\lambda\in\mathbb{C}$ is not a root of unity. If $(S-\lambda)f=0$, then $f=0$.
\end{claim}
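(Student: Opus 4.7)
The plan is to derive Claim 2 as an immediate consequence of Claim 1 via contrapositive. We continue working under the standing assumption of the proof that $f$ has zero integral and is $R$-invariant, where $R = T_1^k T_2^{-l}$.

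Suppose $(S-\lambda)f=0$ with $\lambda\in\C$ not a root of unity. The equation $(S-\lambda)f=0$ is exactly $Sf=\lambda f$. If we had $f\neq 0$, then Claim~1 would apply to this eigenfunction and force $\lambda$ to be a root of unity, which contradicts the hypothesis on $\lambda$. Hence $f=0$.

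There is no real obstacle here; the content is entirely packaged into Claim 1. The role of Claim 2 in the ensuing argument (which I anticipate but do not pursue here) will presumably be to apply it to each non-unity root of the polynomial $g(z)$ appearing in $L(f)$: using the $R$-invariance of $f$ and the relation $L(f)=0$, one can factor $g$ and peel off factors $(S-\lambda)$ for the non-roots-of-unity $\lambda$, reducing the analysis to the case where only cyclotomic factors remain; this is what will eventually identify a finite-index subgroup $H\subseteq\Z^2$ containing $(k,-l)$ such that every $R$-invariant zero-integral $f$ is $H$-invariant.
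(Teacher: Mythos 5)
Your proof is correct and matches the paper's: the paper states Claim~2 is ``an immediate corollary'' of Claim~1, and you spell out exactly that contrapositive, correctly noting the standing assumptions (zero integral, $R$-invariance) needed for Claim~1 to apply.
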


 Next, we move on to show that we can remove all the roots of $g$ that are not roots of unity. Since $g\neq 0$, we may factorize $g$ as 
 $$g(z)=c\prod_{j=1}^{M}(z-z_{j})$$
 for some $c\neq 0, M=O_{k,l}(1)$ and $z_{1},\dots,z_{M}\in\mathbb{C}$. Then it follows from (\ref{kl3}) that
 $$L(f)=c\prod_{j=1}^{M}(S-z_{j})f=0.$$

If $M=0$, then since $c\neq 0$, we must have $f=0$, in which case we are done. So we assume that $M>0$. Rearranging the roots, we can assume that  $z_{1},\dots,z_{M'}$ are roots of unity while $z_{M'+1},\dots,z_{M}$ are not. By repeatedly applying Claim \ref{Claim 2} to $\prod_{j=1}^{M'+t}(S-z_{j})f$ in place of $f$, we deduce that
$$\prod_{j=1}^{M'+t}(S-z_{j})f=0$$
for all $0\leq t\leq M-M'-1$. Hence
 $$\prod_{j=1}^{M'}(S-z_{j})f=0.$$
 
 Assume that $z_j$ is a primitive $q_j$-th root of unity for all $1\leq j\leq M'$.
Then  
\begin{equation}\label{kl5}
    \prod_{j=1}^{M'}(S^{q_{j}}-1)f=0
\end{equation}
since $z-z_{j}$ divides $z^{q_{j}}-1$.

The following claim allows us to bound the size of $q_j$'s, and hence the index of the subgroup $H$ later on.
\begin{claim}\label{Claim 3}
    We have $q_{j}=O_{k,l}(1)$.
\end{claim}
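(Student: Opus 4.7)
My plan is to prove Claim~\ref{Claim 3} by strengthening the argument used in Claim~\ref{Claim 1}, applying the key identity $L(f)=0$ not just to $f$ itself but to all its powers $f^n$. More precisely, I would observe that in the proof of Claim~\ref{Claim 1}, once we have a non-zero $R$-invariant $f$ with $Sf=\lambda f$ for some $\lambda$, then $f^n$ is also $R$-invariant for every $n\geq 1$ (since $R$ is a measure-preserving transformation, so $R(f^n)=(Rf)^n=f^n$), and $S$-invariance of the measure yields $\int f^n\,d\mu=\lambda^n\int f^n\,d\mu$, forcing $\int f^n\,d\mu=0$ whenever $\lambda^n\neq 1$. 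Hence (\ref{kl3}) applies to $f^n$ and gives
\[
  0 \;=\; L(f^n) \;=\; \sum_{i=1}^K c_i\, S^{d_i+d_\ast}(f^n) \;=\; \sum_{i=1}^K c_i\, \lambda^{n(d_i+d_\ast)} f^n \;=\; g(\lambda^n)\, f^n,
\]
and since $f^n\neq 0$, we deduce $g(\lambda^n)=0$ for every $n$ with $\lambda^n\neq 1$.

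Now suppose $\lambda$ is a primitive $q$-th root of unity. Then the set $\{\lambda^n:1\leq n\leq q-1\}$ consists of $q-1$ distinct non-trivial $q$-th roots of unity, each of which is a root of $g$. Since $g$ has degree at most $D=O_{k,l}(1)$, this forces $q-1\leq D$, so $q\leq D+1=O_{k,l}(1)$. This supplies the desired bound for any primitive $q_j$-th root of unity $z_j$ that arises as an eigenvalue of $S$ on a non-trivial $R$-invariant function with zero integral.

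To see that all $q_j$'s appearing in (\ref{kl5}) are bounded, I would pass to the spectral decomposition $f=\sum_\lambda f_\lambda$ of $f$ under the unitary operator $S$ on $L^2(\mu)$. Because $R$ and $S$ commute (as polynomial combinations of the commuting transformations $T_1, T_2$), each spectral projection $f_\lambda$ inherits $R$-invariance from $f$, and the $S$-invariance of $\mu$ forces $\int f_\lambda\,d\mu=0$ for $\lambda\neq 1$. Applying the strengthened Claim~\ref{Claim 1} argument to each non-trivial $f_\lambda$ gives that the corresponding $\lambda$ is a root of unity of order at most $D+1$. Since any $q_j$ whose $(S^{q_j}-1)$ factor is not redundant in (\ref{kl5}) must correspond to such an eigenvalue, we conclude $q_j\leq D+1=O_{k,l}(1)$ (possibly after reshuffling or removing spurious factors that trivially annihilate $f$).

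The main potential obstacle is the care needed to ensure that spurious roots $z_j$ of $g$ (roots of unity that do not appear as eigenvalues of $S$ on the non-trivial part of $f$) are handled correctly; this is why the spectral decomposition step is crucial, as it lets us restrict attention to the $q_j$'s that genuinely contribute to the constraint on $f$. Once the bound $q_j=O_{k,l}(1)$ is established, the subsequent claims will conclude that $f$ is annihilated by $S^Q-1$ for some $Q=O_{k,l}(1)$, yielding $\CI(T_1^kT_2^{-l})$-measurability of $f$ inside $\CI(H)$ for the subgroup $H=\langle(k,-l),(wQ,vQ)\rangle\subseteq\Z^2$, which has finite index depending only on $k$ and $l$.
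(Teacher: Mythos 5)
You take a genuinely different route. The paper's proof of Claim~\ref{Claim 3} is purely algebraic: since $z_j$ is a primitive $q_j$-th root of unity that is a root of $g$, its minimal polynomial (the cyclotomic polynomial $\Phi_{q_j}$, of degree $\phi(q_j)$) divides $g$, and since $\phi(t)\to\infty$ this forces $q_j=O_{k,l}(1)$. Your proof is instead dynamical: you re-run the $f^n$ argument of Claim~\ref{Claim 1} to show that if $Sf=\lambda f$ with $\lambda$ a primitive $q$-th root of unity and $f$ a nonzero $R$-invariant function, then $\lambda,\lambda^2,\ldots,\lambda^{q-1}$ are \emph{all} roots of $g$, so $q-1\leq\deg g$. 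A notable gain is robustness: the cyclotomic divisibility step silently requires $g\in\Q[z]$, since the minimal polynomial over $\Q$ divides $g$ only when $g$ has rational coefficients, whereas the densities $c_i$ defining $g$ are merely real; your version never appeals to minimal polynomials and works directly for $g\in\R[z]$. The trade-off is that you only bound those $q_j$ which genuinely arise as eigenvalues of $S$ on the $R$-invariant, zero-integral part of $L^2(\mu)$, rather than every root-of-unity root of $g$ as the claim literally asserts.

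Two points need to be firmed up for your argument to be fully rigorous. First, the step ``pass to the spectral decomposition $f=\sum_\lambda f_\lambda$'' is not an application of the spectral theorem alone: for a general unitary $S$ the spectral theorem yields a projection-valued integral, not a finite sum of eigenfunctions. What makes the finite decomposition valid here is that $\prod_{j=1}^{M'}(S-z_j)f=0$ (already established just before (\ref{kl5})); since $S$ is normal, the kernel of that product operator is exactly $\bigoplus_j\ker(S-z_j)$, so $f$ is indeed a finite sum of $S$-eigenfunctions with eigenvalues among the $z_j$. Second, ``removing spurious factors'' carries real weight and should not be parenthetical: writing $f=\sum_{j\in J}f_{z_j}$ with $f_{z_j}\neq 0$ for $j\in J$, one has $\prod_{j\in J}(S-z_j)f=0$ already, each $f_{z_j}$ is $R$-invariant (as $R$ and $S$ commute) with zero integral whenever $z_j\neq 1$, and your bound applies to each $z_j$ with $j\in J$. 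Then $S^{Q}f=f$ for $Q=\mathrm{lcm}\{q_j:j\in J\}=O_{k,l}(1)$, which in fact bypasses Claim~\ref{Claim 4} entirely. With these two gaps filled, your proof is correct, and it delivers the conclusion the paper actually uses even though it proves a slightly weaker statement than Claim~\ref{Claim 3} as written.
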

 
 To see this, recall that since  $z_j$ is a primitive $q_j$-th root of unity, its minimal polynomial $h$ is of degree $\phi(q_{j})$, where $\phi$ is the Euler totient function. Since $z_{j}$ is also a root of $g$, the polynomial $h$ must divide $g$, and so $\phi(q_{j})=\deg(h)\leq \deg(g)=O_{k,l}(1)$. Since $\lim\limits_{t\to\infty}\phi(t)=\infty$, we have $\phi(q_{j})=O_{k,l}(1)$ only if $q_{j}=O_{k,l}(1)$.


 

The next claim enables us to combine the contributions of all the roots of unity $z_1, \ldots, z_{M'}$.

\begin{claim}\label{Claim 4}
Let $m,n\in\mathbb{N}$. If $(S^{m}-1)(S^{n}-1)f=0$, then $(S^{2mn}-1)f=0$.    
\end{claim}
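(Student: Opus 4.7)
The plan is to invoke the spectral theorem for the unitary operator induced on $L^{2}(\mu)$ by $S=T_{1}^{w}T_{2}^{v}$. Since $T_{1}$ and $T_{2}$ are commuting measure-preserving transformations, $S$ is a measure-preserving transformation on $(X,\CX,\mu)$ and thus acts unitarily on $L^{2}(\mu)$. Associated with any $f\in L^{2}(\mu)$ is a positive Borel measure $\sigma_{f}$ on $\T$ (the spectral measure of $f$ with respect to $S$) such that for every Laurent polynomial $p\in\C[z,z^{-1}]$ one has
\[
\norm{p(S)f}_{L^{2}(\mu)}^{2}=\int_{\T}|p(z)|^{2}\,d\sigma_{f}(z).
\]
In particular, $p(S)f=0$ if and only if $\sigma_{f}$ is concentrated on the zero set $\{z\in\T:p(z)=0\}$.

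I would apply this equivalence first with $p(z)=(z^{m}-1)(z^{n}-1)$. The hypothesis $(S^{m}-1)(S^{n}-1)f=0$ then translates into the statement that $\sigma_{f}$ is supported on $\mu_{m}\cup\mu_{n}$, the union of the $m$-th and $n$-th roots of unity in $\T$. Every element of this set satisfies $z^{mn}=1$, and \emph{a fortiori} $z^{2mn}=1$. Hence $\sigma_{f}$ is supported on the set of $2mn$-th roots of unity, and applying the spectral identity once more with $p(z)=z^{2mn}-1$ yields $\norm{(S^{2mn}-1)f}_{L^{2}(\mu)}^{2}=0$, which is precisely the desired conclusion.

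No genuine obstacle appears in this argument: the only point worth emphasizing is that $S$ is indeed a measure-preserving transformation (so spectral calculus is available), after which the conclusion is a one-line consequence of the containments $\mu_{m}\cup\mu_{n}\subseteq\mu_{mn}\subseteq\mu_{2mn}$. The factor of $2$ in the target exponent is wasteful (the stronger conclusion $(S^{mn}-1)f=0$ follows from the same proof), but is presumably kept because only the weaker $2mn$-version is needed in the subsequent application within the proof of Proposition~\ref{P: beta rational}.
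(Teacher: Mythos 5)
Your proof is correct, but it takes a genuinely different route from the paper. The paper's own argument is deliberately elementary: it first observes that $(S^{m}-1)(S^{n}-1)f=0$ implies $(S^{mn}-1)^{2}f=0$ (since $z^{m}-1$ and $z^{n}-1$ both divide $z^{mn}-1$), rewrites this as $(S^{2mn}+1)f=2S^{mn}f$, and then extracts $(S^{2mn}-1)f=0$ from the equality case of the Cauchy--Schwarz inequality applied to $\langle S^{2mn}f,f\rangle$, using only that $S$ is unitary. You instead invoke the spectral theorem for the unitary induced by $S$ and note that the spectral measure $\sigma_{f}$ must be concentrated on $\mu_{m}\cup\mu_{n}\subseteq\mu_{mn}$. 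Your argument is shorter and conceptually cleaner, and it in fact delivers the stronger conclusion $(S^{mn}-1)f=0$; the paper's argument, by contrast, avoids any appeal to spectral calculus at this point in the proof and only yields the weaker exponent $2mn$, which is all that is needed for the subsequent iteration. Both are valid; the trade-off is spectral machinery versus a self-contained inner-product computation.
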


To see this, note that $(S^{m}-1)(S^{n}-1)f=0$ implies that $(S^{mn}-1)^{2}f=0$ since $z^{n}-1$ and $z^{m}-1$ both divide $z^{mn}-1$. Then 
$(S^{2mn}+1)f=2S^{mn}f.$
     So $$4\int \vert f\vert^{2}=4\int \vert S^{mn}f\vert^{2}=\int \vert (S^{2mn}+1)f\vert^{2}=2\int \vert f\vert^{2}+\langle S^{2mn}f,f\rangle+\langle f,S^{2mn}f\rangle.$$
     By the Cauchy-Schwarz inequality, $\langle S^{2mn}f,f\rangle\leq \vert f\vert^{2}$ and the equality holds only if $S^{2mn}f=f$. Hence $(S^{2mn}-1)f=0$, and the claim follows. 

     \

     Recall that $M'\leq M=O_{k,l}(1)$ and $q_{j}=O_{k,l}(1)$, the latter of which follows from Claim \ref{Claim 3}. A repeated application of Claim \ref{Claim 4} and (\ref{kl5}) allows us to find $W=O_{k,l}(1)$ and $q = 2^{M'-1} q_1 \cdots q_{M'}\leq W$ such that $S^{q}f=f$.
     Let $H = \langle W! (w,v), (k,-l)\rangle$: this is a finite index subgroup of $\mathbb{Z}^{2}$ that depends only on $k$ and $l$. 
     Since $q|W!$, the fact that $f$ is invariant under both $S^{q}$ and $R$ implies that $f$ is $H$-invariant. This means that $\CI(R)=\CI(H)$. 
     \end{proof}

\section{Seminorm comparison under ergodicity assumptions II: irrational $\beta$}\label{S: seminorm comparison II}

We continue our exploration of consequences that can be drawn from the ergodicity of $(T_1^{\floor{a_1(n)}}T_2^{-\floor{a_2(n)}})_n$ whenever $\beta_2 a_1 -\beta_1 a_2\ll \log$ for some nonzero $\beta_1,\beta_2\in\R$. 
In the preceding section, we have proved Corollary \ref{C: seminorm comparison, beta rational}, a special case of Proposition \ref{P: seminorm comparison, s = 0} for rational $\beta$. Our objective now is to complete the proof of Proposition \ref{P: seminorm comparison, s = 0} (and hence also of Proposition \ref{P: seminorm comparison}) by covering the case of $\beta\notin\Q$. This turns out to require significantly more ad hoc computations than the case of rational $\beta$. For convenience, we state the main output of this section as a separate proposition.

\begin{proposition}\label{P: beta irrational}
    Proposition \ref{P: seminorm comparison, s = 0} holds for $\beta\notin\Q$. 
    
    More specifically, let $(X, \CX, \mu, T_1, T_2)$ be a system, let $u\in\R$ and $a\in\CH$, where $a$ is either in $\R[x]$ or stays logarithmically away from $\R[x]$, and suppose that $(T_1^{\floor{a(n)}}T_2^{-\floor{\beta a(n)+u}})_n$ is ergodic for $(X, \CX, \mu)$. Then for any $f\in L^\infty(\mu)$, we have
    \begin{align*}
        \nnorm{f}_{\be_1 - \beta \be_2} = 0 \quad \textrm{whenever} \quad \nnorm{f}_{\langle\be_1, \be_2\rangle}^+ = 0.
    \end{align*}
\end{proposition}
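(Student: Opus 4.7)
My plan is to proceed spectrally. Let $\sigma_f$ denote the spectral measure of $f$ with respect to the $\Z^2$-action generated by $T_1,T_2$. A direct computation shows that $\nnorm{f}_{\langle\be_1,\be_2\rangle}^+=0$ is equivalent to $\sigma_f$ being atomless on $\T^2$, while
$$\nnorm{f}_{\be_1-\beta\be_2}^{2}=\int_{\T^2}|F(t,s)|^2\,d\sigma_f(t,s),\quad F(t,s):=\lim_{M\to\infty}\E_{m\in[\pm M]}e\bigl(mt+\floor{-\beta m}s\bigr),$$
where the limit defining $F$ exists by Lemma~\ref{L: convergence of exp sum}. Moreover, the ergodicity of $(T_1^{\floor{a(n)}}T_2^{-\floor{\beta a(n)+u}})_n$, applied to $f-\int f\,d\mu$ and combined with a Herglotz-Bochner argument, will force $\sigma_f(\{\psi\neq 0\})=0$ for every continuous $\sigma_f$, where
$$\psi(t,s):=\lim_{N\to\infty}\E_{n\in[N]}e\bigl(t\floor{a(n)}-s\floor{\beta a(n)+u}\bigr).$$
It therefore suffices to show that $S:=\{F\neq 0\}\setminus\{(0,0)\}$ is contained in $\{\psi\neq 0\}$ up to an at-most-countable exceptional set.

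Next, I will pin down $S$. Writing $\floor{-\beta m}=-\beta m-\{-\beta m\}$ and Fourier-expanding $h(x):=e(-\{x\}s)=\sum_{q\in\Z}d_q e(qx)$ with $d_q=\int_0^1 e(-(s+q)x)\,dx$ (nonzero unless $s+q\in\Z\setminus\{0\}$), the equidistribution of $(-\beta m\bmod 1)_m$ for irrational $\beta$ yields
$$F(t,s)=\sum_{q\in\Z:\,t-(s+q)\beta\in\Z}d_q,$$
and irrationality of $\beta$ forces at most one $q$ to contribute. Hence $F(t,s)\neq 0$ iff $t\equiv(k+s)\beta\pmod{1}$ for a unique $k\in\Z$ with $d_{-k}\neq 0$, and $S$ is a countable union of closed curves in $\T^2$.

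The heart of the proof will be verifying that $\psi$ does not vanish on $S$ outside a countable set. For $(t,s)\in S$, writing $t=(k+s)\beta+m$ with $k,m\in\Z$ and expanding
$$t\floor{a(n)}-s\floor{\beta a(n)+u}=(k\beta+m)a(n)-su-t\{a(n)\}+s\{\beta a(n)+u\},$$
then Fourier-expanding $e(-t\{\cdot\})=\sum_p c_p e(p\cdot)$ and $e(s\{\cdot\})=\sum_q d_q e(q\cdot)$ in the same way, one obtains
$$\psi(t,s)=e(-su)\sum_{p,q\in\Z}c_p d_q\,e(qu)\lim_{N\to\infty}\E_n e\bigl((k\beta+m+p+q\beta)a(n)\bigr).$$
Theorem~\ref{T: Boshernitzan} shows that the inner limit vanishes unless $\lambda a$ fails to be equidistributed mod~$1$, where $\lambda:=(k+q)\beta+(m+p)$. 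If $a$ stays logarithmically away from $\R[x]$, this forces $\lambda=0$, and the irrationality of $\beta$ pins down the unique contribution $(p,q)=(-m,-k)$; this yields $\psi(t,s)=c_{-m}d_{-k}\,e(-(s+k)u)$, nonzero on $S\setminus\{(0,0)\}$ since the two Fourier coefficients vanish only when $t\in\Z$ or $s\in\Z$, both ruled out on $S\setminus\{(0,0)\}$ by irrationality of~$\beta$. If $a\in\R[x]$, the set of surviving $\lambda$ is contained in a countable subgroup of $\R$ determined by the rational dependencies among the coefficients of $a$ and $\beta$; a careful accounting using Weyl's equidistribution theorem and the Fourier-inversion identity $\sum_p c_p e(p x)=e(-t\{x\})$ will collapse the sum to a nonzero expression, again with only a countable exceptional subset of~$S$.

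Finally, since $\sigma_f$ is atomless, any at-most-countable exceptional set is $\sigma_f$-null; combined with the ergodicity consequence $\sigma_f(\{\psi\neq 0\})=0$, this will give $\sigma_f(S)=0$, whence $\nnorm{f}_{\be_1-\beta\be_2}^2=\int|F|^2\,d\sigma_f=0$. The main obstacle will be the polynomial subcase: if the coefficients of $a$ are rationally dependent with $\beta$, infinitely many pairs $(p,q)$ may contribute to $\psi(t,s)$, and the argument will require summing these contributions carefully via Fourier inversion and ruling out fortuitous cancellations except on a controllable countable subset of~$S$.
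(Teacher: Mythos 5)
Your high-level strategy matches the paper's: pass to spectral measures, split $\T^2$ according to the zero sets of the two exponential sums $\psi$ (corresponding to the ergodic action) and $F$ (corresponding to $\nnorm{\cdot}_{\be_1-\beta\be_2}$), and show that the exceptional set $\{F\neq 0\}\cap\{\psi=0\}$ is small. Where you genuinely diverge from the paper is the last step. The paper's proof handles the countable exceptional set via Lemma~\ref{L: scaling}: for each $(\beta\gamma,\gamma)$ in the exceptional set, the contribution $\nnorm{P_{(\beta\gamma,\gamma)}f}_{\be_1-\beta\be_2}$ is compared to $\nnorm{P_{(\beta\gamma,\gamma)}f}_{\langle\be_1,\be_2\rangle}^+$ via an explicit (and annoyingly non-uniform) constant $c_{\beta,\gamma}$, which requires further case analysis. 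Your observation that $\nnorm{f}_{\langle\be_1,\be_2\rangle}^+=0$ is, by Wiener's theorem for $\Z^2$, equivalent to $\sigma_f$ being atomless makes that entire machinery unnecessary: any countable exceptional set is automatically $\sigma_f$-null. This is a correct and genuinely cleaner way to close the argument, and it costs nothing since the dependence is known to be non-quantitative anyway (the paper notes $c_{\beta,\gamma}\to 0$ can occur). So the final-step simplification is a real improvement.

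That said, there are two issues. The first is minor: you claim that when $a$ stays logarithmically away from $\R[x]$, $\psi$ is nonzero on all of $S\setminus\{(0,0)\}$. This is false. Since you require $t-(s+k)\beta\in\Z$ rather than $t-(s+k)\beta=0$, the curve $S$ contains points of the form $(0,s)$ with $(s+k)\beta=-m\in\Z\setminus\{0\}$, at which your Fourier coefficient $c_{-m}$ vanishes. Your argument is robust to this because those points still form a countable set, but the explicit claim is wrong and the paper's Case~(i) of Proposition~\ref{P: E_3} treats precisely these points.

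The second issue is the substantive gap. The heart of this proposition is showing that $E_3=\{F\neq 0,\psi=0\}\setminus\{(0,0)\}$ is countable when $a\in\R[x]$, which you dispatch with ``a careful accounting\dots will collapse the sum to a nonzero expression, again with only a countable exceptional subset.'' This is exactly the content of Lemmas~\ref{L: beta a  = p + g}, \ref{L: a = p + g}, and especially \ref{L: (k beta + l) a  = p + g} and Cases~(ii)--(iv) of Proposition~\ref{P: E_3}, which occupy several pages. The interchange of Fourier series with the limit that you rely on does not converge absolutely near the jump of $e(-t\{x\})$, which is part of why the paper works with equidistribution of $(\{a(n)/c\},\{(\beta a(n)+u)/c\})_n$ directly rather than term-by-term Fourier expansion. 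And when $a=\frac{p-ku}{k\beta+l}$ with $k,l\neq 0$, the sequence $(\{a(n)\},\{\beta a(n)+u\})_n$ lives on a proper subtorus; infinitely many $(p,q)$ contribute; and the resulting two-variable exponential sum has a rather intricate explicit form whose non-vanishing outside a countable set is established in the paper by a holomorphicity argument in the variable $s$, with an explicit nonzero value extracted at $s=-|kl|-r$. Nothing in your sketch indicates you have identified these difficulties, so as written this is a significant gap, not a deferred computation.
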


From now on, all the way up to the end of Section \ref{S: seminorm comparison II}, we fix all the data in the statement of Proposition \ref{P: beta irrational}: the system $(X, \CX, \mu, T_1, T_2)$, the numbers $u\in\R$ and $\beta\notin\Q$, and the  {function} $a\in\CH$. {We also make the following standing assumption:

\smallskip
($\clubsuit$)\quad \emph{$a$ is either in $\R[x]$ or stays logarithmically away from $\R[x]$.}
\smallskip}

For the discussion below, we also fix $f\in L^\infty(\mu)$.
Our main tool to analyze the ergodic averages induced by the action $(T_1^{\floor{ a(n)}}T_2^{-\floor{\beta a(n)+u}})_n$ is the classical Herglotz-Bochner theorem. It shows that
\begin{align}\label{E: L^2 for A}
    \lim_{N\to\infty}\norm{\E_{n\in[N]} T_1^{\floor{ a(n)}}T_2^{-\floor{\beta a(n)+u}} f}_{L^2(\mu)} 
    = \norm{A}_{L^2(\sigma_f)},
\end{align}
where $\sigma_f$ is the spectral measure on $[0,1)^2$ associated with $f$ and
\begin{multline*}
    A(t,s):= e(us)\cdot \lim_{N\to\infty}\E_{n\in[N]} e(\floor{ a(n)}t -\floor{\beta a(n)+u}s)\\
    =  \lim_{N\to\infty}\E_{n\in[N]} e((t - \beta s) a(n)- \{ a(n)\}t +\{\beta a(n)+u\}s).
\end{multline*}
We recall that the limits in \eqref{E: L^2 for A} exist by Lemma \ref{L: convergence of exp sum}, just like the limits below.

The exponential sum that we have to compare $A(t,s)$ with 
 is
\begin{multline*}
    B(t,s):= e(s) \cdot \lim_{N\to\infty}\E_{n\in[N]} e(t n +\floor{-\beta n}s) = \lim_{N\to\infty}\E_{n\in[N]} e(t n -\floor{\beta n}s)\\
    =  \lim_{N\to\infty}\E_{n\in[N]} e((t - \beta s) n+\{\beta n\}s)
\end{multline*}
(in the second equality, we use the irrationality of $\beta$ to write $\floor{-\beta n} = -\floor{\beta n} - 1$ for $n\in\Z\backslash\{0\}$)
 because 
\begin{align}\label{E: L^2 for B}
    \nnorm{f}_{\be_1 - \beta \be_2} = \lim_{N\to\infty}\norm{\E_{n\in[N]} T_1^n T_2^{\floor{-\beta n}} f}_{L^2(\mu)} 
    = \norm{B}_{L^2(\sigma_f)}
\end{align}
by the Herglotz-Bochner theorem again.

Our main goal now is to obtain a precise description of the values
$(t,s)\in[0,1)^2$ for which any of $A(t,s), B(t,s)$ vanishes, so that we can compare \eqref{E: L^2 for A} with \eqref{E: L^2 for B} in a way that allows us to control \eqref{E: L^2 for B} by a seminorm along $\langle \be_1, \be_2\rangle$ as in Proposition \ref{P: beta irrational}.
To this end, we split 
\begin{align*}
    [0,1)^2 = E_1 \cup E_2 \cup E_3,
\end{align*}
where 
\begin{align*}
    E_1 &= \{(t,s)\in[0,1)^2:\; A(t,s) = B(t,s) = 0\},\\
    E_2 &= \{(t,s)\in[0,1)^2:\; A(t,s) \neq 0\},\\
    E_3 &= \{(t,s)\in[0,1)^2:\; A(t,s) = 0,\; B(t,s) \neq 0\}.
\end{align*}

Naturally, the contribution of $E_1$ to \eqref{E: L^2 for A} and \eqref{E: L^2 for B} is zero. To evaluate the contribution of $E_2$, we use the ergodicity assumption on our action to observe that 
\begin{align*}
    \abs{\int f\, d\mu}^2 = \sigma_f(\{(0,0)\})
    = \norm{A}_{L^2(\sigma_f)}.
\end{align*}
Since $(0,0)\in E_2$ and $|A(t,s)|>0$ for every $(t,s)\in E_2$, we must have $\sigma_f(E_2) = \sigma_f(\{0,0\})$ from the continuity of the spectral measure. In particular, it follows that 
\begin{align*}
    \int_{E_2}|A(t,s)|^2\, d\sigma_f(t,s) = \int_{E_2}|B(t,s)|^2\, d\sigma_f(t,s) = \sigma_f(\{0,0\}) = \abs{\int_X f\, d\mu}^2,
\end{align*}
and so the contribution of $E_2$ to \eqref{E: L^2 for A} and \eqref{E: L^2 for B} is equal. Our objective therefore is to evaluate the contribution of $E_3$, and for that we need to understand when $A$ vanishes while $B$ does not. 
After doing so, we arrive at the following description of $E_3$.
\begin{proposition}\label{P: E_3}
The set $E_3$ is countable, and we have
\begin{align*}
E_3 \subseteq \{\brac{\beta \gamma_i, \gamma_i}:\; i\in\N\} \mod\Z^2
\end{align*}
for some sequence $(\gamma_n)_n$ of nonzero real numbers. 
\end{proposition}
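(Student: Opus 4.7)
The plan is to prove the proposition in two steps: (1) show $E_3$ is contained in the one-parameter family $L := \{(\beta\gamma, \gamma) \bmod \Z^2 : \gamma \in \R\} \subseteq \T^2$; and (2) show $E_3 \cap L$ is countable.

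For step~(1), I would analyze $B(t, s) = \lim_N \E_{n\in[N]} e((t - \beta s) n) F(\beta n)$ where $F(y) := e(\{y\} s)$ is Riemann-integrable and $1$-periodic. For $s \in (0, 1)$, the Fourier series $F(y) = \sum_{l \in \Z} d_l(s) e(ly)$ converges in $L^2(\T)$, with $d_l(s) = \frac{e(s) - 1}{2\pi i(s - l)} \neq 0$. Using an $L^2$-bound for the tail together with the equidistribution of $(\{\beta n\})_n$ on $\T$ (which holds because $\beta \notin \Q$), I would interchange the Cesàro limit with the Fourier sum to obtain $B(t, s) = \sum_l d_l(s) \lim_N \E_n e((t - \beta s + l \beta) n)$. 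By Weyl's theorem the inner limit is $1$ when $t - \beta s + l\beta \in \Z$ and $0$ otherwise, and the irrationality of $\beta$ makes such an $l$ unique if it exists. So $B(t, s) \neq 0$ forces $t - \beta s + l\beta \in \Z$ for some $l \in \Z$, equivalently $(t, s) \equiv (\beta \gamma, \gamma) \bmod \Z^2$ with $\gamma := s - l$. The edge case $s = 0$ gives $B(t, 0) = \lim_N \E_n e(tn)$, nonzero only at $(t, s) = (0, 0) \in E_2$.

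For step~(2), I would substitute $(t, s) = (\{\beta\gamma\}, \{\gamma\})$ into the definition of $A$ and use $\{x\} = x - \floor{x}$ together with $\beta a(n) = \{\beta a(n) + u\} + \floor{\beta a(n) + u} - u$ to absorb every integer term into the complex exponential, arriving at
\begin{equation*}
A(\{\beta\gamma\}, \{\gamma\}) = e(-u \floor{\gamma}) \cdot \Psi(\gamma), \qquad \Psi(\gamma) := \lim_{N \to \infty} \E_{n \in [N]} e(\gamma G(a(n))),
\end{equation*}
where $G(x) := -\beta \{x\} + \{\beta x + u\}$ is bounded. The assumption $(\clubsuit)$ on $a$ guarantees—either via Weyl's theorem for polynomial $a$, or via Boshernitzan's Theorem~\ref{T: Boshernitzan} applied to $(k + l\beta)\,a$ in the logarithmically-away case—that $(\{a(n)\}, \{\beta a(n) + u\})$ equidistributes on an affine subtorus $H_a \subseteq \T^2$. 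The empirical distribution of $G(a(n))$ then converges weakly to the probability measure $\nu$ obtained by pushing forward Haar measure on $H_a$ under $(u_1, u_2) \mapsto -\beta u_1 + u_2$, and $\Psi(\gamma) = \hat\nu(\gamma)$.

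The key conclusion is that because $\nu$ is compactly supported, its Fourier transform $\hat\nu$ extends to an entire function on $\C$ (via Taylor expansion of $e^{2\pi i \gamma x}$ under the integral, justified by compact support and uniform boundedness). Since $\Psi(0) = \nu(\R) = 1 \neq 0$, $\Psi$ is not identically zero, and therefore its zero set in $\C$—in particular in $\R$—is discrete and countable. Combined with step~(1) and the fact that $A(0, 0) = 1$ excludes $\gamma = 0$, this yields $E_3 \subseteq \{(\beta \gamma_i, \gamma_i) \bmod \Z^2 : i \in \N\}$ for a sequence $(\gamma_i)_i$ of nonzero reals. The main technical obstacle I anticipate is rigorously justifying the Fourier interchange in step~(1) despite the $1/|l|$ decay of $d_l(s)$; however, this can be handled by truncating to a partial sum of size $K$, bounding the tail via Cauchy--Schwarz together with $\|F - S_K\|_{L^2(\T)} \to 0$, and combining with equidistribution of $(\{\beta n\})_n$ to take $K \to \infty$ after $N \to \infty$.
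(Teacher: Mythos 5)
Your proposal is correct and takes a genuinely different route from the paper for the countability part. Your step (1) is essentially the paper's Lemma~\ref{L: Q-independence} plus Corollary~\ref{C: a = n} re-derived directly via Fourier expansion of $y \mapsto e(\{y\}s)$, arriving at the same conclusion $E_3 \subseteq L := \{(\beta\gamma, \gamma) \bmod \Z^2 : \gamma \in \R\}$. The real divergence is in step (2): the paper splits into four cases depending on the form of $a$ (whether $a = \frac{p - ku}{k\beta + l}$ with $p \in \Z[x] + \R$ and $(k,l)$ zero or nonzero), computes explicit closed forms for $A(t,s)$ in Lemmas~\ref{L: a, beta a eq}--\ref{L: (k beta + l) a  = p + g}, and then deduces countability either from the fundamental theorem of algebra (zeros of a trigonometric polynomial) or from the holomorphicity of a finite exponential sum in $s$ (case (iv), shown nonvanishing at the specific value $s = -|kl|-r$). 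Your argument unifies all four cases by observing that the nonvanishing prefactor $e(-u\lfloor\gamma\rfloor)$ reduces the zero set of $A$ on $L$ to the zero set of $\Psi(\gamma) = \widehat{\nu}(\gamma)$ for a compactly supported probability measure $\nu$, and that $\widehat{\nu}$ extends to an entire function of exponential type with $\widehat{\nu}(0) = 1 \neq 0$. This is cleaner and avoids the intricate computations. The price you pay is that the identity $\Psi = \widehat{\nu}$ via ``push Haar measure on $H_a$ forward under $(u_1,u_2) \mapsto -\beta u_1 + u_2$'' is a bit slippery: that map is continuous on $[0,1)^2 \subseteq \R^2$ but not on $\T^2$, and in the degenerate cases where $H_a$ is contained in $\{u_1 = 0\}$ or $\{u_2 = 0\}$ (e.g., $a \in \Z[x]$), the discontinuity set of the test function has full $\mu_{H_a}$-measure, so the push-forward statement requires justification by an explicit fundamental-domain argument or, more cleanly, by first invoking Lemma~\ref{L: convergence of exp sum} for the existence of the limit $\Psi(\gamma)$ for all $\gamma\in\R$, then noting $\Psi$ is Lipschitz and $\Psi(0)=1$, and applying the L\'evy continuity theorem to obtain a weak-$*$ limit measure $\nu$ supported in the compact set $\overline{G(\R)}$. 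With that repair, your route is rigorous, more conceptual, and arguably preferable for this purpose, while the paper's explicit formulas give more quantitative information about the location of the exceptional $\gamma_i$.
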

For a set $E\subseteq\R^2$, we interpret $E$ mod $\Z^2$ as $\{(\rem{t}, \rem{s}):\; (t,s)\in E\}$.

The proof of Proposition \ref{P: E_3} will proceed by explicit computations of various exponential sums and can be broken down into the following four cases:
\begin{enumerate}
    \item $a(x)$ does not equal
    \begin{align}\label{E: special form of a}
        \frac{p(x)-ku}{k\beta + l} \quad \textrm{for\; any}\;\; (k, l)\in\Z^2\setminus{\{(0,0)\}},\; p\in\Z[x]{+\R}; 
    \end{align}
    \item $a(x)$ equals \eqref{E: special form of a} with $k\neq 0, l=0$;
    \item $a(x)$ equals \eqref{E: special form of a} with $k=0, l\neq 0$;
    \item $a(x)$ equals \eqref{E: special form of a} with $k, l\neq 0$.
\end{enumerate}

The analysis of $A(t,s)$ and $B(t,s)$ differs markedly depending on whether $t-\beta s, \beta, 1$ are $\Q$-independent or $\Q$-dependent. The first case can be excluded almost immediately by the lemma below. 
\begin{lemma}\label{L: Q-independence}
    Let $(t,s)\in[0,1)^2$ so that $t-\beta s, \beta, 1$  are $\Q$-independent.
     Then $B(t,s) = 0$.  As a consequence, $$E_3\subseteq \{(t,s)\in[0,1)^2:\; t - \beta s = q + \beta r\; \textrm{for\; some}\; q,r\in\Q\}.$$   

\end{lemma}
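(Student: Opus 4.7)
The plan is to deduce $B(t,s)=0$ from Weyl's equidistribution theorem applied to the orbit $((t-\beta s)n, \beta n)$ on $\T^2$. Rewriting
$$B(t,s) = \lim_{N\to\infty}\E_{n\in[N]} F((t-\beta s)n \!\!\!\mod 1,\; \beta n \!\!\!\mod 1),$$
where $F(x,y) = e(x + \{y\}s)$ is a Riemann-integrable function on $\T^2$ (bounded and continuous away from the null set $y \in \{0\}$), it suffices to show that the sequence $((t-\beta s)n, \beta n)_n$ is equidistributed on $\T^2$.

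First I would verify Weyl's criterion. For any $(k_1,k_2) \in \Z^2\setminus\{(0,0)\}$, one must check that $k_1(t-\beta s) + k_2 \beta \notin \Z$. A putative relation $k_1(t-\beta s) + k_2\beta = m$ with $m\in\Z$ would give a nontrivial $\Q$-linear dependence among $t-\beta s, \beta, 1$, contradicting the hypothesis. Hence the orbit is equidistributed on $\T^2$ and so
$$B(t,s) = \int_{\T^2} e(x + \{y\}s)\, dm(x,y) = \Bigbrac{\int_0^1 e(x)\, dx}\cdot \Bigbrac{\int_0^1 e(ys)\, dy} = 0,$$
since $\int_0^1 e(x)\, dx=0$. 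This proves the first assertion.

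For the consequence, I would argue by contrapositive. If $(t,s)\in E_3$, then $B(t,s)\neq 0$, so the first part forces $t-\beta s, \beta, 1$ to be $\Q$-dependent: there exist integers $k_1,k_2,k_3$, not all zero, with $k_1(t-\beta s) + k_2\beta + k_3 = 0$. If $k_1 = 0$, then $k_2\beta + k_3 = 0$ with $(k_2,k_3)\neq (0,0)$, contradicting $\beta\notin\Q$. Hence $k_1\neq 0$, and solving for $t-\beta s$ yields $t-\beta s = q+\beta r$ for $q = -k_3/k_1,\; r = -k_2/k_1 \in \Q$, as required. No real obstacle is anticipated: the argument is a clean application of Weyl's theorem, and the only mild point is that $y\mapsto e(\{y\}s)$ must be treated as a Riemann-integrable (not continuous) function on $\T$, which is standard.
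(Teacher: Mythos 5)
Your proof is correct and follows essentially the same route as the paper: the paper also computes $B(t,s)$ via Weyl's equidistribution theorem applied to $((t-\beta s)n, \beta n)_n$ on the unit square and observes that the resulting integral factors and vanishes. You simply spell out the Weyl criterion check and the $\Q$-dependence bookkeeping for $E_3$, which the paper leaves implicit.
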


\begin{proof}
By the assumption on $(t,s)$ and Weyl's equidistribution theorem, we have
\begin{align*}
    B(t,s) = \lim_{N\to\infty}\E_{n\in[N]} e((t - \beta s) n+\{\beta n\}s) =  \int_{[0,1)^2}e(x + zs)\; dxdz = 0.
\end{align*}
In particular, such $(t,s)$ cannot lie in $E_3$.
\end{proof}

\subsection{Exponential sum computations for $\Q$-dependent $t-\beta s, \beta, 1$}\label{SS: Q-dependent}
The computations become much more intricate when $ t-\beta s, \beta, 1$ are $\Q$-dependent, in which case $t-\beta s = q+\beta r$ for some $q,r\in\Q$.
The following lemmas give sufficient and necessary conditions for $A(t,s) = 0$ to hold for different cases of $a$ under the assumption of $\Q$-dependence of $ t-\beta s, \beta, 1$.
\begin{lemma}\label{L: a, beta a eq}
Let $q, r\in\Q$ and suppose that $a\in\CH$ does not equal $\frac{p - ku}{k\beta + l}$ for any nonconstant $p\in\Z[x]{+\R}$ and $(k, l)\in\Z^2\setminus{\{(0,0)\}}$. Then
\begin{multline*}
    \lim_{N\to\infty}\E_{n\in[N]} e(q a(n)+r \beta a(n)-\{ a(n)\}t +\{\beta a(n)+u\}s)\\
=    e(-ru)1_\Z(q) \brac{1_{q=t} + 1_{\R\setminus\Z}(q-t)\frac{1}{2\pi i(q-t)}(e(q-t)-1)}\\
1_\Z(r)\brac{1_{r=-s} + 1_{\R\setminus\Z}(r+s)\frac{1}{2\pi i(r+s)}(e(r+s)-1)}.
\end{multline*}
 In particular, this is 0 if and only if one of the followings holds:
    \begin{enumerate}
    \item $q\notin \Z$;
    \item $q-t\in\mathbb{Z}\backslash\{0\}$;
    \item $r\notin \mathbb{Z}$;
    \item $r+s\in\mathbb{Z}\backslash\{0\}$.
\end{enumerate}
\end{lemma}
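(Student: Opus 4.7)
The plan is to compute the limit by Fourier-expanding the fractional-part factors and reducing to single exponential sums along $a(n)$, each of which is then handled by Boshernitzan's equidistribution theorem (Theorem~\ref{T: Boshernitzan}). Setting $\gamma = q + r\beta$, the integrand simplifies to $e(\gamma a(n) - t\{a(n)\} + s\{\beta a(n)+u\})$. The bounded Riemann-integrable functions $F(y) = e(-t\{y\})$ and $G(y) = e(s\{y\})$ on $\T$ have Fourier coefficients $\alpha_k = \int_0^1 e(-(t+k)y)\,dy$ and $\beta_l = \int_0^1 e((s-l)y)\,dy$. Formally, the target reduces to $\sum_{k,l\in\Z}\alpha_k\beta_l\,e(lu)\cdot\lim_N \E_{n\in[N]}e((\gamma+k+l\beta)a(n))$.

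The heart of the argument lies in evaluating each inner limit $\lim_N \E_{n\in[N]} e(c\,a(n))$ with $c = (q+k)+(r+l)\beta$. When $c = 0$, the limit is $1$; since $\beta\notin\Q$, this forces $q+k = 0$ and $r+l = 0$, which admits an integer pair $(k,l)$ if and only if $q,r\in\Z$, in which case the unique solution is $(k,l) = (-q,-r)$. When $c\neq 0$, the limit vanishes by Theorem~\ref{T: Boshernitzan}: under the standing assumption $(\clubsuit)$, if $a$ stays logarithmically away from $\R[x]$ then so does $ca$, hence it stays logarithmically away from $\Q[x]$ as well; if instead $a\in\R[x]$, then $ca\in\R[x]$, and a failure of equidistribution mod $1$ would force all nonconstant coefficients of $ca$ to be rational. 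Clearing the denominators of $q,r\in\Q$ then produces integers $(k',l')\neq(0,0)$ with $(k'\beta+l')a\in\Z[x]+\R$, and rearranging yields a representation $a = (p - k'u)/(k'\beta+l')$ for some nonconstant $p\in\Z[x]+\R$, contradicting the Lemma's hypothesis.

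Combining these observations shows that the limit equals $0$ unless $q,r\in\Z$, in which case it equals $\alpha_{-q}\beta_{-r}\,e(-ru)$. Direct evaluation of $\alpha_{-q} = \int_0^1 e(-(t-q)y)\,dy$ by splitting into the cases $t = q$, $t-q\in\Z\setminus\{0\}$, and $t-q\notin\Z$ (and analogously for $\beta_{-r}$) reproduces the bracketed expressions in the displayed formula, and the vanishing characterization (i)--(iv) is then immediate. The main technical obstacle will be justifying the interchange of the Fourier sum and the Ces\`aro average, since the series for $F$ and $G$ converge only conditionally. I plan to handle this by passing to Fej\'er means $F_K, G_K$, which are uniformly bounded trigonometric polynomials converging to $F, G$ in $L^1(\T)$. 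The resulting error is dominated by $\limsup_N \E_n |F - F_K|(\{a(n)\}) + \limsup_N \E_n |G - G_K|(\{\beta a(n)+u\}) \leq \norm{F - F_K}_{L^1(\T)} + \norm{G - G_K}_{L^1(\T)}$, using the equidistribution of $\{a(n)\}$ and $\{\beta a(n)+u\}$ in $\T$ (a direct consequence of Boshernitzan's theorem combined with the same dichotomy as above), and this error vanishes as $K\to\infty$.
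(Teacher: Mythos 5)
Your proof is correct, and it takes a genuinely different route from the paper's. The paper observes that the hypothesis on $a$ is equivalent (together with $(\clubsuit)$) to the \emph{joint} equidistribution of $(\{a(n)/c\}, \{(\beta a(n)+u)/c\})_n$ on $\T^2$ for every nonzero $c\in\Z$, picks $c$ with $q,r\in\frac{1}{c}\Z$, rewrites the average as $\E_n F(\{a(n)/c\}, \{(\beta a(n)+u)/c\})$ for a fixed Riemann-integrable $F$ on $\T^2$, and then evaluates $\int_{\T^2}F\,dx\,dy$ directly. You instead Fourier-expand the two fractional-part factors on $\T$ separately, reduce to a doubly-indexed family of one-dimensional exponential sums $\E_n e((q+k+(r+l)\beta)a(n))$, show each vanishes unless $(k,l)=(-q,-r)$ with $q,r\in\Z$ via the same Boshernitzan/Weyl dichotomy, and justify the interchange by Fej\'er-mean truncation. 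The underlying mathematics (Weyl criterion plus Boshernitzan) is the same, but the paper's 2D-equidistribution route is slicker because it sidesteps the conditional convergence of the Fourier series that you have to address; your route is somewhat more elementary in that it only invokes 1D equidistribution (though to determine which modes survive you still implicitly use the joint condition). One small point you should make explicit: showing that each of $\{a(n)\}$ and $\{\beta a(n)+u\}$ is individually equidistributed (which you use in the Fej\'er error bound) is itself a consequence of the $(k,l)=(0,1)$ and $(k,l)=(1,0)$ instances of the hypothesis, and this also tacitly requires $a$ to be nonconstant — a degenerate case the paper handles separately at the level of Proposition~\ref{P: beta irrational}.
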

\begin{proof}
By Theorem \ref{T: Boshernitzan} and Weyl's equidistribution theorem, the condition on $a$ is equivalent to the sequence
\begin{align}\label{E: 2-dim seq 1}
    (\{a(n)/c\}, \{(\beta a(n)+u)/c\})_n    
\end{align}
being equidistributed on $[0,1)^2$ for every nonzero $c\in\Z$. {Indeed, for the sake of equidistribution, we want $l\frac{a(x)}{c}+k\frac{\beta a(x)+u}{c} = \frac{k\beta + l}{c}a(x)+\frac{ku}{c}$ not to lie in $\Q[x]+\R$ for any $k,l\in\Z$ not both 0 (the cases when it equals an element of $\Q[x]+\R$ plus a nonconstant function $g$ such that $g\to 0$ or $1\prec g\ll \log$ are excluded by the assumption $\clubsuit$).}  This happens precisely if $a$ takes the prescribed form.

Let $c\in\Z$ be such that $q, r\in\frac{1}{c}\Z$.
We therefore rephrase our exponential sum as
 \begin{align*}
     \lim_{N\to\infty}\E_{n\in[N]} F(\{a(n)/c\}, \{(\beta a(n)+u)/c\})
 \end{align*}
 for $F(x,y) =  e(cq x+cr y-\{cx\}t +\{cy\}s-ru)$. The equidistribution of \eqref{E: 2-dim seq 1}
 on $[0,1)^2$ gives that our average equals
 \begin{align*}
      \int_{[0,1)^2}F(x,y)\, dx dy
     &= e(-ru)\int_{[0,1)^2}e(cqx+cr y-\{cx\}t +\{cy\}s)\, dx dy.
 \end{align*}
 This expression then equals
 \begin{equation*}
     \begin{split}
       & 
       e(-ru)\int_{[0,1)^2}e(cq x+ cry-\{cx\}t +\{cy\}s)\, dxdy
       \\&=e(-ru)\sum_{i,j=0}^{c-1}\int_{[0,\frac{1}{c})^2}e\brac{cq\brac{x+i/c}+cr\brac{y+j/c}-\{c\brac{x+i/c}\}t +\{c\brac{y+j/c}\}s}\, dxdy
       \\&=e(-ru)\sum_{i,j=0}^{c-1}e\brac{qi}e\brac{rj}\int_{[0,\frac{1}{c})^2}e(cqx + cry-\{cx\}t +\{cy\}s)\, dxdy
        \\&=e(-ru)\frac{1}{c}\sum_{i=0}^{c-1}e\brac{qi}\cdot \frac{1}{c}\sum_{j=0}^{c-1}e\brac{rj}\cdot \int_{[0,1)}e((q-t)x)\, dx\cdot\int_{[0,1)}e((r+s)y)\, dy\\
        &= e(-ru)1_\Z(q)  \brac{1_{q=t} + 1_{\R\setminus\Z}(q-t)\frac{1}{2\pi i(q-t)}(e(q-t)-1)}\\
        &\qquad\qquad\qquad\qquad\qquad\qquad 1_\Z(r)\brac{1_{r=-s} + 1_{\R\setminus\Z}(r+s)\frac{1}{2\pi i(r+s)}(e(r+s)-1)}.
     \end{split}
 \end{equation*}
 The four terms in the product are zero if and only if (iii), (iv), (i) and (ii) holds respectively.
\end{proof}

The next few lemmas cover the cases when $a = \frac{p - ku}{k\beta + l}$ for some nonconstant {$p\in\Z[x]+\R$} and various choices of $k,l\in\Z$.

\begin{lemma}[{Case $l=0$}]\label{L: beta a  = p + g}
Let $c\in\N$, $k\in\Z\backslash\{0\}$, $q,r\in\frac{1}{c}\Z$ and suppose that $a = \frac{p - ku}{k\beta}$ for some nonconstant $p\in\Z[x]{+\R}$. Then
\begin{multline*}
    \lim_{N\to\infty}\E_{n\in[N]} e(qa(n)+\beta r a(n)-\{ a(n)\}t +\{\beta a(n)+u\}s)\\
    =    e\brac{-ru}\E_{j\in[ck]}e\brac{\frac{r p(j)}{k}+ \rem{\frac{p(j)}{k}}s}1_\Z(q)\\
    \brac{1_{q=t} + 1_{\R\setminus\Z}(q-t)\frac{1}{2\pi i(q-t)}(e(q-t)-1)}.
\end{multline*}
 In particular, this is 0 if and only if one of the followings holds:
    \begin{enumerate}
    \item $\E\limits_{j\in[ck]}e\brac{\frac{r p(j)}{k}+ \rem{\frac{p(j)}{k}}s}=0$;
    \item $q\notin\Z$;
    \item $q-t\in\mathbb{Z}\backslash\{0\}$.
\end{enumerate}
\end{lemma}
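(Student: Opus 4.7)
My plan is to leverage the identity $\beta a(n) + u = p(n)/k$, which holds by the definition of $a$, to rewrite the exponent as
\begin{equation*}
    q a(n) + \beta r a(n) - \{a(n)\}t + \{\beta a(n)+u\}s = q a(n) - t\{a(n)\} + \tfrac{r p(n)}{k} - ru + \{p(n)/k\}s.
\end{equation*}
I would then partition the sum over $[N]$ according to residue classes modulo $ck$: writing $n = ckm + j$ with $j\in\{0, 1, \dots, ck-1\}$, the key observation is that $p(ckm+j) \equiv p(j) \pmod{ck}$, because every nonconstant coefficient of $p$ is an integer (as $p\in\Z[x]+\R$) and each term in the expansion of $p(ckm+j)-p(j)$ carries a factor of $ckm$. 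Consequently $\{p(n)/k\} = \{p(j)/k\}$, and, using $r\in\tfrac{1}{c}\Z$, we get $r(p(n)-p(j))/k \in \tfrac{1}{c}\Z\cdot c\Z \subseteq \Z$, so $e(rp(n)/k) = e(rp(j)/k)$ as well. The sum will thus decompose as
\begin{equation*}
    e(-ru)\,\E_{j\in[ck]}\, e\bigl(\tfrac{rp(j)}{k} + \{p(j)/k\}s\bigr)\cdot \lim_{M\to\infty}\E_{m\in[M]} e\bigl(q a(ckm+j) - t\{a(ckm+j)\}\bigr).
\end{equation*}

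The heart of the argument will be computing the inner limit. Since $a(x) = (p(x)-ku)/(k\beta)$ is a nonconstant polynomial whose leading coefficient is an integer divided by $k\beta$, and is therefore irrational (as $\beta\notin\Q$ and $p$ has a nonzero integer leading coefficient), the polynomial $a(ckm+j)$ in the variable $m$ also has irrational leading coefficient. By Weyl's criterion applied to the $\Z^2$-characters of $\T^2$, the orbit $\bigl((qa(ckm+j), a(ckm+j))\bmod\Z^2\bigr)_m$ will equidistribute on the closed one-dimensional subtorus $\Gamma_q := \overline{\{(qy,y)\bmod\Z^2 : y\in\R\}}$. Letting $c'\in\N$ be the smallest positive integer with $qc'\in\Z$ (which exists and divides $c$ since $q\in\tfrac{1}{c}\Z$), the Haar measure on $\Gamma_q$ pulls back to $dy/c'$ on $[0,c')$, so
\begin{equation*}
    \lim_{M\to\infty}\E_{m\in[M]} e\bigl(qa(ckm+j)-t\{a(ckm+j)\}\bigr) = \tfrac{1}{c'}\int_0^{c'} e(qy - t\{y\})\, dy.
\end{equation*}

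Breaking this integral into unit intervals and substituting $y = l+z$ for $l\in\{0,\dots,c'-1\}$ will yield
\begin{equation*}
    \tfrac{1}{c'}\sum_{l=0}^{c'-1} e(ql) \cdot \int_0^1 e((q-t)z)\, dz;
\end{equation*}
the geometric sum equals $1_\Z(q)$ by the minimality of $c'$ (vanishing whenever $q\in\tfrac{1}{c}\Z\setminus\Z$, since then $e(q)\neq 1$ while $e(qc')=1$), and the integral evaluates to $1_{q=t} + 1_{\R\setminus\Z}(q-t)\frac{e(q-t)-1}{2\pi i(q-t)}$. Reassembling everything will produce the claimed formula, whose vanishing is dictated exactly by conditions (i)--(iii). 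The main technical point is the joint equidistribution on $\Gamma_q$ along each arithmetic progression $(ckm+j)_m$: this is where the hypotheses on $a$, $p$, and the irrationality of $\beta$ all come into play simultaneously, and it is the step where the argument departs from that of Lemma~\ref{L: a, beta a eq}. The constraint $\beta a+u \in p/k + \Z$ collapses the ambient $\T^2$-equidistribution used there to a one-dimensional subtorus, which is the structural reason why a nontrivial $j$-average over $[ck]$ survives in the final answer rather than dissolving into a product of two independent integrals.
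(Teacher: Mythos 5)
Your proposal is correct and follows essentially the same route as the paper's proof. The paper rescales $a(ckn+j)$ by $1/c$ and invokes Boshernitzan's theorem to get equidistribution on $[0,1)$, producing the integral $\int_0^1 e(cqx-t\{cx\})\,dx$, whereas you read the same information off the joint equidistribution of $(qa(ckm+j),a(ckm+j))$ on the subtorus $\Gamma_q\subset\T^2$; after the substitution $y=cx$ (and using $c'\mid c$) the two integrals coincide, so the two derivations are the same computation in different coordinates.
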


We remark that the first condition is tantamount to saying that the infinite average $\lim\limits_{N\to\infty}\E\limits_{n\in[N]}e\brac{\frac{rp(n)}{k}+ \rem{\frac{p(n)}{k}}s}$ converges to 0.

\begin{proof}

{If $k<0$, then we can make it positive by letting $k\mapsto - k$, $p\mapsto - p$, so we assume that $k>0$.}
On splitting the average into arithmetic progressions of difference $ck$, it equals
\begin{multline*}
 e\brac{-ru}\cdot\E_{j\in[ck]}e\brac{\frac{r p(j)}{k}+\rem{\frac{p(j)}{k}}s} \\
 \lim_{N\to\infty}\E_{n\in[N]}e\left(\frac{qp(ckn+j)-qku}{k\beta} -\rem{\frac{p(ckn+j)-ku}{k\beta}}t\right)
\end{multline*}
since
\begin{align*}
\beta a(ckn+j) +u= \frac{p(j)}{k}\!\!\!\!\mod \Z\quad \textrm{and}\quad 
    \beta r a(ckn+j) = \frac{r p(j)}{k}-ru \!\!\!\!\mod \Z.
\end{align*}
By Theorem \ref{T: Boshernitzan}, the sequence $\brac{\rem{\frac{p(ckn+j)-ku}{ck\beta}}}_n$ is equidistributed on $[0,1)$; 
the reason why we divide by $c$ is for $\frac{qp(ckn+j)-qku}{k\beta}$ 
to be an integer multiple of $\frac{p(ckn+j)-ku}{ck\beta}$. Then our average equals
\begin{align*}
         e\brac{-ru}\cdot\E_{j\in[ck]}e\brac{\frac{r p(j)}{k}+ \rem{\frac{p(j)}{k}}s}\cdot \int_0^1 e(cq x- \{cx\}t)\, dx.
\end{align*}
The same integral computation as in the proof of Lemma \ref{L: a, beta a eq} shows that this is
\begin{align*}
    e\brac{-ru}\E_{j\in[ck]}e\brac{\frac{r p(j)}{k}+ \rem{\frac{p(j)}{k}}s} 1_\Z(q)\brac{1_{q=t} + 1_{\R\setminus\Z}(q-t)\frac{1}{2\pi i(q-t)}(e(q-t)-1)},
\end{align*}
as claimed.
\end{proof}

\begin{lemma}[{Case $k=0$}]\label{L: a = p + g}
Let $c\in\N$, $l\in\Z\backslash\{0\}$, $q,r\in\frac{1}{c}\Z$ and suppose that $a = \frac{p}{l}$ for some nonconstant $p\in\Z[x]{+\R}$. Then
\begin{multline*}
    \lim_{N\to\infty}\E_{n\in[N]} e(qa(n)+\beta r a(n)-\{ a(n)\}t +\{\beta a(n)+u\}s)\\
=e(-ru)\E_{j\in[cl]}e\brac{\frac{q p(j)}{l}-\rem{\frac{p(j)}{l}}t}1_\Z(r)\\
\brac{1_{r=-s} + 1_{\R\setminus\Z}(r+s)\frac{1}{2\pi i(r+s)}(e(r+s)-1)}.
\end{multline*}
 In particular, this is 0 if and only if one of the followings holds:
    \begin{enumerate}
    \item $\E\limits_{j\in[cl]}e\brac{\frac{q p(j)}{l}-\rem{\frac{p(j)}{l}}t}=0$;
    \item $r\notin \Z$;
    \item $r+s\in\mathbb{Z}\backslash\{0\}$.
\end{enumerate}
\end{lemma}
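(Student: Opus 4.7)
The structure mirrors that of Lemma \ref{L: beta a  = p + g}, but with the roles of $a$ and $\beta a + u$ swapped: here $a = p/l$ takes values in $\tfrac{1}{l}\Z + \R$, so $\rem{a(n)}$ and $e(qa(n))$ are periodic in $n$ modulo $cl$, whereas $\beta a(n) + u$ is the genuine ``Hardy-like'' sequence whose fractional parts equidistribute by Weyl (since $\beta\notin\Q$ and $p$ is nonconstant). Accordingly, I would split the average into arithmetic progressions of common difference $cl$, pull out the periodic contribution from $a(n)$, and apply equidistribution to the remaining sum driven by $\beta a + u$.

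Setting $n = cln' + j$ with $j \in [cl]$ and using that $p(cln'+j) - p(j)$ is divisible by $cl$ (since $p\in\Z[x]+\R$), I obtain $a(cln'+j) = p(j)/l + c K_j(n')$ where $K_j(n') := (p(cln'+j)-p(j))/(cl)$ is a nonconstant integer-valued polynomial in $n'$. Since $qc, rc \in \Z$, this yields $\rem{a(n)} = \rem{p(j)/l}$, $e(qa(n)) = e(qp(j)/l)$, and $e(\beta r a(n)) = e(\beta r p(j)/l)\,e(\beta rc\, K_j(n'))$. After extracting the $j$-periodic factor $e(qp(j)/l - \rem{p(j)/l}\,t + \beta r p(j)/l)$, the original limit reduces to $e(-ru)$ times $\E_{j\in[cl]}$ of this factor times the inner limit
\begin{equation*}
L_j := \lim_{N\to\infty} \E_{n'\in[N]} e\bigl(\beta rc\, K_j(n') + \rem{y_j + \beta c\, K_j(n')}\,s\bigr), \qquad y_j := \beta p(j)/l + u.
\end{equation*}

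The crux is the evaluation of $L_j$. Set $m := rc \in \Z$, $d := \gcd(m,c)$, $m' := m/d$, $c' := c/d$, so that $r = m'/c'$ with $\gcd(m',c')=1$. Both $\beta c K_j(n')$ and $\beta m K_j(n')$ are integer multiples of $\beta d K_j(n')$, whose fractional part $w := \rem{\beta d K_j(n')}$ equidistributes on $[0,1)$ by Weyl (applied to the integer polynomial $K_j$ weighted by the irrational $\beta d$). Since $m'\floor{\beta d K_j(n')}$ and $c'\floor{\beta d K_j(n')}$ are integers, one gets $e(\beta m K_j(n')) = e(m'w)$ and $\rem{y_j + \beta c K_j(n')} = \rem{y_j + c' w}$; hence $L_j = \int_0^1 e(m'w + \rem{y_j+c'w}s)\,dw$. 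The change of variables $v = c'w$ and the splitting of $[0,c')$ into unit intervals peels off the geometric factor $\tfrac{1}{c'}\sum_{i=0}^{c'-1} e(ri)$, which, by $\gcd(m',c')=1$, vanishes unless $c'=1$, i.e. unless $r\in\Z$. This produces the $1_\Z(r)$ in the stated formula. When $r\in\Z$, the remaining integral $\int_0^1 e(rv + \rem{y_j+v}s)\,dv$ is evaluated by splitting $[0,1)$ at $v = 1-\rem{y_j}$ and computing two elementary exponential integrals; using $e(r\floor{y_j})=1$ one converts $e(-r\rem{y_j})$ into $e(-r\beta p(j)/l - ru)$, and the factor $e(-r\beta p(j)/l)$ exactly cancels the previously extracted $e(\beta r p(j)/l)$, leaving the constant $e(-ru)$ multiplied by the $(r+s)$-factor appearing in the lemma.

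The main obstacle is the joint equidistribution step: although $\rem{\beta c K_j(n')}$ equidistributes on $[0,1)$ alone, the two quantities $e(\beta rc K_j(n'))$ and $\rem{y_j + \beta c K_j(n')}$ are not asymptotically independent — both are deterministic functions of the single parameter $\rem{\beta d K_j(n')}$ — and correctly tracking the $\gcd$ structure of $(rc,c)$ via $d$ is precisely what forces the factor $1_\Z(r)$ in the final formula. Once $L_j$ is computed, the equivalent vanishing conditions (i)--(iii) can be read off directly from the three factors (the $\E_{j\in[cl]}$-average, $1_\Z(r)$, and the $(r+s)$-factor) in the product.
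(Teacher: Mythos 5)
Your proof is correct, and it reaches the stated formula, but it takes a somewhat different (and more elaborate) path than the paper's. The paper splits the sum into progressions $n = cln'+j$ and then immediately uses $z_{n'} := \rem{\tfrac{\beta p(cln'+j)}{cl}+u}$ as the single equidistributing parameter: since $rc$ and $c$ are integers, both $e(\beta r a(n))$ and $\rem{\beta a(n)+u}$ are determined by $z_{n'}$, and Weyl/Boshernitzan directly yields the integral $e(-ru)\int_0^1 e(rcx+s\rem{cx})\,dx$, which is then evaluated exactly as in Lemma~\ref{L: a, beta a eq} (substituting $y=cx$ and splitting $[0,c)$ into $c$ unit intervals gives the geometric factor $\tfrac{1}{c}\sum_{i=0}^{c-1}e(ri)=1_\Z(r)$ and the inner integral $\int_0^1 e((r+s)y)\,dy$). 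You instead extract the integer polynomial $K_j(n') = (p(cln'+j)-p(j))/(cl)$ and then reparametrize by $w = \rem{\beta d K_j(n')}$ with $d=\gcd(rc,c)$. This works, but the $\gcd$ reduction buys you nothing: the geometric sum $\tfrac{1}{c'}\sum_{i=0}^{c'-1}e(ri)$ already equals $1_\Z(r)$ without reducing to coprime $(m',c')$, because the paper's analogous sum over $c$ terms, $\tfrac{1}{c}\sum_{i=0}^{c-1}e(ri)$, equals $1_\Z(r)$ for any $r\in\tfrac{1}{c}\Z$. Your framing that ``tracking the $\gcd$ structure is precisely what forces $1_\Z(r)$'' is therefore misleading --- the indicator arises from the same elementary character-sum cancellation either way. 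The part of your argument that is genuinely more careful than the paper's is the bookkeeping of $u$: by absorbing $u$ into $y_j = \beta p(j)/l + u$ from the start and then using $e(r\lfloor y_j\rfloor)=1$ (valid once $r\in\Z$) to convert $e(-r\rem{y_j})$ into $e(-r\beta p(j)/l)e(-ru)$ and cancel the $e(\beta r p(j)/l)$ extracted earlier, you make the origin of the $e(-ru)$ prefactor transparent; in the paper's version this is implicit in the equidistribution of $\rem{\tfrac{\beta p(cln+j)}{cl}+u}$ and a shift of variable in the final integral. Two small things worth checking explicitly in a polished write-up: that $K_j$ is a \emph{nonconstant} integer polynomial (true, since $p$ is nonconstant and the higher-order terms of $(cln'+j)^k-j^k$ all carry a factor $(cl)^m$ with $m\geq 1$), and that the assertion $\rem{y_j + \beta c K_j(n')} = \rem{y_j + c'w}$ uses $c'\lfloor\beta d K_j(n')\rfloor\in\Z$, which holds because $c'$ is an integer.
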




\begin{proof}
{Like in the previous lemma, we can assume without loss of generality that $l>0$.}
On splitting the average into arithmetic progressions of difference $cl$, it equals
\begin{align*}
\E_{j\in[cl]}e\brac{\frac{q p(j)}{l}-\rem{\frac{p(j)}{l}}t} \lim_{N\to\infty}\E_{n\in[N]} e\left(\frac{\beta r p(cln+j)}{l} \right. 
    \left.    +\rem{\frac{\beta p(cln+j)}{l} +u}s\right)
\end{align*}
since
\begin{align*}
    qa(cl n + j) = \frac{q p(j)}{l}\!\!\!\! \mod \Z \quad \textrm{and}\quad 
    a(cl n + j) = \frac{p(j)}{l}\!\!\!\! \mod\Z.
\end{align*}
By Theorem \ref{T: Boshernitzan}, the sequence $\brac{\rem{\frac{\beta p(cln+j)}{cl}+u}}_n$ is equidistributed on $[0,1)$, and hence the average equals
\begin{multline*}
    e(-ru)\E_{j\in[cl]}e\brac{\frac{q p(j)}{l}-\rem{\frac{p(j)}{l}}t}\int_{0}^{1} e\big(rcx+s\{cx\}\big)dx=\\
     e(-ru)\E_{j\in[cl]}e\brac{\frac{q p(j)}{l}-\rem{\frac{p(j)}{l}}t}1_\Z(r)\brac{1_{r=-s} + 
    1_{\R\setminus\Z}(r+s)\frac{1}{2\pi i(r+s)}(e(r+s)-1)},
\end{multline*}
where the integral is evaluated as in the proof of Lemma \ref{L: a, beta a eq}.
\end{proof}

In the special case where $a(n) = n$ and $u=0$, Lemma \ref{L: a = p + g} gives the following corollary that can be used to describe the zero set of $B$.
\begin{corollary}[Case $a(n)=n$]\label{C: a = n}
Let $q,r\in\Q$.
Then
\begin{multline*}
    \lim_{N\to\infty}\E_{n\in[N]} e((q+\beta r)n +\{\beta n\}s)\\
    =     1_\Z(q) 1_\Z(r)
\brac{1_{r=-s} + 1_{\R\setminus\Z}(r+s)\frac{1}{2\pi i(r+s)}(e(r+s)-1)}.
\end{multline*}
    In particular, it is 0 if and only if one of the followings holds:
    \begin{enumerate}
        \item $q\notin\Z$;
    \item $r\notin \Z$;
    \item $r+s\in\mathbb{Z}\backslash\{0\}$.
\end{enumerate}
\end{corollary}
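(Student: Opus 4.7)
My plan is to reduce Corollary \ref{C: a = n} directly to Lemma \ref{L: a = p + g} by specializing to $a(n) = n$, $p(x) = x$, $l = 1$, and $u = 0$. This is permissible because Lemma \ref{L: a = p + g} is a purely analytic statement about exponential sums, valid for any real $u$ and any $a = p/l$ of the prescribed form, and is therefore independent of the globally fixed parameter $u$ in Section \ref{S: seminorm comparison II}. First I would pick $c \in \N$ so that $cq, cr \in \Z$. Since $\{n\} = 0$ for every integer $n$, the lemma's exponential sum $\E_{n\in[N]} e(qa(n) + \beta r a(n) - \{a(n)\}t + \{\beta a(n)+u\}s)$ collapses (under these substitutions) to the sum appearing in the corollary, and the front factor $\E_{j \in [cl]} e(qp(j)/l - \{p(j)/l\}t)$ from Lemma \ref{L: a = p + g} collapses to $\E_{j \in [c]} e(qj)$, while $e(-ru)=1$.

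The only nontrivial computation remaining is to evaluate $\E_{j \in [c]} e(qj)$. If $q \in \Z$, every summand equals $1$ so the average is $1$. If $q \notin \Z$ but $cq \in \Z$, then $j \mapsto e(qj)$ defines a nontrivial additive character of $\Z/c\Z$ and sums to $0$ by orthogonality. Hence $\E_{j \in [c]} e(qj) = 1_\Z(q)$, and substituting this into the formula from Lemma \ref{L: a = p + g} yields
\begin{align*}
    \lim_{N\to\infty}\E_{n\in[N]} e((q+\beta r)n +\{\beta n\}s) = 1_\Z(q) \cdot 1_\Z(r) \cdot \brac{1_{r=-s} + 1_{\R\setminus\Z}(r+s)\frac{1}{2\pi i(r+s)}(e(r+s)-1)},
\end{align*}
which is precisely the corollary's asserted formula.

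For the ``in particular'' part, I would simply analyze when each factor vanishes: $1_\Z(q)$ and $1_\Z(r)$ vanish iff $q \notin \Z$ and $r \notin \Z$ respectively, while the bracketed factor breaks into three exhaustive disjoint subcases according to the value of $r+s$. Namely, $r = -s$ gives $1$; $r + s \in \R \setminus \Z$ gives $(e(r+s)-1)/(2\pi i(r+s))$, which is nonzero because $e(r+s) \neq 1$; and $r + s \in \Z \setminus \{0\}$ gives $0$ (both indicators vanish). Thus the bracket vanishes precisely when $r + s \in \Z \setminus \{0\}$, which yields the three listed conditions. I do not anticipate any substantial obstacle here: the corollary is a direct specialization of an already-proven lemma, with the only piece of genuine work being the one-line character-sum evaluation.
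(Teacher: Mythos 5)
Your proposal is correct and takes precisely the same route as the paper: the paper introduces the corollary with the sentence ``In the special case where $a(n)=n$ and $u=0$, Lemma~\ref{L: a = p + g} gives the following corollary,'' and your write-up simply fills in the routine details (the character-sum evaluation $\E_{j\in[c]}e(qj)=1_\Z(q)$, the vanishing of $\{n\}$ and $\{j\}$, and the case analysis of the bracketed factor). Your explicit remark that the lemma is an analytic identity independent of the globally fixed $u$ is a sensible clarification of a point the paper glosses over.
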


In the last remaining case, the computations become so complicated that the conclusion is less explicit than in the previous results. In what follows, $\sgn(x) = 1_{x\geq 0}-1_{x\leq 0}$ is the usual sign function.

\begin{lemma}[Case $k,l\neq 0$]\label{L: (k beta + l) a  = p + g}
Let $c\in\N$, $k, l\in\Z\backslash\{0\}$ {with $l>0$}, and $q,r\in\frac{1}{c}\Z$, and suppose that $a = \frac{p-ku}{k\beta + l}$ for some nonconstant $p\in\Z[x]{+\R}$. For $j\in[ck]$ and $j'~\in~[0,|ckl|~-~1]$, define
\begin{align*}
  w &:= {\frac{k(q-t)-l(r+s)}{|kl|}}\\
  x_{j,j'} &:= |k|\rem{\frac{p(j)-\sgn(k)j'}{k}}\\
  b_{j, j'} &:=\floor{\frac{\sgn(k)(j' +1_{k<0})}{l}}t-\floor{\frac{p(j)-\sgn(k)j'}{k}}s. 
\end{align*}
Then
\begin{equation*}
    \lim_{N\to\infty}\E_{n\in[N]} e(qa(n)+\beta r a(n)-\{ a(n)\}t +\{\beta a(n)+u\}s) = 0
\end{equation*}
 if and only if one of the followings holds: 
      \begin{enumerate}
    \item $w=0$ and 
    \begin{align*}
    \E_{j\in[ck]}e\left(\frac{{r+s}}{k}p(j)\right)\E_{j'\in[0, |ckl|-1]}e\left(b_{j,j'}\right)(e(s) + \min(x_{j,j'},1)(1-e(s))) = 0;
\end{align*}
    \item $w\neq 0$ and 
\begin{multline*}
\E_{j\in[ck]}e\left(\frac{{r+s}}{k}p(j)\right)\E_{j'\in[0, |ckl|-1]}e\left({j'w}+b_{j,j'}\right)\\
    \brac{e(s)\Bigbrac{e({w})-1}+(1-e(s))\Bigbrac{e({w\min(x_{j,j'},1)})-1}} = 0.
\end{multline*}    
\end{enumerate}
\end{lemma}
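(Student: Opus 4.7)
The plan is to adapt the computational template of Lemmas \ref{L: a, beta a eq}, \ref{L: beta a  = p + g} and \ref{L: a = p + g} to the setting where both fractional-part sequences $(\{a(n)\})_n$ and $(\{\beta a(n)+u\})_n$ interact nontrivially. Since the substitution $(k,l,p)\mapsto(-k,-l,-p)$ preserves $a$, I may assume $l>0$; the factors $\sgn(k)$ and $1_{k<0}$ in the statement then arise from reconciling the sub-cases $k>0$ and $k<0$. Setting $\alpha:=1/(k(k\beta+l))$ and $y_n:=\alpha(p(n)-ku)$, the partial-fraction identity $\frac{q+r\beta}{k\beta+l}=\frac{kq-lr}{k(k\beta+l)}+\frac{r}{k}$ yields, modulo integers, $(q+r\beta)a(n)\equiv(kq-lr)y_n+\frac{rp(n)}{k}-ru$, while $a(n)=ky_n$ and $\beta a(n)+u=-ly_n+p(n)/k$ exactly.

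Splitting the sum as $n=ckm+j$ for $j\in[ck]$, the congruence $p(ckm+j)\equiv p(j)\pmod{k}$ (which holds since $p\in\Z[x]+\R$) makes $\{p(n)/k\}$ independent of $m$. Because $\beta\notin\Q$ forces $\alpha\in\R\setminus\Q$, Theorem \ref{T: Boshernitzan} gives equidistribution of $(y_{ckm+j}\bmod 1)_m$ on $[0,1)$, and Weyl's theorem reduces each inner Ces\`aro average to
\begin{equation*}
e\brac{\tfrac{rp(j)}{k}-ru}\int_{0}^{1}e(F_{j}(y))\,dy,\qquad F_{j}(y):=(kq-lr)y+\{-ly+p(j)/k\}s-\{ky\}t.
\end{equation*}
I would evaluate this integral by partitioning $[0,1)$ into the $|kl|$ intervals $J_{j'}=[j'/|kl|,(j'+1)/|kl|)$. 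A direct numerology check shows that on the interior of $J_{j'}$, $\floor{ky}$ is constant and equal to $\floor{\sgn(k)(j'+1_{k<0})/l}$, which contributes the $t$-part of $b_{j,j'}$. Likewise $\floor{-ly+p(j)/k}$ equals $\floor{(p(j)-\sgn(k)j')/k}$ on all of $J_{j'}$ when $x_{j,j'}\geq 1$, and only on a sub-interval of relative length $x_{j,j'}$ otherwise; on the complementary sub-interval the floor jumps by $\mp 1$, producing a phase jump of $\pm s$ in the integrand. After substituting $y=(j'+z)/|kl|$, the $z$-linear coefficient of $F_j$ consolidates into exactly $w=(k(q-t)-l(r+s))/|kl|$, yielding
\begin{equation*}
\int_{J_{j'}}e(F_{j}(y))\,dy=\frac{e(wj'+p(j)s/k+b_{j,j'})}{|kl|}\int_{0}^{1}e\Bigbrac{wz+1_{z>\min(x_{j,j'},1)}s}\,dz.
\end{equation*}

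Elementary integration evaluates the inner integral as $e(s)+\min(x_{j,j'},1)(1-e(s))$ when $w=0$, and as $\tfrac{1}{2\pi iw}\Bigbrac{e(s)(e(w)-1)+(1-e(s))(e(w\min(x_{j,j'},1))-1)}$ when $w\neq 0$. Summing over $j'$ (whether over $[0,|ckl|-1]$ or $[0,|kl|-1]$, both averages agree since the integrand is $|kl|$-periodic in $j'$), averaging over $j\in[ck]$, combining phases via $e(rp(j)/k)\cdot e(p(j)s/k)=e((r+s)p(j)/k)$, and dividing out the nonzero prefactor $1/(2\pi iw)$ in the case $w\neq 0$, yields the two advertised vanishing conditions.

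The main obstacle is the purely combinatorial bookkeeping: tracking where the floor discontinuity of $\{-ly+p(j)/k\}$ falls inside the partition $(J_{j'})_{j'}$, and unifying the sign conventions so that a single formula featuring $\sgn(k)$ and $1_{k<0}$ subsumes both signs of $k$. The quantity $x_{j,j'}=|k|\{(p(j)-\sgn(k)j')/k\}$ precisely encodes the relative position of the discontinuity inside $J_{j'}$, and the case $\min(x_{j,j'},1)=1$ captures the situation where no discontinuity occurs in $J_{j'}$ at all.
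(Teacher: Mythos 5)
Your overall strategy mirrors the paper's proof closely — the partial-fraction rewrite via $\frac{x\beta+y}{k\beta+l}=\frac{ky-lx}{k(k\beta+l)}+\frac{x}{k}$, splitting into arithmetic progressions $n=ckm+j$, Weyl equidistribution, and then partitioning the resulting integral into intervals where both floors are constant. There is however a genuine gap in the equidistribution step when $c>1$, and the claimed repair does not fix it.

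The issue is with the passage from the Ces\`aro average to the integral $\int_0^1 e(F_j(y))\,dy$. Your integrand has the term $(kq-lr)y$ with $kq-lr\in\frac{1}{c}\Z$, so $y\mapsto e(F_j(y))$ is a function of $y\bmod c$, not of $y\bmod 1$. Equidistribution of $(y_{ckm+j}\bmod 1)_m$ therefore does not yield $\int_0^1 e(F_j(y))\,dy$; one needs equidistribution of $(y_{ckm+j}/c\bmod 1)_m$ (which holds by the same reason, since $\alpha\notin\Q$) and the average then equals $\frac{1}{c}\int_0^c e(F_j(y))\,dy$. Partitioning $[0,c)$ into intervals of length $1/|kl|$ gives $c|kl|=|ckl|$ intervals, which is exactly the range of $j'$ in the lemma. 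Your parenthetical claim that the averages over $j'\in[0,|kl|-1]$ and $j'\in[0,|ckl|-1]$ agree because the summand is $|kl|$-periodic in $j'$ is false in general: shifting $j'\mapsto j'+|kl|$ multiplies $e(wj'+b_{j,j'})$ by $e\bigbrac{w|kl|+kt+ls}=e(kq-lr)$, which is not $1$ unless $kq-lr\in\Z$. (The quantity $x_{j,j'}$ is indeed $|kl|$-periodic, but the phase is not.) The paper avoids this by working from the outset with $\tilde q = cq, \tilde r = cr\in\Z$ and the variable $z=y/c$, so that $\tilde qk-\tilde rl$ is an integer and the integrand is honestly $1$-periodic; the partition then naturally has $|ckl|$ pieces. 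If you instead keep the variable $y$, you must integrate over $[0,c)$ with normalization $1/c$, and the rest of your per-interval computation (the change of variables $y=(j'+z)/|kl|$ and the evaluation of the inner integral in the two cases $w=0$, $w\neq 0$) goes through unchanged.
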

{We observe that there is no loss of generality in assuming that $l>0$ for if $l<0$, then we can reduce to the case $l>0$ by changing $l\mapsto -l, k\mapsto -k, p\mapsto -p$.} 
\begin{proof}

Let $\tilde{q} = qc, \tilde{r} = rc\in\Z$, and $\tilde{p}(n) = p(n)-ku$. We can recast our average as
\begin{align*}
    \lim_{N\to\infty}\E_{n\in[N]} e\brac{{\frac{(\tilde{q} +\beta\tilde{r})\tilde{p}(n)}{c(k\beta + l)}}-\rem{\frac{\tilde{p}(n)}{k\beta + l}}t +\rem{\frac{\beta \tilde{p}(n)}{k\beta + l}+u}s}.
\end{align*}
Using the identity
\begin{align*}
    \frac{x\beta + y}{k\beta + l} = \frac{ky - lx}{k(k\beta + l)} + \frac{x}{k},
\end{align*}
we want to bring all the fractions above to the form $\Q + \frac{\Z}{ck(k\beta + l)}$ and then apply Theorem~\ref{T: Boshernitzan} and Weyl's equidistribution theorem. Indeed, our average equals
\begin{multline*}
    \lim_{N\to\infty}\E_{n\in[N]} e\left({\brac{\frac{\tilde{q}k -\tilde{r}l}{ck(k\beta + l)}+\frac{\tilde{r}}{ck}}\tilde{p}(n)}-\rem{\frac{ck}{ck(k\beta + l)}\tilde{p}(n)}t\right.\\
    \left.+\rem{\brac{\frac{-cl}{ck(k\beta + l)}+\frac{1}{k}} \tilde{p}(n)+u}s\right).
\end{multline*}
To remove the rational shifts, we split the range of $n$ into arithmetic progressions of difference $ck$. Thus our average becomes
\begin{multline*}
    \E_{j\in[ck]} e\brac{\frac{\tilde{r}\tilde p(j)}{ck}}  \lim_{N\to\infty}\E_{n\in[N]}e\left({{\frac{\tilde{q}k -\tilde{r}l}{ck(k\beta + l)}}\tilde p(ckn + j)}\right.\\
    \left. -\rem{\frac{ck}{ck(k\beta + l)}\tilde p(ckn+j)}t+\rem{{\frac{-cl}{ck(k\beta + l)}} \tilde p(ckn+j)+\frac{\tilde p(j)}{k}+u}s\right).
\end{multline*}
By Theorem \ref{T: Boshernitzan}, the sequence $\brac{\rem{\frac{\tilde p(ckn+j)}{ck(k\beta + l)}}}_n$ is equidistributed on $[0,1)$, and hence our average  equals
\begin{align*}
    \E_{j\in[ck]}e\left(\frac{\tilde{r}\tilde p(j)}{ck}\right)\int_0^1 e\brac{(\tilde{q}k - \tilde{r}l)x  - \rem{ck x}t + \rem{-clx +\frac{\tilde p(j)}{k}+u}s}\; dx.
\end{align*}
Recalling that $\tilde{r} = cr$ and $\tilde{p}(j) = p(j) - ku$, we can slightly simplify this expression to
\begin{align*}
    e(-ru)\E_{j\in[ck]}e\left(\frac{{r}p(j)}{k}\right)\int_0^1 e\brac{(\tilde{q}k - \tilde{r}l)x  - \rem{ck x}t + \rem{-clx +\frac{ p(j)}{k}}s}\; dx.
\end{align*}

To remove the fractional parts, we split the domain of integration into intervals of length $\frac{1}{|ckl|}$. 
If $x\in\left(\frac{j'}{|ckl|}, \frac{j'+1}{|ckl|}\right)$, then 
\begin{align*}
    \floor{ck x} = \floor{\frac{\sgn(k)(j' +1_{k<0})}{l}}. 
\end{align*}
On the other hand, let $x_{j,j'} := |k|\rem{\frac{p(j)-\sgn(k)j'}{k}}$.  Then an easy computation shows that
\begin{align*}
     \floor{-clx + \frac{p(j)}{k}} = \begin{cases}
         \floor{\frac{p(j)-\sgn(k)j'}{k}},\; &x\in \left(\frac{j'}{|ckl|}, \frac{j'+\min(x_{j,j'},1)}{|ckl|} \right]=:C_{j,j'}\\
         \floor{\frac{p(j)-\sgn(k)j'}{k}}-1,\; &x\in \left(\frac{j'+\min(x_{j,j'},1)}{|ckl|},\frac{j'+1}{|ckl|}\right) =: C'_{j,j'}.
     \end{cases}
\end{align*}



Now, letting
\begin{align*}
  w &:= {\frac{\tilde{q}k - \tilde{r}l-ckt-cls}{|ckl|} = \frac{k(q-t)-l(r+s))}{|kl|}}\\
  \textrm{and} \quad b_{j, j'} &:=\floor{\frac{\sgn(k)(j' +1_{k<0})}{l}}t-\floor{\frac{p(j)-\sgn(k)j'}{k}}s  
\end{align*}
 to simplify the discussion, our expression equals $e(-ru)$ times  
\begin{align*}
    & \E_{j\in[ck]}e\left(\frac{{r+s}}{k}p(j)\right)\sum_{j'\in[0, |ckl|-1]}\int_{\frac{j'}{|ckl|}}^{\frac{j'+1}{|ckl|}} e\left({|ckl|}wx  +\floor{ckx}t-\floor{-clx + \frac{p(j)}{k}}s\right)\; dx\\
    &=\E_{j\in[ck]}e\left(\frac{{r+s}}{k}p(j)\right)\sum_{j'\in[0, |ckl|-1]}\left(\int_{C_{j,j'}} e({|ckl|}wx  +b_{j,j'})\; dx+\int_{C'_{j,j'}} e\left({|ckl|}wx  +b_{j,j'}+s\right)\; dx\right)\\
    &=\E_{j\in[ck]}e\left(\frac{{r+s}}{k}p(j)\right)\sum_{j'\in[0, |ckl|-1]}\left(e(s)\int_{\frac{j'}{|ckl|}}^{\frac{j'+1}{|ckl|}} e({|ckl|}wx  +b_{j,j'})\; dx\right.\\
    &\qquad\qquad\qquad\qquad\qquad\qquad\qquad\qquad\qquad\qquad\left.+(1-e(s))\int_{C_{j,j'}} e\left({|ckl|}wx  +b_{j,j'}\right)\; dx\right).
    \end{align*}
Performing the change of variables, this equals
\begin{align*}
    &\E_{j\in[ck]}e\left(\frac{{r+s}}{k}p(j)\right)\E_{j'\in[0, |ckl|-1]}e\left({j'w}+b_{j,j'}\right)\\
    &\qquad\qquad\qquad\qquad\left(e(s)\int_{0}^1 e\left({w}x\right)\; dx+(1-e(s))\int_{0}^{\min(x_{j,j'},1)} e\left({w}x\right)\; dx\right).
\end{align*}
If $w=0$, the expression above equals
\begin{align*}
    \E_{j\in[ck]}e\left(\frac{{r+s}}{k}p(j)\right)\E_{j'\in[0, |ckl|-1]}e\left(b_{j,j'}\right)(e(s) + \min(x_{j,j'},1)(1-e(s))),
\end{align*}
otherwise we obtain
\begin{multline*}
    \E_{j\in[ck]}e\left(\frac{{r+s}}{k}p(j)\right)\E_{j'\in[0, |ckl|-1]}e\left({j'w}+b_{j,j'}\right)\\
    \frac{1}{2\pi i  {w}}\brac{e(s)\Bigbrac{e({w})-1}+(1-e(s))\Bigbrac{e({w\min(x_{j,j'},1)})-1}}.
\end{multline*}
This completes the proof.
\end{proof}

\subsection{Structure of $E_3$}

Combining the content of the previous lemmas, we move on to prove Proposition \ref{P: E_3}, i.e. we describe the set $E_3$ consisting of those points $(t,s)\in[0,1)^2$ for which $A(t,s) =0$ while $B(t,s)\neq 0$. 

     \begin{proof}[Proof of Proposition \ref{P: E_3}]
     Our objective can be restated as follows: we want to show that $E_3$ is countable, and that it is contained in $\{\brac{\beta \gamma, \gamma}: \gamma\in\R\}$ mod $\Z^2$.
    
     
     Let $(t,s)\in E_3$. By Lemma \ref{L: Q-independence} and Corollary \ref{C: a = n}, we have 
     \begin{align}\label{E: admissible (t,s)}
     t-\beta s = q + \beta r\quad \textrm{for\; some}\quad q,r\in\Z\quad \textrm{satisfying}\quad r+s\notin\Z\setminus\{0\}.
     \end{align}
     {In particular, the value $c$ appearing in various lemmas throughout Section \ref{SS: Q-dependent} is $1$.} Then
     \begin{align*}
         (t,s) = (\beta(r+s), r+s) + (q,-r),
     \end{align*}
     from which we deduce that $(t,s)$ lies in
     \begin{align}\label{E: hyperplane}
        \{(\beta \gamma, \gamma):\; \gamma\in\R\}\mod \Z^2.
     \end{align}
     It remains to show that $E_3$ is countable.
     To obtain this last claim, we split the proof into the following four cases whose exponential sums we analyzed separately in the previous section, and each of which requires a slightly different treatment now:
     \begin{enumerate}
    \item\label{i: a 1} $a(x)$ does not equal
    \begin{align}\label{E: special form of a_2}
        \frac{p(x)-ku}{k\beta + l} \quad \textrm{for\; any}\;\; (k, l)\in\Z^2\setminus{\{(0,0)\}},\; p\in\Z[x]{+\R}; 
    \end{align}
    \item\label{i: a 2} $a(x)$ equals \eqref{E: special form of a_2} with $k\neq 0, l=0$;
    \item\label{i: a 3} $a(x)$ equals \eqref{E: special form of a_2} with $k=0, l\neq 0$;
    \item\label{i: a 4} $a(x)$ equals \eqref{E: special form of a_2} with $k, l\neq 0$.
\end{enumerate}
     
     \smallskip
     \textbf{Case \eqref{i: a 1}.}
     \smallskip

    Combining the conclusions of Lemma \ref{L: a, beta a eq} and Corollary \ref{C: a = n}, the set $E_3$ consists of those pairs $(t,s)\in[0,1)^2$ satisfying \eqref{E: admissible (t,s)} for which additionally $q-t\in\mathbb{Z}\backslash\{0\}$.
    Hence $t\in\Z\setminus\{q\}$ and $s  = \frac{1}{\beta}(t-q)-r\in \frac{1}{\beta}\Z\setminus\{0\} + \Z$, and so the collection of pairs $(t,s)\in E_3$ forms a countable subset of \eqref{E: hyperplane}.
    


    \smallskip
    \textbf{Case \eqref{i: a 2}.}
    \smallskip

    In this case, Lemma \ref{L: beta a  = p + g} says that on top of \eqref{E: admissible (t,s)}, each $(t,s)\in E_3$ satisfies one of the two conditions:
       $q-t\in\Z\setminus\{0\}$ or $\E\limits_{i\in[{k}]}e\brac{\frac{r p(i)}{k}+ \rem{\frac{p(i)}{k}}s}=0$. If $q-t\in\Z\setminus\{0\}$, then  we have countably many choices of $(t,s)$ inside \eqref{E: hyperplane} just like in the previous case.
        If instead $\E_{i\in[k]}e\brac{\frac{r p(i)}{k}+ \rem{\frac{p(i)}{k}}s}=0$, then $r\in\Z$ and rudimentary manipulations imply that
        \begin{align}\label{E: reduced exp sum}
            \E_{i\in[k]} e\brac{(r+s)\rem{\frac{p(i)}{k}}} = 0.    
        \end{align}
         Consider therefore the remaining set
        \begin{align*}
            \tilde{E}_3 = \left\{(t,s)\in E_3:\; t = q+ \beta (r+s),\; q,r\in\Z,\; r+s\notin\Z\setminus\{0\},\; \eqref{E: reduced exp sum}\; \textrm{holds}\right\}.
        \end{align*}
        The crucial observation is that the exponential sum \eqref{E: reduced exp sum} can be realized as a polynomial of bounded degree. Indeed, for each $j\in\{0,\ldots, k-1\}$, let
        $$c_j =\frac{1}{k}\cdot \abs{\rem{i\in[k]:\; \rem{\frac{p(i)}{k}} = \frac{j+\{p(0)\}}{k}}}$$ and $Q(z) = \sum\limits_{j=0}^{k-1} c_j z^j$ be the rational polynomial satisfying
        \begin{align*}
            \E_{i\in[k]} e\brac{\alpha\rem{\frac{p(i)}{k}}} = Q(e(\alpha/k)){e(\alpha \{p(0)\}/k)}
        \end{align*}
        for every $\alpha\in\T$. By the fundamental theorem of algebra, there exist at most $k-1$ values $\alpha\in\T$ for which $Q(e(\alpha/k)) = 0$. In our case, $\alpha = r+ s$ and $t = q +\beta (r+s)$, and so we deduce that $\tilde{E}_3$ is countable. Combining both subcases, we deduce that $E_3$ is a countable subset of \eqref{E: hyperplane}.

\smallskip
\textbf{Case \eqref{i: a 3}.}
\smallskip
         
         In this case, Lemma \ref{L: a = p + g} guarantees that $(t,s)\in E_3$ satisfies 
         \begin{align}\label{E: reduced exp sum 3}
             \E_{i\in[{l}]}e\brac{(q-t)\rem{\frac{p(i)}{l}}} =0
         \end{align}
         on top of \eqref{E: admissible (t,s)}. The countability of $E_3$ then follows from a similar argument as in the previous case.

        \smallskip
        \textbf{Case \eqref{i: a 4}}
        \smallskip

        While in the previous two cases we deduced the countability of $E_3$ from the fundamental theorem of algebra, this time we will rely on basic properties of holomorphic functions.
        Let
        \begin{align*}
            w:= {\frac{k(q-t)-l(r+s)}{|kl|} = \frac{-(k\beta +l)(r+s)}{|kl|},}
        \end{align*}
        and recall that every $(t,s)\in E_3$ must satisfy one of the two conditions from Lemma~\ref{L: (k beta + l) a  = p + g} on top of \eqref{E: admissible (t,s)}.
        If $w=0$, then necessarily $s=-r\in\Z$, which is precluded by \eqref{E: admissible (t,s)}.
        Therefore $w\neq 0$, in which case the second condition from Lemma \ref{L: (k beta + l) a  = p + g} holds, i.e. 
\begin{multline}\label{E: tricky exp sum}
    \E_{j\in[k]}e\left(\frac{{r+s}}{k}p(j)\right)\E_{j'\in[0, |kl|-1]}e\left(-\frac{j'}{|kl|}(k\beta + l)(r+s)+b_{j,j'}\right)\\
    \brac{e(s)\Bigbrac{e({w})-1}+(1-e(s))\Bigbrac{e({w\min(x_{j,j'},1)})-1}}
\end{multline}    
is 0
for 
\begin{align*}
    x_{j,j'} &=|k|\left\{\frac{p(j)-\sgn(k)j'}{k}\right\}\\
    \textrm{and}\quad b_{j,j'} &= \floor{\frac{\sgn(k)(j' +1_{k<0})}{l}}(q+\beta(r+s)) -\floor{\frac{p(j)-\sgn(k)j'}{k}}s. 
\end{align*}

For each fixed $q,r\in\mathbb{Z}$, the average \eqref{E: tricky exp sum} is a finite linear combination of functions of the form $s\mapsto e(v s)$ for $v\in\R$. 
Each such function is holomorphic, and therefore the average in \eqref{E: tricky exp sum} is also holomorphic as a function of $s$. Hence, one of the two things may happen:
\begin{enumerate}
    \item \eqref{E: tricky exp sum} is identically 0 for all $s\in\C$;
    \item \eqref{E: tricky exp sum} has at most countably many zeros.
\end{enumerate}
If we are able to exclude the first case, then the second case immediately implies the countability of $E_3$.

\ 

We show that the former option is not possible, i.e. the average in \eqref{E: tricky exp sum} cannot vanish for all $s\in\C$, by identifying the value $s=-|kl|-r$ for which \eqref{E: tricky exp sum} is nonzero.
For this value $s$, we can simplify various expressions appearing in \eqref{E: tricky exp sum} to
\begin{align*}
        \frac{r+s}{k}p(j) &\equiv \frac{r+s}{k}p(0)\!\!\!\mod 1,\\
        -\frac{j'}{|kl|}(k\beta + l)(r+s) &\equiv \frac{\sgn(k)j'}{l}|kl|\beta\!\!\!\mod 1,\\
        b_{j,j'}&\equiv -\floor{\frac{\sgn(k)(j' +1_{k<0})}{l}}|kl|\beta\!\!\!\mod 1\\
        w &= k\beta + l\notin\Z.
\end{align*}
With these simplifications, \eqref{E: tricky exp sum} reduces to
\begin{align*}
    \E_{j'\in [0,|kl|-1]}
    e\brac{\Bigbrac{\frac{\sgn(k)j'}{l}-\floor{\frac{\sgn(k)(j' +1_{k<0})}{l}}}|kl|\beta} = 0.
\end{align*}
The left-hand side then equals  $e\Bigbrac{-\frac{\sgn(k)1_{k<0}}{l}|kl|\beta}$ times  
\begin{align*}
    \E_{j'\in [0,|kl|-1]}
    e\brac{\Big\{\frac{\sgn(k)(j'+1_{k<0})}{l}\Big\}|kl|\beta} &= \E_{j'\in [0,l-1]}
    e\brac{\Big\{\frac{j'}{l}\Big\}|kl|\beta}\\ &= \E_{j'\in [0,l-1]}e(j'|k|\beta) =\frac{e(|kl|\beta)-1}{l(e(|k|\beta)-1)},
\end{align*}
which is nonzero since $\beta\notin\Q$.
 \end{proof}

\subsection{Proof of Proposition \ref{P: beta irrational}}\label{SS: seminorm comparison base case}
We now bring all the pieces together in order to prove Proposition \ref{P: beta irrational}.

For a non-empty subset $H\subseteq[0,1)^2$, set
$$\CK(H) := \langle f\in L^2(\mu):\; T_1 f = e(t)f,\; T_2f = e(s)f\;\; \textrm{for\; some}\;\; (t,s)\in H\rangle,$$
and set $\CK(\emptyset):=\{0\}$. Let $P_H f$ be the orthogonal projection of $f$ onto $\CK(H)$. If $H=\{(t,s)\}$, then we also write
\begin{align*}
    P_{(t,s)}f =P_{\{(t,s)\}}f= \lim_{N\to\infty}\E_{n_1, n_2\in[N]}T_1^{n_1}T_2^{n_2}f\cdot e(-tn_1 - sn_2)
\end{align*}
for simplicity, and note that the single limit over $([N]^2)_N$ could be replaced by an iterated limit. If $H\cap H' = \emptyset$, then clearly $\langle P_H f, P_{H'} f\rangle = 0$ and $P_{H\cup H'} f = P_H f + P_{H'}f$.  It follows in particular that if $H\subseteq H'$, then
\begin{align}
    \nnorm{P_{H'} f}_{\be_1-\beta \be_2}^2 = \nnorm{P_{H} f}_{\be_1-\beta \be_2}^2 + \nnorm{P_{H'\setminus H} f}_{\be_1-\beta \be_2}^2\geq \nnorm{P_{H} f}_{\be_1-\beta \be_2}^2;
\end{align}
to see this, we write $P_{H'}f = P_H f +P_{H'\setminus H} f,$ expand the left-hand side as a Gowers-Cauchy-Schwarz inner product, and then use the orthogonality of $P_H f$, $P_{H'\setminus H} f$ to cancel the cross terms.

The next lemma will be crucial in completing the proof of Proposition \ref{P: beta irrational}. For eigenfunctions with eigenvalues of the form $(\beta \gamma, \gamma)$ mod $\Z^2$ with $\gamma\neq 0$, it allows us to replace the single-parameter action of $(T_1^n T_2^{-\floor{\beta n}})_n$ by the double-parameter action of $(T_1^{\floor{\frac{n}{\beta \gamma}}} T_2^{\floor{-\frac{m}{\gamma}}})_{n,m}$.

\begin{lemma}\label{L: scaling}
Let $\gamma\in\R$ be nonzero and $f\in L^2(\mu)$. Then there exists $c_{\beta, \gamma}\in\C$ such that  
\begin{align}\label{E: 2-parameter action}
\lim_{N\to\infty}\E_{n\in[N]} T_1^n T_2^{-\floor{\beta n}} P_{(\beta \gamma, \gamma)} f = c_{\beta, \gamma} \cdot \lim_{N\to\infty}\E_{n, m\in[N]} T_1^{\floor{\frac{n}{\beta \gamma}}} T_2^{\floor{-\frac{m}{\gamma}}} P_{(\beta \gamma, \gamma)} f.
\end{align}
\end{lemma}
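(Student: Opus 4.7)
The plan is to reduce both sides of \eqref{E: 2-parameter action} to scalar multiples of the joint eigenfunction $g:=P_{(\beta\gamma,\gamma)}f$, then to compare the two scalars. Because $g\in \CK(\{(\beta\gamma,\gamma)\})$ satisfies $T_1 g = e(\beta\gamma) g$ and $T_2 g = e(\gamma) g$, every iterate of the form $T_1^a T_2^b$ acts on $g$ as multiplication by $e(\beta\gamma a+\gamma b)$. Consequently, both sides of the claimed identity will turn out to be complex-valued constants (depending only on $\beta$ and $\gamma$) times $g$, and choosing $c_{\beta,\gamma}$ to be the appropriate ratio will suffice.

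For the left-hand side, I would use the eigenfunction relation together with the decomposition $\beta n = \floor{\beta n} + \rem{\beta n}$ to rewrite
\[
T_1^n T_2^{-\floor{\beta n}} g = e(\beta\gamma n-\gamma\floor{\beta n})\, g = e(\gamma\rem{\beta n})\, g.
\]
Since $\beta\notin\Q$, Weyl's equidistribution theorem applied to the Riemann-integrable function $x\mapsto e(\gamma x)$ gives $\lim_{N\to\infty}\E_{n\in[N]} e(\gamma\rem{\beta n}) = \int_0^1 e(\gamma x)\,dx =:B$, so the left-hand side equals $B\cdot g$.

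For the right-hand side, the same eigenvalue relations together with Fubini factor the double Ces\`aro average:
\[
\lim_{N\to\infty}\E_{n,m\in[N]} T_1^{\floor{n/(\beta\gamma)}} T_2^{\floor{-m/\gamma}} g \;=\; A_1 A_2\cdot g,
\]
where $A_1 := \lim_{N\to\infty}\E_{n\in[N]} e(\beta\gamma\floor{n/(\beta\gamma)})$ and $A_2 := \lim_{N\to\infty}\E_{m\in[N]} e(\gamma\floor{-m/\gamma})$. Existence of both one-parameter limits is guaranteed by Lemma \ref{L: convergence of exp sum}, applied to the linear (hence polynomial) Hardy sequences $n\mapsto n/(\beta\gamma)$ and $m\mapsto -m/\gamma$. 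The natural definition $c_{\beta,\gamma}:=B/(A_1 A_2)$ then yields the claim, provided that $A_1A_2\ne 0$.

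The main (and only) obstacle is verifying the non-vanishing of $A_1 A_2$. Using the identity $\alpha\floor{x/\alpha} = x - \alpha\rem{x/\alpha}$ together with $e(\Z)=1$, I would recast $A_1 = \lim\E_n e(-\beta\gamma\rem{n/(\beta\gamma)})$ and analogously for $A_2$. If $\beta\gamma\notin\Q$, Weyl's theorem produces $A_1 = (e(-\beta\gamma)-1)/(-2\pi i\beta\gamma)$, which is nonzero because $\beta\gamma\notin\Z$; if $\beta\gamma\in\Q\setminus\{0\}$, writing $\beta\gamma=q/p$ in lowest terms collapses the average to a finite geometric sum whose value is easily checked to be nonzero. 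The identical dichotomy applied to $\gamma\ne 0$ yields $A_2\ne 0$. With both one-parameter scalars nonzero, $c_{\beta,\gamma}$ is a well-defined complex number and the asserted identity follows.
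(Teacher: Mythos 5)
Your argument is correct and follows essentially the same route as the paper: both reduce the two sides to scalar multiples of the eigenfunction $P_{(\beta\gamma,\gamma)}f$, then verify that the relevant exponential-sum constants are nonzero via a rational/irrational dichotomy (Weyl equidistribution in the irrational case, an explicit finite geometric sum in the rational case). The only cosmetic difference is that the paper first dispatches $\gamma\in\Z\setminus\{0\}$ as a trivial case with $c_{\beta,\gamma}=0$ and then trichotomizes on $\gamma$, whereas you factor $C_{\beta,\gamma}=A_1A_2$ and verify $A_1\neq 0$ and $A_2\neq 0$ uniformly, which also covers the integer case since there $B=0$ forces $c_{\beta,\gamma}=0$ anyway.
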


One could alternatively try to compare the left-hand side of \eqref{E: 2-parameter action} with $$\lim\limits_{N\to\infty}\E\limits_{n, m\in[N]} T_1^{\floor{\xi n}} T_2^{\floor{-\xi \beta m}} P_{(\beta \gamma, \gamma)} f$$ for other choices of $\xi\in\R\backslash\{0\}$. However, 
this average vanishes unless $\xi\in\frac{1}{\beta\gamma}\Z$.


\begin{proof}
Let 
\begin{align*}
F &:= \lim_{N\to\infty}\E_{n\in[N]} T_1^n T_2^{-\floor{\beta n}} P_{(\beta \gamma, \gamma)} f = P_{(\beta \gamma, \gamma)} f \cdot \lim_{N\to\infty}\E_{n\in[N]} e(\beta\gamma n - \floor{\beta n}\gamma)\\
&= P_{(\beta \gamma, \gamma)} f \cdot \lim_{N\to\infty}\E_{n\in[N]} e(\rem{\beta n}\gamma) = P_{(\beta \gamma, \gamma)} f \cdot \int_0^1 e(\gamma y)\, dy
\end{align*}
and 
\begin{align*}
G &:= \lim_{N\to\infty}\E_{n, m\in[N]} T_1^{\floor{\frac{n}{\beta \gamma}}} T_2^{\floor{-\frac{m}{\gamma}}} P_{(\beta \gamma, \gamma)} f\\
&= P_{(\beta \gamma, \gamma)} f\cdot \lim_{N\to\infty}\E_{n, m\in[N]} e\brac{\floor{\frac{n}{\beta \gamma}}\beta \gamma + \floor{-\frac{m}{\gamma}}\gamma}\\
&= P_{(\beta \gamma, \gamma)} f\cdot \lim_{N\to\infty}\E_{n, m\in[N]} e\brac{-\rem{\frac{n}{\beta \gamma}}\beta \gamma - \rem{-\frac{m}{\gamma}}\gamma}.
\end{align*}

If $\gamma\in \Z\setminus\{0\}$, then $F = 0$, and so the claim follows trivially with $c_{\beta, \gamma} = 0$. 
Otherwise $\int_0^1 e(\gamma y)\, dy\neq 0$, and the goal is to show that
\begin{align*}
    C_{\beta, \gamma} := \lim_{N\to\infty}\E_{n, m\in[N]} e\brac{-\rem{\frac{n}{\beta \gamma}}\beta \gamma - \rem{-\frac{m}{\gamma}}\gamma}
\end{align*}
is nonzero as well so that
\begin{align*}
    c_{\beta, \gamma} := C_{\beta, \gamma}\inv\cdot\int_0^1 e(\gamma y)\, dy
\end{align*}
is well-defined.

We split the proof into several cases.
If $\gamma \notin(\frac{1}{\beta} \Q)\cup \Q$, then both $\beta \gamma$ and $\gamma$ are irrational, and so 
\begin{align*}
	C_{\beta, \gamma} = \int_0^1 e(-\beta \gamma x)\, dx\cdot \int_0^1 e(-\gamma y)\, dy.
\end{align*}
Both integrals are nonzero since neither $\gamma$ nor $\beta\gamma$ is an integer.

If $\gamma\in\Q\setminus\Z$, write $\gamma = -\frac{p}{q}$ for coprime integers $p,q$. Then 
\begin{align*}
C_{\beta, \gamma} = \int_0^1 e(-\beta \gamma x)\, dx \cdot \E_{m\in[p]} e\brac{\rem{\frac{qm}{p}}\frac{p}{q}}.
\end{align*}
The integral is nonzero since $\beta\gamma\notin\Z$. To see that the exponential sum is nonzero, let $q^*$ be the multiplicative inverse of $q$ mod $p$. Then the map $m\mapsto q^* m$ is a bijection on $\Z/p\Z$, and so 
\begin{align*}
	\E_{m\in[p]} e\brac{\rem{\frac{qm}{p}}\frac{p}{q}} &= \E_{m\in[p]} e\brac{\rem{\frac{qq^*m}{p}}\frac{p}{q}} = \E_{m\in[p]} e\brac{\frac{m}{q}}\neq 0 
\end{align*}
due to the coprimality of $p,q$.

Lastly, suppose that $\gamma\in\frac{1}{\beta}\Q$. Setting $\gamma = \frac{p}{q\beta}$ for coprime integers $p,q$, we deduce that
\begin{align*}
	C_{\beta, \gamma} = \int_0^1 e(-\gamma y)\, dy \cdot  \E_{n\in[p]} e\brac{-\rem{\frac{qn}{p}}\frac{p}{q}}.
\end{align*}
The integral is nonzero since $\gamma\notin\Z$, and the exponential sum is nonzero by the same argument as before.
\end{proof}
We conclude this section with the proof of Proposition \ref{P: beta irrational}.
\begin{proof}[Proof of Proposition \ref{P: beta irrational}]
If $a$ is constant, then the action can only be ergodic if the system is conjugate to a 1-point system, in which case the result follows trivially. Suppose therefore that $a$ is not constant.

    By the arguments below the statement of Proposition \ref{P: beta irrational}, we have
    \begin{align*}
        \nnorm{{f}}_{\be_1-\beta \be_2}^2 = \int_{E_3\cup\{(0,0)\}} |B(t,s)|^2\, d\sigma_{{f}}(t,s).
    \end{align*}
By Proposition \ref{P: E_3}, every element of the support $\tilde{E}_3:=E_3\cup\{(0,0)\}$ of the preceding integral takes the form $(\beta \gamma, \gamma)$ mod $\Z^2$.
Using this and orthogonality of projections on different eigenspaces, we get
\begin{align*}
    \nnorm{f}_{\be_1-\beta \be_2}^2 = \nnorm{P_{\tilde{E}_3}f}_{\be_1-\beta \be_2}^2 = \sum_{(\beta \gamma, \gamma)\in \Tilde{E}_3}\nnorm{P_{(\beta\gamma,\gamma)}f}_{\be_1-\beta \be_2}^2.
\end{align*}
By Lemma \ref{L: scaling}, for any nonzero $\gamma\in\R$,
  there exists $c_{\beta, \gamma}\geq 0$ such that 
\begin{align*}
\nnorm{P_{(\beta\gamma,\gamma)}f}_{\be_1-\beta \be_2}&=\lim_{N\to\infty}\norm{\E_{n\in[N]} T_1^n T_2^{-\floor{\beta n}} P_{(\beta \gamma, \gamma)} f}_{L^2(\mu)}\\
&= c_{\beta, \gamma} \cdot \lim_{N\to\infty}\norm{\E_{n, m\in[N]} T_1^{\floor{\frac{n}{\beta \gamma}}} T_2^{\floor{-\frac{m}{\gamma}}} P_{(\beta \gamma, \gamma)} f}_{L^2(\mu)}\\
&= c_{\beta, \gamma} \cdot \nnorm{P_{(\beta\gamma,\gamma)}f}_{\langle (1/\beta \gamma)\be_1, (-1/\gamma)\be_2\rangle}\ll_{\beta,\gamma} \nnorm{P_{(\beta\gamma,\gamma)}f}_{\langle \be_1, \be_2\rangle}^+
\end{align*}
by Lemma \ref{L: basic properties of seminorms}. 
A usual computation gives
\begin{align*}
    \nnorm{f}_{\langle \be_1, \be_2\rangle}^+ = \nnorm{P_{[0,1)^2}f}_{\langle \be_1, \be_2\rangle}^+
\end{align*}
(where $P_{[0,1)^2}f$ is the projection on the Kronecker factor of the joint action of $T_1, T_2$),
and so combining the computations so far, we get
\begin{align*}
    \nnorm{P_{(\beta\gamma,\gamma)}f}_{\be_1-\beta \be_2}\ll_{\beta,\gamma} \nnorm{P_{(\beta\gamma,\gamma)}f}_{\langle \be_1, \be_2\rangle}^+ \leq  \nnorm{P_{[0,1)^2}f}_{\langle \be_1, \be_2\rangle}^+ =  \nnorm{f}_{\langle \be_1, \be_2\rangle}^+
\end{align*}
for any $\gamma\in\R$ (including $\gamma = 0$, which is trivial). Hence if $\nnorm{f}_{\langle \be_1, \be_2\rangle}^+ = 0$, then $\nnorm{f}_{\be_1-\beta \be_2} = 0$ as well. 
\end{proof}
A glimpse at the proof shows that we cannot make the dependence between $\nnorm{f}_{\langle \be_1, \be_2\rangle}^+$ and $\nnorm{f}_{\be_1-\beta \be_2}$ quantitative since the constants $c_{\beta, \gamma}$ can be arbitrarily close to 0 for $(\beta\gamma,\gamma)\in \tilde{E}_3$.

\section{Proofs of main theorems}\label{S: proofs of main theorems}
In this section, we finally prove the theorems from the introduction. We start with a discussion on the ergodicity for slowly growing sequences.  In what follows, we say that a sequence of transformations $(T_n)_n$ is \textit{strongly/weakly ergodic} on $(X, \CX, \mu)$ if for any $f\in L^\infty(\mu)$, we have
\begin{align*}
    \lim_{N\to\infty}\E_{n\in[N]}T_n f= \int f\; d\mu
\end{align*}
in the strongly/weakly $L^{2}(\mu)$-sense (thus, the notion of strong ergodicity is the same as ergodicity). 

\begin{lemma}[Ergodicity of slowly growing sequences]\label{L: slowly growing}
    Let $a\in\CH$ with $a\ll \log$, and let $(X, \CX, \mu, T)$ be a system.  
    \begin{enumerate}
        \item Then $(T^{\floor{a(n)}})_n$ is strongly ergodic for $(X, \CX, \mu)$ if and only if $(X, \CX, \mu, T)$ is conjugate to a one-point system, or equivalently $\mu$ is the Dirac measure $\delta_x$ for~some~$x~\in~X$;
        \item If in addition $a\succ 1$, then $(T^{\floor{a(n)}})_n$ is weakly ergodic for $(X, \CX, \mu)$ if and only if $T$ is mixing.
    \end{enumerate}
\end{lemma}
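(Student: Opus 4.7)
Since $\mu=\delta_x$ trivially makes every $f\in L^\infty(\mu)$ constant $\mu$-a.e., the easy directions of both (i) and (ii) are immediate. My strategy for the hard directions is to use Lemma \ref{L: dyadic intervals} to pass from the Ces\`aro hypothesis to convergence of $\E_{n\in(c_N,c_{N+1}]}T^{\floor{a(n)}}f$ to $\int f\,d\mu$ along intervals on which $\floor{a(n)}$ is constant (equal to $N$), thereby extracting a single-exponent statement $T^Nf\to\int f\,d\mu$ which I can then analyse using the fact that $T^N$ is an $L^2$-isometry. The natural choice is $c_N:=\lceil a^{-1}(N)\rceil$ when $a\to+\infty$; if $a$ is bounded one can skip straight to the isometry argument, since $\floor{a(n)}$ is then eventually a fixed integer $k_0$ by Hardy-field monotonicity.

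\textbf{Carrying out the two parts.} For part (i), once I have $T^Nf\to\int f\,d\mu$ strongly in $L^2(\mu)$ (extracted as above, using that $\floor{a(n)}=N$ on all but at most one point of $(c_N,c_{N+1}]$ and that the interval length tends to $\infty$), the isometry identity $\norm{T^Nf-\int f\,d\mu}_{L^2(\mu)}^2=\norm{f}_{L^2(\mu)}^2-|\int f\,d\mu|^2$ is constant in $N$, so its vanishing forces equality in Cauchy--Schwarz applied to $\langle f,1\rangle_{L^2(\mu)}$ and thus makes $f$ constant $\mu$-a.e.; since this holds for every $f\in L^\infty(\mu)$, $\mu$ is a point mass. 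In the bounded case one argues the same way from the identity $T^{k_0}f=\int f\,d\mu$. For part (ii), the direction ``$T$ mixing $\Rightarrow (T^{\floor{a(n)}})_n$ weakly ergodic'' is straightforward: the hypothesis $a\succ 1$ yields $\floor{a(n)}\to\infty$, so mixing gives $\langle T^{\floor{a(n)}}f,g\rangle\to\int f\,d\mu\cdot\overline{\int g\,d\mu}$ pointwise in $n$, which is preserved by Ces\`aro averaging. Conversely, the same dyadic-interval extraction (now in the weak topology) yields $T^Nf\to\int f\,d\mu$ weakly in $L^2(\mu)$ for every $f$, which is precisely the mixing property.

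\textbf{Main technical obstacle.} The only substantive point is verifying the hypothesis $c_{N+1}/c_N\to L\in(1,\infty]$ required to invoke Lemma \ref{L: dyadic intervals}. The bound $a(x)\leq C\log x$ translates into $c_N\geq e^{N/C}$ for large $N$, and standard Hardy-field asymptotics applied to $a^{-1}$ (which is eventually monotone and comparable to an exponential) show that $c_{N+1}/c_N$ converges to some $L\geq e^{1/C}>1$ (possibly $+\infty$ if $a\prec\log$, as when $a(x)=(\log x)^{1/2}$). A minor nuisance is the rounding in $c_N=\lceil a^{-1}(N)\rceil$ and the at-most-one boundary index in each slice where $\floor{a(n)}$ could equal $N+1$ instead of $N$, but since $c_{N+1}-c_N\to\infty$ both effects contribute $o(1)$ in $L^2(\mu)$-norm as $N\to\infty$.
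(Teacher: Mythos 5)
Your proposal is correct and follows essentially the same route as the paper: split on whether $a$ converges or $1\prec a\ll\log$, use Lemma~\ref{L: dyadic intervals} on the blocks $[a^{-1}(N),a^{-1}(N+1))$ where $\floor{a(n)}$ is constant to extract $T^Nf\to\int f\,d\mu$ (strongly for (i), weakly for (ii)), then invoke the $L^2$-isometry of $T^N$ for (i) and the definition of mixing for (ii). The minor cosmetic differences (rounding with $\lceil\cdot\rceil$, phrasing the isometry step via equality in Cauchy--Schwarz for $\langle f,1\rangle$ rather than directly as $\norm{f-\int f\,d\mu}=0$) do not change the argument.
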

\begin{proof}
Part (i). The converse direction is trivial, so we only prove the forward direction.

Suppose first that $a$ converges to a constant. Since $a$ is eventually monotone, we can find $c\in\N$ such that $\norm{T^c f - \int f\; d\mu}_{L^2(\mu)} = 0$ for all $f\in L^\infty(\mu)$. This implies that $f$ is constant $\mu$-a.e.; since $f$ was arbitrary, we deduce that the measure must be concentrated on a single point.

Suppose now that $1\prec a\ll\log$. First, we show that $a\inv(x)\gg (1+c)^x$ for some $c>0$. If this is not the case, then $\lim\limits_{x\to\infty}\frac{\log a\inv(x)}{x} = 0$. Taking the limit along $a(x)$ rather than $x$, we get $\lim\limits_{x\to\infty} \frac{\log x}{a(x)}=0$, contradicting our assumption $a\ll \log$.

Then, by Lemma \ref{L: dyadic intervals}, we get 
\begin{align*}
    \lim_{N\to\infty}\norm{\E_{n\in[a\inv(N), a\inv(N+1))}T^{\floor{a(n)}}f-\int f\; d\mu}_{L^2(\mu)} = 0
\end{align*}
for all $f\in L^\infty(\mu)$. Noticing that $\floor{a(n)} = N$ for $n\in [a\inv(N), a\inv(N+1))$, we can get rid of the average, obtaining
\begin{align*}
     \lim_{N\to\infty}\norm{T^N f-\int f\; d\mu}_{L^2(\mu)} = \norm{f-\int f\; d\mu}_{L^2(\mu)} = 0.
\end{align*}
Once again, the last equality implies that $f$ is constant $\mu$-a.e, and hence the measure must be concentrated on a single point.

\

Part (ii). We thank Nikos Frantzikinakis for showing this argument to us. Suppose first that $(T^{\floor{a(n)}})_n$ is weakly ergodic. Arguing as in the proof of Lemma \ref{L: slowly growing}, we deduce that
\begin{align*}
    \lim_{N\to\infty}\int f_1\cdot \E_{n\in[a\inv(N), a\inv(N+1))}T^{\floor{a(n)}}f_2\; d\mu = \int f_1\; d\mu\cdot \int f_2\; d\mu
\end{align*}
for all $f_1,f_2\in L^\infty(\mu)$. By noticing that $\floor{a(n)} = N$ for $n\in [a\inv(N), a\inv(N+1))$, we infer that
\begin{align*}
    \lim_{N\to\infty}\int f_1\cdot T^N f_2\; d\mu = \int f_1\; d\mu\cdot \int f_2\; d\mu,
\end{align*}
hence $T$ is mixing. The converse implication is immediate. 
\end{proof}

We start with establishing the seminorm control provided by Theorem \ref{T: seminorm control}.

\begin{proof}[Proof of Theorem \ref{T: seminorm control}]
Suppose that $a_{1},\dots,a_{\ell}\in\CH$ satisfy the difference ergodicity condition on a system $(X, \CX, \mu, T_1, \ldots, T_\ell)$, and that $T_1, \ldots, T_\ell$ are all ergodic. We will show that they are good for seminorm control. If $a_j\ll \log$ for some $j\in[\ell]$, then Lemma \ref{L: slowly growing} implies that the measure is concentrated on a single point, in which case seminorm control follows immediately. We assume therefore for the rest of the proof that $a_1, \ldots, a_\ell\succ \log$.


By \cite[Lemma A.3]{R22}, we can find an ordered Hardy family $\CQ = (q_1, \ldots, q_m)\subseteq\CH$ such that $a_{1},\dots,a_{\ell}\in\extspan_\R\CQ$.
By Theorem \ref{T: new estimates}, there exists a positive integer $s=O_{\ell, \CQ}(1)$ and vectors $\bv_{1},\dots,\bv_{s}$ coming from the set of the leading coefficients of 
        \begin{align*}
            \{a_1 \be_1\}\cup\{a_i \be_i - a_j \be_j:\; i\neq j,\; a_i, a_j\; \textrm{are\; dependent}\}
        \end{align*}
        (taken with respect to $\CQ$) such that for all $f_{1},\dots,f_{\ell}\in L^{\infty}(\mu)$, we have
\begin{align}\label{goal0}
                  \lim_{N\to\infty} 
         \norm{\E_{n\in [N]} 
         \prod_{j\in[\ell]}  T_{j}^{\sfloor{a_{j}(n)}} f_{j}}_{L^2(\mu)}=0
    \end{align}
    whenever $\nnorm{f_1}_{\bv_{1},\dots,\bv_{s}}=0$. Note that if $a_i, a_j$ are dependent, then the leading coefficient of $a_i \be_i - a_j \be_j$ equals $\beta_i \be_i - \beta_j \be_j$ for nonzero $\beta_i, \beta_j\in\R$ for which $\beta_j a_i - \beta_i a_j\ll\log$. Applying Proposition \ref{P: seminorm comparison} iteratively, we can replace each such vector in our seminorm by groups $\Z^\ell$ and $\langle \be_i, \be_j\rangle$. Combined with Lemma \ref{L: basic properties of seminorms}, the ergodicity of $T_i, T_j$ allows us to replace each such group by $\be_1$, and so we can find an integer $s'=O_{\ell, \CQ}(1)$ such that (\ref{goal0}) holds whenever $\nnorm{f_1}_{\be_1^{\times s'}}=0$. By symmetry, we can get seminorm control of (\ref{goal0}) in terms of respective bounded-degree Host-Kra seminorms of other functions. 
Therefore, $a_{1},\dots,a_{\ell}$ are good for seminorm control for the system and we are done.
\end{proof}

We are now ready to prove Theorem \ref{T: main theorem}, the main result of this paper.

\begin{proof}[Proof of Theorem \ref{T: main theorem}]
Suppose that $a_{1},\dots,a_{\ell}\in\CH$ satisfy the difference and product ergodicity conditions on a system $(X, \CX, \mu, T_1, \ldots, T_\ell)$; we will show that they are jointly ergodic. Again, we can assume that $a_1, \ldots, a_\ell\succ \log$ since otherwise the result is trivial by Lemma \ref{L: slowly growing}.

Using Theorem \ref{T: joint ergodicity criteria}, it suffices to establish that $a_1, \ldots, a_\ell$ are good for  equidistribution and seminorm control. By \cite[Corollary 12.5]{DKKST24}, the condition of being good for equidistribution follows from the product ergodicity condition. Moreover, the product ergodicity condition implies that $T_1, \ldots, T_\ell$ are all ergodic, which jointly with the difference ergodicity condition and Theorem \ref{T: seminorm control} give seminorm control.
\end{proof}

Next, we establish Theorem \ref{T: easy direction}.
\begin{proof}[Proof of Theorem \ref{T: easy direction}]
    Suppose now that $a_{1},\dots,a_{\ell}\in\CH$ are jointly ergodic for a system $(X, \CX, \mu, T_1, \ldots, T_\ell)$. 
    Once again, we can assume that  $a_1, \ldots, a_\ell\succ \log$ for otherwise the result follows trivially from Lemma \ref{L: slowly growing}. Then the product ergodicity condition follows from \cite[Lemmas 12.6 and 12.13]{DKKST24}.

    To get the difference ergodicity condition, we need to additionally assume that for every $i\neq j$ and $c_i,c_j\in\R$, the sequence $c_i a_i - c_j a_j$ is either an almost rational polynomial or it stays logarithmically away from rational polynomials. We will show that under these conditions, the exponential sum
    \begin{align}\label{E: convergence of exp sum}
        \E_{n\in[N]}e(t\floor{a_1(n)}+s\floor{a_2(n)})
    \end{align}
    converges, and hence by the spectral theorem, $\E_{n\in[N]}T_i^{\floor{a_i(n)}}T_j^{-\floor{a_j(n)}}f$ converges in $L^2(\mu)$ for any $f\in L^2(\mu)$. Therefore the $L^2(\mu)$ limit must equal the weak limit, which equals $\int f\; d\mu$ by joint ergodicity because
    \begin{align*}
        \lim_{N\to\infty}\int g\cdot \E_{n\in[N]}T_i^{\floor{a_i(n)}}T_j^{-\floor{a_j(n)}}f\; d\mu &= \lim_{N\to\infty}\int \E_{n\in[N]} T_j^{\floor{a_j(n)}}g\cdot T_i^{\floor{a_i(n)}}f\; d\mu\\
        &= \int g\; d\mu \cdot \int f\; d\mu.
    \end{align*}

    To show that \eqref{E: convergence of exp sum} converges, we expand it as
    \begin{align*}
        \E_{n\in[N]}e(t a_1(n)+s a_2(n) - t\{a_1(n)\}-s\{a_2(n)\}).
    \end{align*}
    Since the function $x\mapsto e(-t\{x\})$ is Riemann integrable for any $t\in\R$, we can approximate it arbitrarily well by trigonometric polynomials, and hence it suffices to show the convergence of 
    \begin{align*}
        \E_{n\in[N]}e(t' a_1(n)+s' a_2(n))
    \end{align*}
    for all $t'\in t+\Z,\; s'\in s+\Z$. By assumption, either $t' a_1(n)+s' a_2(n)$ equals $p(n)+c + o(1)$ for some $p\in\Q[x]$ and $c\in\R$ as $n\to\infty$, in which case the exponential sum obviously converges, or it stays logarithmically away from rational polynomials, in which case it converges by Theorem \ref{T: Boshernitzan}.    
\end{proof}

In order to prove Theorem \ref{T: counterexample}, we make use of the following seminorm estimate.

\begin{proposition}\label{est12}
    Let $(X,\mathcal{X},\mu,T_{1},T_{2})$ be a system and $f_{1},f_{2}\in L^{\infty}(\mu)$. Assume that $T_{2}\vert_{\CI(T^{-1}_{1}T_{2})}$ is mixing. We have that 
    $$\lim_{N\to\infty}\Bigl\Vert\E_{n\in[N]} T_{1}^n f_1\cdot T_{2}^{n+\floor{\log_{2}n}}f_2\Bigr\Vert_{L^2(\mu)}=0$$
    if $\nnorm{f_{j}}_{T_{j},T_{j}}=0$ for any $j=1,2$.
\end{proposition}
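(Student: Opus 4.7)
The plan is to reduce the claim to showing the $L^2(\mu)$-decay of the partial averages
$$B_k := \E_{n \in [2^k, 2^{k+1})} T_1^n f_1 \cdot T_2^{n+k} f_2$$
as $k \to \infty$. For $N = 2^{K+1}$, the dyadic decomposition
$$\E_{n \in [N]} T_1^n f_1 \cdot T_2^{n + \floor{\log_2 n}} f_2 \; = \; \sum_{k=0}^K \frac{2^k}{2^{K+1}}\, B_k$$
expresses the target average as a geometric combination of the $B_k$'s, with weights summing to $1 - 2^{-(K+1)}$ and concentrated near $k = K$. Hence $\|B_k\|_{L^2(\mu)} \to 0$ immediately yields the vanishing along $N = 2^{K+1}$, and the convergence for general $N$ follows from a standard Lemma~\ref{L: dyadic intervals}-type monotonicity argument.

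To estimate $\|B_k\|^2$, I would set $S := T_1^{-1} T_2$, $F_j := T_1^j f_1 \cdot \bar f_1$, and $G_j := T_2^j f_2 \cdot \bar f_2$. Expanding $\|B_k\|^2 = \E_{n,m \in [2^k, 2^{k+1})} \langle T_1^n f_1 T_2^{n+k} f_2,\; T_1^m f_1 T_2^{m+k} f_2 \rangle$ and applying the measure-preserving shift $T_1^{-m}$ (using $T_1 T_2 = T_2 T_1$ and the identity $T_1^{-m} T_2^{m+k} = S^m T_2^k$), a direct calculation yields
$$\|B_k\|^2 \; = \; \sum_{|j| < 2^k} w_{j,k} \int F_j \cdot \bigl(\E_{m \in J_k(j)} S^m\bigr)\, T_2^k G_j \, d\mu,$$
where $j = n - m$, the Fej\'er weights are $w_{j,k} := (2^k - |j|)/2^{2k}$, and $J_k(j)$ is the compatible range of $m$. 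Decomposing $G_j = G_j^{\parallel} + G_j^{\perp}$ with $G_j^{\parallel} := \E(G_j \mid \CI(S))$, I would exploit that $T_2$ commutes with $S$ (hence preserves $\CI(S)$) and that $S$ acts as the identity on $\CI(S)$. The mean ergodic theorem for $S$ then gives $(\E_m S^m)\, T_2^k G_j^{\perp} \to 0$ in $L^2(\mu)$, with spectral rate controlled by the $k$-independent measure $\sigma_{G_j^{\perp}}^S$ (a consequence of $T_2 S = S T_2$ and unitarity). For the main term, $T_2^k G_j^{\parallel} \in L^2(\CI(S))$ and hence
$$\int F_j \cdot T_2^k G_j^{\parallel}\, d\mu \; = \; \int \E(F_j \mid \CI(S)) \cdot T_2^k G_j^{\parallel}\, d\mu \; \xrightarrow[k\to\infty]{} \; \int F_j\, d\mu \cdot \int G_j\, d\mu \; = \; \widehat{\sigma}_{f_1}^{T_1}(j) \cdot \widehat{\sigma}_{f_2}^{T_2}(j)$$
by the mixing assumption on $T_2|_{\CI(S)}$ applied to the two $L^2(\CI(S))$-functions $\E(F_j \mid \CI(S))$ and $G_j^{\parallel}$.

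Combining these two convergences (both uniformly bounded in $j,k$ by $\|f_1\|_\infty^2 \|f_2\|_\infty^2$) with the convergence of the Fej\'er averages to the Ces\`aro sum, I would conclude via Parseval/Herglotz--Bochner that
$$\lim_{k \to \infty} \|B_k\|^2 \; = \; \sum_{t \in \T} \sigma_{f_1}^{T_1}(\{t\}) \cdot \sigma_{f_2}^{T_2}(\{-t\}).$$
Cauchy--Schwarz on this atomic pairing together with the spectral identity $\nnorm{f}_{T,T}^4 = \sum_{t \in \T} |\sigma_f^T(\{t\})|^2$ then gives
$$\lim_{k \to \infty} \|B_k\|^2 \; \leq \; \nnorm{f_1}_{T_1,T_1}^2 \cdot \nnorm{f_2}_{T_2,T_2}^2,$$
vanishing whenever $\nnorm{f_j}_{T_j,T_j} = 0$ for some $j \in \{1,2\}$. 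The chief obstacle in executing the plan is the uniformity underlying the interchange of limits in the preceding paragraph: the integrals $c_{j,k} := \int F_j \cdot (\E_m S^m) T_2^k G_j\, d\mu$ depend on $k$ through two independent asymptotic mechanisms (the mean ergodic averaging in $m$ and the mixing decay in $k$), and passing these simultaneously through the Fej\'er summation over $j$ demands a dominated-convergence argument that leverages the $k$-invariance of $\sigma_{G_j^{\perp}}^S$ together with the uniform $L^\infty$-bounds on $F_j$ and $G_j$.
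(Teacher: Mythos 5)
Your algebraic framework is sound and in fact closely mirrors the paper's proof: both proofs rest on the substitution $S := T_1^{-1}T_2$, the rewriting $T_1^{-m}T_2^{m+k} = S^m T_2^k$, the mean ergodic theorem for $S$ to project onto $\CI(S)$, and the mixing of $T_2\vert_{\CI(S)}$ to split the resulting correlation. Your spectral endgame via the Wiener--Herglotz identity $\nnorm{f}_{T,T}^4 = \sum_t\abs{\sigma_f^T(\{t\})}^2$ is also a legitimate way to close the argument once the limit formula for $\norm{B_k}^2$ is secured. The structural difference is that the paper applies the van der Corput inequality, which gives an \emph{upper bound} involving a Fej\'er-type sum over $h$ truncated at a \emph{free} parameter $H$ (at the cost of error terms $O(1/H) + O(H\log_2 N/N)$), whereas you insist on the \emph{exact} Fej\'er expansion of $\norm{B_k}^2$, for which the range of the difference variable $j$ is forced to be the full window $\abs{j} < 2^k$.

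This is precisely where your proof has a genuine gap, and it is the one you yourself flag at the end. In the exact Fej\'er expansion, passing $k\to\infty$ through the sum $\sum_{\abs{j}<2^k} w_{j,k}\,c_{j,k}$ requires that $c_{j,k}\to c_j$ \emph{uniformly in $j$} (or at least in a Fej\'er-averaged sense), because the weights $w_{j,k}$ are essentially flat over $\abs{j}\lesssim 2^k$ and carry total mass tending to $1$, so truncating $j$ at a fixed threshold leaves an $O(1)$ tail. The two mechanisms by which $c_{j,k}\to c_j$ --- the mean ergodic theorem for $S$ applied to $T_2^k G_j^\perp$, and mixing of $T_2\vert_{\CI(S)}$ applied to $\E(F_j\vert\CI(S))$ and $G_j^\parallel$ --- each converge at a rate controlled by spectral data that depends on $j$ (the measures $\sigma^S_{G_j^\perp}$ and the mixing rate of $T_2\vert_{\CI(S)}$ on $\E(F_j\vert\CI(S)),G_j^\parallel$). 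The $k$-invariance of $\sigma^S_{G_j^\perp}$, which you correctly observe, removes one axis of dependence but does nothing about the $j$-dependence, and uniform $L^\infty$-bounds on $F_j, G_j$ do not imply uniform mixing or ergodic-averaging rates. Dominated convergence in the usual sense also does not apply, since there is no $k$-independent majorant with finite mass against the growing Fej\'er supports. By contrast, van der Corput lets the paper take the $N\to\infty$ (equivalently $k\to\infty$) limit \emph{first}, for each fixed $h\in[H]$, where only finitely many $h$ are in play, and only \emph{then} send $H\to\infty$; this decoupling of the averaging length from the difference-variable range is exactly what your exact-kernel computation cannot replicate. To repair your argument along its own lines, you would need either a uniform mixing rate (not available under the hypotheses), or you should simply replace the exact Fej\'er expansion by the van der Corput inequality with an auxiliary $H$, at which point the proof collapses onto the paper's.
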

\begin{proof}
        Dyadically decomposing the sum, it suffices to show that 
        \begin{equation*}
        \frac{1}{N}\brac{\sum_{k=0}^{\floor{\log_2 N}-1} \sum_{n\in[2^k,2^{k+1})}T_{1}^n f_1\cdot T_{2}^{n+k}f_2 + \sum_{n\in[2^\floor{\log_2 N}, N]}T_{1}^n f_1\cdot T_{2}^{n+\floor{\log_2 N}}f_2}\to 0
    \end{equation*}in $L^2(\mu)$.
By the triangle inequality and the bound $N-2^\floor{\log_2(N)}\leq N$, the $L^2(\mu)$ norm of the previous expression is smaller than \begin{equation*}
    \frac{1}{N}\sum_{k=0}^{\floor{\log_2 N}-1}2^k \norm{\E_{n\in[2^k,2^{k+1})} T_{1}^n f_1\cdot T_{2}^{n+k}f_2}_{L^2(\mu)} + \norm{\E_{n\in[2^\floor{\log_2 N}, N]}T_{1}^n f_1\cdot T_{2}^{n+\floor{\log_2 N}}f_2}_{L^2(\mu)}.
\end{equation*}
    By applying the Cauchy-Schwarz inequality, we infer that the square of the first expression above is bounded by
    \begin{multline*}
        \frac{1}{N^2}\left(\sum_{k=0}^{\floor{\log_2 N}-1}2^k \right)\left(\sum_{k=0}^{\floor{\log_2 N}-1} 2^k\norm{\E_{n\in[2^k,2^{k+1})} T_{1}^n f_1\cdot T_{2}^{n+k}f_2}_{L^2(\mu)}^2\right)\\
        \ll \frac{1}{N}\sum_{k=0}^{\floor{\log_2 N}-1} 2^k\norm{\E_{n\in[2^k,2^{k+1})} T_{1}^n f_1\cdot T_{2}^{n+k}f_2}_{L^2(\mu)}^2.
    \end{multline*}
    Let $S:=T_{1}^{-1}T_{2}$.
    By the van der Corput inequality, this is at most 
    \begin{multline*}
       \ll\frac{1}{N}\sum_{k=0}^{\floor{\log_2 N}-1} 2^k\E_{h\in[H]}\left| \E_{n\in[2^k,2^{k+1})}\int(\overline{f_1}\cdot T_1^h f_1)\cdot S^n T_2^k(\overline{f_2}\cdot T_2^{h} f_2) \, d\mu\right|+ {\frac{1}{H}+\frac{H\log_2 N}{N}}.
    \end{multline*}

Note that for each fixed $h$ and $\e>0$, if $k$ is sufficiently large depending only on $h,\e$ and $f_{2}, T_{2}, S$, then 
$$\Bigl\Vert \E_{n\in[2^k,2^{k+1})}\Bigbrac{S^{n}(\overline{f_2}\cdot T_{2}^{h} f_2)-\E(\overline{f_2}\cdot T_{2}^{h} f_2\vert \CI(S))}\Bigr\Vert_{2}<\e$$
by the mean ergodic theorem for F\o lner sequences. So the previous expression can be bounded by 
\begin{multline*}
 \ll\E_{h\in[H]} \left(\frac{1}{N}\sum_{k=0}^{\floor{\log_2 N}-1}2^k \left|\int T_{2}^{-k}(\overline{f_1}\cdot T_{1}^h f_1)\cdot \E(\overline{f_2}\cdot T_{2}^{h} f_2\vert \CI(S)) \, d\mu\right|\right)
         \\+{\frac{1}{H}+\frac{H\log_2 N}{N}}+o_{H;N\to\infty}(1),
    \end{multline*}
    Since $T_{2}\vert_{\CI(S)}$ is mixing by assumption, for any fixed $h\in [H]$ we have
    \begin{multline*}
       \lim_{N\to\infty}\frac{1}{N}\sum_{k=0}^{\floor{\log_2 N}-1}2^k\abs{\int T_{2}^{-k}(\overline{f_1}\cdot T_{1}^h f_1)\cdot \E(\overline{f_2}\cdot T_{2}^{h} f_2\vert \CI(S)) \, d\mu}\\
       =\abs{\int \overline{f_1}\cdot T_{1}^h f_1\, d\mu \cdot\int \overline{f_2}\cdot T_{2}^h f_2\, d\mu},
    \end{multline*} and hence
    \begin{equation*}
         \abs{\frac{1}{N}\sum_{k=0}^{\floor{\log_2 N}-1} \sum_{n\in[2^k,2^{k+1})}T_{1}^n f_1\cdot T_{2}^{n+k}f_2}^2\ll\E_{h\in[H]}\left|\int \overline{f_1}\cdot T_{1}^h f_1\, d\mu \cdot\int \overline{f_2}\cdot T_{2}^h f_2\, d\mu\right|+\frac{1}{H}.
    \end{equation*}
    The square of this is bounded by
    \begin{equation*}
         \ll\E_{h\in[H]}\left|\int \overline{f_j}\cdot T_{j}^h f_j\, d\mu\right|^{2}+\frac{1}{H}
    \end{equation*}
    for any $j=1,2$.
       Taking $H\to\infty$, we conclude that the last quantity vanishes whenever $\nnorm{f_{j}}_{T_{j},T_{j}}$. An identical proof allows us to control $$\limsup\limits_{N\to\infty}\norm{\E_{n\in[2^\floor{\log_2 N}, N]}T_{1}^n f_1\cdot T_{2}^{n+\floor{\log_2 N}}f_2}_{L^2(\mu)}$$ by degree-2 Host-Kra seminorms.    
\end{proof}

\begin{proof}[Proof of Theorem \ref{T: counterexample}]
    By Lemma \ref{L: slowly growing}, $(T^{\floor{\log_2 n}})_n$ is ergodic if and only if $T$ is conjugate to a one-point system, so it remains to show that $(T^n, T^{n+\floor{\log_2 n}})_n$ is jointly ergodic if and only if $T$ is mixing.  Suppose first that $(T^n, T^{n+\floor{\log_2 n}})_n$ is jointly ergodic. Then in particular
    \begin{align*}
        \lim_{N\to\infty}\int \E_{n\in[N]}T^n f_1\cdot T^{n+\floor{\log_2 n}}f_2\; d\mu = \lim_{N\to\infty}\int \E_{n\in[N]} f_1\cdot T^{\floor{\log_2 n}}f_2\; d\mu,
    \end{align*}
    hence $(T^{\floor{\log_2 n}})_n$ is weakly ergodic. It follows from Lemma \ref{L: slowly growing} that $T$ is mixing. 

    Now we assume that $T$ is mixing. Then Proposition \ref{est12} implies that $(T^n, T^{n+\floor{\log_2 n}})_n$ is good for seminorm estimates. Since $T$ has no nontrivial eigenvalues, $(T^n, T^{n+\floor{\log_2 n}})_n$ is also good for equidistribution. So, by Theorem \ref{T: joint ergodicity criteria}, we have that $(T^n, T^{n+\floor{\log_2 n}})_n$ is jointly ergodic.
\end{proof}

\section{Joint ergodicity for pathological Hardy sequences}\label{section: repaired conjecture}

Given Corollary \ref{C:np}, it is natural to ask what can be said for sequences $a_{1},\dots,a_{\ell}\in\mathcal{H}$ for which there exist $i\neq j$, $c_i, c_j\in\R$, and $p\in\Q[x]$ satisfying $1\prec c_i a_i - c_j a_j-p\ll \log$. For convenience, we say that such a family of sequences is \emph{pathological}. Unlike the non-pathological case, the $L^{2}(\mu)$-limit 
$$\lim_{N\to\infty}\E_{n\in[N]}\prod_{j=1}^\ell T_j^{\floor{a_{j}(n)}}f_j$$
may not even exist (for example, by Lemma \ref{L: slowly growing}, $\E\limits_{n\in[N]}T^{\floor{\log_{2}(n)}}f$ converges in $L^2(\mu)$ if and only if $T^{n}f$ converges in $L^2(\mu)$, which happens precisely when $(X, \CX, \mu, T)$ is conjugate to a one-point system).

We propose here a couple of ways to amend the joint ergodicity problem to account for pathological sequences.

\subsection{The weak joint ergodicity problem}
We start with some definitions. 


\begin{definition}
      Let $a_1, \ldots, a_\ell:\N\to\R$ be sequences and $(X, \CX, \mu, T_1, \ldots, T_\ell)$ be a system. We say that $a_1, \ldots, a_\ell$ are 
      \begin{enumerate}
      \item \textit{strongly/weakly jointly ergodic} (or \emph{SJ/WJ}) for $(X, \CX, \mu, T_1, \ldots, T_\ell)$  if for all $f_1, \ldots, f_\ell\in L^\infty(\mu)$, we have that
\begin{align}\label{E: joint ergodicity2}
        \lim_{N\to\infty}\E_{n\in[N]}\prod_{j=1}^\ell T_j^{\floor{a_{j}(n)}}f_j=\prod_{j=1}^\ell \int f_j\, d\mu
    \end{align}
    in the strong/weak sense.
    \item   \emph{strongly/weakly difference ergodic} (or \emph{SD/WD}) if $(T_i^{\floor{a_i(n)}}T_j^{-\floor{a_j(n)}})_n$ is strongly/ weakly ergodic on $(X, \CX, \mu)$ for all distinct $i, j\in[\ell]$.
     \item   \emph{strongly/weakly product ergodic} (or \emph{SP/WP}) if $(T_1^{\floor{a_1(n)}}\times \cdots \times T_\ell^{\floor{a_\ell(n)}})_n$ is strongly/weakly ergodic on the product system $(X^\ell, \CX^{\otimes \ell}, \mu^\ell)$.
      \end{enumerate}
\end{definition}

We observe that all the strong notions above correspond to the notions of ergodicity, joint ergodicity, as well as difference and product ergodicity conditions defined in the introduction.

Let $Y$ denote $D,J$ or $P$.
It is clear that $SY$ implies $WY$. Conversely, if the relevant limit exists in the strong sense, then $SY$ is equivalent to $WY$.

We summarize some basic properties:

\begin{proposition}\label{all}
Let $a_{1},\dots,a_{\ell}\in\mathcal{H}$.
    \begin{enumerate}
    \item For any system, $SD+SP\Rightarrow SJ$;
    \item For any system,  $SJ\Rightarrow SP$;
    \item For any system,  $WJ\Rightarrow WD$;
    \item For some systems,  $SJ\not\Rightarrow SD$.
    \end{enumerate}
\end{proposition}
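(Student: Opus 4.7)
The plan is to observe that each of the four assertions is essentially a restatement or an easy consequence of results already proved in the paper, so the proof should amount to unpacking definitions and invoking the right theorem in each case.

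For \textbf{(i)}, note that $SD$ and $SP$ are precisely the difference and product ergodicity conditions of Definition \ref{D: good for joint ergodicity}, and $SJ$ is the joint ergodicity of Definition \ref{D: joint ergodicity}; hence the implication is exactly the statement of Theorem \ref{T: main theorem}. Similarly, for \textbf{(ii)}, the implication $SJ\Rightarrow SP$ coincides with part (1) of Theorem \ref{T: easy direction}, which is already at our disposal.

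For \textbf{(iii)}, the argument is a short computation. Fix distinct $i,j\in[\ell]$ and $f,g\in L^\infty(\mu)$; apply the definition of $WJ$ with $f_i=f$, $f_j=g$, and $f_k=1$ for $k\notin\{i,j\}$, testing against the function $g\in L^\infty(\mu)$ in the weak-$L^2$ sense only in the $j$-coordinate -- equivalently, use $WJ$ with the scalar $h\equiv 1$ to obtain
\[
\lim_{N\to\infty}\int \E_{n\in[N]}T_i^{\sfloor{a_i(n)}}f\cdot T_j^{\sfloor{a_j(n)}}g\, d\mu = \int f\, d\mu\int g\,d\mu.
\]
Since each $T_j$ is $\mu$-preserving, $\int T_i^{\sfloor{a_i(n)}}f\cdot T_j^{\sfloor{a_j(n)}}g\,d\mu=\int (T_i^{\sfloor{a_i(n)}}T_j^{-\sfloor{a_j(n)}}f)\cdot g\,d\mu$, so the displayed equality says exactly that $(T_i^{\sfloor{a_i(n)}}T_j^{-\sfloor{a_j(n)}})_n$ is weakly ergodic, which is $WD$. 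The only subtlety is that $WJ$ in its stated form asserts weak convergence of $\E_{n\in[N]}\prod_k T_k^{\sfloor{a_k(n)}}f_k$; testing this weak convergence against the constant function $1$ is enough to obtain the above identity.

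For \textbf{(iv)}, we appeal to Theorem \ref{T: counterexample}. Pick any system $(X,\CX,\mu,T)$ in which $T$ is mixing but not conjugate to a one-point system (for instance, a nontrivial Bernoulli shift), and set $T_1=T_2=T$, $a_1(n)=n$, $a_2(n)=n+\sfloor{\log_2 n}$. By Theorem \ref{T: counterexample}, the pair $(T^n,T^{n+\sfloor{\log_2 n}})_n$ is jointly ergodic, which is $SJ$. On the other hand, $T_1^{\sfloor{a_1(n)}}T_2^{-\sfloor{a_2(n)}}=T^{-\sfloor{\log_2 n}}$, and the second assertion of Theorem \ref{T: counterexample} says that $(T^{\sfloor{\log_2 n}})_n$ (equivalently $(T^{-\sfloor{\log_2 n}})_n$) is ergodic only when the system is conjugate to a one-point system, contradicting our choice. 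Hence $SD$ fails while $SJ$ holds, showing the non-implication. I do not anticipate any substantial obstacle here: the only genuinely non-trivial input is Theorem \ref{T: counterexample}, which has already been established.
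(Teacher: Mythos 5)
Your proposal is correct and follows essentially the same route as the paper: parts (i), (ii), and (iv) are read off from Theorems \ref{T: main theorem}, \ref{T: easy direction}, and \ref{T: counterexample} respectively, and part (iii) is the same short computation the paper gives (set $f_i=f$, $f_j=g$, other $f_k\equiv 1$, pair the weak limit with the constant $1$, and use measure-invariance to move $T_j^{\sfloor{a_j(n)}}$ to the left-hand factor). The only cosmetic issue is your initial phrasing in (iii) about ``testing against $g$ in the $j$-coordinate,'' which is not quite meaningful, but you immediately correct it to testing the weak convergence against the constant $1$, which is exactly what the paper does.
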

\begin{proof}
Part (i) follows from Theorem \ref{T: main theorem}. Part (ii) follows from Theorem \ref{T: easy direction}. Part (iv) follows from Theorem \ref{T: counterexample}. To see Part (iii), take $f_{i}=f, f_{j}=g$ and all other functions to be 1. Then $WJ$ implies that 
    $$\int f\; d\mu\cdot\int g\; d\mu=\lim_{N\to\infty}\E_{n\in[N]}\int T_{i}^{a_{i}(n)}f\cdot T_{j}^{a_{j}(n)}g\; d\mu=\lim_{N\to\infty}\E_{n\in[N]}\int T_{i}^{a_{i}(n)}T_{j}^{-a_{j}(n)}f\cdot g\; d\mu,$$
    which implies $WD$.
\end{proof}

Since the jointly ergodic conjecture ($SD+SP\Longleftrightarrow SJ$) fails, it is natural to ask the following relevant questions.

\begin{question}\label{c2} 
    Let $a_{1},\dots,a_{\ell}\in\mathcal{H}$. Is it true that $WD+WP\Leftrightarrow WJ$ for any system?
\end{question}

\begin{question}\label{c3}  
    Let $a_{1},\dots,a_{\ell}\in\mathcal{H}$. Is it true that $WD+SP\Leftrightarrow SJ$ for any system?
\end{question}

We remark that Question \ref{c2} can be very difficult, as the positive answer to it would imply that 2-mixing implies a property reminiscent of 3-mixing (see the following proposition).

\begin{proposition}
     Let $(X,\mathcal{X},\mu,T)$ be a system. The pair $(T^{\floor{\log_{2}n}},T^{2\floor{\log_{2}n}})_{n}$ is 
    \begin{enumerate}
        \item $WJ$ if and only if $T$ has the following property: for all $f_0, f_1, f_2\in L^\infty(\mu)$, we have $$\lim_{n\to\infty}\int f_0\cdot T^n f_1\cdot T^{2n}f_2\; d\mu = \int f_0\; d\mu \cdot \int f_1\; d\mu \cdot \int f_2\; d\mu;$$
        \item $WD$ {and $WP$} if and only if $T$ is (2-)mixing;
        \item $SJ, SD$ or $SP$ if and only if $T$ is conjugate to a one-point system.
    \end{enumerate}
\end{proposition}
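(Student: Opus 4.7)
The plan is to reduce each of the three hypotheses to a well-understood single-parameter statement about $(T^{c\floor{\log_2 n}})_n$ and then to apply Lemma \ref{L: slowly growing} together with Lemma \ref{L: dyadic intervals}. All three parts rest on the same simple observation: on each dyadic block $I_K:=[2^K,2^{K+1})$ the sequence $\floor{\log_2 n}$ is identically equal to $K$.

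For part (iii), I test each of the three strong hypotheses against the function $1$ in the appropriate slot. If SJ holds, setting $f_1=1$ forces $\E_{n\in[N]}T^{2\floor{\log_2 n}}f_2\to \int f_2\,d\mu$ in $L^2(\mu)$ for every $f_2\in L^\infty(\mu)$, so $(T^{2\floor{\log_2 n}})_n$ is strongly ergodic; Lemma \ref{L: slowly growing}(i) (applied to $a(x)=2\log_2 x$) then forces $\mu$ to be concentrated on a point. SD is simply strong ergodicity of $(T^{-\floor{\log_2 n}})_n$, and SP tested against $F=1\otimes g$ yields strong ergodicity of $(T^{2\floor{\log_2 n}})_n$; both fall to Lemma \ref{L: slowly growing}(i) in the same manner.

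For part (ii), WD is by definition weak ergodicity of $(T^{-\floor{\log_2 n}})_n$, which by Lemma \ref{L: slowly growing}(ii) is equivalent to $T$ being mixing. For WP, testing weak ergodicity of the product action against $F=1\otimes g_1$ and $G=1\otimes g_2$ in $L^\infty(\mu^2)$ shows that $(T^{2\floor{\log_2 n}})_n$ is weakly ergodic, whence $T$ is mixing by Lemma \ref{L: slowly growing}(ii). Conversely, assuming $T$ is mixing, I check weak ergodicity of the product action on rank-one tensors $F=g_1\otimes g_2$, $G=g_1'\otimes g_2'$: the average in question factors as
\begin{equation*}
\E_{n\in[N]}\phi(\floor{\log_2 n})\,\psi(\floor{\log_2 n}), \qquad \phi(k):=\int g_1\cdot T^k g_1'\,d\mu,\ \psi(k):=\int g_2\cdot T^{2k}g_2'\,d\mu,
\end{equation*}
and mixing of $T$ gives $\phi(k)\to\int g_1\int g_1'$ and $\psi(k)\to\int g_2\int g_2'$ as $k\to\infty$; since $\floor{\log_2 n}\to\infty$, a trivial Ces\`aro argument delivers the desired limit. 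Linearity and $L^2(\mu^2)$-density then extend this to all $F,G$, proving WP.

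For part (i), the entire equivalence follows from Lemma \ref{L: dyadic intervals} applied to the sequence $a_K=2^K$ (so $a_{K+1}/a_K\to 2$): weak Ces\`aro convergence of $v_n:=T^{\floor{\log_2 n}}f_1\cdot T^{2\floor{\log_2 n}}f_2$ along $[N]$ is equivalent to weak convergence of the block averages $\E_{n\in I_K}v_n$ (the reverse implication, from block to Ces\`aro, is a standard weighted-averaging argument with weights $2^K/N$ and so can be inserted by hand). Since $\floor{\log_2 n}=K$ on $I_K$, the block average collapses to $T^K f_1\cdot T^{2K}f_2$, so WJ is equivalent to
\begin{equation*}
\int f_0\cdot T^K f_1\cdot T^{2K}f_2\,d\mu\ \longrightarrow\ \int f_0\,d\mu\int f_1\,d\mu\int f_2\,d\mu \qquad (K\to\infty),
\end{equation*}
for all $f_0,f_1,f_2\in L^\infty(\mu)$, which is exactly the stated property. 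The only mildly technical step is confirming the block-to-Ces\`aro direction of Lemma \ref{L: dyadic intervals}, but this is a soft $\varepsilon$-argument and is the only place any non-trivial bookkeeping appears; everything else reduces to direct invocations of the lemmas already proved in the paper.
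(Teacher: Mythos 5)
Your proposal is correct and follows the route the paper indicates, namely reducing all three parts to single-parameter statements and invoking Lemma \ref{L: slowly growing} together with the dyadic-block observation underlying Lemma \ref{L: dyadic intervals}. Two small clean-ups: since $2\floor{\log_2 n}\neq\floor{2\log_2 n}$, in part (iii) you should apply Lemma \ref{L: slowly growing} to $T^2$ with $a(x)=\log_2 x$ (or note that its proof only requires $\floor{a(n)}$ to be eventually constant on long blocks), rather than ``to $a(x)=2\log_2 x$''; and the block-to-Ces\`aro direction of part (i) is actually immediate because once $c_K:=\int f_0\cdot T^K f_1\cdot T^{2K}f_2\,d\mu$ converges, the sequence $c_{\floor{\log_2 n}}$ converges pointwise and hence also in Ces\`aro average, so there is no bookkeeping to worry about there.
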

\begin{proof}
 The proof of Part (i) is similar to that of Part (ii) of Lemma \ref{L: slowly growing}, and so we leave it to the interested readers.
    Parts (ii) and (iii) follow from Lemma \ref{L: slowly growing}.
\end{proof}

To make Question \ref{c3} more plausible, we address it for a tuple for which the (strong) joint ergodicity problem fails by Theorem \ref{T: counterexample}. 

\begin{proposition}\label{last}
    For any system $(X,\mathcal{X},\mu,T_{1},T_{2})$ and for the pair $(T_{1}^{n},T_{2}^{n+\floor{\log_{2}n}})_{n}$, we have that $WD+SP\Leftrightarrow SJ$.
\end{proposition}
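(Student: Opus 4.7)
The forward implication $SJ \Rightarrow WD + SP$ is immediate from Proposition \ref{all}: parts (ii) and (iii) give $SJ \Rightarrow SP$ and $SJ \Rightarrow WJ \Rightarrow WD$ respectively. The substance of the proposition lies in the converse $WD + SP \Rightarrow SJ$, which I plan to establish by verifying the two hypotheses of the joint ergodicity criteria (Theorem \ref{T: joint ergodicity criteria}) for the pair $a_1(n) = n$, $a_2(n) = n + \floor{\log_2 n}$, namely (a) good for equidistribution and (b) good for seminorm control.

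For condition (a), I would observe that $SP$ already forces each $T_1, T_2$ to be ergodic (since ergodicity of the product action descends to each coordinate), and furthermore $SP$ gives good for equidistribution via \cite[Corollary 12.5]{DKKST24}, exactly as in the proof of Theorem~\ref{T: main theorem}. Thus (a) is free once $SP$ is assumed. Condition (b) is where the work happens, and I plan to derive it from Proposition~\ref{est12}, whose hypothesis is that $T_2\vert_{\CI(T_1^{-1}T_2)}$ is mixing; this is the main point to verify, and it is the only place in the argument where the hypothesis $WD$ (rather than $SD$) really matters.

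To extract mixing of $T_2\vert_{\CI(T_1^{-1}T_2)}$ from $WD$, let $S := T_1T_2^{-1}$, and note that (using commutativity) $\CI(S) = \CI(T_1^{-1}T_2)$. Factoring out $S$, the transformations appearing in $WD$ rewrite as
\[
T_1^{\floor{n}} T_2^{-\floor{n+\floor{\log_2 n}}} = S^n\, T_2^{-\floor{\log_2 n}},
\]
so $WD$ says precisely that $(S^n T_2^{-\floor{\log_2 n}})_n$ is weakly ergodic on $(X,\CX,\mu)$. Restricting to the sub-$\sigma$-algebra $\CI(S)$ on which $S$ acts trivially, this reduces to the weak ergodicity of $(T_2^{-\floor{\log_2 n}})_n$ on $(X, \CI(S), \mu)$. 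Since $\log_2 n$ satisfies $1 \prec \log_2 n \ll \log n$, Lemma~\ref{L: slowly growing}(ii) applies (in its equivalent form for $T_2^{-1}$, whose mixing is equivalent to that of $T_2$), yielding precisely that $T_2\vert_{\CI(S)}$ is mixing. This step is the crux of the argument: it is exactly where the weak formulation of difference ergodicity is used, and it leverages the pathological growth rate $\log_2 n$ to upgrade a mean-value statement to a mixing statement via the pigeonhole-on-dyadic-blocks idea encoded in Lemma~\ref{L: dyadic intervals}.

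With mixing of $T_2\vert_{\CI(T_1^{-1}T_2)}$ in hand, Proposition~\ref{est12} delivers the required seminorm control (with $s = 2$, since $\nnorm{f_j}_{T_j,T_j} = \nnorm{f_j}_{2,T_j}$). Combining (a) and (b), Theorem~\ref{T: joint ergodicity criteria} gives that $(T_1^n, T_2^{n+\floor{\log_2 n}})_n$ is jointly ergodic, completing the converse direction and hence the equivalence.
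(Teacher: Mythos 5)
Your proof is correct and follows essentially the same route as the paper: forward direction from Proposition \ref{all}, good for equidistribution from $SP$ via \cite[Corollary 12.5]{DKKST24}, and seminorm control via Proposition~\ref{est12} after extracting mixing of $T_2\vert_{\CI(T_1T_2\inv)}$ from $WD$ restricted to $\CI(T_1T_2\inv)$-measurable functions and Lemma~\ref{L: slowly growing}(ii). The only cosmetic difference is that you factor the $(1,2)$ difference as $S^n T_2^{-\floor{\log_2 n}}$ (picking up a negative power), whereas the paper uses the $(2,1)$ difference and gets $T_2^{\floor{\log_2 n}}$; both are handled by the same lemma since mixing of $T_2^{-1}$ and of $T_2$ are equivalent.
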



\begin{proof}
    It follows from Proposition \ref{all} that $SJ\Rightarrow WD+SP$. So now we show the opposite direction. Note that $SP$ implies that $(T_{1}^{n},T_{2}^{n+\floor{\log_{2}n}})_{n}$ is good for equidistribution. By Theorem \ref{T: joint ergodicity criteria}, it suffices to show that $(T_{1}^{n},T_{2}^{n+\floor{\log_{2}n}})_{n}$ is good for seminorm control.
    
     Since $(T_{1}^{n},T_{2}^{n+\floor{\log_{2}n}})_{n}$ is WD, for any $\CI(T_1T_2\inv)$-measurable $f_0, f_1$ we have $$\int f_0 \cdot \E_{n\in[N]} T_2^{\floor{\log n}} f_1\to\int f_0\; d\mu \int f_1\; d\mu.$$ 
This means that $(T_{2}^{\floor{\log_{2}n}})_{n}$ is weakly ergodic for the system $(X,\mathcal{I}(T_1T_2\inv),\mu,T_{2})$. By Lemma \ref{L: slowly growing}, $T_{2}\vert_{\mathcal{I}(T_1T_2\inv)}$ is mixing, and by Proposition \ref{est12}, $(T_{1}^{n},T_{2}^{n+\floor{\log_{2}n}})_{n}$ is good for seminorm control.
\end{proof}

 \subsection{Changing the averaging scheme}
Another approach to revising the joint ergodicity classification problem for Hardy sequences (involving slowly growing functions) is to consider a different averaging scheme. This point of view was adopted in \cite{BMR20}, where some multiple recurrence results were obtained that cannot be established through the study of the corresponding (Ces\`{a}ro) ergodic averages. For instance, it can be easily proven that for any ergodic system $(X,\CX,\mu, T)$, the logarithmic averages $$\frac{1}{\log N}\sum_{n\leq N} \frac{T^{\floor{\log n}}f}{n}$$ 
converge to $\int f\; d\mu$ for all bounded measurable functions $f$. The main input in this result is the spectral theorem and the fact that $a\log n$ is equidistributed with respect to logarithmic averaging for any $a\in (0,1)$. In this sense, logarithmic averaging is more natural for sequences involving $\log$, and so we can ``repair'' Theorem \ref{T: counterexample} by considering logarithmic averages. We call a sequence $a:\N\to \Z$ \emph{ergodic with respect to logarithmic averages} on a system $(X,\CX, \mu,T)$  if \begin{equation*}
    \lim_{N\to\infty} \norm{\frac{1}{\log N}\sum_{n\in [N]} \frac{T^{a(n)}f}{n}-\int f\; d\mu }_{L^2(\mu)}=0.
\end{equation*}We define joint ergodicity of sequences with respect to logarithmic averages similarly to Definition \ref{D: joint ergodicity} by replacing Ces\`{a}ro averages with logarithmic ones.

\begin{conjecture}\label{Conj: log averaged counterexample}
    For any system $(X,\CX, \mu,T_1,T_2)$, the tuple $(T_1^n,T_2^{n+\floor{\log n}})_{n}$ is jointly ergodic for logarithmic averages if and only if $(T_2^{n+\floor{\log n}}T_1^{-n})_{n}$ and $(T_1^{n}\times T_2^{n+\floor{\log n}})_{n}$ are ergodic for logarithmic averages on $(X, \CX, \mu)$ and $(X\times X, \CX\otimes \CX, \mu\times\mu)$ respectively.
\end{conjecture}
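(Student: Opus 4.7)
The plan is to follow the general architecture of the paper's main results, adapted to logarithmic averages: a forward direction that parallels Theorem~\ref{T: easy direction} and a backward direction that combines a logarithmic analogue of Theorem~\ref{T: joint ergodicity criteria} with a logarithmic version of Proposition~\ref{est12}.

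For the forward direction, suppose $(T_1^n, T_2^{n+\floor{\log n}})_n$ is jointly ergodic for log averages. Product ergodicity follows by applying the hypothesis on the product system $(X\times X, \mu\times\mu, T_1\times T_1, T_2\times T_2)$ with test functions of the form $f_1\otimes 1$ and $1\otimes f_2$. For difference ergodicity, the core observation is that dyadic decomposition on intervals $I_k = [e^k, e^{k+1})$ (on which $\floor{\log n}$ is constantly equal to $k$ and whose $1/n$-mass is $1+o(1)$) rewrites
\begin{equation*}
\frac{1}{\log N}\sum_{n\leq N}\frac{T_2^{n+\floor{\log n}}T_1^{-n}f}{n} \;=\; \frac{1}{\log N}\sum_{k\leq \log N}T_2^{k}A_k + o(1),
\end{equation*}
where $A_k := \sum_{n\in I_k} R^n f/n$ and $R := T_1^{-1}T_2$. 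The mean ergodic theorem gives $A_k\to \E(f\mid \CI(R))$ in $L^2(\mu)$, so the whole average converges strongly to $\E(\E(f\mid\CI(R))\mid \CI(T_2))$. The usual weak-limit argument via joint ergodicity (test against an arbitrary $g$ and shift $T_1^{-n}$ across by $T_1$-invariance) forces this strong limit to equal $\int f\,d\mu$, which is exactly the condition that $T_2|_{\CI(R)}$ is ergodic -- i.e.\ the difference ergodicity condition for log averages for this tuple.

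For the backward direction, we first plan to prove a logarithmic analogue of Theorem~\ref{T: joint ergodicity criteria}: joint ergodicity for log averages is equivalent to being good for seminorm control and for equidistribution, both read with logarithmic weights. Equidistribution follows from product ergodicity by the standard spectral argument. For seminorm control we adapt Proposition~\ref{est12}: the dyadic decomposition turns the log average into an outer Cesàro-type sum $\frac{1}{\log N}\sum_{k\leq\log N}T_2^k$ applied to inner $1/n$-weighted sums on $I_k$; a van der Corput step followed by Cauchy--Schwarz reduces the problem to controlling
\begin{equation*}
\E_{h\in[H]}\frac{1}{\log N}\sum_{k\leq\log N}\int T_2^{-k}\bigl(\overline{f_1}\cdot T_1^h f_1\bigr)\cdot \E\bigl(\overline{f_2}\cdot T_2^h f_2\bigm| \CI(T_1^{-1}T_2)\bigr)\,d\mu,
\end{equation*}
which factors into a product of integrals using ergodicity (rather than mixing, as was needed in the Cesàro case) of $T_2|_{\CI(T_1^{-1}T_2)}$. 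This ergodicity is supplied by the difference ergodicity condition via the equivalence established in the forward direction.

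The main obstacle will be rigorously establishing the logarithmic analogue of Theorem~\ref{T: joint ergodicity criteria}; the proof of its Cesàro version in \cite{FrKu22a} chains together van der Corput reductions and Host-Kra structure theorems, and one has to verify that these machines adapt cleanly to the $1/n$ weighting (in particular, that the van der Corput inequality and the spectral/structure dichotomy survive when passing from uniform weights on $[N]$ to logarithmic weights). A secondary but routine technical point is handling the lower-order errors arising from the fact that $\sum_{n\in I_k}1/n = 1 + O(e^{-k})$ rather than exactly $1$; these errors telescope over $k\leq \log N$ and contribute negligibly to all the averages under consideration.
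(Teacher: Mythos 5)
First, a structural clarification: the statement you are attempting to prove is Conjecture~\ref{Conj: log averaged counterexample} in the paper, not a theorem. The paper deliberately refrains from proving it in full; the only supporting evidence it offers is the special case $T_1=T_2$, which it dispatches via the weighted seminorm estimates of \cite{BMR20} together with the logarithmic analogue \cite[Theorem~2.19]{FrKu22a} of Theorem~\ref{T: joint ergodicity criteria}. So there is no ``paper's proof'' to compare against --- you are attempting to close a gap the authors left open, and the burden of rigor is correspondingly heavier. Note also that the ``main obstacle'' you flag at the end (a logarithmic analogue of Theorem~\ref{T: joint ergodicity criteria}) is not really an obstacle: the paper already cites such an analogue as \cite[Theorem~2.19]{FrKu22a}.

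Your forward-direction argument for the difference condition looks sound. On the dyadic blocks $I_k=[e^k,e^{k+1})\cap\N$ the iterate $T_2^{n+\floor{\log n}}T_1^{-n}$ factors as $T_2^k R^n$ with $R=T_2T_1^{-1}$, the $1/n$-weighted averages of $R^n f$ over $I_k$ converge to $\E(f\mid\CI(R))$ (one should check that the $1/n$-weighting is compatible with the mean ergodic theorem, which it is, e.g.\ by splitting $I_k$ into $L$ sub-blocks on which $1/n$ is constant up to $1+O(1/L)$), and the Ces\`aro average in $k$ of $T_2^k$ then produces $\E(\E(f\mid\CI(R))\mid\CI(T_2))$ strongly. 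The weak-limit argument with the $T_1$-shift is correct. So the useful observation ``log-averaged difference ergodicity $\iff$ $T_2|_{\CI(R)}$ is ergodic'' is justified.

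However, there are two genuine gaps.

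\emph{(1) Product ergodicity in the forward direction.} Your claim that log-averaged product ergodicity ``follows by applying the hypothesis on the product system $(X\times X,\mu\times\mu,T_1\times T_1, T_2\times T_2)$'' does not work. Joint ergodicity is a property of the fixed system $(X,\CX,\mu,T_1,T_2)$; it gives you no information whatsoever about the same iterates acting on $(X\times X,\mu\times\mu)$ with the product transformations. The Ces\`aro-average analogue of this implication is exactly Theorem~\ref{T: easy direction}(i), and it is proved via a nontrivial spectral analysis cited as \cite[Lemmas~12.6 and 12.13]{DKKST24}; your two-line sketch supplies no log-averaged version of that argument.

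\emph{(2) Seminorm control from ergodicity of $T_2|_{\CI(R)}$.} Your display
\begin{equation*}
\E_{h\in[H]}\frac{1}{\log N}\sum_{k\leq\log N}\int T_2^{-k}\bigl(\overline{f_1}\cdot T_1^h f_1\bigr)\cdot\E\bigl(\overline{f_2}\cdot T_2^h f_2\bigm|\CI(R)\bigr)\,d\mu
\end{equation*}
has dropped the absolute value that the van der Corput step unavoidably produces (compare the corresponding displays inside the proof of Proposition~\ref{est12}). With the absolute value restored, you face an average $\frac{1}{K}\sum_{k\le K}\bigl|\int\tilde\phi_h\cdot T_2^k g_h\,d\mu\bigr|$ with $\tilde\phi_h,g_h\in L^\infty(\CI(R),\mu)$. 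In Proposition~\ref{est12} the exponential weights $2^k$ concentrate the sum at large $k$ and \emph{mixing} of $T_2|_{\CI(R)}$ gives pointwise-in-$k$ convergence of $\bigl|\int\tilde\phi_h T_2^k g_h\bigr|$, so the argument closes. For log averages the weights are uniform, so you need the Ces\`aro average of the absolute values to converge to the right thing. Cauchy--Schwarz in $k$ reduces this to $\frac{1}{K}\sum_k\bigl|\int\tilde\phi_h T_2^kg_h\bigr|^2$, which by the mean ergodic theorem on the product converges to
\begin{equation*}
\int\!\!\int\tilde\phi_h\otimes\overline{\tilde\phi_h}\cdot\E\bigl(g_h\otimes\overline{g_h}\bigm|\CI(T_2\times T_2)\bigr)\,d(\mu\times\mu),
\end{equation*}
and $\CI(T_2\times T_2)$ restricted to $\CI(R)\otimes\CI(R)$ is trivial only if $T_2|_{\CI(R)}$ is \emph{weakly mixing}, not merely ergodic. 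As it stands, the ergodicity you extract from the difference condition is not enough to close this step; you would either need to extract more from the hypotheses, or use a different decomposition (say, splitting $g_h$ into its Kronecker and weak-mixing parts under $T_2|_{\CI(R)}$ and handling each separately). Until one of these is carried out, the backward direction is incomplete.
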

The conjecture is true whenever $T_1=T_2$. Indeed, the difference condition is equivalent to $T$ being ergodic, and, in this case, we can easily see that the product condition holds as well. 
Additionally, the pair we study is good for seminorm control for a single transformation due to the results in \cite{BMR20}. Furthermore, it can be shown that they are good for equidistribution (for logarithmic averages). Using a variant of Theorem \ref{T: joint ergodicity criteria} for logarithmic averages \cite[Theorem 2.19]{FrKu22a}, we conclude that $(T^n, T^{n+\floor{\log n}})_{n}$ is jointly ergodic for all ergodic systems. This argument makes Conjecture \ref{Conj: log averaged counterexample} plausible.

We remark, though, that logarithmic averaging is not enough to establish joint ergodicity characterization when we deal with even slower functions, like $\log\log x$. In this example, one would need to consider an even weaker averaging scheme, namely, double logarithmic averaging. For this reason, we have to modify this line of reasoning.

We formalize the prior discussion.
Let $W(n)$ be a non-decreasing sequence of real numbers satisfying $W(n)\to\infty$ and denote $w(n)=W(n+1)-W(n)$. We define the $W$-averaging operators of a sequence $a_n$ on a normed space by the formula\begin{equation*}
   A^W_{N}\left(\left(a(n)\right)_{n}\right) :=\frac{1}{W(N)}\sum_{n=1}^N w(n)a_n.
\end{equation*}This is well-defined for $N$ sufficiently large.

Let $a_1,\ldots,a_\ell:\N\to \R$ be sequences and let $(X,\CX,\mu,T_1,\ldots, T_\ell)$ be a system.
We call $(T_1^{a_1(n)},\ldots,T_\ell^{a_\ell(n)})$  {\em $W$-jointly ergodic} for $(X,\CX,\mu,T_1,\ldots, T_\ell)$ if for all $f_1,\ldots, f_\ell\in L^{\infty}(\mu)$, we have 
\begin{equation*}
    \lim_{N\to\infty} \norm{A^W_{N}(T_1^{a(n)}f_1\ldots T_k^{a_k(n)}f_k)-\int f_1\; d\mu\cdots\int f_\ell\; d\mu}_{L^2(\mu)}=0.
\end{equation*}
Moreover, we say that a sequence of transformations $(T_n)_n$ is {\em $W$-ergodic} on $(X,\CX,\mu)$ if for all $f\in L^{\infty}(\mu)$, we have
\begin{equation*}
    \lim_{N\to\infty} \norm{A^W_{N}(T_n f)-\int f\; d\mu }_{L^2(\mu)}=0.
\end{equation*}
This brings us to the following possible modification of Problem \ref{Pr: joint ergodicity problem}.

\begin{conjecture}\label{C: weighted conjecture}
    Let $a_1,\ldots, a_\ell\in \mathcal{H}$ with $a_j\succ 1$ for every $j\in[\ell]$. Then there exists a non-decreasing sequence $(W(n))_n$ with $1\prec W(x)\ll x$
    such that for any system $(X,\CX, \mu,T_1,\ldots, T_\ell)$, the following are equivalent:\\
\begin{enumerate}
    \item $(T_1^{\floor{a_1(n)}},\ldots, T_\ell^{\floor{a_\ell(n)}})_n$ is $W$-jointly ergodic on $(X,\CX,\mu)$;\\
    \item the sequence $(T_1^{\floor{a_1(n)}}\times\dots\times T_\ell^{\floor{a_\ell(n)}})_n$ is $W$-ergodic on $(X^\ell, \CX^{\otimes \ell},\mu^{\ell})$ while the sequence $(T_i^{\floor{a_i(n)}}T_j^{-\floor{a_j(n)}})_n$ is $W$-ergodic on $(X,\CX, \mu)$ for all distinct $i, j\in[\ell]$.
    \end{enumerate}
\end{conjecture}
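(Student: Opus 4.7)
The plan is to reduce Conjecture \ref{C: weighted conjecture} to a $W$-averaged version of the machinery developed in the main part of the paper. First, I would choose $W$ based on the Hardy family generated by $a_{1},\dots,a_{\ell}$. Using \cite[Lemma A.3]{R22}, find an ordered Hardy family $\CQ=(q_{1},\dots,q_{m})$ with $a_{j}\in\extspan_{\R}\CQ$ for every $j\in[\ell]$, and let $W\in\CH$ be any nondecreasing function satisfying $1\prec W(x)\ll x$ and $\log W(x)\prec q_{1}(x)$, where $q_{1}$ is the slowest nonconstant member of $\CQ$. For example, if $q_{1}\succ \log x$ one may take $W(x)=x$ (Ces\`aro averaging); if $q_{1}\asymp \log x$ one takes $W(x)=\log x$; and if $q_{1}\asymp\log\log x$ one takes $W(x)=\log\log x$. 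The point of this choice is to ensure a $W$-averaged extension of Boshernitzan's equidistribution theorem, in the vein of \cite[Theorem 5.1]{R22}, so that $(a(n))_{n}$ is $W$-equidistributed $\bmod\ 1$ whenever $a-p\succ \log W$ for all $p\in\Q[x]$.

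Second, I would establish a $W$-averaged analog of the joint ergodicity criteria of Theorem \ref{T: joint ergodicity criteria}, along the lines of \cite[Theorem 2.19]{FrKu22a}. In that analog, the ``good for equidistribution'' condition would involve vanishing of the $W$-averaged exponential sums $A_{N}^{W}\!\bigbrac{e\bigbrac{\lambda_{1}\floor{a_{1}(n)}+\dots+\lambda_{\ell}\floor{a_{\ell}(n)}}}$, and ``good for seminorm control'' would be defined analogously with $A_{N}^{W}$ in place of $\E_{n\in[N]}$. The implication ``$W$-product ergodicity $\Rightarrow$ good for $W$-equidistribution'' would then follow from the $W$-averaged Boshernitzan theorem through the same strategy as \cite[Corollary 12.5]{DKKST24}.

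Third, and this is the technical crux, I would adapt the seminorm control chain to the $W$-averaged setting. The relative concatenation machinery of Section \ref{S: concatenation} should translate essentially unchanged, since it is algebraic and depends only on formal properties of generalized box seminorms (which can be recast with $W$-F\o lner averages). The seminorm smoothing argument of Section \ref{S: smoothing} should also go through once one has a $W$-averaged version of the van der Corput lemma; such an inequality is standard for reasonable $W$ with $w(n)=W(n+1)-W(n)$ slowly varying. Finally, one must establish a $W$-analog of the seminorm comparison under the difference ergodicity assumption (Sections \ref{S: seminorm comparison I}--\ref{S: seminorm comparison II}), starting from the Herglotz--Bochner identity for $W$-averaged spectral measures and carrying through the case analysis.

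The main obstacle lies in this last step. The explicit exponential sum computations of Lemmas \ref{L: a, beta a eq}--\ref{L: (k beta + l) a  = p + g} rely on the Ces\`aro-averaged Weyl theorem through Theorem \ref{T: Boshernitzan}, and must be redone with $W$-averages. For slow functions such as $a(x)=\log x$ with $W(N)=\log N$, this forces the computation of integrals like
\begin{equation*}
\int_{1}^{\infty}e\bigbrac{t\rem{\log x}}\,\frac{dx}{x\log x}
\end{equation*}
and their multidimensional analogs, which exhibit resonances absent in the Ces\`aro case and are the most likely source of unexpected pathological examples that would invalidate or restrict the conjecture. A further difficulty is choosing $W$ canonically when $\CQ$ contains several ``slow'' scales simultaneously (e.g.\ both $\log x$ and $\log\log x$), since the correct averaging for one scale may be wrong for another; this may require $W$ to be chosen adaptively, or the statement to be refined so that $W$ depends on the rational/irrational leading coefficient structure of differences $c_{i}a_{i}-c_{j}a_{j}$ in the spirit of Section \ref{SS: preliminary reduction}.
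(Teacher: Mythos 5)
This statement is an open conjecture in the paper (Conjecture \ref{C: weighted conjecture}); the authors do not prove it, and in fact immediately after stating it they write that ``a major missing ingredient needed to resolve [it] would be seminorm estimates for ergodic averages along Hardy sequences with suitable weights,'' suggesting a merger of their unweighted machinery with the weighted method of Bergelson--Moreira--Richter \cite{BMR20}. So there is no ``paper's own proof'' to compare against.

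Your proposal is a plausible roadmap, and it agrees in broad outline with the authors' own remark: the chief obstacle is establishing $W$-averaged seminorm control, and you correctly single that out. But what you have written is not a proof, and you are candid about this yourself: you do not actually carry out the $W$-averaged van der Corput/smoothing/concatenation argument, you do not redo the exponential-sum case analysis of Sections \ref{S: seminorm comparison I}--\ref{S: seminorm comparison II} under $W$-averaging, and you explicitly flag the possibility of ``unexpected pathological examples that would invalidate or restrict the conjecture'' as well as an unresolved ambiguity in choosing $W$ when $\CQ$ has several slow scales (e.g.\ both $\log x$ and $\log\log x$). Those last two points are genuine and potentially fatal: the conjecture as stated asserts the existence of a \emph{single} $W$ that works for \emph{all} systems, and you have not shown that a single averaging scheme can simultaneously handle all pairwise differences $c_i a_i - c_j a_j$ when they live at different slow scales. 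Until that is resolved, the proposal is at best evidence that the conjecture is reasonable (which is roughly the epistemic status the paper assigns it), not a demonstration that it is true.

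One smaller technical caveat worth flagging: your claim that the relative concatenation machinery ``translates essentially unchanged'' to $W$-F\o lner averages is not obviously correct. The generalized box seminorms in Section \ref{SS: factors} are defined via F\o lner averages over groups $G_i \subseteq \R^\ell$, and the factor structure (Proposition \ref{P: structure theorem}) relies on the multicorrelation decomposition of Proposition \ref{P: multicorrelation}, which in turn uses unweighted ergodic averaging and Walsh's convergence theorem. Introducing a weight $w(n)$ into the $n$-average does not interact in an obvious way with the $\bm$-averages defining the seminorms; one would need to check that the dual-function calculus and the Gowers--Cauchy--Schwarz steps survive. This is the kind of detail that could hide a real obstruction.
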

    A major missing ingredient needed to resolve Conjecture \ref{C: weighted conjecture} would be seminorm estimates for ergodic averages along Hardy sequences with suitable weights. To obtain such estimates, one should try to merge the argument of the authors for usual (unweighted) ergodic averages along Hardy sequences together with the method of Bergelson, Moreira and Richter \cite{BMR20}.

{One potential shortcoming of Conjecture \ref{C: weighted conjecture} vs. the original problem has to do with the choice of a suitable averaging scheme. While such schemes can always be found for Hardy sequences of polynomial growth, it is not clear what would be the ``correct'' averaging scheme if we go beyond Hardy sequences and consider e.g. sequences with oscillations, such as $x\sin{x}$.}


\medskip

\subsection{More general sequences of polynomial growth} While Corollary \ref{C:np} asserts that Problem \ref{Pr: joint ergodicity problem} admits an affirmative answer for all non-pathological Hardy sequences of polynomial growth, we believe that this result should extend to a more general class of iterates. In this section, we conjecture a natural generalization of Corollary \ref{C:np}, which will involve tempered functions defined below.

\begin{definition}[Tempered functions]
    Let $i\in\mathbb{N}_{0}$ and $x_0\geq 0$. A real-valued function $a\in C^{i+1}(x_0, \infty)$ is called a \emph{tempered function of degree $i$} if the following hold:
\begin{enumerate}
\item $a^{(i+1)}(x)$ tends monotonically to $0$ as $x\to\infty;$
\item  $\lim\limits_{x\to\infty}x|a^{(i+1)}(x)|=\infty.$
\end{enumerate} 
\end{definition}

A significant hurdle arising in the study of tempered functions is that in contrast to Hardy field functions, ratios of tempered functions may not converge. In order to avoid various problematic cases, one typically restricts attention to a well-behaved subclass of tempered functions. On letting 
\begin{align*}
    \mathcal{R} &:=\Big\{a\in C^\infty(\mathbb{R}_+):\;\lim_{x\to\infty}\frac{xa^{(i+1)}(x)}{a^{(i)}(x)}\in \mathbb{R}\;\;\text{for all}\;\;i\in\mathbb{N}_{0}\Big\},\\
    \mathcal{T}_i&:=\Big\{a\in\mathcal{R}:\;\exists\;i<\alpha\leq i+1,\;\lim_{x\to\infty}\frac{xa'(x)}{t(x)}=\alpha,\;\lim_{x\to\infty}a^{(i+1)}(x)=0\Big\},
\end{align*}
the aforementioned well-behaved class is given by $\mathcal{T}:=\bigcup_{i=0}^\infty \mathcal{T}_i$. (See \cite{BeHK09, Kouts21} for a detailed discussion of tempered functions.)

It is known that every function $a\in \mathcal{T}_i$ is a tempered function of degree $i$ and satisfies the growth condition $x^i\log x\prec a(x)\prec x^{i+1}$ (see \cite{BeHK09}).

In \cite{DKS23}, the first, second and fourth authors showed that Problem~\ref{Pr: joint ergodicity problem} holds whenever $$a_1=\ldots=a_\ell=a=c_1h+c_2t$$ for some $(c_1,c_2)\in \mathbb{R}^2\setminus\{(0,0)\}$ as well as a Hardy function $h\in \CH$ and a tempered function $t\in \mathcal{T}$ satisfying some growth conditions. 
For example, it holds for the function 
\[x^{\pi}/ \log x+x^{1/2}(2+\cos\sqrt{\log x})\] which is of polynomial growth but is not Hardy.

Although dealing with tempered functions is outside the scope of this article, we expect Problem~\ref{Pr: joint ergodicity problem} to hold in the following general case.

\begin{conjecture}
Let $a_1, \ldots, a_\ell:\N\to\R$ be sequences of the form 
\[a_j=c_{1,j} h_j+c_{2,j}t_j\] for some $(c_{1,j},c_{2,j})\in \mathbb{R}^2\setminus\{(0,0)\},$ $h_j\in \CH,$ and $t_j\in \mathcal{T}$ such that for all distinct $i, j\in[\ell]$ and $c_i, c_j\in\R$, the function $c_i a_i - c_j a_j$ is either an almost rational polynomial or it stays logarithmically away from $\Q[x]$. Then the sequences $a_1, \ldots, a_\ell$ are jointly ergodic for a system $(X, \CX, \mu, T_1, \ldots, T_\ell)$ if and only  if they satisfy the product and difference ergodicity conditions on the system. 
\end{conjecture}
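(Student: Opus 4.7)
The plan is to mirror the proof strategy of Theorem \ref{T: main theorem}, replacing each Hardy-specific ingredient by its mixed Hardy--tempered counterpart. Using Theorem \ref{T: joint ergodicity criteria}, it suffices to establish that $a_1,\dots,a_\ell$ are good for equidistribution and good for seminorm control on $(X,\CX,\mu,T_1,\dots,T_\ell)$. The converse direction (joint ergodicity implies the difference and product conditions) would follow by adapting Theorem \ref{T: easy direction}: I would check that for sequences of the form $c_{1,j}h_j+c_{2,j}t_j$, the exponential sums $\E_{n\in[N]}e(t\floor{a_i(n)}+s\floor{a_j(n)})$ still converge, which reduces to equidistribution facts for tempered functions that are well-known in the literature under the stated hypothesis on $c_ia_i-c_ja_j$. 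The product ergodicity condition then forces equidistribution directly.

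The heart of the proof is seminorm control, which in turn requires three ingredients. First, one needs an initial generalized box seminorm estimate in the spirit of Theorem \ref{T: old estimates} for mixed iterates $c_{1,j}h_j+c_{2,j}t_j$. Since functions in $\mathcal{T}$ admit controlled Taylor expansions and have monotonically decaying higher derivatives, the PET-type induction of \cite{DKKST24} should extend after defining an appropriate notion of ordered \emph{mixed} family, where monomials and tempered functions of fractional degree interleave. Second, once one proves that this class is \emph{good for smoothing} in the abstract sense of Definition \ref{d47}, the seminorm smoothing argument (Proposition \ref{newestimate3}) applies verbatim, because it uses only the abstract formalism and is insensitive to the analytic nature of the iterates. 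Third, one would invoke an analog of Proposition \ref{P: seminorm comparison} to convert the resulting generalized box seminorm estimates into Host--Kra seminorm estimates under the difference ergodicity hypothesis.

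The principal obstacle lies in establishing this last seminorm comparison step in the mixed Hardy--tempered setting. In Sections \ref{S: seminorm comparison I}--\ref{S: seminorm comparison II} the ergodicity of $(T_1^{\floor{a_1(n)}}T_2^{-\floor{a_2(n)}})_n$ was translated via the Herglotz--Bochner theorem into very explicit exponential sum computations that relied on the case dichotomy $a\in\R[x]$ versus $a$ staying logarithmically away from $\R[x]$, and on the closure of $\mathcal{H}$ under differences. For mixed iterates, differences and ratios need not remain in a Hardy field, so Boshernitzan's equidistribution theorem cannot be invoked directly, and the relevant dichotomy must be recast. I expect that the first substantive step is to prove an analog of Proposition \ref{P: reduction of ergodicity} showing that if $\beta_2a_1-\beta_1a_2\ll\log$, then the tempered parts of $a_1$ and $a_2$ must share the same leading behaviour up to a rational scalar, so that one may reduce to the case $a_2=\beta a_1+u$ modulo a vanishing error. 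The hypothesis of the conjecture is designed precisely to make this alignment step tractable.

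Assuming this reduction goes through, the remaining computations carry over: one splits the spectral support $[0,1)^2$ into the three sets $E_1,E_2,E_3$, and the analysis of $E_3$ proceeds by enumerating which rational shifts $(q,r)$ can produce vanishing $A(t,s)$ but nonvanishing $B(t,s)$. The exponential-sum calculations in Lemmas \ref{L: a, beta a eq}--\ref{L: (k beta + l) a  = p + g} should have analogues in the mixed class once one has equidistribution on the appropriate subtori (such results for tempered functions are available in the work of Koutsogiannis and Frantzikinakis), and Lemma \ref{L: scaling} together with the conclusion of Section \ref{SS: seminorm comparison base case} then applies unchanged. In summary, the structural skeleton of this paper transfers cleanly to the mixed Hardy--tempered class; the laborious, but essentially routine, part is redoing the exponential sum case-analysis, while the genuinely novel step is the reduction lemma capturing what it means for $\beta_2a_1-\beta_1a_2$ to be sublogarithmic when $a_1,a_2$ mix Hardy and tempered components.
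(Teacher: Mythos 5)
The statement you have attempted to prove is stated in the paper as a \emph{conjecture}, not a theorem: the authors explicitly write that ``dealing with tempered functions is outside the scope of this article'' and leave it open. Consequently there is no proof in the paper to compare your argument against, and your proposal cannot be judged against a reference solution.

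Evaluated on its own terms, your proposal is an accurate roadmap rather than a proof. You correctly identify that the architecture of Theorem \ref{T: main theorem} would transfer---reduce to good-for-equidistribution plus good-for-seminorm-control via Theorem \ref{T: joint ergodicity criteria}, establish initial generalized box seminorm estimates, run the abstract smoothing argument of Proposition \ref{newestimate3}, and then upgrade to Host--Kra seminorms via a seminorm comparison under the difference ergodicity hypothesis---and you correctly observe that the smoothing step is formal and would carry over once the class is shown to be good for smoothing. But every ingredient that is actually hard in the mixed Hardy--tempered setting is left unestablished. Specifically: (a) you assert without argument that the PET-type induction of \cite{DKKST24} ``should extend'' to mixed families; in fact the existence of an ordered family in Definition \ref{D: ordered Hardy family} relies crucially on the total ordering of Hardy functions by growth, whereas ratios of two tempered functions need not converge, so even formulating the right notion of ordered mixed family is nontrivial. (b) You assert that Lemmas \ref{L: a, beta a eq}--\ref{L: (k beta + l) a  = p + g} ``should have analogues,'' but these rest on Boshernitzan's Theorem \ref{T: Boshernitzan}, which has no equally sharp counterpart for mixed iterates; the available equidistribution results for tempered functions (e.g.\ those used in \cite{BeHK09, Kouts21, DKS23}) come with stricter growth hypotheses and do not give the dichotomy you invoke. (c) The analogue of Proposition \ref{P: reduction of ergodicity}---which you correctly single out as the genuinely novel step---is only conjectured, and without it the entire spectral argument of Sections \ref{S: seminorm comparison I}--\ref{S: seminorm comparison II} cannot even begin. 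These are exactly the obstructions that led the authors to label the statement a conjecture, so your proposal reproduces the authors' assessment of the difficulty but does not close any of the gaps.
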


  \bibliography{library}
\bibliographystyle{plain}
\end{document}